\def\DynkinNodeSize{3.5mm}
\def\DynkinArrowLength{2mm}
\tikzset{
  % a diagram node
  dnode/.style={
    circle,
    inner sep=0pt,
    minimum size=\DynkinNodeSize,
    fill=white,
    draw},
  middlearrow/.style={
    decoration={markings,
      mark=at position 0.6 with
      %{\arrow[black]{angle 90};}
      %{\arrow[black]{angle 60};}
      %{\arrow[black]{stealth};}
      {\draw (0:0mm) -- +(+135:\DynkinArrowLength); \draw (0:0mm) -- +(-135:\DynkinArrowLength);},
    },
    postaction={decorate}
  },
  leftrightarrow/.style={
    decoration={markings,
      mark=at position 0.999 with
      {
      \draw (0:0mm) -- +(+135:\DynkinArrowLength); \draw (0:0mm) -- +(-135:\DynkinArrowLength);
      },
      mark=at position 0.001 with
      {
      \draw (0:0mm) -- +(+45:\DynkinArrowLength); \draw (0:0mm) -- +(-45:\DynkinArrowLength);
      },
    },
    postaction={decorate}
  },
  % single edge
  sedge/.style={
  },
  % directed double edge
  dedge/.style={
    middlearrow,
    double distance=0.5mm,
  },
  % directed triple edge
  tedge/.style={
    middlearrow,
    double distance=1.0mm+\pgflinewidth,
    postaction={draw}, % third line
  },
  % double edge with two arrows, for \tilde{A}_1 residues
  infedge/.style={
    leftrightarrow,
    double distance=0.5mm,
  },
}
\newcommand{\q}[1]{q_{#1}}
\newcommand{\nt}{\unlhd}
\newcommand{\inv}{\mathrm{inv}}
\newcommand{\sq}{\mathrm{sq}}
\newcommand{\sm}{\setminus}
\newcommand{\eps}{\varepsilon}
\let\ol\overline
\let\wt\widetilde
\let\wh\widehat
\newcommand{\onto}{\twoheadrightarrow}
\newcommand{\GL}[1]{\mathrm{GL}_{#1}(\RR)}
\newcommand{\SL}[1]{\mathrm{SL}_{#1}(\RR)}
\renewcommand{\O}[1]{\mathrm{O}(#1)}
\newcommand{\SO}[1]{\mathrm{SO}(#1)}
\newcommand{\Sp}[1]{\mathrm{Sp}_{#1}(\RR)}
\newcommand{\U}[1]{\mathrm{U}_{#1}(\CC)}
\newcommand{\SU}[1]{\mathrm{SU}_{#1}(\CC)}
\newcommand{\UH}{\mathrm{U}_1(\HH)}
\newcommand{\Pin}[1]{\mathrm{Pin}(#1)}
\newcommand{\Spin}[1]{\mathrm{Spin}(#1)}
\newcommand{\Cl}[1]{\mathrm{Cl}(#1)}
\newcommand{\Cle}[2]{\mathrm{Cl}^{#2}(#1)}
\newcommand{\Pu}{\mathrm{Pu}}
\newcommand{\I}{\mathfrak{I}}
\renewcommand{\AA}{\mathbb{A}}
\newcommand{\CC}{\mathbb{C}}
\newcommand{\HH}{\mathbb{H}}
\newcommand{\II}{\mathbb{I}}
\newcommand{\KK}{\mathbb{K}}
\newcommand{\NN}{\mathbb{N}}
\newcommand{\OO}{\mathbb{O}}
\newcommand{\PP}{\mathbb{P}}
\newcommand{\RR}{\mathbb{R}}
\newcommand{\ZZ}{\mathbb{Z}}
\newcommand{\AAA}{\mathcal{A}}
\newcommand{\EEE}{\mathcal{E}}
\newcommand{\GGG}{\mathcal{G}}
\newcommand{\HHH}{\mathcal{H}}
\newcommand{\PPP}{\mathcal{P}}
\newcommand{\KKK}{\mathcal{K}}
\newcommand{\LLL}{\mathcal{L}}
\newcommand{\wAAA}{\widetilde{\mathcal{A}}}
\newcommand{\Wext}{{W^{\mathrm{ext}}}}
\newcommand{\Wspin}{{W^{\mathrm{spin}}}}
\newcommand{\Dext}{{D^{\mathrm{ext}}}}
\newcommand{\Dspin}{{D^{\mathrm{spin}}}}
\DeclareMathOperator{\Fix}{Fix}
\DeclareMathOperator{\diag}{diag}
\DeclareMathOperator{\Aut}{Aut}
\DeclareMathOperator{\Out}{Out}
\DeclareMathOperator{\Inn}{Inn}
\DeclareMathOperator{\id}{id}
\DeclareMathOperator{\End}{End}
\DeclareMathOperator{\Sym}{Sym}
\DeclareMathOperator{\Rea}{Re}
\DeclareMathOperator{\Ima}{Im}
\newenvironment{pmat}{\left(\begin{smallmatrix}}{\end{smallmatrix}\right)}
\newcommand{\abs}[1]{\lvert#1\rvert}
\newcommand{\gen}[1]{\langle#1\rangle}
\newcommand{\cgen}[2]{\langle#1\,\vert\,#2\rangle}
\newcommand{\Defn}[1]{\textcolor{blue}{\textbf{\boldmath #1}}}
\newtheorem{mainthm}{Theorem}
\newtheorem{proposition}{Proposition}[section]
\newtheorem{theorem}[proposition]{Theorem}
\newtheorem{lemma}[proposition]{Lemma}
\newtheorem{corollary}[proposition]{Corollary}
\newtheorem{consequence}[proposition]{Consequence}
\newtheorem{observation}[proposition]{Observation}
\theoremstyle{definition}
\newtheorem{convention}[proposition]{Convention}
\newtheorem{notation}[proposition]{Notation}
\newtheorem{remark}[proposition]{Remark}
\newtheorem{strategy}[proposition]{Strategy}
\theoremstyle{definition}
\newtheorem{definition}[proposition]{Definition}
\newtheorem{defn}[proposition]{Definition}
\begin{document}

\title{
Spin covers of maximal compact subgroups of Kac--Moody groups and spin-extended Weyl groups}
%\author{David Ghatei \and Max Horn \and Ralf K\"ohl (n\'e Gramlich) \and Sebastian Wei\ss}
\author{David Ghatei}
\author{Max Horn}
\author{Ralf K\"ohl}
\author{Sebastian Wei\ss}
\address{D.G.: University of Birmingham, School of Mathematics, Edgbaston, Birmingham, B15 2TT, United Kingdom}
\address{M.H., R.K., S.W.: JLU Giessen, Mathematisches Institut, Arndtstrasse 2, 35392 Giessen, Germany}
\email{max.horn@math.uni-giessen.de}
\email{ralf.koehl@math.uni-giessen.de }

\maketitle
\begin{abstract}
Let $G$ be a split real Kac--Moody group of arbitrary type and let $K$ be its maximal compact subgroup, i.e. the
subgroup of elements fixed by a Cartan--Chevalley involution of $G$.
We construct non-trivial spin covers of $K$, thus confirming a conjecture by Damour and Hillmann \cite{DamourHillmann}. For irreducible simply laced diagrams and for all spherical diagrams these spin covers are two-fold central extensions of $K$. For more complicated irreducible diagrams these spin covers are central extensions by a finite $2$-group of possibly larger cardinality.  
Our construction is amalgam-theoretic and makes use of the generalized spin representations of maximal compact subalgebras of split real Kac--Moody algebras studied in \cite{Hainke/Koehl/Levy}.
Our spin covers contain what we call spin-extended Weyl groups which admit a presentation by generators and relations obtained from the one for extended Weyl groups by relaxing the condition on the generators so that only their eighth powers are required to be trivial.
\end{abstract}

\section{Introduction} \label{sec:intro}
%  ___         _   _          
% / __| ___ __| |_(_)___ _ _  
% \__ \/ -_) _|  _| / _ \ ' \ 
% |___/\___\__|\__|_\___/_||_|

In \cite[Section~3.5]{DamourHillmann} it turned out that the existence of a spin-extended Weyl group $\Wspin(E_{10})$ would be very useful for the study of fermionic billards. Lacking a concrete mathematical model of that group $\Wspin(E_{10})$, Damour and Hillmann in their article instead use images of $\Wspin(E_{10})$ afforded by various generalized spin representations as described in \cite{DamourKleinschmidtNicolai}, \cite{deBuylHenneauxPaulot}, which can be realized as matrix groups.

In \cite[Section~3.5, footnote 18, p.\ 24]{DamourHillmann}, Damour and Hillmann conjecture that the spin-extended Weyl group $\Wspin(E_{10})$ can be constructed as a discrete subgroup of a double spin cover $\Spin{E_{10}}$ of the subgroup $K(E_{10})$ of elements fixed by the Cartan--Chevalley involution of the split real Kac--Moody group of type $E_{10}$. 
The purpose of this article is to confirm this conjecture, and to generalize it to arbitrary diagrams resp.
 arbitrary generalized Cartan matrices

In the simply laced case our result is as follows:

\begin{mainthm} \label{mainthm:sl-spincover}
Let $\Pi$ be an irreducible simply laced Dynkin diagram, i.e., a Dynkin diagram affording only single edges, let $I = \{ 1, \ldots, n \}$ be a set of labels of the vertices of $\Pi$, and let $K(\Pi)$ be the subgroup of elements fixed by the Cartan--Chevalley involution of the split real Kac--Moody group of type $\Pi$. For each $i \in I$ let $G_i \cong \Spin{2}$ and for each $i \neq j \in I$ let
\[G_{ij} \cong
\begin{cases}
  \Spin{3}, & \text{if $i$, $j$ form an edge of $\Pi$,} \\
  (\Spin{2} \times \Spin{2})/\gen{(-1,-1)}, &
      \text{if $i$, $j$ do not form an edge of $\Pi$.}
\end{cases}\]
Moreover, for $i < j \in I$, let $\phi_{ij}^i : G_i \to G_{ij}$ be the standard embedding as ``upper-left diagonal block'' and $\phi_{ij}^j : G_j \to G_{ij}$ be the standard embedding as ``lower-right diagonal block''.

Then up to isomorphism there exists a uniquely determined group, denoted $\Spin{\Pi}$, whose multiplication table extends the partial multiplication provided by $\left( \bigsqcup_{i < j \in I} G_{ij} \right) / \sim$ where $\sim$ is the equivalence relation determined by $\phi_{ij}^i(x) \sim \phi_{ik}^i(x)$ for all $i \neq j, k \in I$ and $x \in G_i$.

Furthermore, there exists a canonical two-to-one central extension $\Spin\Pi \to K(\Pi)$.  
\end{mainthm}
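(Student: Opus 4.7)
The natural strategy is to interpret the statement as the assertion that the amalgam
$\AAA = \{G_i, G_{ij}, \phi_{ij}^i\}$ is non-degenerate, i.e., that its universal enveloping group is a genuine two-fold central extension of $K(\Pi)$. My plan proceeds in three phases.

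\textbf{Existence and uniqueness.} First I would define $\Spin\Pi$ as the universal enveloping group $U(\AAA)$ of the amalgam, namely the quotient of the free product $\bigast_{i<j} G_{ij}$ by the identifications $\phi_{ij}^i(x) = \phi_{ik}^i(x)$ for all admissible $i,j,k$ and $x \in G_i$. By construction the canonical maps $G_{ij} \to U(\AAA)$ are group homomorphisms whose images form a cover of $U(\AAA)$ realizing the partial multiplication. Uniqueness of $\Spin\Pi$ as a group whose multiplication table extends this partial multiplication is then a formal consequence of the universal property: any such group receives a canonical surjection from $U(\AAA)$, and if the amalgam already determines multiplication of every pair of generators, this surjection is an isomorphism. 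The non-trivial content is therefore not existence of $U(\AAA)$ as an abstract group, but rather that it is non-degenerate, i.e., the canonical maps $G_{ij} \to U(\AAA)$ are injective.

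\textbf{Construction of the projection to $K(\Pi)$.} The group $K(\Pi)$ admits an analogous amalgam presentation in which $G_i$ is replaced by $\SO 2$, the edge $G_{ij}$ by $\SO 3$, and the non-edge $G_{ij}$ by $\SO 2 \times \SO 2$; this amalgam presentation is essentially due to Kac--Peterson and has been made precise in the Kac--Moody setting (as used in \cite{Hainke/Koehl/Levy}). The standard two-to-one covers $\Spin 2 \onto \SO 2$ and $\Spin 3 \onto \SO 3$ together with the quotient $(\Spin 2 \times \Spin 2)/\gen{(-1,-1)} \onto \SO 2 \times \SO 2$ are compatible with the embeddings $\phi_{ij}^i$, so by the universal property of $U(\AAA)$ they assemble into a surjective homomorphism $\pi : \Spin\Pi \onto K(\Pi)$. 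A local analysis at each $G_{ij}$ shows that $\ker\pi$ is generated by the central elements $-1_i \in G_i$, that all of these become identified in $\Spin\Pi$ (via any $G_{ij}$ containing both $G_i$ and $G_j$), and hence that $\ker\pi$ has order at most $2$.

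\textbf{Non-triviality of the kernel.} The main obstacle is to show that $\ker\pi$ has order exactly $2$, and not $1$; equivalently, that the partial multiplication does not force $-1 = 1$ in $\Spin\Pi$. Here I would invoke the generalized spin representation $\rho$ of the maximal compact subalgebra constructed in \cite{Hainke/Koehl/Levy}. Because $\rho$ is built locally from the standard spin representations of $\mathfrak{so}(3)$ and compatibly matched on overlaps, it integrates across the amalgam to a homomorphism $\tilde\rho : \Spin\Pi \to \mathrm{GL}(V)$ whose restriction to each edge $G_{ij} \cong \Spin 3$ is a genuine spin representation, sending the central $-1$ to $-\id_V$. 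Thus $\tilde\rho(-1_i) = -\id_V \neq \id_V$, certifying that $-1 \neq 1$ in $\Spin\Pi$ and hence that $\pi$ is a genuine two-fold central extension. This last step is where the substantive work of \cite{Hainke/Koehl/Levy} enters; the remaining arguments are largely formal consequences of the amalgam formalism.
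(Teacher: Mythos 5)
Your proposal follows essentially the same three-phase strategy as the paper: define $\Spin\Pi$ as the universal enveloping group of the $\Spin 2$-amalgam, descend to $K(\Pi)$ by lifting the $\SO 2$-amalgam presentation of $K(\Pi)$ (the paper's Theorem~\ref{thm:K-univ-sl} via \cite{Medts/Gramlich/Horn}, Lemma~\ref{lem:sl-K-envelops-spin-amalgam}, Proposition~\ref{prop:env-grp-central-cover}), and establish non-triviality of the kernel via the generalized spin representation of \cite{Hainke/Koehl/Levy} (the paper's Theorem~\ref{m1} and Corollary~\ref{m1cor}), so the approach matches. One point of caution: the phrase ``a local analysis at each $G_{ij}$ shows that $\ker\pi$ is generated by the $-1_i$'' glosses over the direction $\ker\pi\subseteq\langle -1_i\rangle$, which is not local but requires invoking the universal property of $K(\Pi)$ as an enveloping group to build an inverse $K(\Pi)\to\Spin\Pi/\langle -1_i\rangle$ (this is precisely what Proposition~\ref{prop:env-grp-central-cover} does), and it is also worth being explicit that the compatibility verification in Theorem~\ref{m1} is where the Clifford-algebraic structure (Lemmas~\ref{24}, \ref{25}) is actually used to make ``integrates across the amalgam'' rigorous.
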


\noindent
The system $\{ G_i, G_{ij}, \phi_{ij}^i \}$ is called an \Defn{amalgam of groups}, the pair consisting of the group $\Spin\Pi$ and the set of canonical embeddings $\tau_{i} : G_{i} \hookrightarrow \Spin\Pi$, $\tau_{ij} : G_{ij} \hookrightarrow \Spin\Pi$ a \Defn{universal enveloping group}; the canonical embeddings are called \Defn{enveloping homomorphisms}. Formal definitions and background information concerning amalgams can be found in Section~\ref{sec:amalgams}. Since all $G_i\cong\Spin 2$ are isomorphic to one another, it in fact suffices to fix one group $U\cong \Spin 2$ instead with connecting homomorphisms $\phi_{ij}^i : U \to G_{ij}$.  

The formalization of the concept of standard embedding as ``upper-left/lower right diagonal block'' can be found in Section~\ref{sec:spin2amalgams}. Note that, since the $G_i$ are only given up to isomorphism, these standard embeddings are only well-defined up to automorphism of $G_i$, which leads to some ambiguity. Since by \cite{Hartnick/Koehl/Mars} the group $K(\Pi)$ (and therefore each of its central extensions by a finite group) is a topological group, one may assume the $\phi_{ij}^i$ to be continuous, thus restricting oneself to the ambiguity stemming from the two continuous automorphisms of $\Spin 2$, the identity and the inversion homomorphisms. This ambiguity is resolved in Section~\ref{sec:spin2amalgams}.

\smallskip
Theorem~\ref{mainthm:sl-spincover} provides us with the means of characterizing $\Wspin(\Pi)$.

\begin{mainthm} \label{mainthm:sl-weyl}
Let $\Pi$ be an irreducible simply laced Dynkin diagram, let $I = \{ 1, \ldots, n \}$ be a set of labels of the vertices of $\Pi$, and for each $i \in I$ let 
\begin{itemize}
\item $\tau_i : G_i \cong \Spin 2 \hookrightarrow \Spin\Pi$ be the canonical enveloping homomorphisms,
\item $x_i \in G_i$ elements of order eight whose polar coordinates involve
%\TODO{too fuzzy}
the angle $\frac{\pi}{4}$, and 
\item $r_i := \tau_i(x_i)$.
\end{itemize}
Then $\Wspin(\Pi) := \langle r_i \mid i \in I \rangle$ satisfies the defining relations
\begin{align}
 r_i^8&=1,  \tag{R1} \\
 r_i^{-1} r_j^2 r_i &= r_j^2 r_i^{2n(i,j)}
    \ \qquad\text{ for } i\neq j\in I,  \tag{R2} \\
 \underbrace{r_i r_j r_i \cdots}_{m_{ij} \text{ factors}} &=
 \underbrace{r_j r_i r_j \cdots}_{m_{ij} \text{ factors}}
    \qquad\text{ for } i\neq j\in I,    \tag{R3}
\end{align}
where \[
m_{ij} = \begin{cases}
    3, & \text{if $i$, $j$ form an edge,} \\
    2, & \text{if $i$, $j$ do not form an edge,}
\end{cases}
\quad \text{and} \quad
n(i,j) =
\begin{cases}
    1, & \text{if $i$, $j$ form an edge,} \\
    0, & \text{if $i$, $j$ do not form an edge.}
\end{cases}
\]
\end{mainthm}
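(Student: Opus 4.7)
The proof splits into two parts: a rank-two verification that $r_i = \tau_i(x_i)$ really satisfies (R1)--(R3), and an argument upgrading ``satisfies'' to ``defines'' by identifying $\Wspin(\Pi)$ modulo its centre with Tits' extended Weyl group $\Wext(\Pi)$.

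For the first part, each relation involves at most two indices and hence, by Theorem~\ref{mainthm:sl-spincover}, may be checked inside $G_{ij}$. Relation (R1) is immediate: $x_i\in G_i\cong\Spin 2\cong\U 1$ has polar angle $\pi/4$ and therefore order $8$, which is preserved by $\tau_i$. For non-edges $\{i,j\}$ the group $G_{ij}=(\Spin 2\times\Spin 2)/\gen{(-1,-1)}$ is abelian, so $r_i$ and $r_j$ commute, covering both (R2) (with $n(i,j)=0$) and (R3) (with $m_{ij}=2$) simultaneously. For edges $\{i,j\}$, I identify $G_{ij}\cong\Spin 3$ with the unit quaternions $\mathrm{Sp}(1)\subset\HH$ and place the two canonical $\Spin 2$ subgroups as the circles $\{\cos\theta+\sin\theta\,k\}$ and $\{\cos\theta+\sin\theta\,i\}$, i.e.\ the preimages of the two coordinate $\mathrm{SO}(2)$'s in $\mathrm{SO}(3)$. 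Then $r_i=\tfrac{1}{\sqrt 2}(1+k)$ and $r_j=\tfrac{1}{\sqrt 2}(1+i)$, and a short quaternion calculation yields $r_i^2=k$, $r_j^2=i$, $r_i^{-1}r_j^2r_i=-j=ik=r_j^2r_i^2$ (confirming (R2)) and $r_ir_jr_i=\tfrac{1}{\sqrt 2}(i+k)=r_jr_ir_j$ (confirming (R3)).

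For the second part, let $\tilde W$ be the abstract group defined by (R1)--(R3); the first part produces a surjection $\pi\colon\tilde W\onto\Wspin(\Pi)$. Manipulating (R2) in the edge case gives $r_j^2=r_ir_j^2r_i$, hence $r_i^2r_j^2r_i^2=r_j^2$ and $(r_i^2r_j^2)^2=r_j^4$. Combining this with $(r_j^{-1}r_i^2r_j)^2=r_j^{-1}r_i^4r_j$ and the swapped form of (R2), $r_j^{-1}r_i^2r_j=r_i^2r_j^2$, one obtains $r_j^{-1}r_i^4r_j=r_j^4$ and therefore $r_i^4=r_j^4$ for every edge $\{i,j\}$. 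Irreducibility of $\Pi$ then makes $z:=r_i^4$ independent of $i$, central in $\tilde W$, and of order dividing $2$ by (R1). Modulo $\gen z$ one has $r_i^4=1$, the squares $r_i^2$ commute pairwise (since $r_i^{-2}=r_i^2$), and (R2), (R3) persist; these are precisely Tits' defining relations for $\Wext(\Pi)$ in the simply laced case, so $\tilde W/\gen z\cong\Wext(\Pi)$. Since $\pi(z)=-1$ is the non-trivial central element of $\Spin\Pi$, the analogous identification $\Wspin(\Pi)/\gen{\pi(z)}\cong\Wext(\Pi)$ holds with $\gen{\pi(z)}\cong\ZZ/2$. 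The five lemma applied to the central extensions $1\to\gen z\to\tilde W\to\Wext(\Pi)\to 1$ and $1\to\gen{\pi(z)}\to\Wspin(\Pi)\to\Wext(\Pi)\to 1$ then concludes that $\pi$ is an isomorphism.

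The main obstacle is the step $\tilde W/\gen z\cong\Wext(\Pi)$: although each Tits relation is easily seen to hold in the quotient, invoking Tits' presentation theorem in the (possibly infinite) simply laced setting requires care to ensure that no unexpected additional relations arise. Once this is in hand, the five-lemma matching is formal, and the entire argument rests on the amalgam of Theorem~\ref{mainthm:sl-spincover}, which is what permits the rank-two verification inside each $G_{ij}$ to propagate faithfully to a presentation of the full group $\Wspin(\Pi)$.
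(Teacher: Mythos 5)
Your proof is correct and follows essentially the same route as the paper's (Proposition~\ref{uniqueepi} together with Theorem~\ref{constwspin}): verify the relations (R1)--(R3) inside each rank-two group $G_{ij}$ to obtain a surjection from the presented group $\tilde W$ onto $\Wspin(\Pi)\le\Spin\Pi$, show the elements $r_i^4$ all coincide and give a central $z$ of order dividing two, quotient down to $\Wext(\Pi)$ on both sides, and conclude by the (short) five lemma. One remark on your closing paragraph: the step you single out as the main obstacle, $\tilde W/\langle z\rangle\cong\Wext(\Pi)$, is actually a pure presentation match since $\Wext(\Pi)$ is \emph{defined} by relations (T1)--(T3) and adding $r_i^4=1$ to (R1)--(R3) reproduces exactly that list; the genuinely nontrivial external inputs sit in your ``analogous identification'' on the concrete side, namely the Kac--Peterson isomorphism $\wt W(\Pi)\cong\Wext(\Pi)$ (Proposition~\ref{KacPeterson}) and the nontriviality of $-1_{\Spin\Pi}$ supplied by Theorem~\ref{mainthm:sl-spincover}.
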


To be a set of defining relations means that any product of the $r_i$ that in $\Wspin(\Pi)$ represents the identity can be written as a product of conjugates of ways of representing the identity via (R1), (R2), (R3). 

\medskip
Our results in fact can be extended to arbitrary diagrams as discussed in Sections~\ref{sec:adm-amalgams}, \ref{sec:tametypes}, and \ref{extended}.

\medskip
As a by-product of our proof of Theorem~\ref{mainthm:sl-spincover} we show in Section~\ref{sec13} that for non-spherical diagrams $\Pi$ the groups $\Spin\Pi$ and $K(\Pi)$ are never simple; instead they always admit a non-trivial compact Lie group as a quotient via the generalized spin representation described in \cite{Hainke/Koehl/Levy}. The generalized spin representations of $\Spin\Pi$ is continuous, so that the obtained normal subgroups are closed.
Similar non-simplicity phenomena as abstract groups have been observed in \cite{CapraceHume}. Furthermore, we observe that for arbitrary simply laced diagrams the image of $\Wspin$ under the generalized spin representation is finite, generalizing  \cite[Lemma~2, p.~49]{DamourHillmann}.

\medskip
Sections~\ref{sec:amalgams}, \ref{sec:cartan-dynkin}, \ref{sec:SO-O}, \ref{sec:spin-pin}, \ref{sec:7} are introductory in nature; we revise the notions of amalgams, Cartan matrices and Dynkin diagrams and fix our notation for orthogonal and spin groups. Sections~\ref{sec:so2amalgams} and \ref{sec:spin2amalgams} deal with the classification theory of amalgams and, as a blueprint for Theorem~\ref{mainthm:sl-spincover}, identify $\SO n$ and $\Spin n$ as universal enveloping groups of $\SO 2$-, resp.\ $\Spin 2$-amalgams of type $A_{n-1}$. In Section~\ref{sec:spin-cover-simply-laced} we prove Theorem~\ref{mainthm:sl-spincover}.

Sections~\ref{sec:g2}, \ref{sec:bc2}, \ref{sec:rank-2-residues} provide us with the necessary tools for generalizing our findings to arbitrary diagrams; they deal with equivariant coverings of the real projective plane by the split Cayley hexagon and the symplectic quadrangle and with coverings of the real projective plane and the symplectic quadrangle by trees. In Section~\ref{sec:adm-amalgams} we study $\SO 2$- and $\Spin 2$-amalgams for this larger class of diagrams. Section~\ref{sec:tametypes} deals with the general version of Theorem~\ref{mainthm:sl-spincover}. Section~\ref{sec14} deals with the proof of Theorem~\ref{mainthm:sl-weyl} and its generalization. In Section~\ref{sec13} we observe that our findings provide epimorphisms from $\Spin\Pi$ and $K(\Pi)$ onto non-trivial compact Lie groups.

\medskip
\textbf{Acknowledgements.} We thank Thibault Damour for pointing out his conjecture to us and Arjeh Cohen for very many valuable discussions concerning maximal compact subgroups of split real Lie groups of rank two. We also thank Guntram Hainke and Paul Levy for their comments and ideas concerning spin covers and Pierre-Emmanuel Caprace for several comments on a preliminary version of this article. The third named author moreover gratefully acknowledges the hospitality of the IHES in Bures-sur-Yvette and the Albert Einstein Institute in Golm. This research has been partially funded by the EPRSC grant
EP/H02283X.

\tableofcontents

\part{Basics}
%  ____            _
% |  _ \ __ _ _ __| |_
% | |_) / _` | '__| __|
% |  __/ (_| | |  | |_
% |_|   \__,_|_|   \__|

\section{Conventions}
%  ___         _   _          
% / __| ___ __| |_(_)___ _ _  
% \__ \/ -_) _|  _| / _ \ ' \ 
% |___/\___\__|\__|_\___/_||_|

\begin{notation}
$\NN:=\{1,2,3,\ldots\}$ denotes the set of positive integers.
\end{notation}

\begin{notation}
Throughout this article we use the convention $ij:=\{i,j\}$ if the set $\{i,j\}$ is used as an index. For example, if $G_{ij}$ is a group, then $G_{ji}$ is the same group.
Note that this does not apply to superscripts, so $G^{ij}$ and $G^{ji}$ may differ.
\end{notation}

\begin{notation}
For any group $G$, consider the following maps:
\begin{alignat*}{2}
  %\id:&\ G\to G : x\mapsto x, &&\quad\text{the identity map}, \\
  \inv:&\ G\to G : x\mapsto x^{-1}, &&\quad\text{the inverse map}, \\
  \sq:&\ G\to G : x\mapsto x^2, &&\quad\text{the square map}.
\end{alignat*}
Both maps commute with any group homomorphism.
\end{notation}

\begin{notation}
For any group $G$, we denote by $Z(G)$ the \Defn{centre} of $G$.
\end{notation}

\section{Amalgams} \label{sec:amalgams}
%  ___         _   _          
% / __| ___ __| |_(_)___ _ _  
% \__ \/ -_) _|  _| / _ \ ' \ 
% |___/\___\__|\__|_\___/_||_|

In this section we recall the concept of amalgams. More details concerning this concept can, in various formulations, be found in \cite[Part~III.$\mathcal{C}$]{Bridson/Haefliger:1999}, \cite[Section~1.3]{Ivanov/Shpectorov:2002}, \cite[Section~1]{Gloeckner/Gramlich/Hartnick:2010}.

\begin{definition} Let $U$ be a group, and $I\neq\emptyset$ a set.
%\begin{itemize}
%\item
A \Defn{$U$-amalgam over $I$} is a set
\[\AAA=\{ G_{ij},\; \phi_{ij}^i \mid i\neq j\in I \}\]
such that $G_{ij}$ is a group and $\phi_{ij}^i:U\to G_{ij}$ is a monomorphism for all $i\neq j\in I$.
The maps $\phi^i_{ij}$ are called \Defn{connecting homomorphisms}. 
The amalgam is \Defn{continuous} if  $U$ and $G_{ij}$ are topological groups, and
$\phi_{ij}^i$ is continuous for all $i\neq j\in I$.
\end{definition}

\begin{definition} \label{def:ama-iso}
%\item
Let $\wt\AAA=\{ \wt{G}_{ij},\; \wt{\phi}_{ij}^i \mid i\neq j\in I \}$ and
$\AAA=\{ G_{ij},\; \phi_{ij}^i \mid i\neq j\in I \}$
be $U$-amalgams over $I$. An \Defn{epimorphism}, resp.\ \Defn{isomorphism} $\alpha:\wt\AAA\to\AAA$
of $U$-amalgams is a system
\[\alpha=\{ \pi,\; \alpha_{ij} \mid i\neq j \in I \}\]
consisting of a permutation $\pi\in\Sym(I)$ and group epimorphisms, resp.\ isomorphisms
\[
\alpha_{ij}:\wt{G}_{ij}\to G_{\pi(i)\pi(j)}
\]
such that for all $i \neq j \in I$
\[ \alpha_{ij}\circ \wt{\phi}_{ij}^i=\phi_{\pi(i)\pi(j)}^{\pi(i)}\ , \]
that is, the following diagram commutes:
\[
\xymatrix{
& \wt{G}_{ij} \ar[dd]^{\alpha_{ij}} \\
  U \ar[rd]_{\phi_{\pi(i)\pi(j)}^{\pi(i)}} \ar[ru]^{\wt{\phi}_{ij}^i}  \\
    & G_{\pi(i)\pi(j)}
}
\]

More generally, let $\wt\AAA=\{ \wt{G}_{ij},\; \wt{\phi}_{ij}^i \mid i\neq j\in I \}$ be a $U$-amalgam and let
$\AAA=\{ G_{ij},\; \phi_{ij}^i \mid i\neq j\in I \}$ be a $V$-amalgam.
An \Defn{epimorphism} $\alpha : \wt\AAA \to \AAA$ is a system
\[\alpha=\{ \pi,\;\rho^i,\;\alpha_{ij} \mid i\neq j \in I \}\]
consisting of a permutation $\pi\in\Sym(I)$, group epimorphisms $\rho^i:U\to V$ and group epimorphisms
\[
\alpha_{ij}:\wt{G}_{ij}\to G_{\pi(i)\pi(j)}
\]
such that for all $i \neq j \in I$
\[ \alpha_{ij}\circ \wt{\phi}_{ij}^i=\phi_{\pi(i)\pi(j)}^{\pi(i)} \circ \rho^{\pi(i)} \ , \]
that is, the following diagram commutes:
\[
\xymatrix{
  U \ar[d]_{\rho^{\pi(i)}} \ar[r]^{\wt{\phi}_{ij}^i} & \wt{G}_{ij} \ar[d]^{\alpha_{ij}}  \\
  V \ar[r]_{\phi_{\pi(i)\pi(j)}^{\pi(i)}} & G_{\pi(i)\pi(j)}
}
\]
\end{definition}

\begin{notation}
If (and only if) in the epimorphism $\alpha : \wt{\AAA} \to \AAA$ each $\rho^i : U \to V$ is an isomorphism, then one obtains an epimorphism $\alpha' : \wt{\AAA} \to \AAA'$ of $U$-amalgams by defining $\alpha' = \{ \pi, \alpha_{ij} \mid i \neq j \in I \}$ and $\AAA' = \{ G_{ij}, (\phi^i_{ij})' \}$ via \[(\phi^i_{ij})' : U \to G_{ij} : u \mapsto (\phi^{i}_{ij} \circ \rho^i)(u) ..\]
If this $\alpha'$ turns out to be an isomorphism of $U$-amalgams, by slight abuse of terminology we also call the epimorphism $\alpha$ an \Defn{isomorphism} of amalgams.
\end{notation}

\begin{remark} \label{altiso}
More generally, an amalgam can be defined as a collection of groups $G_i$ and a collection of groups $G_{ij}$ with connecting homomorphisms $\psi^i_{ij} : G_i \to G_{ij}$. Since in our situation for all $i$ there exist isomorphisms $\gamma_i : U \to G_i$, it suffices to consider the connecting homomorphisms $\phi^i_{ij} = \psi^i_{ij} \circ \gamma_i$.

In the more general setting, an isomorphism of amalgams consists of a permutation $\pi$ of the index set $I$ and isomorphisms $\alpha_i : G_i \to \ol{G}_{\pi(i)}$ and $\alpha_{ij} : G_{ij} \to \ol{G}_{\pi(i)\pi(j)}$ such that 
\[ \alpha_{ij} \circ \psi^i_{ij} = \ol{\psi}^{\pi(i)}_{\pi(i)\pi(j)} \circ \alpha_i\ .\]

A routine calculation shows that $U$-amalgams and isomorphisms of $U$-amalgams are special cases
of amalgams and isomorphisms of amalgams as found in the literature.
\end{remark}

\begin{definition}
%\item
Given a $U$-amalgam $\AAA=\{ G_{ij},\; \phi_{ij}^i \mid i\neq j\in I \}$, 
an \Defn{enveloping group} of $\mathit{\AAA}$ is a pair $(G,\tau)$ consisting of a group $G$ and a set
\[\tau = \{ \tau_{ij}\mid i\neq j\in I \}\]
of \Defn{enveloping homomorphisms} $\tau_{ij}:G_{ij}\to G$ such that
\begin{align*}
G=\gen{\tau_{ij}(G_{ij}) \mid i\neq j\in I}\ , &&\ \forall\ i\neq j\neq k\in I:\ \tau_{ij}\circ \phi_{ij}^j=\tau_{kj}\circ \phi_{kj}^j\ ,
\end{align*}
that is, for $i\neq j\neq k\in I$ the following diagram commutes:
\[
\xymatrix{
& G_{ij} \ar[dr]^{\tau_{ij}} \\
  U \ar[dr]_{\phi_{kj}^j} \ar[ur]^{\phi_{ij}^j}  && G \\
&   G_{kj} \ar[ur]_{\tau_{kj}} 
}
\]
We write $\tau : \AAA \to G$ and call $\tau$ an \Defn{enveloping morphism}.
An enveloping group $(G,\tau)$ and the corresponding enveloping morphism are \Defn{faithful} if $\tau_{ij}$ is a monomorphism for all $i\neq j\in I$.
\end{definition}

\begin{definition}
Given a $U$-amalgam $\AAA=\{ G_{ij},\; \phi_{ij}^i\}$, an enveloping group
$\big(G,\tau\big)$ is called a \Defn{universal enveloping group} if, 
given an enveloping group $(H,\tau')$ of $\AAA$, there is a unique
epimorphism $\pi:G\to H$ such that for all $i \neq j \in I$ one has
$\pi\circ \tau_{ij}=\tau'_{ij}$. We write $\tau : \AAA \to G$ and call $\tau$ a \Defn{universal enveloping morphism}. By universality, two universal enveloping groups $(G_1,\tau_1)$ and $(G_2,\tau_2)$ of a $U$-amalgam $\AAA$ are (uniquely) isomorphic. 

%\item
The \Defn{canonical universal enveloping group} of the $U$-amalgam $\AAA$
is the pair $\big(G(\AAA),\wh\tau\big)$, where
$G(\AAA)$ is the group given by the presentation
\[G(\AAA):=\left\langle \bigcup_{i\neq j\in I} G_{ij} \mid \text{all relations in } G_{ij}, \text{ and }
\forall\ i\neq j\neq k\in I, \forall x\in U: 
\phi_{ij}^j(x)=\phi_{kj}^j(x) \right\rangle\]
and where $\wh\tau=\{\wh\tau_{ij}\mid i\neq j\in I\}$ with the canonical homomorphism $\wh\tau_{ij}:G_{ij}\to G(\AAA)$ for all $i\neq j\in I$. The canonical universal enveloping group of a $U$-amalgam is a universal enveloping group (cf.\ \cite[Lemma~1.3.2]{Ivanov/Shpectorov:2002}). 
%\end{itemize}
\end{definition}

\begin{lemma}
Let $U$ and $V$ be groups and $I$ an index set. Suppose
\begin{itemize}[itemsep=1mm plus 0.5mm minus 0.5mm]
\item $\wt\AAA=\{ \wt{G}_{ij},\; \wt{\phi}_{ij}^i \mid i\neq j\in I \}$ is a $U$-amalgam over $I$,
\item $\AAA=\{ G_{ij},\; \phi_{ij}^i \mid i\neq j\in I \}$ is a $V$-amalgam over $I$,
\item $\alpha=\{ \pi,\;\rho^i,\;\alpha_{ij} \mid i\neq j\in I \}$ is an amalgam epimorphism $\wt\AAA\to\AAA$,
\item $(G,\tau)$ with $\tau=\{ \tau_{ij} \mid i\neq j\in I \})$ is an enveloping group of $\AAA$.
\end{itemize}
Then the following hold:
\begin{enumerate}
\item\label{lem:ama-envelope-lift}
There is a unique enveloping group $(G, \wt \tau)$, $\wt \tau = \{ \wt\tau_{ij} \mid i\neq j\in I \}$, of $\wt\AAA$ such that the following
diagram commutes for all $i\neq j\in I$:
\[
\xymatrix{
  \wt{G}_{ij} \ar@{-->}[rrd]^{ \wt\tau_{ij}} \ar[d]_{\alpha_{ij}} \\
  G_{ij} \ar[rr]^{\tau_{ij}} && G
}
\]
\item \label{lem:ama-epi-induces-envelope-epi}
Suppose $(\wt{G}, \wt \tau)$, $\wt \tau = \{ \wt\tau_{ij} \mid i\neq j\in I \}$, is a universal enveloping group of $\wt\AAA$.
Then there is a unique epimorphism $\wh\alpha:\wt{G}\to G$
such that the following diagram commutes for all $i\neq j\in I$:
\[
\xymatrix{
  \wt{G}_{ij} \ar[rr]^{\wt\tau_{ij}} \ar[d]_{\alpha_{ij}} &&
  \wt{G} \ar@{-->}[d]^{\wh\alpha} \\
  G_{ij} \ar[rr]^{\tau_{ij}} && G
}
\]
\item If $\alpha$ is an isomorphism and $(G,\tau)$ is a universal enveloping group,
then also $\wh\alpha$ is an isomorphism.
\end{enumerate}
\end{lemma}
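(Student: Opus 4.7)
The plan is to prove the three parts in order, with part (1) doing the only real work and parts (2), (3) then following formally from universality.

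For part (1), I would simply \emph{define} $\wt\tau_{ij} := \tau_{\pi(i)\pi(j)} \circ \alpha_{ij}$, which is forced by the required commutativity of the triangle, so uniqueness is automatic. To see that $(G, \wt\tau)$ is then an enveloping group of $\wt\AAA$, two axioms must be verified. For generation, $\alpha_{ij}$ is surjective, so $\wt\tau_{ij}(\wt G_{ij}) = \tau_{\pi(i)\pi(j)}(G_{\pi(i)\pi(j)})$, and as $(i,j)$ ranges over pairs with $i\neq j$ these images generate $G$ because $(G,\tau)$ envelops $\AAA$. For compatibility, I would substitute the defining relation $\alpha_{ij} \circ \wt\phi_{ij}^j = \phi_{\pi(i)\pi(j)}^{\pi(j)} \circ \rho^{\pi(j)}$ of the amalgam epimorphism $\alpha$ into the expression for $\wt\tau_{ij} \circ \wt\phi_{ij}^j$ and then invoke the compatibility $\tau_{\pi(i)\pi(j)} \circ \phi_{\pi(i)\pi(j)}^{\pi(j)} = \tau_{\pi(k)\pi(j)} \circ \phi_{\pi(k)\pi(j)}^{\pi(j)}$ of the enveloping morphism $\tau$ to match $\wt\tau_{kj} \circ \wt\phi_{kj}^j$ on the nose.

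Part (2) then drops out. Applying part (1) produces an enveloping group $(G, \wt\tau')$ of $\wt\AAA$ with $\wt\tau'_{ij} = \tau_{\pi(i)\pi(j)} \circ \alpha_{ij}$, and the universal property of $(\wt G, \wt\tau)$ applied to this target yields a unique epimorphism $\wh\alpha : \wt G \to G$ satisfying $\wh\alpha \circ \wt\tau_{ij} = \wt\tau'_{ij}$, which is exactly the commutativity demanded by the diagram.

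For part (3), under the isomorphism hypothesis each $\alpha_{ij}$ and each $\rho^i$ is invertible, so the data $\alpha^{-1} := \{\pi^{-1},\;(\rho^{\pi^{-1}(i)})^{-1},\;\alpha_{\pi^{-1}(i)\pi^{-1}(j)}^{-1}\}$ assembles into an amalgam isomorphism $\AAA \to \wt\AAA$. Since $(G,\tau)$ is now assumed universal, part (2) applied to $\alpha^{-1}$ with the roles of the two amalgams exchanged delivers a unique epimorphism $\wh\beta : G \to \wt G$. The composite $\wh\beta \circ \wh\alpha$ is then an endomorphism of $\wt G$ commuting with every $\wt\tau_{ij}$, as is $\id_{\wt G}$; the uniqueness clause of the universal property of $(\wt G, \wt\tau)$ forces $\wh\beta \circ \wh\alpha = \id_{\wt G}$, and symmetrically $\wh\alpha \circ \wh\beta = \id_G$. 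The only place where I would slow down is the bookkeeping: verifying that $\alpha^{-1}$ really is an amalgam morphism in the sense of Definition~\ref{def:ama-iso}, with the permutation $\pi^{-1}$ cancelling correctly against $\pi$ in the intertwining relations, is mechanical but finicky, and is the main thing I would want to get right before invoking part (2) in reverse.
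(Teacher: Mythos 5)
Your proposal is correct and follows the same strategy as the paper: define $\wt\tau_{ij}$ by forced composition and verify the enveloping axioms in (1), then invoke universality of $(\wt G, \wt\tau)$ for (2), and finally obtain (3) by applying (2) symmetrically using the inverse amalgam isomorphism. The only place your write-up is more explicit than the paper's (which for (3) merely says ``exchange the roles'') is the formula for $\alpha^{-1}$, where the $\rho$-component should read $(\rho^{\pi(i)})^{-1}$ rather than $(\rho^{\pi^{-1}(i)})^{-1}$ to match the intertwining convention of Definition~\ref{def:ama-iso} --- exactly the bookkeeping you flagged as needing care.
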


\begin{proof}
\begin{enumerate}
\item
Let $i\neq j\in I$.
Since $\alpha_{ij}$ is an epimorphism, we must have $\wt\tau_{ij}:=\tau_{ij}\circ \alpha_{ij}$
for the diagrams to commute; the claimed uniqueness follows.
The fact that $\alpha_{ij}$ is an epimorphism also implies
\[ \wt\tau_{ij}(\wt{G}_{ij})=(\tau_{ij}\circ \alpha_{ij})(\wt{G}_{ij}) = \tau_{ij}(G_{ij})\ ,
   \qquad\text{ and so }\qquad
   G=\gen{\tau_{ij}(\wt{G}_{ij})}=\gen{\wt\tau_{ij}(\wt{G}_{ij})}\ ..\]
Moreover, for $i\neq j\neq k\in I$ we find
\begin{align*}
  \wt\tau_{ij}\circ \wt{\phi}_{ij}^j
= \tau_{ij}\circ \alpha_{ij} \circ \wt{\phi}_{ij}^j
&= \tau_{ij}\circ \phi_{\pi(i)\pi(j)}^{\pi(j)} \circ \rho^{\pi(j)} \\
&= \tau_{kj}\circ \phi_{\pi(k)\pi(j)}^{\pi(j)} \circ \rho^{\pi(j)}
= \tau_{kj}\circ \alpha_{kj} \circ \wt{\phi}_{kj}^j
= \wt\tau_{kj}\circ \wt{\phi}_{kj}^j\ .
\end{align*}
Hence $(G,\{ \wt\tau_{ij} \})$ is indeed an enveloping group of $\wt\AAA$.

\item
On the one hand, by (a) the lower left triangle in the following diagram commutes:
\[
\xymatrix{
  \wt{G}_{ij} \ar[rr]^{\tau_{ij}} \ar@{-->}[rrd]^{\ol\tau_{ij}\circ\alpha_{ij}} \ar[d]_{\alpha_{ij}} &&
  G \ar@{-->}[d]^{\exists!\wh\alpha} \\
  G_{ij} \ar[rr]^{\ol\tau_{ij}} && \ol{G}
}
\]
On the other hand, by the definition of universal enveloping group
there is a unique epimorphism $\wh\alpha$ making the upper right triangle
commute. The claim follows.
\item follows from (b) by exchanging the roles of $G_{ij}$, $G$ and $\widetilde G_{ij}$, $\widetilde G$. 
\qedhere
\end{enumerate}
\end{proof}

\begin{notation} \label{nota:amalgamcomm}
We denote the situation in Lemma~\ref{lem:ama-envelope-lift} by the commutative diagram
\[
\xymatrix{
  \wt{\mathcal{A}} \ar[rrd]^{ \wt\tau} \ar[d]_{\alpha} \\
  \mathcal{A} \ar[rr]^{\tau} && G
}
\]
and the situation in Lemma~\ref{lem:ama-epi-induces-envelope-epi} by the commutative diagram
\[
\xymatrix{
  \wt{\mathcal{A}} \ar[rr]^{\wt\tau} \ar[d]_{\alpha} &&
  \wt{G} \ar[d]^{\wh\alpha} \\
  \mathcal{A} \ar[rr]^{\tau} && G
}
\]
\end{notation}

The following proposition will be crucial throughout this article. The typical situation in our applications will be $U=\SO{2}$, $\wt{U}=\Spin{2}$, $\wt{V}=\{\pm1\}$.

\begin{proposition} \label{prop:env-grp-central-cover}
Let $U$, $\wt{U}$ and $\wt{V}\leq\wt{U}$ be groups and $I$ an index set. Suppose
\begin{itemize}[itemsep=1mm plus 0.5mm minus 0.5mm]
\item $\wt\AAA=\{ \wt{G}_{ij},\; \wt{\phi}_{ij}^i \mid i\neq j\in I \}$ is a $\wt{U}$-amalgam over $I$ such that $\wt{G}_{ij}=\gen{\wt\phi_{ij}^i(\wt{U}), \wt\phi_{ij}^j(\wt{U})}$,
\item $\AAA=\{ G_{ij},\; \phi_{ij}^i \mid i\neq j\in I \}$ is a $U$-amalgam over $I$,
\item $\alpha=\{ \pi,\;\rho^i,\;\alpha_{ij} \mid i\neq j\in I \}$ is an amalgam epimorphism $\wt\AAA\to\AAA$,
\item $(\wt{G},\wt\tau)$ with $\wt\tau=\{ \wt\tau_{ij} \mid i\neq j\in I \})$ is a universal enveloping group of $\wt\AAA$,
\item $(G,\tau)$ with $\tau=\{ \tau_{ij} \mid i\neq j\in I \})$ is a universal enveloping group of $\AAA$,
\item $\wh\alpha:\wt{G}\to G$ is the epimorphism induced by $\alpha$
via the commutative diagrams ($i \neq j \in I$)
\[
\xymatrix{
  \wt{G}_{ij} \ar[rr]^{\wt\tau_{ij}} \ar[d]_{\alpha_{ij}} &&
  \wt{G} \ar@{-->}[d]^{\wh\alpha} \\
  G_{ij} \ar[rr]^{\tau_{ij}} && G
}
\]
as in Lemma~\ref{lem:ama-epi-induces-envelope-epi}.
\end{itemize}
For $i\neq j\in I$ define $Z_{ij}^i := \wt\phi_{ij}^i(\wt{V})$ and
$Z_{ij}^{\widetilde\phi} := \gen{ Z_{ij}^i, Z_{ij}^j}$, as well as
$A_{ij}:=\ker(\alpha_{ij})$.
Then if \[A_{ij} \leq  Z_{ij}^{\widetilde\phi} \leq Z(\wt{G}_{ij}),\]
it follows that $\wt{G}$ is a central extension of $G$ by
$N:= \gen{ \wt\tau_{ij}(A_{ij}) \mid i\neq j\in I}$.

In this situation the epimorphism $\alpha:\wt\AAA\to\AAA$ is called an \Defn{$|N|$-fold central extension of amalgams}.
\end{proposition}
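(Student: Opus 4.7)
The plan is to establish two facts, from which the central extension statement follows immediately: first, that $N$ is a central subgroup of $\wt G$; and second, that $\ker(\wh\alpha)=N$, so that $\wt G/N\cong G$.

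For centrality, it suffices to show that each generator $\wt\tau_{ij}(a)$ with $a\in A_{ij}$ commutes with each $\wt\tau_{kl}(\wt G_{kl})$. The generation hypothesis $\wt G_{kl}=\gen{\wt\phi_{kl}^k(\wt U),\wt\phi_{kl}^l(\wt U)}$ and the containment $A_{ij}\le Z_{ij}^{\wt\phi}=\gen{Z_{ij}^i,Z_{ij}^j}$ reduce the problem to checking that a single building block $\wt\tau_{ij}(\wt\phi_{ij}^i(\wt v))$ commutes with a single building block $\wt\tau_{kl}(\wt\phi_{kl}^k(\wt u))$, for $\wt v\in\wt V$ and $\wt u\in\wt U$. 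For $|I|\ge 3$ and $\{i,j\}\cap\{k,l\}=\emptyset$ the enveloping gluing relation rewrites the first block as $\wt\tau_{ik}(\wt\phi_{ik}^i(\wt v))$ and the second as $\wt\tau_{ik}(\wt\phi_{ik}^k(\wt u))$, both now lying inside $\wt\tau_{ik}(\wt G_{ik})$, in which the hypothesis $Z_{ik}^{\wt\phi}\le Z(\wt G_{ik})$ makes the first one central and hence the two commute. A small case analysis along the same lines handles $\{i,j\}\cap\{k,l\}\neq\emptyset$ as well as the degenerate case $|I|=2$. I expect this index juggling to be the main obstacle, since the argument genuinely needs both the generation hypothesis on $\wt G_{ij}$ and the enveloping gluing relations in order to promote the local centrality inside each $\wt G_{ij}$ to global centrality inside $\wt G$.

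For the kernel identification, the inclusion $N\le\ker(\wh\alpha)$ is immediate from $\wh\alpha\circ\wt\tau_{ij}=\tau_{ij}\circ\alpha_{ij}$ together with $\alpha_{ij}(A_{ij})=\{1\}$. For the reverse inclusion I plan to exhibit $\wt G/N$ as an enveloping group of $\AAA$. Writing $\pi_N:\wt G\to\wt G/N$ for the quotient, the composition $\pi_N\circ\wt\tau_{ij}$ kills $A_{ij}$ and therefore factors through $\wt G_{ij}/A_{ij}\cong G_{ij}$ (after absorbing the permutation $\pi$ into the indexing), giving homomorphisms $\ol\tau_{ij}:G_{ij}\to\wt G/N$. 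The amalgam-epimorphism identity $\alpha_{ij}\circ\wt\phi_{ij}^j=\phi_{ij}^j\circ\rho^j$, the surjectivity of $\rho^j$, and the enveloping compatibility for $\wt\tau$ then combine to yield $\ol\tau_{ij}\circ\phi_{ij}^j=\ol\tau_{kj}\circ\phi_{kj}^j$, so $(\wt G/N,\{\ol\tau_{ij}\})$ is indeed an enveloping group of $\AAA$. The universal property of $(G,\tau)$ produces an epimorphism $\beta:G\to\wt G/N$ with $\beta\circ\tau_{ij}=\ol\tau_{ij}$, and evaluating on the generating images $\wt\tau_{ij}(\wt G_{ij})$ forces $\beta\circ\wh\alpha=\pi_N$. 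Therefore $\ker(\wh\alpha)\subseteq\ker(\pi_N)=N$, completing the identification and hence the proof of the central extension claim.
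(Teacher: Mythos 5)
Your proof is correct and follows essentially the same two-step strategy as the paper: promote the local centrality of $A_{ij}$ inside each $\wt{G}_{ij}$ to global centrality in $\wt{G}$ by moving building blocks between rank-two pieces via the enveloping gluing relations (exactly what the paper packages into its well-defined subgroups $Z_i$ and $\wt{G}_i$), then realize $\wt{G}/N$ as an enveloping group of $\AAA$ and invoke the universal property of $(G,\tau)$ to identify $\ker\wh\alpha$ with $N$. The only presentational difference is in the kernel step, where you read off $\beta\circ\wh\alpha=\pi_N$ directly on $\wt{G}$ and conclude $\ker\wh\alpha\le N$ at once, whereas the paper constructs the induced map $\wt{G}/N\to G$ and verifies that it and $\beta$ are mutually inverse.
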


\begin{proof}
We proceed by proving the following two assertions:
\begin{enumerate}
\item $N \leq Z(\wt{G})$,
\item $\wt{G}/N \cong G$.
\end{enumerate} 
Consider the following commutative diagram:
\[\xymatrix{
\wt{V} \ar[ddrr] \ar[rr]^{\wt{\phi}^i_{ij|\wt{V}}} &&
Z_{ij}^{\widetilde\phi} \ar[dr] &&
1 \ar[d] &&
1 \ar@{-->}[d] \\
&&& Z(\wt{G}_{ij}) \ar[dr] &
    A_{ij} \ar[l] \ar[d] \ar[rr]^{\wt{\tau}_{ij|A_{ij}}} \ar[ull] &&
    N \ar[d] \\
&&\wt{U} \ar[d]^{\rho^i} \ar[rr]^{\wt{\phi}^i_{ij}} &&
  \wt{G}_{ij} \ar[d]^{\alpha_{ij}} \ar[rr]^{\wt{\tau}_{ij}} &&
  \wt{G} \ar[d]^{\widehat \alpha}\\
&& U \ar[rr]^{{\phi}^i_{ij}} &&
   G_{ij} \ar[rr]^{{\tau}_{ij}} \ar[d] &&
   G \ar@{-->}[d] \\ 
&&&& 1 && 1 
}
\]
For $i\neq j\in I$ set
\[
Z_i := \wt\tau_{ij}(Z_{ij}^i)
  = (\wt\tau_{ij} \circ \wt\phi_{ij}^i)(\wt{V})
\qquad\text{ and }\qquad
\wt{G}_i := (\wt\tau_{ij} \circ \wt\phi_{ij}^i)(\wt{U})  \ .
\]
The hypothesis implies
\[
\wt\tau_{ij}(\wt{G}_{ij})=\gen{\wt{G}_i, \wt{G}_j}
\qquad\text{ and }\qquad
\wt{G}
= \gen{ \wt\tau_{ij}(\wt{G}_{ij}) \mid i\neq j\in I}
= \gen{\wt{G}_i \mid i\in I}\ .
\]
Moreover,
\[
Z_i = \wt\tau_{ij}(Z_{ij}^i)
  \leq \wt\tau_{ij}(Z(\wt{G}_{ij}))
  \leq Z(\wt\tau_{ij}(\wt{G}_{ij}))
  = Z(\gen{\wt{G}_i, \wt{G}_j})\ ,
\]
whence $Z_i$ centralizes $\wt{G}_j$ for all $i,j\in I$. Since $\wt{G}$
is generated by the $\wt{G}_i$, one has
\[
\gen{Z_i\mid i\in I}
\leq Z(\gen{\wt{G}_i\mid i\in I})
= Z(\wt{G})\ .
\]
Therefore
\[
N = \gen{ \wt\tau_{ij}(A_{ij}) \mid i\neq j\in I}
\leq \gen{ \wt\tau_{ij}(Z_{ij}^i) \mid i\neq j\in I}
 = \gen{ Z_i \mid i\in I}
\leq Z(\wt{G})\ ,
\]
i.e., (a) holds.

\medskip

Commutativity of the diagram implies $N\leq \ker(\wh\alpha)$ and so the homomorphism theorem yields an epimorphism $\wt{G}/N\to G : gN\mapsto\wh\alpha(g)$.
In order to show that this epimorphism actually is an isomorphism we construct an inverse
map by exploiting that $G$ and $\wt{G}$
are universal enveloping groups of $\AAA$, resp.\ $\wt\AAA$.
Indeed, for $g\in G_{ij}$, let $\wt{g}\in\alpha^{-1}_{ij}(g)$. Then
\[
  \wt\tau_{ij}(\alpha^{-1}_{ij}(g))
= \wt\tau_{ij}(\wt{g} A_{ij})
\leq \wt\tau_{ij}(\wt{g}) N
= \wt\tau_{ij}(\alpha^{-1}_{ij}(g)) N
\in \wt{G}/N\ .
\]
Thus we obtain a well-defined homomorphism
\[
\wh\tau_{ij} : G_{ij} \to \wt{G}/N : g \mapsto \wt\tau_{ij}(\alpha^{-1}_{ij}(g)) N\ .
\]
Then $(\wt{G}/N, \{\wh\tau_{ij}\})$ is an enveloping group
for $\AAA$. In particular for $u\in U$ and $i\neq j\neq k\in I$ we have
\begin{align*}
\left(\wh\tau_{ij}\circ \phi_{ij}^j\right)(u)
&= \wt\tau_{ij}\left(\alpha^{-1}_{ij}\left(\phi_{ij}^j(u)\right)\right)
= \wt\tau_{ij}\left(\wt\phi_{ij}^j\left(\left(\rho^j\right)^{-1}(u)\right)\right) \\
&= \wt\tau_{kj}\left(\wt\phi_{kj}^j\left(\left(\rho^j\right)^{-1}(u)\right)\right)
= \wt\tau_{kj}\left(\alpha^{-1}_{kj}\left(\phi_{kj}^j(u)\right)\right)
= \left(\wh\tau_{kj}\circ \phi_{kj}^j\right)(u)\ .
\end{align*}
Since $(G,\{\tau_{ij}\})$ is a universal enveloping group of $\AAA$,
there exists a unique epimorphism $\beta:G\to\wt{G}/N$ such that
for $i\neq j\in I$ we have
\[ \beta\circ\tau_{ij} = \wh\tau_{ij}\ . \]
By the definition of $\wh\alpha$ and $\wh\tau_{ij}$ we find
\[ \wh\alpha\circ\wh\tau_{ij} = \tau_{ij}\ . \]
Therefore
\[ (\beta\circ\wh\alpha)\circ\wh\tau_{ij} = \wh\tau_{ij}
\qquad\text{ and }\qquad
(\wh\alpha\circ\beta)\circ\tau_{ij} = \tau_{ij}\ . \]
But $(G,\tau)$ and $(\wt{G},\wt\tau)$ are universal enveloping groups;
their uniqueness property implies that
$\beta\circ\wh\alpha=\id_{\wt{G}/N}$ and $\wh\alpha\circ\beta=\id_{G}$
and hence as claimed $\wt{G}/N\cong G$. We have shown assertion (b).
\end{proof}

\section{Cartan matrices and Dynkin diagrams} \label{sec:cartan-dynkin}
%  ___         _   _          
% / __| ___ __| |_(_)___ _ _  
% \__ \/ -_) _|  _| / _ \ ' \ 
% |___/\___\__|\__|_\___/_||_|

In this section we recall the concepts of Cartan matrices and Dynkin diagrams. For a thorough introduction see \cite[Chapter~4]{kac1994infinite}, \cite[Section~7.1]{Remy:2002}.

\begin{definition}
Let $I$ be a non-empty set.
A \Defn{generalized Cartan matrix} over $I$ is a matrix $A=(a(i,j))_{i,j\in I}$
such that for all $i\neq j\in I$,
\begin{enumerate}
\item $a(i,i)=2$,
\item $a(i,j)$ is a non-positive integer,
\item if $a(i,j)=0$ then $a(j,i)=0$.
\end{enumerate}
$A$ is of \Defn{two-spherical type} if $a(i,j)a(j,i)\in\{0,1,2,3\}$
for all $i\neq j\in I$.
\end{definition}

\begin{definition}
%\begin{itemize}
%\item 
A \Defn{Dynkin diagram} (or short: \Defn{diagram}) is
a graph $\Pi$ with vertex set $V(\Pi)$ and edge set $E(\Pi)\subseteq
\binom{V(\Pi)}{2}$ such that each edge has an \Defn{edge valency} of $1$, $2$, $3$ or
$\infty$ and, in addition, edges with valency $2$ or $3$ are directed.
If $\{v,w\}\in E(\Pi)$ is directed from $v$ to $w$, we write $v\to w$.
%
%\item
Let $E_0(\Pi):=\binom{V(\Pi)}{2}\sm E(\Pi)$, and let
$E_1(\Pi)$, $E_2(\Pi)$,  $E_3(\Pi)$, resp. $E_\infty(\Pi)$ be the subsets of
$E(\Pi)$ of edges of valency $1$, $2$, $3$, resp.\ $\infty$.
The elements of $E_1(\Pi)$, $E_2(\Pi)$,  $E_3(\Pi)$ are called \Defn{edges of type} $A_2$, $\mathrm{C}_2$
resp.\ $\mathrm{G}_2$.
%
%\item
The diagram $\Pi$ is \Defn{irreducible} if it is connected as a graph, it is \Defn{simply laced} if all edges have valency $1$, it is \Defn{doubly laced} if all edges have valency $1$ or $2$, and it is \Defn{two-spherical} if no edge has valency $\infty$.
%
%\item
If $V(\Pi)$ is finite, then a \Defn{labelling} of $\Pi$ is a bijection $\sigma:I \to V(\Pi)$, where $I:=\{ 1,\ldots, \abs{V(\Pi)}\}$.
%\end{itemize}
\end{definition}

Throughout this text, we assume all diagrams to have finite vertex set.

\begin{remark} \label{longandshort}
Let $I$ be a non-empty set and $A=(a(i,j))_{i,j\in I}$ a two-spherical
generalized Cartan matrix. Then this induces a two-spherical Dynkin diagram
$\Pi(A)$ with vertex set $V:=I$ %and labelling $\sigma:I\to V:i\mapsto i$
as follows: For $i\neq j\in I$, there is an edge between $i$
and $j$ if and only if $a(i,j)\neq 0$. The valency of the edge then is
$q_{ij}:=a(i,j)a(j,i)\in\{1,2,3\}$. If $q_{ij}>1$, then the edge is
directed $i\leftarrow j$ if and only if $a(i,j) = -q < -1 = a(j,i)$.

% Some thoughts on the direction of the arrows, and which vertices are special:
% From 11.5:
%
%   G_i is \eps_p  if i -> j
%   G_i is \eps_l  if j -> i
%
% For Sp_4(R), we know that \eps_l corresponds to the "special" vertex.
% So the "special" vertex is the one the arrow points to.
%
% The usual convention for Dynkin diagrams is this:
%
% B_n  o---o---o=>=o 
% C_n  o---o---o=<=o
%
% The right-most vertex in B_n is special, and indeed it the one being pointed
% at. Yay! The corresponding root is called a `short' root (nice mnemonic
% for that  1=>=2 means that root 1 is longer (i.e. "greater", >) than root 2.
%
% Finally, in \Wspin, if i\in I corresponds to a special vertex (so i\in J),
% then r_i^4 = 1 (and not z). Looking at the B_2 computations,
% if in the pair i,j the special vertex corresponds to j, then we must have
% n(i,j) = 0 and n(j,i) = 1. Thus a(i,j) = -2 and a(j,i) = -1.
%
% With this information, we can check all our uses of directed edges
% and special vertices for consistency.

Conversely, given a two-spherical Dynkin diagram $\Pi$ with vertex set $V$,
we obtain a two-spherical generalized Cartan matrix
$A(\Pi):=(a(i,j))_{i,j\in I}$ over $I:=V$ by setting for $i\neq j\in I$:
\[
a(i,i):=2,\qquad
a(i,j):=
\begin{cases}
 0, & \text{if }\{i,j\}\notin E(\Pi), \\
-2, & \text{if }\{i,j\}\in E_2(\Pi)
    \text{ and }i\leftarrow j, \\
-3, & \text{if }\{i,j\}\in E_3(\Pi)
    \text{ and }i\leftarrow j, \\
-1, & \text{otherwise}.\\
\end{cases}
\]

These two operations are inverse to each other, i.e.,
$\Pi(A(\Pi))=\Pi$ and $A(\Pi(A))=A$.
\end{remark}

\begin{notation}  \label{augmented}
If the generalized Cartan matrix $A$ is not of two-spherical type, it is nevertheless possible to associate a Dynkin diagram $\Pi(A)$ to it by labelling the edge between $i$ and $j$ with $\infty$ whenever $a(i,j)a(j,i) \geq 4$. In this case it is, of course, not possible to reconstruct the values of $a(i,j)$ and $a(j,i)$ from the diagram $\Pi$.

Therefore, by convention, in this article for each edge between $i$ and $j$ with label $\infty$ we consider the values of $a(i,j)$ and $a(j,i)$ as part of the \Defn{augmented} Dynkin diagram: write $-a(i,j)$ between the vertex $i$ and the $\infty$ label and $-a(j,i)$ between the vertex $j$ and the $\infty$ label.
In addition, an edge with $\infty$ label such that $a(i,j)$ and $a(j,i)$ have different parity gets directed $i \leftarrow j$, if $a(i,j)$ is even, and $i \to j$, if $a(i,j)$ is odd. See Figure~\ref{fig:annotated-dynkin} for an example.
\end{notation}

\begin{figure}[h]
\centering
$A=\begin{pmatrix}
 2 & -2 &  0 &  0 \\
-2 &  2 & -1 &  0 \\
 0 & -4 &  2 & -1 \\
 0 &  0 & -1 &  2 \\
\end{pmatrix}\ \leadsto\ $
\begin{tikzpicture}[baseline]
    \node[dnode,label=below:1,label={above right:2}] (1) at (0,0) {};
    \node[dnode,label=below:2,label={above left:2},label={above right:1}] (2) at (2,0) {};
    \node[dnode,label=below:3,label={above left:4}] (3) at (4,0) {};
    \node[dnode,label=below:4] (4) at (6,0) {};

    \path (1) edge[sedge] node[above=1.5mm] {$\infty$} (2)
          (2) edge[sedge,middlearrow] node[above=1.5mm] {$\infty$} (3)
          (3) edge[sedge] (4)
          ;
\end{tikzpicture}

\caption{An augmented Dynkin diagram.}
\label{fig:annotated-dynkin}
\end{figure}

\section{The groups ${\SO{n}}$ and $\O{n}$} \label{sec:SO-O}
%  ___         _   _          
% / __| ___ __| |_(_)___ _ _  
% \__ \/ -_) _|  _| / _ \ ' \ 
% |___/\___\__|\__|_\___/_||_|

In this section we fix notation concerning the compact real orthogonal groups.

\begin{definition} \label{qn}
Given a quadratic space $(\KK,V,q)$ with $\dim_\KK V<\infty$, we set
\begin{align*}
\O{q}:=\{ a\in \mathrm{GL}(V) \mid \forall\ v\in V:\ q(av)=q(v)\}\ , && \SO{q}:=\O{q}\cap \mathrm{SL}(V)\ .
\end{align*}
Given $n\in \NN$, let $\q{n}:\RR^n\to \RR : x\mapsto \sum_{i=1}^n x_i^2$ be the standard quadratic form on $\RR^n$, and
\begin{align*}
 \O{n}&:=\{ a\in \GL{n}\mid aa^t=E_n\}\cong \O{\q{n}} = \O{-\q{n}}\ , \\
\SO{n}&:=\O{n}\cap \SL{n} \cong \SO{\q{n}}=\SO{-\q{n}}\nt \O{\q{n}} = \O{-\q{n}}\ .
\end{align*}
Since an element of $\O{n}$ has determinant $1$ or $-1$, we have $[\O{n}:\SO{n}]=2$.
\end{definition}

\begin{notation} \label{nota:EI-VI-qI}
Let $n\in \NN$ and let $\EEE=(e_1,\ldots,e_n)$ be the standard basis of $\RR^n$. Given a subset $I\subseteq \{1,\ldots,n\}$, we set
\begin{align*}
\EEE_I:=\{ e_i\mid i\in I\}\ , &&
V_I:=\gen{\EEE_I}_\RR\leq \RR^n\ , &&
q_I:={\q{n}}_{|V_I}:V_I\to \RR\ .
\end{align*}
There are canonical isomorphisms
\[M_\EEE:\End(\RR^n)\to \mathrm{M}_n(\RR) : a\mapsto M_\EEE(a)
  \quad\text{and}\quad
M_{\EEE_I}:\End(V_I)\to M_{|I|}(\RR): a\mapsto M_{\EEE_I}(a)\]
that map an endomorphism into its transformation matrix with respect to the standard basis $\EEE$, resp.\ the basis $\EEE_I$.
Moreover, there is a canonical embedding \[\eps_I:\O{q_I}\to \O{\q{n}},\]
inducing a canonical embedding
\[M_\EEE\circ \eps_I\circ M_{\EEE_I}^{-1}:\O{|I|}\to \O{n}\ ,\]
which, by slight abuse of notation, we also denote by $\eps_I$.
We will furthermore use the same symbol for the (co)restriction of $\eps_I$ to $\SO{\cdot}$. The most important application of this map in this article is for $|I|=2$ with $I = \{ i, j \}$ providing the map \[\eps_{ij} : \SO{2} \to \SO{n}.\] 
\end{notation}

\section{The groups ${\Spin{n}}$ and ${\Pin{n}}$} \label{sec:spin-pin}
%  ___         _   _          
% / __| ___ __| |_(_)___ _ _  
% \__ \/ -_) _|  _| / _ \ ' \ 
% |___/\___\__|\__|_\___/_||_|

In this section we recall the compact real spin and pin groups. For a thorough treatment we refer to \cite{Lawson/Michelsohn:1989}, \cite{Gallier}, \cite{Meinrenken:2013}.

\begin{definition} \label{def:defcliffordconj}
Let $(\RR,V,q)$ be a quadratic space and let $T(V) = \bigoplus_{n \geq 0} V^{\otimes n}$ be the tensor algebra of $V$. The identity $V^{\otimes 0} = \RR$ provides a ring monomorphism $\RR \to T(V)$, the identity $V^{\otimes 1}=V$ a vector space monomorphism $V \to T(V)$ that allow one to identify $\RR$, $V$ with their respective images in $T(V)$. For
\[\I(q):=\langle v\otimes v-q(v) \mid v\in V \rangle\] define the \Defn{Clifford algebra of ${q}$} as \[\Cl{q}:=T(V)/\I(q).\]
Moreover, let
\[\Cl{q}^*:=\{ x\in \Cl{q} \mid \exists\ y\in \Cl{q}:\ xy=1\}\ .\]
The \Defn{transposition map} is the involution 
\[ \tau:\Cl{q}\to \Cl{q} \quad\quad \text{ induced by } \quad\quad  v_1\cdots v_k\mapsto v_k\cdots v_1, \quad v_i \in V, \]
cf.\ \cite[Section~2.2.6]{Meinrenken:2013}, \cite[Proposition~1.1]{Gallier}.
The \Defn{parity automorphism} is the map  
\[\Pi:\Cl{q}\to \Cl{q} \quad\quad \text{ given by } \quad\quad v_1\cdots v_k \mapsto (-1)^k \cdot v_1\cdots v_k, \quad v_i \in V, \]
cf.\ \cite[Section~2.2.2, Section~3.1.1]{Meinrenken:2013}, \cite[Proposition~1.2]{Gallier}.
We set 
\[
\Cle{q}{0}:=\{ x\in \Cl{q} \mid \Pi(x)=x \}
\quad\text{ and }\quad
\Cle{q}{1}:=\{ x\in \Cl{q} \mid \Pi(x)=-x\},
\]
which yields a $\ZZ_2$-grading of $\Cl{q}$, i.e.,
\[ \Cl{q}=\Cle{q}{0}\oplus \Cle{q}{1}
\quad\text{ and }\quad
\Cle{q}{i}\Cle{q}{j}\subseteq \Cle{q}{i+j}
\quad\text{ for } i,j\in \ZZ_2.
\]

Furthermore, following \cite[Section~3.1]{Gallier}, we define the \Defn{Clifford conjugation}
\[ \sigma:\Cl{q}\to\Cl{q} : x\mapsto \ol{x}:=\tau\Pi(x)=\Pi\tau(x), \]
and the \Defn{spinor norm}
\[ N:\Cl{q}\to \Cl{q} : x\mapsto x\ol{x}. \]
\end{definition}

\begin{notation}
In the following, $(\RR,V,q)$ is an anisotropic quadratic space such that $\dim_\RR V<\infty$.
\end{notation}

\begin{definition} Given $x\in \Cl{q}^*$, the map
\[ \rho_x:\Cl{q}\to\Cl{q} : y\mapsto \Pi(x)yx^{-1} \]
is the \Defn{twisted conjugation with respect to ${x}$}.
Using the canonical identification of $V$ with its image in $\Cl{q}$,
we define
\[ \Gamma(q):=\{ x\in \Cl{q}^*\mid \forall\ v\in V:\ \rho_x(v)\in V\} \]
to be the \Defn{Clifford group with respect to ${q}$}, cf.\ \cite[Section~3.1.1]{Meinrenken:2013}, \cite[Definition~1.4]{Gallier}. We obtain a representation
\[ \rho: \Gamma(q)\to \mathrm{GL}(V) : x\mapsto \rho_x, \]
which is the \Defn{twisted adjoint representation}.
\end{definition}

\begin{definition}
Given $n\in \NN$ and $V = \RR^n$, we set
\[ \Cl{n}:=\Cl{-\q{n}}
\quad\text{ and }\quad
\Gamma(n):=\Gamma(-\q{n}).
\]
Recall that $\q{n}$ is defined to be the standard quadratic form on $\RR^n$, cf.\ Definition~\ref{qn}.
\end{definition}

Note that the literature one can also find the opposite sign convention.

\begin{remark}\label{rem:cl3=quaternions}
\begin{enumerate}
\item Let $n \in \NN$ and let $e_1,\ldots,e_n$ be the standard basis of $\RR^n$. Then the following hold in $\Cl{n}$ for $1 \leq i \neq j \leq n$:
\begin{align*}
e_i^2 &= -1, \\
e_ie_j &= -e_je_i, \\
(e_ie_j)^2 &= -1.
\end{align*}
The first identity is immediate from the definition. The second identity follows from polarization, as in the tensor algebra $T(\RR^n)$ one has
\begin{align*}
\I(\q{n}) &\ni (e_i+e_j) \otimes (e_i+e_j) - q(e_i+e_j) \\ &= e_i \otimes e_i + e_i \otimes e_j + e_j \otimes e_i + e_j \otimes e_j - q(e_i) - q(e_j) - 2b(e_i,e_j) \\ &= e_i \otimes e_j + e_j \otimes e_i,
\end{align*}
where $b(\cdot,\cdot)$ denotes the bilinear form associated to $\q{n}$. The third identity is immediate from the first two.
\item One has
$\Cle{3}{0}\cong \HH$, where $\HH$ denotes the quaternions. Indeed, given a basis $e_1$, $e_2$, $e_3$ of $\RR^3$, a basis of $\Cle{3}{0}$, considered as an $\RR$-vector space, is given by $1$, $e_1e_2$, $e_2e_3$, $e_3e_1$. By (a) the latter three basis elements square to $-1$ and anticommute with one another. Note, furthermore, that under this isomorphism the Clifford conjugation is transformed into the standard involution of the quaternions and, consequently, the the spinor norm into the norm of the quaternions.  
\end{enumerate}
\end{remark}

\begin{lemma}\label{1}
The map $N:\Cl{q}\to \Cl{q}$ induces a homomorphism
\[N:\Gamma(q)\to \RR^*\] such that
\[\forall\ x\in \Gamma(q):\qquad N\big(\Pi(x)\big)=N(x)\ .\]
\end{lemma}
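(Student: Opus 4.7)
The plan is to establish three things in order: that $N$ maps $\Gamma(q)$ into $\RR^*$, that the resulting map is a group homomorphism, and that $N(\Pi(x))=N(x)$. I would begin with the bookkeeping: checking on generators of $\Cl{q}$ that $\Pi\tau=\tau\Pi$ (so $\bar x$ is unambiguous and $\Pi(\bar x)=\tau(x)$), and that $\tau$ is an antiautomorphism (so $\overline{xy}=\bar y\bar x$). From this the identity
\[
N(xy) \;=\; xy\,\overline{xy} \;=\; xy\bar y\bar x \;=\; x\,N(y)\,\bar x
\]
is immediate, so once I show $N(y)\in\RR$ the map $N$ is automatically multiplicative, since a real scalar commutes through $\bar x$.

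The heart of the proof is the containment $N(\Gamma(q))\subseteq\RR^*$. For $x\in\Gamma(q)$ and $v\in V$, I would exploit that $\rho_x(v)=\Pi(x)vx^{-1}\in V$, so $\tau$ fixes $\rho_x(v)$. Applying $\tau$ directly to $\Pi(x)vx^{-1}$ and comparing with the original yields
\[
\Pi(x)\,v\,x^{-1} \;=\; \tau(x)^{-1}\,v\,\bar x \qquad\text{for all }v\in V,
\]
which rearranges, using $\Pi(\bar x)=\tau(x)$, to $\Pi(x\bar x)\,v = v\,(x\bar x)$, i.e., $\Pi(N(x))\,v=v\,N(x)$ for every $v\in V$.

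To extract scalarness from this, I would decompose $N(x)=N_0+N_1$ with $N_i\in\Cle{q}{i}$ and match parities on both sides, which forces $N_0 v = v N_0$ and $N_1 v = -v N_1$ for every $v\in V$. Choosing an orthogonal basis $e_1,\ldots,e_n$ of $V$ (available since $q$ is anisotropic) and expanding $N_0$ and $N_1$ in the induced monomial basis $\{e_S\}_{S\subseteq\{1,\ldots,n\}}$ of $\Cl q$, one checks directly that $N_0$ must be a real scalar and $N_1$ must vanish; in basis-independent language, this amounts to $Z(\Cl q)\cap\Cle{q}{0}=\RR$ together with the fact that no nonzero element of $\Cle{q}{1}$ anticommutes with every $v\in V$. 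Hence $N(x)\in\RR$, and since $x\in\Cl q^*$ and $\bar x$ is invertible (its inverse being $\overline{x^{-1}}$), also $N(x)\in\RR^*$. This grading argument is the main obstacle; the rest is formal.

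With $N$ taking central real values, multiplicativity drops out of the first displayed identity, and the final claim follows from the calculation
\[
N(\Pi(x)) \;=\; \Pi(x)\,\overline{\Pi(x)} \;=\; \Pi(x)\,\tau(x) \;=\; \Pi(x)\,\Pi(\bar x) \;=\; \Pi(N(x)) \;=\; N(x),
\]
using that $\Pi$ restricts to the identity on $\RR\subseteq\Cl q$.
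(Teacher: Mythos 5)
Your argument is essentially the standard one; the paper itself does not spell out a proof but simply cites \cite[Proposition~1.9]{Gallier}, and your reconstruction follows the route taken there: establish the identity $N(xy)=x\,N(y)\,\bar x$, show that the Clifford-group condition together with the anti-automorphism property of $\tau$ forces $N(x)$ to commute with $V$ up to the parity twist, and then use the $\ZZ_2$-grading to conclude scalarness. The final multiplicativity and the identity $N(\Pi(x))=N(x)$ are handled correctly.

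There is, however, a small slip in the displayed rearrangement that is worth fixing. Starting from $\Pi(x)\,v\,x^{-1}=\tau(x)^{-1}\,v\,\bar x$, multiplying on the left by $\tau(x)$ and on the right by $x$ gives
\[
\tau(x)\Pi(x)\,v \;=\; v\,\bar x x,\qquad\text{i.e.}\qquad \Pi(\bar x x)\,v = v\,(\bar x x),
\]
using $\tau(x)=\Pi(\bar x)$. The scalar that emerges from this manipulation is therefore $\bar x x$, not $x\bar x = N(x)$ as you wrote; no rearrangement of that single equation produces $x\bar x$ directly. This does not break the proof: the grading argument you describe shows $\bar x x\in\RR$, it is nonzero since $x$ and $\bar x$ are invertible, and then, because real scalars are central, $N(x)=x\bar x = x(\bar x x)x^{-1}=(\bar x x)xx^{-1}=\bar x x\in\RR^*$. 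Alternatively one can note that $\Gamma(q)$ is stable under $\Pi$, $\tau$, and hence $\sigma$, and apply the conclusion to $\bar x$ in place of $x$. Either way the gap is easily bridged, but the claim ``rearranges to $\Pi(x\bar x)v=v(x\bar x)$'' should be corrected to $\Pi(\bar x x)v=v(\bar x x)$ with the centrality step made explicit.
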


\begin{proof}
Cf. \cite[Proposition~1.9]{Gallier}.
\end{proof}

\begin{definition}\label{defspinpin}
The group
\[\Pin{q}:=\{ x\in \Gamma(q) \mid N(x)=1\}\leq \Gamma(q)\]
is the \Defn{pin group with respect to ${q}$}, and
\[\Spin{q}:=\Pin{q}\cap \Cle{q}{0}\leq \Pin{q}\]
is the \Defn{spin group with respect to ${q}$}. By Lemma~\ref{1} and the $\ZZ_2$-grading of $\Cl{q}$, the sets $\Pin{q}$ and $\Spin{q}$ are indeed subgroups of $\Gamma(q)$.
Given $n\in \NN$, define \[\Pin{n}:=\Pin{-q_n} \quad\quad \text{ and } \quad\quad \Spin{n}:=\Spin{-q_n}.\]
\end{definition}

\begin{theorem} \label{rho}
The following hold: 
\begin{enumerate}
\item One has $[\Pin{q}:\Spin{q}]=2$ and $\Spin{q}=\rho^{-1}\big(\SO{q}\big)$.
\item The twisted adjoint representation $\rho:\Gamma(q)\to \mathrm{GL}(V)$ induces an epimorphism $\rho : \Pin{q} \to \O{q}$. In particular, given $n\in \NN$, we obtain epimorphisms
\begin{align*}
\rho_n:=M_\EEE\circ \rho:\Pin{n}\to \O{n}\ , && \rho_n:=M_\EEE\circ \rho:\Spin{n}\to \SO{n}
\end{align*}
with $\ker(\rho_n)=\{\pm1\}$ in both cases.
\item The group $\Spin{q}$ is a double cover of the group $\SO{q}$.
\end{enumerate}
\end{theorem}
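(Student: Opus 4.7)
The plan is to tackle part (b) first and derive (a) and (c) from it. The central computation for (b) is that every nonzero $v \in V$ is anisotropic by hypothesis, hence invertible in $\Cl{q}$ with $v^{-1} = v/q(v)$ from the defining relation $v^2 = q(v)$. Polarization yields $vw + wv = 2b(v,w)$ for all $v,w \in V$, where $b$ is the associated symmetric bilinear form, and a direct substitution gives
\[
\rho_v(w) \;=\; \Pi(v)\,w\,v^{-1} \;=\; -vwv^{-1} \;=\; w - \frac{2b(v,w)}{q(v)}\,v,
\]
i.e.\ the reflection of $w$ along the hyperplane $v^\perp$. In particular $v \in \Gamma(q)$, and after rescaling so that $N(v) = 1$ one has $v \in \Pin{q}$. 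By the Cartan--Dieudonn\'e theorem every element of $\O{q}$ is a finite product of such reflections, so $\rho(\Pin{q}) = \O{q}$.

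For the kernel, suppose $x \in \Gamma(q)$ satisfies $\rho_x = \id_V$, i.e.\ $\Pi(x)v = vx$ for every $v \in V$. Decomposing $x = x_0 + x_1$ with $x_i \in \Cle{q}{i}$ and collecting by parity in the identity $x_0 v - v x_0 = x_1 v + v x_1$ forces both sides to vanish, so $x_0$ centralizes $V$ while $x_1$ anticommutes with every element of $V$. Expanding $x_0$ and $x_1$ in the monomial basis of $\Cl{q}$ associated with an orthogonal basis of $V$, one checks that these conditions force $x_0 \in \RR$ and $x_1 = 0$; imposing $N(x) = 1$ then yields $x = \pm 1$, establishing $\ker(\rho|_{\Pin{q}}) = \{\pm 1\}$ and completing (b).

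To obtain (a), the $\ZZ_2$-grading restricts to a partition $\Pin{q} = \Spin{q} \,\sqcup\, (\Pin{q} \cap \Cle{q}{1})$; right multiplication by any normalized anisotropic vector is a bijection between the two pieces, so $[\Pin{q}:\Spin{q}] = 2$. The identity $\Spin{q} = \rho^{-1}(\SO{q})$ follows by tracking parity against determinant: each reflection generator lies in $\Cle{q}{1}$ and maps to an element of $\O{q}$ of determinant $-1$, so even-length products correspond simultaneously to $\Cle{q}{0}$ and to $\SO{q}$. Part (c) is then immediate, since $\rho|_{\Spin{q}}$ is surjective onto $\SO{q}$ with kernel of order two. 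The main technical obstacle I expect is the kernel step in (b): the parity split is easy, but showing that the centralizer of $V$ in $\Cl{q}$ is exactly $\RR$ and that no nonzero element of $\Cle{q}{0}$ anticommutes with all of $V$ requires a careful basis-level calculation in the Clifford algebra, organized by orthogonal monomials and by the parity of the index set supporting each monomial.
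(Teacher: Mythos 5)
Your proposal is correct and follows the standard argument: Cartan--Dieudonn\'e for surjectivity, the parity decomposition together with a basis-level analysis of the (anti)centralizer of $V$ in $\Cl{q}$ for the kernel, and then deducing (a) and (c). The paper itself only cites \cite[Theorem~1.11]{Gallier} without giving a proof, and your argument is essentially the one in that reference; note that the normalization step (rescaling a nonzero $v$ so that $N(v)=-q(v)=1$) implicitly relies on $q$ being negative definite, which matches the paper's convention $\Cl{n}=\Cl{-q_n}$ and is the case in all applications.
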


\begin{proof}
See \cite[Theorem~1.11]{Gallier}.
\end{proof}

\begin{remark}\label{rem:3}\ 
\begin{enumerate}
\item By slight abuse of notation, suppressing the choice of basis, we will also sometimes denote the map $\rho_n$ by $\rho$.
\item Let $H_1\leq \Spin{n}$ and $H_2\leq \Pin{n}$ be such that $-1\in H_1$ and $-1\in H_2$, respectively, and let $\tilde{H}_i:=\rho_n(H_i)$. Then we have
$H_i=\rho_n^{-1}(\tilde{H}_i)$.
We will explicitly determine these groups for some canonical subgroups of $\SO{n}$ and $\O{n}$.
\end{enumerate}
\end{remark}

\begin{lemma}\label{2}
Let $n\in \NN$ and $I\subseteq \{1,\ldots,n\}$. Then there is a canonical embedding
$\tilde\eps_I:\Pin{-q_I}\to \Pin{-q_n}$
satisfying \[\forall x \in \Cl{-q_I} : \qquad {\rho_{\tilde\eps_I(x)}}_{|V_I} = \eps_I \circ \rho_x\]
such that the following diagram commutes:
\[\xymatrix{
 \Pin{-q_I} \ar[d]^\rho\ar[rr]^{\tilde\eps_I} & & \Pin{-q_n} \ar[d]^{\rho} \\
\O{-q_I} \ar[rr]^{\eps_I} & & \O{-q_n}
}
\]
\end{lemma}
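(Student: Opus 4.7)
The plan is to construct $\tilde\eps_I$ by lifting the isometric inclusion $(V_I,-q_I)\hookrightarrow(V_n,-q_n)$ to Clifford algebras and then restricting to the pin groups.

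First, since $q_n$ restricts to $q_I$ on $V_I$ by Notation~\ref{nota:EI-VI-qI}, the inclusion $V_I\hookrightarrow V_n$ extends canonically to an algebra map $T(V_I)\to T(V_n)$ that sends $\I(-q_I)$ into $\I(-q_n)$. This descends to an algebra homomorphism
\[\tilde\eps_I:\Cl{-q_I}\to\Cl{-q_n}.\]
Injectivity is immediate from the standard basis description: the ordered monomials $e_{i_1}\cdots e_{i_k}$ with $i_1<\cdots<i_k$ in $I$ form a basis of $\Cl{-q_I}$, and $\tilde\eps_I$ sends them to the corresponding linearly independent subset of the analogous basis of $\Cl{-q_n}$.

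Second, I would observe that $\tilde\eps_I$ preserves the $\ZZ_2$-grading (it maps length-$k$ monomials to length-$k$ monomials) and the transposition $\tau$, hence the Clifford conjugation $\sigma=\tau\Pi$ and the spinor norm $N(x)=x\sigma(x)$. Combined with $\tilde\eps_I|_\RR=\id_\RR$, this shows that any $x\in\Gamma(-q_I)$ with $N(x)=1$ satisfies $N(\tilde\eps_I(x))=1$.

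Third, the main step is to verify $\tilde\eps_I(\Gamma(-q_I))\subseteq\Gamma(-q_n)$ together with the compatibility formula ${\rho_{\tilde\eps_I(x)}}_{|V_I}=\eps_I\circ\rho_x$. Since $-q_I$ is anisotropic, every $x\in\Gamma(-q_I)$ is a product of non-isotropic vectors in $V_I$, so it suffices to treat $x=v\in V_I\setminus\{0\}$. For $w\in V_I$, the algebra-homomorphism property together with $\tilde\eps_I\circ\Pi=\Pi\circ\tilde\eps_I$ yields
\[\rho_{\tilde\eps_I(v)}(\eps_I(w))=\Pi(\tilde\eps_I(v))\cdot\eps_I(w)\cdot\tilde\eps_I(v)^{-1}=\tilde\eps_I\bigl(\Pi(v)\,w\,v^{-1}\bigr)=\eps_I(\rho_v(w))\in V_I\subseteq V_n,\]
which simultaneously delivers the compatibility formula on $V_I$ and the containment $\rho_{\tilde\eps_I(v)}(V_I)\subseteq V_n$. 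For $w=e_j$ with $j\notin I$, the anticommutation relation $e_ie_j=-e_je_i$ from Remark~\ref{rem:cl3=quaternions}(a) together with $\Pi(v)=-v$ gives
\[\rho_{\tilde\eps_I(v)}(e_j)=-v\cdot e_j\cdot v^{-1}=e_j\cdot v\cdot v^{-1}=e_j\in V_n.\]
Multiplicativity of $\tilde\eps_I$ and of twisted conjugation in the second argument propagate these identities from generators to all of $\Gamma(-q_I)$, so $\tilde\eps_I(x)\in\Gamma(-q_n)$ for every $x\in\Gamma(-q_I)$.

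Finally, combining the norm-preservation from Step~2 with Step~3 shows that $\tilde\eps_I$ restricts to an injective homomorphism $\Pin{-q_I}\to\Pin{-q_n}$, and the commutativity of the diagram is exactly the identity ${\rho_{\tilde\eps_I(x)}}_{|V_I}=\eps_I\circ\rho_x$ derived above, translated through the matrix identifications $M_\EEE$, $M_{\EEE_I}$ of Notation~\ref{nota:EI-VI-qI}. The main obstacle is Step~3: showing that $\rho_{\tilde\eps_I(x)}$ preserves all of $V_n$ and not only $V_I$. This is precisely what forces one to invoke both the anticommutation relations between $e_i$ (for $i\in I$) and $e_j$ (for $j\notin I$) in $\Cl{-q_n}$, and the fact that anisotropy of $-q_I$ lets one reduce to the case of single non-isotropic vector generators.
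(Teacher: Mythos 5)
Your proposal is correct and reaches the same conclusion, but differs structurally from the paper's argument in one place. The paper does not reduce to vector generators: it works with an arbitrary $x \in \Gamma(-q_I)$ and expands $\tilde\eps_I(x)$ in the monomial basis $y=e_{j_1}\cdots e_{j_k}$ with $j_1,\ldots,j_k\in I$. For $i\notin I$ each such monomial satisfies $\Pi(y)e_i = e_iy$, because moving $e_i$ past $k$ anticommuting vectors gives $ye_i=(-1)^k e_iy$ while $\Pi(y)=(-1)^ky$ supplies the compensating sign; by linearity $\Pi(\tilde\eps_I(x))e_i=e_i\tilde\eps_I(x)$ and hence $\rho_{\tilde\eps_I(x)}(e_i)=e_i$ immediately. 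You instead reduce to the case $x=v$ a single non-isotropic vector (using that the anisotropy of $-q_I$, together with the surjectivity of $\rho$ onto $\O{-q_I}$ with kernel $\RR^*$, lets one write every element of $\Gamma(-q_I)$ as a product of vectors), verify $\rho_{\tilde\eps_I(v)}(e_i)=e_i$ via $ve_i=-e_iv$ and $\Pi(v)=-v$, and then propagate using $\rho_{x_1x_2}=\rho_{x_1}\circ\rho_{x_2}$. Both routes are valid; yours invokes the generator description of the Clifford group, while the paper's avoids that and instead does a direct computation on the monomial basis. You also make explicit two steps the paper treats as folklore: the construction of $\tilde\eps_I$ from the tensor-algebra functoriality of $\Cl{\cdot}$, and its compatibility with $\tau$, $\Pi$, $\sigma$, and $N$. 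One small wording slip: you call $\rho_{x_1x_2}=\rho_{x_1}\circ\rho_{x_2}$ ``multiplicativity of twisted conjugation in the second argument''; it is multiplicativity in the subscript, the conjugated element being held fixed.
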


In analogy to Notation~\ref{nota:EI-VI-qI}
we will use the same symbol for the (co)restriction of $\tilde\eps_I$ to $\Spin{\cdot}$. The most important application of this map in this article is for $|I|=2$ with $I = \{ i, j \}$ providing the map \[\tilde\eps_{ij} : \Spin{2} \to \Spin{n}.\] 

\begin{proof}
Let $x \in \Gamma(-q_I)$. By definition, \[\forall v \in V_I : \qquad \rho_{\tilde\eps_I(x)}(v) \in V_I \subseteq \RR^n.\]
Since $e_ie_j = -e_je_i$ for all $i \neq j \in I$ by Remark~\ref{rem:cl3=quaternions}(a), for each $\RR$-basis vector $y = e_{j_1}\cdots e_{j_k}$ of $\tilde\eps_I(\Cl{-q_I})$ and all $i \in \{ 1, \ldots, n \} \backslash I$ one has \[\Pi(y)e_i = e_iy.\] Hence \[\Pi(\tilde\eps_I(x))e_i=e_i \tilde\eps_I(x)\] and, thus, for all $i \in \{ 1, \ldots, n \} \backslash I$ \[\rho_{\tilde\eps_I(x)}(e_i) = \Pi(\tilde\eps_I(x))e_i\tilde\eps_I(x)^{-1} = e_i \in \RR^n.\]

As $\tilde\eps_I(\Cl{-q_I})$ is generated as an $\RR$-algebra by the set $\{ e_i \mid i \in I \}$, we in particular have \[\rho \circ \tilde\eps_I = \eps_I \circ \rho.\]
Therefore $\eps_I(x) \in \Gamma(-q_n)$. Finally, \[N(\tilde\eps_I(x)) = \tilde\eps_I(N(x)) = \tilde\eps_I(1)=1,\] whence \[\tilde\eps_I(x) \in \Pin{-q_n}. \qedhere\]
\end{proof}

\begin{remark}
Since $\tilde\eps_I(\Spin{-q_I}) = \langle e_ie_j \mid i \neq j \in I \rangle \subseteq \Cle{-q_n}{0}$, one has \[\tilde\eps_I(\Spin{-q_I}) \subseteq \Pin{-q_n} \cap \Cle{-q_n}{0} = \Spin{-q_n}.\]
\end{remark}

\begin{consequence}\label{7}
Let $n\in \NN$ and $I\subseteq \{1,\ldots,n\}$. Then \[\rho_n^{-1}\big(\eps_I(\O{|I|})\big)=\tilde\eps_I\big(\Pin{-q_I}\big) \qquad \text{and} \qquad \rho_n^{-1}\big(\eps_I(\SO{|I|})\big)=\tilde\eps_I\big(\Spin{-q_I}\big).\]
\end{consequence}

\begin{proof}
By Lemma~\ref{2}, one has
$\rho_n\tilde\eps_I\big(\Pin{-q_I}\big)=\eps_I\big(\O{|I|}\big)$ and $\rho_n\tilde\eps_I\big(\Spin{-q_I}\big)=\eps_I\big(\SO{|I|}\big)$,
thus the assertion results from Remark~\ref{rem:3}(b).
\end{proof}

\begin{remark}\label{6}
Let $n \in \NN$, let $I \subseteq \{ 1, \ldots, n \}$ and let $m:=\abs{I}$. Then there exists an isomorphism $i : \Pin{m} \to \Pin{-q_I}$ such that the following diagram commutes:
\[\xymatrix{
\Pin{m}\ar[d]_{\rho_m} \ar[rr]^{i} && \Pin{-q_I} \ar[rr]^{\tilde\eps_I} \ar[d]^\rho \ar[lld]^{M_{\eps_I} \circ \rho} && \Pin{-q_n} \ar[rr]^{\id} \ar[d]_\rho \ar[drr]_{\rho_n} && \Pin{n} \ar[d]^{\rho_n} \\
\O{m} \ar[rr]_{{M_{\eps_I}}^{-1}} && \O{-q_I} \ar[rr]_{\eps_I} && \O{-q_n} \ar[rr]_{M_\eps} && \O{n} 
}
\]
As in \ref{nota:EI-VI-qI} we slightly abuse notation and also write  $\tilde\eps_I$ for the map $\id \circ \tilde\eps_I \circ i : \Pin{m} \to \Pin{n}$ and $\eps_I$ for the map $M_\eps \circ \eps_I \circ {M_{\eps_I}}^{-1} : \O{m} \to \O{n}$. Consequently, we obtain the following commutative diagram:
\[\xymatrix{
 \Pin{m} \ar[d]^{\rho_m}\ar[rr]^{\tilde\eps_I} & & \Pin{n} \ar[d]^{\rho_n} \\
\O{m} \ar[rr]^{\eps_I} & & \O{n}
}
\]
\end{remark}

\begin{remark}\label{coordinatesforspin}
According to \cite[Corollary~1.12]{Gallier}, the group $\Pin{n}$ is generated by the set \[\{ v \in \RR^n \mid N(v) = 1 \}\] and each element of the group $\Spin{n}$ can be written as a product of an even number of elements from this set. 
That is, each element $g \in \Spin{2}$ is of the form \[g = \prod^{2k}_{i=1} (a_ie_1 + b_ie_2) = \prod^k_{i=1} \big((a_{2i-1}a_{2i} + b_{2i-1}b_{2i}) + (a_{2i-1}b_{2i} - a_{2i}b_{2i-1})e_1e_2\big) =: a + be_1e_2.\]
The requirement $a_ie_1 + b_ie_2 \in \{ v \in \RR^n \mid N(v) = 1 \}$ is equivalent to \[a_i^2+b_i^2 = (a_ie_1+b_ie_2)(-a_ie_1-b_ie_2)=(a_ie_1+b_ie_2)\overline{(a_ie_1+b_ie_2)}=N(a_ie_1+b_ie_2)= 1.\] Morever, \[1=N(g)=N(a+be_1e_2)=(a+be_1e_2)\overline{(a+be_1e_2)}=(a+be_1e_2)(a+be_2e_1)=a^2+b^2.\]  
Certainly, $\Spin{2}$ contains all elements of the form $a+be_1e_2$ with $a^2+b^2=1$, i.e., one obtains \[\Spin{2} = \{ \cos(\alpha) + \sin(\alpha) e_1e_2 \mid \alpha \in \RR \}.\]
One has \[(\cos(\alpha) + \sin(\alpha) e_1e_2)^{-1} = \cos(\alpha)-\sin(\alpha)e_1e_2= \cos(-\alpha) + \sin(-\alpha) e_1e_2,\] i.e., the map \[\RR \to \Spin{2} : \alpha \mapsto \cos(\alpha) + \sin(\alpha) e_1e_2\] is a group homomorphism from the real numbers onto the circle group.
The twisted adjoint representation $\rho_2$ maps the element $\cos(\alpha)+\sin(\alpha)e_1e_2 \in \Spin{2}$ to the transformation
\begin{eqnarray*}
x_1e_1+x_2e_2 & \mapsto & (\cos(\alpha)+\sin(\alpha)e_1e_2)(x_1e_1+x_2e_2)(\cos(\alpha)-\sin(\alpha)e_1e_2) \\
& = & x_1\left(\cos(\alpha)^2-\sin(\alpha)^2\right)e_1 - 2x_2\cos(\alpha)\sin(\alpha)e_1 \\ & & + 2x_1\cos(\alpha)\sin(\alpha)e_2 + x_2\left(\cos(\alpha)^2-\sin(\alpha)^2\right)e_2 \\
& = & (x_1\cos(2\alpha)-x_2\sin(2\alpha))e_1 + (x_1\sin(2\alpha)+x_2\cos(2\alpha))e_2, 
\end{eqnarray*}
i.e., the rotation of the euclidean plane $\RR^2$ by the angle $2\alpha$. In other words, $\rho_2$ is the double cover of the circle group by itself, cf.\ Theorem~\ref{rho}(b).

Similarly, each element $g \in \Spin{3}$ is of the form
\[g = \prod^{2k}_{i=1} (a_ie_1 + b_ie_2 + c_ie_3) = a + be_1e_2 + ce_2e_3 + de_3e_1\]
and each element $h \in \Spin{4}$ of the form
\[h = \prod^{2k}_{i=1} (a_ie_1 + b_ie_2 + c_ie_3 + d_ie_4) = h_1 + h_2e_1e_2 + h_3e_2e_3 + h_4e_3e_1 + h_5e_1e_2e_3e_4 + h_6e_4e_3 + h_7e_4e_1 + h_8e_4e_2.\]
\end{remark}

\section{The isomorphism $\Spin{4}\cong\Spin{3}\times\Spin{3}$}

In this section we recall special isomorphisms admitted by the groups $\Spin{3}$ and $\Spin{4}$. This structural information will only become relevant in Part III (Sections~\ref{strategy} and \ref{sec:g2}) of this article.

\begin{definition}
Denote by \[\HH := \{ a+bi+cj+dk \mid a, b, c, d \in \RR \}\] the \Defn{real quaternions}, identify $\RR$ with the centre of $\HH$ via $\RR \to \HH : a \mapsto a$, let \[\bar{\cdot} : \HH \to \HH : x = a +bi +cj + dk \mapsto \overline{x} = a -bi -cj -dk\] be the \Defn{standard involution}, and let \[\mathrm{U}_1(\HH) := \{ x \in \HH \mid x\overline{x} = 1_\HH\}\] be the group of unit quaternions. 
\end{definition}

\begin{remark}\label{Spin4Spin3}
By \cite[Section~1.4]{Gallier} one has \[\Spin{3} \cong \mathrm{U}_1(\HH) \quad\quad \text{and} \quad\quad \Spin{4} \cong \Spin{3} \times \Spin{3} \cong \mathrm{U}_1(\HH) \times \mathrm{U}_1(\HH).\]

The isomorphism $\Spin 3 \cong \mathrm{U}_1(\HH)$ in fact is an immediate consequence of the isomorphism $\Cle{3}{0}\cong \HH$ from Remark~\ref{rem:cl3=quaternions}(b) plus the observation that this isomorphism transforms the spinor norm into the norm of the quaternions. 

  A canonical isomorphism $\Spin{4}  \cong \Spin{3} \times \Spin{3} \cong \mathrm{U}_1(\HH) \times \mathrm{U}_1(\HH)$ can be described as follows (see \cite[Section~1.4]{Gallier}). By Remark~\ref{coordinatesforspin} each element of $\Spin 4$ is of the form \[a + be_1e_2 + ce_2e_3 + d e_3e_1 + a'e_1e_2e_3e_4 + b'e_4e_3 + c'e_4e_1 + d'e_4e_2.\]
For \[i:=e_1e_2, \quad j:=e_2e_3, \quad k:=e_3e_1, \quad \II:=e_1e_2e_3e_4, \quad i':=e_4e_3, \quad j':=e_4e_1, \quad k':=e_4e_2\] one has
\begin{align*}
ij &= k\ ,& jk &= i\ , & ki &= j\ , \\
i\II &= \II i = i'\ , & 
j\II &= \II j = j'\ , & 
k\II &= \II k = k'\ , \\
i^2 &= j^2 = k^2 = -1\ , & 
\II^2 &= 1\ , & 
\sigma(\II) &= \II\ ,
\end{align*}
where $\sigma(\II)$ denotes the Clifford conjugate of $\II$, cf.\ Definition~\ref{def:defcliffordconj}. We conclude that for every $x \in \Spin 4$ there exist uniquely determined $u = a + bi + cj + dk, v = a' + b'i + c'j + d'k \in \HH$ such that \[x = u + \II v.\]
One computes
\[N(x) = N(u+\II v) = (u+\II v)(\overline{u}+\II\overline{v}) = u\overline{u}+v\overline{v}+\II(u\overline{v}+v\overline{u}),\] i.e., \[N(x) = 1 \Longleftrightarrow u\overline{u} + v\overline{v} = 1 \text{ and } u\overline{v}+v\overline{u}=0.\]
Hence, for $1=N(x)=N(u+\II v)$, one has
\begin{eqnarray*}
N(u+v) & = & (u+v)(\overline{u}+\overline{v}) = 1, \\
N(u-v) & = & (u-v)(\overline{u}-\overline{v}) = 1.
\end{eqnarray*} 
That is, the map \[\Spin{4} \to \Spin 3 \times \Spin 3 : u+\II v \mapsto (u+v,u-v)\] is a well-defined bijection and, since
\begin{eqnarray*}
(u+\II v)(u'+\II v') & = & uu'+vv'+\II(uv'+vu'), \\
(u+v,u-v)(u'+v',u'-v') & = & \left(uu'+vv'+uv'+vu',uu'+vv'-(uv'+vu')\right),
\end{eqnarray*}
 in fact an isomorphism of groups.

Consequently, there exist a group epimorphism \[\tilde\eta : \Spin{4} \to \Spin{3} : u+\II v \mapsto u+v. \] 
\end{remark}

\begin{remark}\label{mapsso4}
Using this isomorphism $\Spin{4} \cong \Spin{3} \times \Spin{3} \cong \mathrm{U}_1(\HH) \times \mathrm{U}_1(\HH)$ there exists a natural homomorphism \[\Spin{4} \to \SO{\HH} \cong \SO{4} : (a,b) \mapsto \left(x \mapsto axb^{-1} \right).\]  
Note that the restrictions $(a,1) \mapsto \left( x \mapsto ax \right)$ and $(1,b) \mapsto \left( x \mapsto xb^{-1} \right)$ both are injections of $\Spin{3} \cong \mathrm{U}_1(\HH)$ into $\mathrm{GL}(\HH) \cong (\HH \backslash \{ 0 \}, \cdot) $, in fact into $\SO{\HH}$, as the norm is multiplicative. Since the kernel of this action has order two, the homomorphism $\Spin{4} \to \SO{\HH} \cong \SO{4}$ must be onto by Proposition~\ref{rho}. We conclude that the group $\SO{4}$ is isomorphic to the group consisting of the maps \[\HH \to \HH : x \mapsto axb^{-1} \qquad \text{for $a, b \in \mathrm{U}_1(\HH)$;}\] for an alternative proof see \cite[Lemma~11.22]{Salzmann:1995}.

A similar argument (or a direct computation using the twisted adjoint representation) shows that the natural homomorphism  \[\Spin{3} \to \SO{\langle i, j, k \rangle_\RR} \cong \SO{3} : a \mapsto \left(x \mapsto axa^{-1} \right)\] is an epimorphism and, thus, that the group $\SO{3}$ is isomorphic to the group consisting of the maps \[\HH \to \HH : x \mapsto axa^{-1} \qquad \text{for $a \in \mathrm{U}_1(\HH)$;}\]
see also \cite[Lemma~11.24]{Salzmann:1995}.
\end{remark}

\begin{remark} \label{mapisrho}
There also exists a group epimorphism \[\eta : \SO{4} \to \SO{3}\] induced by the map
\begin{align*}
\SO{4} \cong \{ \HH \to \HH : x \mapsto axb^{-1} \mid a, b \in \mathrm{U}_1(\HH) \}
  &\ \to\  \{ \HH \to \HH \mid x \mapsto axa^{-1} \mid a \in \mathrm{U}_1(\HH) \} \cong \SO{3} \\
  (x \mapsto axb^{-1}) &\ \mapsto\  (x \mapsto axa^{-1}).
\end{align*}

Altogether, one obtains the following commutative diagram:
\[\xymatrix{
\Spin{4} \ar[rr]_{\tilde\eta} \ar[d]^{\rho_4} & & \Spin{3} \ar[d]_{\rho_3} \\
 \SO{4} \ar[rr]^\eta & & \SO{3}
}\]
\end{remark}

\section{Lifting automorphism from $\SO{n}$ to $\Spin{n}$} \label{sec:7}
%  ___         _   _          
% / __| ___ __| |_(_)___ _ _  
% \__ \/ -_) _|  _| / _ \ ' \ 
% |___/\___\__|\__|_\___/_||_|

\begin{notation} \label{nota:Dalpha-Salpha}\label{notationiota}\label{iotaspin}
For $\SO{2}\times\SO{2} = \{ (a,b) \mid a,b\in\SO{2} \}$ let
\[\iota_1:\SO{2}\to \SO{2}\times\SO{2} : x \mapsto (x,1)\ ,\qquad
  \iota_2:\SO{2}\to \SO{2}\times\SO{2} : x \mapsto (1,x)\ .\]
Similarly, for $\Spin{2}\times\Spin{2} = \{ (a,b) \mid a,b\in\Spin{2} \}$ let
\[\tilde\iota_1:\Spin{2}\to \Spin{2}\times\Spin{2} : x \mapsto (x,1)\ ,\qquad
  \tilde\iota_2:\Spin{2}\to \Spin{2}\times\Spin{2} : x \mapsto (1,x)\ .\]
Moreover, define
\[\rho_2\times\rho_2:\Spin{2}\times\Spin{2}\to\SO{2}\times\SO{2} :
   (a,b)\mapsto (\rho_2(a),\rho_2(b))\ .\]
Hence
\[
 (\rho_2\times\rho_2)\circ\tilde\iota_1=\iota_1\circ\rho_2\ ,\qquad
 (\rho_2\times\rho_2)\circ\tilde\iota_2=\iota_2\circ\rho_2\ .
\]
Furthermore, let \[\pi : \Spin{2} \times \Spin{2} \to \Spin{2} \times \Spin{2}/ \langle (-1,-1) \rangle\] be the canonical projection. By the homomorphism theorem of groups the map $\rho_2 \times \rho_2$ factors through $\Spin{2} \times \Spin{2}/ \langle (-1,-1) \rangle$ and induces the following commutative diagram:
\[
\xymatrix{
\Spin{2} \times \Spin{2} \ar[rr]^{\rho_2\times\rho_2} \ar[d]^{\pi} && \SO{2}\times\SO{2} \\
\Spin{2} \times \Spin{2}/ \langle (-1,-1) \rangle \ar[urr]^{\rho_2.\rho_2}
}
\]
For $\alpha\in\RR$ let 
\[D(\alpha):=\begin{pmat} \cos(\alpha) & \sin(\alpha) \\
-\sin(\alpha) & \cos (\alpha)\end{pmat} \in \SO{2}
\quad\text{ and }\quad
S(\alpha):= \cos(\alpha)+\sin(\alpha)e_1e_2 \in \Spin{2}.\]
Then $\Spin{2}=\{ S(\alpha) \mid \alpha\in \RR\}$
and $\SO{2}=\left\{ D(\alpha) \mid \alpha\in\RR\right\}$ and there is a continuous group isomorphism
\[\psi:\SO{2}\to \Spin{2} : D(\alpha)\mapsto S(\alpha).\]
By the computation in Remark~\ref{coordinatesforspin} the epimorphism $\rho_2$ from Theorem~\ref{rho}
satisfies $\rho_2=\sq\circ\psi^{-1}$, i.e.
\[ \rho_2:\Spin{2}\to \SO{2} : S(\alpha)\mapsto D(2\alpha). \]
\end{notation}

\begin{proposition} \label{prop:lift-aut-so2}
Given an automorphism $\gamma\in \Aut(\SO{2})$, there is a unique automorphism $\tilde\gamma\in \Aut(\Spin{2})$ such that $\rho_2\circ \tilde\gamma=\gamma\circ \rho_2$.
Moreover, $\gamma$ is continuous if and only if $\tilde\gamma$ is continuous.
\end{proposition}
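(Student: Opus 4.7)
The plan is to reduce everything to the explicit parametrization $\Spin 2 = \{ S(\alpha) \mid \alpha \in \RR \}$ from Notation~\ref{notationiota} and to exploit the continuous isomorphism $\psi : \SO 2 \to \Spin 2$ together with the identity $\rho_2 \circ \psi = \sq$, which falls out of $\psi(D(\alpha)) = S(\alpha)$ and $\rho_2(S(\alpha)) = D(2\alpha)$. Equivalently, $\sq \circ \psi^{-1} = \rho_2$.

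For existence, I would simply set $\tilde\gamma := \psi \circ \gamma \circ \psi^{-1}$. This is manifestly an automorphism of $\Spin 2$, and the desired intertwining identity follows from a one-line calculation: using $\rho_2\circ\psi = \sq$ and the fact that $\sq$ commutes with every group homomorphism,
\[ \rho_2 \circ \tilde\gamma \;=\; \sq \circ \gamma \circ \psi^{-1} \;=\; \gamma \circ \sq \circ \psi^{-1} \;=\; \gamma \circ \rho_2. \]

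For uniqueness, suppose $\tilde\gamma_1, \tilde\gamma_2 \in \Aut(\Spin 2)$ both satisfy $\rho_2\circ\tilde\gamma_i = \gamma\circ\rho_2$. Since $\Spin 2$ is abelian, the pointwise quotient $\chi(s) := \tilde\gamma_1(s)\tilde\gamma_2(s)^{-1}$ defines a group homomorphism $\Spin 2 \to \Spin 2$ whose image lies in $\ker \rho_2 = \{\pm 1\}$. This is the step I expect to be the only genuine obstacle: one must rule out non-trivial homomorphisms $\Spin 2 \to \{\pm 1\}$. The key observation is that $\Spin 2$ is divisible, since every $s \in \Spin 2$ has a square root $t$; then $\chi(s) = \chi(t)^2 \in \{(\pm 1)^2\} = \{1\}$, so $\chi \equiv 1$ and $\tilde\gamma_1 = \tilde\gamma_2$.

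For the continuity equivalence, $\psi$ is continuous by construction and $\psi^{-1}$ is continuous since $\psi$ is a continuous bijection between compact Hausdorff groups. Thus $\tilde\gamma = \psi\gamma\psi^{-1}$ is continuous whenever $\gamma$ is, and conversely $\gamma = \psi^{-1}\tilde\gamma\psi$ is continuous whenever $\tilde\gamma$ is, by the same argument.
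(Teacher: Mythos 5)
Your proof is correct, and for the existence and continuity parts it follows the paper's approach exactly: define $\tilde\gamma := \psi\circ\gamma\circ\psi^{-1}$, verify the intertwining identity via $\rho_2 = \sq\circ\psi^{-1}$ and the fact that $\sq$ commutes with homomorphisms, and use that $\psi$ is a continuous isomorphism with continuous inverse (a point the paper leaves implicit, since it does not spell out the continuity equivalence at all).

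For uniqueness, you take a genuinely different route. The paper simply asserts that uniqueness follows because $\Aut(\SO 2)\to\Aut(\Spin 2):\gamma\mapsto\psi\circ\gamma\circ\psi^{-1}$ is an isomorphism; unpacked, the argument is that any solution $\tilde\gamma$ can be written as $\psi\circ\gamma'\circ\psi^{-1}$ for a unique $\gamma'\in\Aut(\SO 2)$, whence $\gamma'\circ\rho_2=\rho_2\circ\tilde\gamma=\gamma\circ\rho_2$, and surjectivity of $\rho_2$ forces $\gamma'=\gamma$. Your argument instead forms the quotient homomorphism $\chi = \tilde\gamma_1\cdot\tilde\gamma_2^{-1}$ (valid because $\Spin 2$ is abelian), observes that $\chi$ lands in $\ker\rho_2=\{\pm 1\}$, and kills it by divisibility of $\Spin 2$. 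Both are correct. Your version is more self-contained and makes the obstruction explicit: the ambiguity would be measured by $\mathrm{Hom}(\Spin 2,\{\pm 1\})$, and divisibility is exactly what rules out a non-trivial such homomorphism. The paper's version is shorter but leans on the surjectivity of $\rho_2$ without stating it; your version does not need surjectivity of $\rho_2$ at all, only that its kernel consists of elements of finite order inside a divisible group.
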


\begin{proof}
Define $\tilde\gamma:=  \psi\circ \gamma\circ\psi^{-1}$. Then
\[ \rho_2\circ \tilde\gamma
 = (\sq\circ\psi^{-1})\circ (\psi\circ \gamma\circ\psi^{-1})
 = \sq\circ\gamma\circ\psi^{-1}
 = \gamma\circ\sq\circ\psi^{-1}
 = \gamma\circ\rho_2\ .
\]
Uniqueness follows as $\Aut(\SO{2})\to\Aut(\Spin{2}) : \gamma\mapsto \psi\circ \gamma\circ\psi^{-1}$
is an isomorphism.
\end{proof}

\begin{corollary}\label{prop:lift-aut-so2xso2}
Given an automorphism $\gamma\in \Aut(\SO{2}\times\SO{2})$,
there is a unique automorphism $\tilde\gamma\in \Aut(\Spin{2}\times\Spin{2})$ such that
\[(\rho_2\times\rho_2)\circ \tilde\gamma=\gamma\circ (\rho_2\times\rho_2).\]
\end{corollary}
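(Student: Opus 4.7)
The plan is to mimic the proof of Proposition~\ref{prop:lift-aut-so2} componentwise. Recall from Notation~\ref{notationiota} that the continuous isomorphism $\psi:\SO{2}\to\Spin{2}$ satisfies $\rho_2=\sq\circ\psi^{-1}$. I will set $\Psi:=\psi\times\psi:\SO{2}\times\SO{2}\to\Spin{2}\times\Spin{2}$, which is again a continuous group isomorphism. Since the product group $\Spin{2}\times\Spin{2}$ is abelian and squaring on a product of abelian groups acts componentwise, one has
\[
  \rho_2\times\rho_2 \;=\; (\sq\circ\psi^{-1})\times(\sq\circ\psi^{-1}) \;=\; \sq\circ\Psi^{-1},
\]
where $\sq$ on the right denotes the squaring map of $\Spin{2}\times\Spin{2}$.

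Given $\gamma\in\Aut(\SO{2}\times\SO{2})$, I define
\[
  \tilde\gamma \;:=\; \Psi\circ\gamma\circ\Psi^{-1} \;\in\; \Aut(\Spin{2}\times\Spin{2}).
\]
Using that $\sq$ commutes with every group homomorphism (as it commutes with any endomorphism on an abelian group), a one-line calculation
\[
  (\rho_2\times\rho_2)\circ\tilde\gamma
  \;=\; \sq\circ\Psi^{-1}\circ\Psi\circ\gamma\circ\Psi^{-1}
  \;=\; \gamma\circ\sq\circ\Psi^{-1}
  \;=\; \gamma\circ(\rho_2\times\rho_2)
\]
verifies the lifting property.

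For uniqueness, I will argue exactly as in Proposition~\ref{prop:lift-aut-so2}: conjugation by $\Psi$ yields a group isomorphism $\Aut(\SO{2}\times\SO{2})\to\Aut(\Spin{2}\times\Spin{2})$, $\gamma'\mapsto\Psi\circ\gamma'\circ\Psi^{-1}$. Hence any automorphism $\tilde\gamma$ of $\Spin{2}\times\Spin{2}$ is of the form $\Psi\circ\gamma'\circ\Psi^{-1}$ for a unique $\gamma'\in\Aut(\SO{2}\times\SO{2})$, and the relation $(\rho_2\times\rho_2)\circ\tilde\gamma=\gamma\circ(\rho_2\times\rho_2)$ forces $\gamma'\circ(\rho_2\times\rho_2)=\gamma\circ(\rho_2\times\rho_2)$. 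Surjectivity of $\rho_2\times\rho_2$ (Theorem~\ref{rho}) then yields $\gamma'=\gamma$, so $\tilde\gamma$ is uniquely determined.

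I do not expect a genuine obstacle: the only minor subtlety is that one must take the squaring map in the product group rather than applying Proposition~\ref{prop:lift-aut-so2} factorwise, since an automorphism $\gamma$ of $\SO{2}\times\SO{2}$ need not respect the direct product decomposition. Using the global isomorphism $\Psi$ bypasses this issue cleanly.
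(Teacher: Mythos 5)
Your proof is correct and matches the paper's own argument: the paper also defines the global isomorphism $\psi:\SO{2}\times\SO{2}\to\Spin{2}\times\Spin{2}$ componentwise, observes $\rho_2\times\rho_2=\sq\circ\psi^{-1}$, and then conjugates by $\psi$ exactly as in Proposition~\ref{prop:lift-aut-so2}. Your closing remark about why one cannot simply apply Proposition~\ref{prop:lift-aut-so2} factorwise is precisely the point the paper's construction is designed to address.
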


\begin{proof}
Let $\tilde\gamma := \psi\circ\gamma\circ\psi^{-1}$, where
\[ \psi: \SO{2}\times\SO{2}\to\Spin{2}\times\Spin{2},\ 
	(D(\alpha),D(\beta)) \mapsto (S(\alpha), S(\beta))\ , \]
and observe that $\rho_2\times\rho_2 = \sq\circ\psi^{-1}$. Proceed
as in the proof of Proposition~\ref{prop:lift-aut-so2}.
\end{proof}

\begin{proposition}\label{prop:lift-aut-soN}
Let $n\geq 3$. Given an automorphism $\gamma\in \Aut\big(\SO{n}\big)$,
there is a unique automorphism $\tilde\gamma\in \Aut\big(\Spin{n}\big)$ such that
\[\rho_n\circ \tilde\gamma=\gamma\circ \rho_n.\]
\end{proposition}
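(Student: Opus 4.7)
The obstruction to imitating the proofs of Proposition~\ref{prop:lift-aut-so2} and Corollary~\ref{prop:lift-aut-so2xso2} is that for $n\geq 3$ there is no abstract group isomorphism $\SO{n}\cong\Spin{n}$ to conjugate through; the argument must exploit topological structure instead. The crucial fact I would use is that $\rho_n:\Spin{n}\to\SO{n}$ is the \emph{universal} covering homomorphism of $\SO{n}$, i.e.\ $\Spin{n}$ is connected and simply connected with $\ker\rho_n=\{\pm 1\}$ (cf., e.g., \cite[Theorem~1.13]{Gallier}).

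For existence, I would first note that any abstract automorphism $\gamma$ of the compact semisimple real Lie group $\SO{n}$ is automatically continuous by the classical rigidity of semisimple Lie groups, so $f:=\gamma\circ\rho_n:\Spin{n}\to\SO{n}$ is a continuous surjective homomorphism. The covering-space lifting theorem, applied to the universal cover $\rho_n$, then yields a unique continuous basepoint-preserving map $\tilde\gamma:\Spin{n}\to\Spin{n}$ with $\rho_n\circ\tilde\gamma=f$. To check that $\tilde\gamma$ is a group homomorphism, I would observe that for each $x,y\in\Spin{n}$ the element $\tilde\gamma(xy)\tilde\gamma(y)^{-1}\tilde\gamma(x)^{-1}$ lies in $\ker\rho_n=\{\pm 1\}$ (since $f$ is a homomorphism); hence the continuous map
\[ \Spin{n}\times\Spin{n}\to\{\pm 1\},\qquad (x,y)\mapsto\tilde\gamma(xy)\tilde\gamma(y)^{-1}\tilde\gamma(x)^{-1} \]
lands in a discrete set, takes the value $1$ at $(1,1)$, and is therefore identically $1$ by connectedness of its domain. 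Applying the same construction to $\gamma^{-1}$ gives a lift $\widetilde{\gamma^{-1}}$, whence the compositions $\tilde\gamma\circ\widetilde{\gamma^{-1}}$ and $\widetilde{\gamma^{-1}}\circ\tilde\gamma$ are continuous basepoint-preserving lifts of $\id_{\SO{n}}$ and so must equal $\id_{\Spin{n}}$; consequently $\tilde\gamma\in\Aut(\Spin{n})$.

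For uniqueness among all (not necessarily continuous) group homomorphism lifts, suppose $\tilde\gamma_1,\tilde\gamma_2\in\Aut(\Spin{n})$ both satisfy $\rho_n\circ\tilde\gamma_i=\gamma\circ\rho_n$. Then $\eta(x):=\tilde\gamma_1(x)\tilde\gamma_2(x)^{-1}$ takes values in $\ker\rho_n=\{\pm 1\}\subseteq Z(\Spin{n})$; centrality of the image makes $\eta:\Spin{n}\to\{\pm 1\}$ a group homomorphism, and since $\Spin{n}$ is perfect for $n\geq 3$ (as a connected semisimple Lie group), $\eta\equiv 1$ and hence $\tilde\gamma_1=\tilde\gamma_2$.

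The principal technical step is the verification that the covering-space lift is multiplicative (handled by the discrete-fibre/connectedness trick above); the remaining ingredients — simple-connectedness of $\Spin{n}$ for $n\geq 3$, automatic continuity of abstract automorphisms of compact semisimple Lie groups, and perfectness of $\Spin{n}$ — are classical and may be invoked directly.
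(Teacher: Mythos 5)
Your proof is correct and reaches the same conclusion via a genuinely different packaging of the same three underlying facts — perfectness of $\Spin n$, simple-connectedness of $\Spin n$, and automatic continuity of abstract automorphisms of $\SO n$ (van der Waerden). The paper instead abstracts these into the statement that $\rho_n\colon\Spin n\to\SO n$ is \emph{the universal central extension} of $\SO n$ (citing \cite[Section~1.4C]{Hahn/OMeara:1989}) and then invokes the universal property twice, for $\gamma$ and for $\gamma^{-1}$, to produce $\tilde\gamma$ and its inverse; van der Waerden's theorem is only mentioned at the end. You instead work directly with the topological universal cover: you obtain $\tilde\gamma$ from the covering-space lifting theorem, verify multiplicativity by the discrete-fibre/connectedness argument, and obtain invertibility by lifting $\gamma^{-1}$ and using uniqueness of basepoint-preserving lifts. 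Your uniqueness argument (two algebraic lifts differ by a homomorphism $\Spin n\to\ker\rho_n\subseteq Z(\Spin n)$, which must be trivial by perfectness) is exactly the standard argument the universal-central-extension machinery encodes.

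One small remark in your favour: the phrase ``universal central extension'' in the paper must be read in the topological category, since the abstract $H_2$ of $\SO n(\RR)$ is much larger than $\ZZ/2\ZZ$ (by Milnor--Suslin type results); this means the universal property is only available after one knows that $\gamma$, and hence $\gamma\circ\rho_n$, is continuous. Your proof makes this dependence on van der Waerden's theorem explicit from the outset, whereas the paper mentions continuity of automorphisms only after concluding, so your exposition is if anything slightly more careful on this point. Both proofs are valid; yours uses more elementary covering-space machinery in place of the black box of universal central extensions, at the cost of an explicit homomorphism check that the universal property handles implicitly.
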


\begin{proof}
For $n\geq 3$, both $\SO{n}$ and $\Spin{n}$ are perfect, cf.\ 
\cite[Corollary~6.56]{Hofmann/Morris:1998}. By Theorem~\ref{rho}(b) the group $\Spin{n}$ is a central extension of $\SO{n}$. Since $\Spin{n}$ is simply connected (see, e.g., \cite[Section~1.8]{Gallier}, it in fact is the universal central extension of $\SO{n}$.

The universal property of universal central extensions (cf.\ e.g.\ \cite[Section~1.4C]{Hahn/OMeara:1989}) yields the claim: Indeed, there are unique homomorphisms $\tilde\gamma, \tilde\gamma' : \Spin{n} \to \Spin{n}$ such that \[\gamma \circ \rho_n = \rho_n \circ \tilde \gamma \quad \text{and} \quad \gamma^{-1} \circ \rho_n = \rho_n \circ \tilde\gamma'.\]   
Hence
\[\rho_n \circ \tilde\gamma \circ \tilde\gamma' = \gamma \circ \rho_n \circ \tilde\gamma' = \gamma \circ \gamma^{-1} \circ \rho_n = \rho_n\] and, similarly, \[\rho_n \circ \tilde\gamma' \circ \tilde\gamma = \rho_n.\]
The universal property therefore implies $\tilde\gamma \circ \tilde\gamma' = \id = \tilde\gamma' \circ \tilde\gamma$, i.e., $\tilde\gamma$ is an automorphism.

In fact, all automorphisms are continuous by van der
Waerden's Continuity Theorem, cf.\ \cite[Theorem~5.64]{Hofmann/Morris:1998}.
\end{proof}

For the following proposition recall the definitions of $\eps_{ij}$ in Notation~\ref{nota:EI-VI-qI} and of $\tilde\eps_{ij}$ in Lemma~\ref{2}.

\begin{proposition} \label{prop:zusatz}
Let $\phi : \Spin{2} \to \Spin{n}$ be a homomorphism such that
\[\ker(\rho_n \circ \phi) = \{ 1, -1 \} \qquad \text{and} \qquad \rho_n \circ \phi = \eps_{ij} \circ \rho_2\] for some $i \neq j \in I$. Then $\phi = \tilde\eps_{ij}$.
\end{proposition}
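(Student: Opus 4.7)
The plan is to show $\phi$ and $\tilde\eps_{ij}$ agree by checking that their pointwise quotient is a trivial character on $\Spin 2$. By Lemma~\ref{2}, the embedding $\tilde\eps_{ij}$ already satisfies $\rho_n \circ \tilde\eps_{ij} = \eps_{ij} \circ \rho_2$, which by hypothesis equals $\rho_n \circ \phi$. Consequently for every $g \in \Spin 2$ one has $\rho_n\bigl(\phi(g) \cdot \tilde\eps_{ij}(g)^{-1}\bigr) = 1$, i.e.\ $\phi(g) \cdot \tilde\eps_{ij}(g)^{-1} \in \ker(\rho_n) = \{\pm 1\}$ by Theorem~\ref{rho}(b).

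Next I would set $\chi : \Spin 2 \to \{\pm 1\}$, $\chi(g) := \phi(g) \cdot \tilde\eps_{ij}(g)^{-1}$. Because $\{\pm 1\} \leq Z(\Spin n)$, this formula is multiplicative and hence defines a group homomorphism. Reducing the proposition to showing $\chi$ is trivial is the crux of the argument.

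To kill $\chi$ I would exploit that $\Spin 2$ is a divisible abelian group: by Remark~\ref{coordinatesforspin} the map $\RR \to \Spin 2$, $\alpha \mapsto S(\alpha)$, is a surjective group homomorphism, so every element of $\Spin 2$ admits $n$-th roots for every $n \in \NN$. Any homomorphism from a divisible group into a finite group is therefore trivial (given $g \in \Spin 2$ and $g = h^2$, one computes $\chi(g) = \chi(h)^2 = 1$). Hence $\chi \equiv 1$ and $\phi = \tilde\eps_{ij}$, as claimed.

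The only possible obstacle is that the statement does not assume $\phi$ to be continuous, so one cannot simply invoke connectedness of $\Spin 2$ to kill $\chi$; the divisibility step above is what handles this purely algebraically. Note also that the kernel hypothesis $\ker(\rho_n \circ \phi) = \{\pm 1\}$ is actually redundant given the commutativity relation (since $\eps_{ij}$ is injective and $\ker \rho_2 = \{\pm 1\}$), but it serves as a useful sanity check that $\phi$ itself is injective rather than factoring through $\rho_2$.
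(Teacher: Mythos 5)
Your argument is correct and takes a genuinely different path than the paper's. The paper's proof first invokes Consequence~\ref{7} to deduce $\phi(\Spin 2)\subseteq\tilde\eps_{ij}(\Spin 2)$, then uses the kernel hypothesis to establish injectivity of $\phi$, and finally hands the automorphism $\tilde\eps_{ij}^{-1}\circ\phi$ of $\Spin 2$ to the uniqueness clause of Proposition~\ref{prop:lift-aut-so2}. You instead compare $\phi$ and $\tilde\eps_{ij}$ pointwise, reading off from Lemma~\ref{2} and Theorem~\ref{rho}(b) that their pointwise quotient is a character $\chi : \Spin 2 \to \{\pm1\}$ (centrality of $\{\pm1\}$ in $\Spin n$ is exactly what makes $\chi$ a homomorphism), and kill $\chi$ by divisibility of the circle group. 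This is more economical: it bypasses both Consequence~\ref{7} and Proposition~\ref{prop:lift-aut-so2} and never needs injectivity of $\phi$ as an intermediate step. Your remark that the kernel hypothesis is redundant is also accurate, since $\ker(\eps_{ij}\circ\rho_2)=\ker\rho_2=\{\pm1\}$ by injectivity of $\eps_{ij}$; the paper, by contrast, uses that hypothesis explicitly to obtain injectivity of $\phi$ so that $\tilde\eps_{ij}^{-1}\circ\phi$ is actually an automorphism rather than merely an injective endomorphism. What the paper's route buys is uniformity: Proposition~\ref{prop:lift-aut-so2} is reused several times elsewhere (e.g.\ in Notation~\ref{rem:lift-so2-ama-to-spin2-ama} and Proposition~\ref{prop:lift-ama-iso-sl}), so routing this lemma through it keeps the toolkit small, whereas your version inlines the essential circle-group divisibility fact directly.
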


\begin{proof}
By Consequence~\ref{7} one has \[\phi(\Spin{2}) \subseteq ({\rho_n}^{-1} \circ \eps_{ij} \circ \rho_2)(\Spin{2}) = {\rho_{n}}^{-1}(\eps_{ij}(\SO{2})) = \tilde\eps_{ij}(\Spin{2}).\]
By hypothesis $\ker\phi \subseteq \{ 1, -1\}$. If $-1 \in \ker\phi$, then $1 = \phi(-1) = \phi(S(\pi)) = \phi(S(\frac{\pi}{2}))^2$, i.e., $\phi(S(\frac{\pi}{2})) \in \{ 1, -1\}$, whence $S(\frac{\pi}{2}) \in \ker(\rho_n \circ \phi)$, a contradiction. Consequently,  $\phi$ is a monomorphism.

Consider the following commuting diagram:
\[
\xymatrix{ && \Spin{n} \\
\Spin{2} \ar[rr]^\phi \ar[d]_{\rho_2} && \phi(\Spin{2}) \ar@{^{(}->}[u]  \ar[rr]^{{\tilde\eps_{ij}}^{-1}} \ar[d]_{\rho_n} && \Spin{2} \ar[d]_{\rho_2} \\
\SO{2} \ar[rr]_{\eps_{ij}} && \eps_{ij}(\SO{2}) \ar@{^{(}->}[d] \ar[rr]_{{\eps_{ij}}^{-1}} && \SO{2} \\ && \SO{n}
}
\]
One has $\rho_2 \circ {\tilde\eps_{ij}}^{-1} \circ \phi = \rho_2 = \id \circ \rho_2$.
Since $\phi$ is injective, the map ${\tilde\eps_{ij}}^{-1} \circ \phi$ is an automorphism of $\Spin{2}$. Hence Proposition~\ref{prop:lift-aut-so2} implies ${\tilde\eps_{ij}}^{-1} \circ \phi = \id$.
\end{proof}

\part{Simply laced diagrams}
%  ____            _
% |  _ \ __ _ _ __| |_
% | |_) / _` | '__| __|
% |  __/ (_| | |  | |_
% |_|   \__,_|_|   \__|

\section{$\SO{2}$-amalgams of simply laced type} \label{sec:so2amalgams}
%  ___         _   _          
% / __| ___ __| |_(_)___ _ _  
% \__ \/ -_) _|  _| / _ \ ' \ 
% |___/\___\__|\__|_\___/_||_|

In this section we discuss amalgamation results for compact real orthogonal groups. The results and exposition are similar to \cite{Borovoi:1984}, \cite{Gramlich:2006}. The key difference is that the amalgams in the present article are constructed starting with the circle group $\SO{2}$ instead of the perfect group $\mathrm{SU}(2)$. This leads to some subtle complications that we will need to address below.

Recall the maps $\eps_{12}, \eps_{23} : \SO{2} \to \SO{3}$ from Notation~\ref{nota:EI-VI-qI} and the maps $\iota_1, \iota_2 : \SO{2}\to \SO{2}\times \SO{2}$ from Notation~\ref{notationiota}.

\begin{definition} \label{defstandardso}
Let $\Pi$ be a simply laced diagram with labelling $\sigma:I\to V$.
An \Defn{$\SO{2}$-amalgam with respect to $\Pi$ and $\sigma$} is an amalgam
$\AAA=\{ G_{ij},\; \phi_{ij}^i \mid i\neq j\in I \}$
such that
\[\forall\ i\neq j\in I: \qquad
G_{ij}=\begin{cases}
\SO{3}, & \text{if }\{i,j\}^\sigma\in E(\Pi), \\
\SO{2}\times \SO{2}, & \text{if }\{i,j\}^\sigma\notin E(\Pi),
\end{cases}\]
and for $i<j\in I$,
\begin{align*}
\phi_{ij}^i\big(\SO{2}\big)=\begin{cases}
\eps_{12}\big(\SO{2}\big), & \text{if  $\{i,j\}^\sigma\in E(\Pi)$}, \\
\iota_1\big(\SO{2}\big), & \text{if }\{i,j\}^\sigma\notin E(\Pi),
\end{cases} &&
\phi_{ij}^j\big(\SO{2}\big)=\begin{cases}
\eps_{23}\big(\SO{2}\big), & \text{if  $\{i,j\}^\sigma\in E(\Pi)$}, \\
\iota_2\big(\SO{2}\big), & \text{if }\{i,j\}^\sigma\notin E(\Pi).
\end{cases}
\end{align*}
The \Defn{standard $\SO{2}$-amalgam with respect to $\Pi$ and $\sigma$} is the $\SO{2}$-amalgam
\[\AAA\big(\Pi,\sigma,\SO{2}\big):=\{ G_{ij},\; \phi_{ij}^i \mid i\neq j\in I \}\]
with respect to $\Pi$ and $\sigma$ with
\begin{align*} \forall\ i< j\in I: &&
\phi_{ij}^i=\begin{cases}
\eps_{12},&\text{if }\{i,j\}^\sigma\in E(\Pi), \\
\iota_1, &\text{if }\{i,j\}^\sigma\notin E(\Pi).
\end{cases} &&
\phi_{ij}^j=\begin{cases}
\eps_{23},&\text{if }\{i,j\}^\sigma\in E(\Pi), \\
\iota_2, &\text{if }\{i,j\}^\sigma\notin E(\Pi).
\end{cases}
\end{align*}
\end{definition}

\begin{remark} \label{rem:continuous}
The key difference between the standard $\SO{2}$-amalgam and an arbitrary $\SO{2}$-amalgam $\AAA=\{ G_{ij},\; \phi_{ij}^i \mid i\neq j\in I \}$ with respect to $\Pi$ and $\sigma$ is that, for instance, $\eps_{12}^{-1} \circ \phi_{ij}^i$ can be an arbitrary automorphism of $\SO{2}$.
Automatic continuity (like van der Waerden's Continuity Theorem, cf.\ \cite[Theorem~5.64 and Corollary~6.56]{Hofmann/Morris:1998}) fails for automorphisms  of the circle group $\SO{2}$ whereas it does hold for the group $\SO{3}$. 
Hence, obviously, not every automorphism of $\SO{2}$ is induced by an automorphism of $\SO{3}$ and so it is generally not possible to undo the automorphism $\eps_{12}^{-1} \circ \phi_{ij}^i$ inside $\SO{3}$.
Therefore Goldschmidt's Lemma (see \cite[Lemma~2.7]{Goldschmidt:1980}, also \cite[Proposition~8.3.2]{Ivanov/Shpectorov:2002}, \cite[Lemma~6.16]{Gloeckner/Gramlich/Hartnick:2010}) implies that for each diagram $\Pi$ there exist plenty of pairwise non-isomorphic abstract $\SO{2}$-amalgams.

However, by \cite[Section~4.G]{Kac/Peterson:1983}, \cite[Corollary~7.16]{Hartnick/Koehl/Mars}, a split real Kac--Moody group and its maximal compact subgroup (i.e., the group of elements fixed by the Cartan--Chevalley involution) both carry natural group topologies that induce the Lie group topology on their respective fundamental subgroups of ranks one and two and make the respective embeddings continuous.

It is therefore meaningful to use \emph{continuous} $\SO{2}$-amalgams for studying these maximal compact subgroups.
Such continuous amalgams are uniquely determined by the underlying diagram $\Pi$, as we will see in Theorem~\ref{thm:uniqueness-so-sl} below.
\end{remark}

\begin{convention} \label{fix}
For each group isomorphic to one of $\SO{2}$, $\SO{3}$, $\SO{2} \times \SO{2}$, we fix a matrix representation that allows us to identify the respective groups accordingly. Our study of amalgams by Goldschmidt's Lemma then reduces to the study of automorphisms of these groups.
\end{convention}

\begin{lemma}\label{10}
Let
\[D:=\begin{pmatrix} & & 1 \\ & -1 & \\ 1 & & \end{pmatrix}\in \SO{3}\ .\]
Then the map
$\gamma_D:\SO{3}\to \SO{3} : A\mapsto D\cdot A\cdot D^{-1}=D\cdot A\cdot D$
is an automorphism of $\SO{3}$ such that \[\gamma_D\circ \eps_{12}=\eps_{23} \qquad \text{and} \qquad \gamma_D\circ \eps_{23}=\eps_{12}.\]
\end{lemma}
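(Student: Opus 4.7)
The plan is to verify the three claims about $D$ directly from its matrix entries, since everything reduces to elementary computations in $\mathrm{M}_3(\RR)$.

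First, I would check that $D \in \SO{3}$: a direct calculation gives $DD^t = E_3$, and expanding the determinant along the first column yields
\[\det(D) = 1 \cdot \det\begin{pmat} 0 & -1 \\ 1 & 0 \end{pmat} = 1.\]
At the same time, viewing $D$ as the linear map $e_1 \leftrightarrow e_3$, $e_2 \mapsto -e_2$ makes it clear that $D^2 = E_3$, so $D^{-1} = D$. Hence $\gamma_D : A \mapsto D A D^{-1} = DAD$ is an inner automorphism of $\SO 3$, which already gives the first assertion.

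Next I would verify the intertwining $\gamma_D \circ \eps_{12} = \eps_{23}$ by computing the image of an arbitrary generator. With $A := \eps_{12}(D(\alpha))$, one has $A(e_1) = \cos(\alpha)\, e_1 - \sin(\alpha)\, e_2$, $A(e_2) = \sin(\alpha)\, e_1 + \cos(\alpha)\, e_2$ and $A(e_3) = e_3$, so
\begin{align*}
DAD(e_1) &= DA(e_3) = D(e_3) = e_1, \\
DAD(e_2) &= -DA(e_2) = -D\bigl(\sin(\alpha)\, e_1 + \cos(\alpha)\, e_2\bigr) = \cos(\alpha)\, e_2 - \sin(\alpha)\, e_3, \\
DAD(e_3) &= DA(e_1) = D\bigl(\cos(\alpha)\, e_1 - \sin(\alpha)\, e_2\bigr) = \sin(\alpha)\, e_2 + \cos(\alpha)\, e_3.
\end{align*}
The corresponding matrix is exactly $\eps_{23}(D(\alpha))$, which proves $\gamma_D \circ \eps_{12} = \eps_{23}$ on the generators $D(\alpha)$ of $\SO 2$, and hence on all of $\SO 2$.

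The final identity $\gamma_D \circ \eps_{23} = \eps_{12}$ then follows formally by applying $\gamma_D$ to both sides of $\gamma_D \circ \eps_{12} = \eps_{23}$ and using $\gamma_D^2 = \id_{\SO 3}$, which in turn is a consequence of $D^2 = E_3$. I do not expect any genuine obstacle here; the only subtle point is the choice of the sign $-1$ at position $(2,2)$ of $D$, which is needed simultaneously so that $\det D = +1$ (rather than $-1$) and so that the orientation of the rotation angle $\alpha$ is preserved rather than reversed when the rotation is transported from the $(e_1,e_2)$-plane to the $(e_2,e_3)$-plane.
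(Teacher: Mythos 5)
Your proof is correct and takes essentially the same route as the paper: a direct elementary computation verifying that conjugation by $D$ carries $\eps_{12}(\SO 2)$ to $\eps_{23}(\SO 2)$, the only cosmetic difference being that you track the action on basis vectors whereas the paper multiplies the three matrices out explicitly. Your derivation of the second identity from $D^2 = E_3$ (hence $\gamma_D^2 = \id$) is a small but clean improvement over the paper's ``follows analogously.''
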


\begin{proof}
Given $\begin{pmat} x & y \\ -y & x\end{pmat}\in \SO{2}$, we have
\begin{align*}
\begin{pmatrix} & & 1 \\ & -1 & \\ 1 & & \end{pmatrix}\cdot \begin{pmatrix} x & y & \\
-y & x & \\
& & 1 \end{pmatrix}\cdot \begin{pmatrix} & & 1 \\ & -1 & \\ 1 & & \end{pmatrix}=\begin{pmatrix}  &  & 1\\
y& -x & \\
x& y&  \end{pmatrix}\cdot \begin{pmatrix} & & 1 \\ & -1 & \\ 1 & & \end{pmatrix}=\begin{pmatrix}1 & &  \\ & x & y \\  & -y& x\end{pmatrix}\ .
\end{align*}
The second assertion follows analogously.
\end{proof}

The only influence of the labelling $\sigma$ of an amalgam is the choice which of the vertices $i^\sigma$, $j^\sigma$ corresponds to which subgroup of $G_{ij}$. We now show that this choice does not affect the isomorphism type of the amalgam. 

\begin{consequence}\label{14}
Let $\Pi$ be a simply laced diagram with labellings $\sigma_1,\sigma_2:I\to V$. Then
\[\AAA\big(\Pi,\sigma_1,\SO{2}\big)\cong \AAA\big(\Pi,\sigma_2,\SO{2}\big).\]
\end{consequence}

\begin{proof}
Denote $\AAA:=\AAA\big(\Pi,\sigma_1,\SO{2}\big)$ and $\ol{\AAA}:= \AAA\big(\Pi,\sigma_2,\SO{2}\big)$.
Let $D\in \SO{3}$ be as in Lemma~\ref{10} and let $\pi:=\sigma_2^{-1}\circ\sigma_1\in \Sym(I)$. Notice that
\begin{align*}
\ol{G}_{\pi(i)\pi(j)}=\SO{3}\ &\Leftrightarrow\ \{\pi(i),\pi(j)\}^{\sigma_2}\in E(\Pi)\ \Leftrightarrow\ \{ i,j\}^{\pi\sigma_2}\in E(\Pi) \\
&\Leftrightarrow\ \{ i,j\}^{\sigma_1\sigma_2^{-1}\sigma_2}\in E(\Pi)\ \Leftrightarrow\ \{ i,j\}^{\sigma_1}\in E(\Pi)\ \Leftrightarrow\ G_{ij}=\SO{3}\ ..
\end{align*}
Given $i<j\in I$ with $\{ i,j\}^{\sigma_1} \in E(\Pi)$, let
\[\alpha_{ij}:=\begin{cases}
\id_{\SO{3}},&\text{if }\pi(i)<\pi(j), \\
\gamma_D,&\text{if }\pi(i)>\pi(j),
\end{cases}\] and given $i<j \in I$ with $\{ i,j\}^{\sigma_1} \not\in E(\Pi)$, let
\[\alpha_{ij}: \SO{2} \times \SO{2} \to \SO{2} \times \SO{2},
(x,y) \mapsto
  \begin{cases}
  (x,y),&\text{if }\pi(i)<\pi(j), \\
  (y,x),&\text{if }\pi(i)>\pi(j).
  \end{cases}
\]

Then the system
$\alpha:=\{ \pi, \alpha_{ij} \mid i\neq j\in I \}:\AAA\to\ol{\AAA}$
is an isomorphism of amalgams. Indeed, given $i<j\in I$ with $\{ i,j\}^{\sigma_1} \in E(\Pi)$, one has
\begin{align*}
\alpha_{ij}\circ \phi_{ij}^i=\alpha_{ij}\circ \eps_{12}=
\begin{cases}
\id_{\SO{3}}\circ\eps_{12}=\eps_{12}=\ol{\phi}_{\pi(i)\pi(j)}^{\pi(i)},&\text{if }\pi(i)<\pi(j), \\
\gamma_D\circ \eps_{12}=\eps_{23}=\ol{\phi}_{\pi(i)\pi(j)}^{\pi(j)} ,&\text{if }\pi(i)>\pi(j),
\end{cases}
\end{align*}
and
\begin{align*}
\alpha_{ij}\circ \phi_{ij}^j=\alpha_{ij}\circ \eps_{23}=
\begin{cases}
\id_{\SO{3}}\circ\eps_{23}=\eps_{23}=\ol{\phi}_{\pi(i)\pi(j)}^{\pi(i)},&\text{if }\pi(i)<\pi(j), \\
\gamma_D\circ \eps_{23}=\eps_{12}=\ol{\phi}_{\pi(i)\pi(j)}^{\pi(j)},&\text{if }\pi(i)>\pi(j).
\end{cases}
\end{align*}
The case $i<j\in I$ with $\{ i,j\}^{\sigma_1} \not\in E(\Pi)$ is verified similarly.
\end{proof}

\begin{definition}\label{def:std-ama-SO2}
As we have just seen, the labelling of a standard $\SO{2}$-amalgam is
irrelevant for its isomorphism type. Hence,
for a simply laced diagram $\Pi$, we write
$\AAA\big(\Pi,\SO{2}\big)$ to denote this isomorphism type and, moreover, by slight abuse of notation to denote any representative $\AAA\big(\Pi,\sigma,\SO{2}\big)$ of this isomorphism type. It is called
the \Defn{standard $\SO{2}$-amalgam with respect to $\Pi$}.
\end{definition}

\begin{lemma}\label{lem:rank1-inv-A2}
Let $B:=\begin{pmat} 1 & \\ & -I_2\end{pmat}$, $C:=\begin{pmat} -I_2 & \\ & 1\end{pmat}\in \SO{3}$. Then the following holds:
\begin{enumerate}
\item The map $\gamma_B:\SO{3}\to \SO{3} : A\mapsto B\cdot A\cdot B^{-1}$
is an automorphism of $\SO{3}$ such that
\[\gamma_B\circ \eps_{12}=\eps_{12}\circ \inv \qquad \text{and} \qquad \gamma_B\circ \eps_{23}=\eps_{23}.\]
\item The map $\gamma_C:\SO{3}\to \SO{3} : A\mapsto C\cdot A\cdot C^{-1}$
is an automorphism of $\SO{3}$ such that
\[\gamma_C\circ \eps_{12}=\eps_{12} \qquad \text{and} \qquad  \gamma_C\circ \eps_{23}=\eps_{23}\circ\inv.\]
\end{enumerate}
\end{lemma}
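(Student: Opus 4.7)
The plan is to prove both parts by direct matrix computation, exactly in the spirit of the preceding Lemma~\ref{10}. There is no subtlety here; the argument is a bookkeeping exercise.

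First I would observe that $B$ and $C$ genuinely lie in $\SO 3$: both are diagonal with $\pm 1$ entries and $\det = (1)(-1)(-1) = 1$, respectively $\det = (-1)(-1)(1)=1$, and both are orthogonal. Hence $\gamma_B$ and $\gamma_C$ are inner automorphisms of $\SO 3$, which takes care of the ``automorphism'' assertions. Note also $B^{-1}=B$ and $C^{-1}=C$, which simplifies the subsequent arithmetic.

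Next I would unpack the remaining identities by evaluating both sides on an arbitrary element $A=\begin{pmat} x & y \\ -y & x\end{pmat}\in \SO 2$. For part~(a), a direct block computation gives
\[
B\cdot\eps_{12}(A)\cdot B
 = \begin{pmat} 1 & 0 & 0 \\ 0 & -1 & 0 \\ 0 & 0 & -1 \end{pmat}
   \begin{pmat} x & y & 0 \\ -y & x & 0 \\ 0 & 0 & 1 \end{pmat}
   \begin{pmat} 1 & 0 & 0 \\ 0 & -1 & 0 \\ 0 & 0 & -1 \end{pmat}
 = \begin{pmat} x & -y & 0 \\ y & x & 0 \\ 0 & 0 & 1 \end{pmat}
 = \eps_{12}(A^{-1})\ ,
\]
which is $(\eps_{12}\circ\inv)(A)$, while
\[
B\cdot\eps_{23}(A)\cdot B
 = \begin{pmat} 1 & 0 & 0 \\ 0 & x & y \\ 0 & -y & x \end{pmat}
 = \eps_{23}(A)\ ,
\]
because the conjugation by $\diag(1,-1,-1)$ cancels out on the lower $2\times 2$ block. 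This establishes part~(a).

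Part~(b) is entirely analogous: conjugation by $C=\diag(-1,-1,1)$ acts trivially on the upper $2\times 2$ block (cancelling two sign changes) so $\gamma_C\circ\eps_{12}=\eps_{12}$, whereas on $\eps_{23}(A)$ it flips the signs of the off-diagonal entries $\pm y$ in the lower block, producing $\eps_{23}(A^{-1})$. The only ``obstacle'' worth flagging is the blockwise arithmetic, and in particular checking that the signs on the fixed coordinate and on the acting block behave as claimed; no ideas beyond the model provided by Lemma~\ref{10} are needed.
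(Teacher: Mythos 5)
Your proof is correct and follows essentially the same approach as the paper's: a direct blockwise matrix computation of $B\cdot\eps_{12}(A)\cdot B$ (and its analogues), after observing that conjugation by elements of $\SO 3$ gives inner automorphisms. You spell out a few more details (verifying $B,C\in\SO 3$, noting $B^{-1}=B$, and writing out the $\eps_{23}$ case explicitly), but the substance is identical to the paper's argument, which simply states that the remaining cases follow analogously.
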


\begin{proof}
\begin{enumerate}
\item Given $A:= \begin{pmat} x & y \\ -y & x\end{pmat}\in \SO{2}$, we have
\begin{align*}
\gamma_B( \eps_{12}(A))
= B \cdot \begin{pmatrix} x & y & \\
-y & x & \\
& & 1 \end{pmatrix}\cdot B^{-1}
%= \begin{pmatrix} 1 &  & \\
% & -1 & \\
%& & -1 \end{pmatrix} \cdot \begin{pmatrix} x & y & \\
%-y & x & \\
%& & 1 \end{pmatrix}\cdot \begin{pmatrix} 1 &  & \\
% & -1 & \\
%& & -1 \end{pmatrix} \\ 
=\begin{pmatrix} x& -y &  \\ y& x & \\  & & 1 \end{pmatrix}
= \eps_{12}(A^{-1})\ .
\end{align*}
The second statement follows analogously.
\item is shown with a similar computation. \qedhere
\end{enumerate}
\end{proof}

\begin{theorem}\label{thm:uniqueness-so-sl}\label{cons:std-ams-SO2-iso}
Let $\Pi$ be a simply laced diagram with labelling $\sigma:I\to V$ and 
let $\AAA=\{ G_{ij},\; \phi_{ij}^i \mid i\neq j\in I \}$
be a continuous $\SO{2}$-amalgam with respect to $\Pi$ and $\sigma$. Then
$\AAA\cong \AAA\big(\Pi,\sigma,\SO{2}\big)$.
\end{theorem}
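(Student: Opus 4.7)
The plan is to build an amalgam isomorphism $\alpha = \{\id_I, \alpha_{ij} \mid i \neq j \in I\}\colon \AAA \to \AAA(\Pi,\sigma,\SO{2})$ with trivial permutation $\pi = \id_I$, by exhibiting for each pair $i<j$ a suitable automorphism $\alpha_{ij}$ of $G_{ij}$ that straightens out the connecting homomorphisms into the standard ones.

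The first step is to exploit continuity to make the ambiguity finite. Since $\SO{2}$ is a compact connected abelian Lie group, its only continuous automorphisms are $\id$ and $\inv$. By Definition~\ref{defstandardso} the images $\phi_{ij}^i(\SO{2})$ and $\phi_{ij}^j(\SO{2})$ coincide with the images of the standard embeddings (either $\eps_{12}, \eps_{23}$ on an edge or $\iota_1, \iota_2$ on a non-edge), so there exist uniquely determined continuous $\mu_{ij}^i, \mu_{ij}^j \in \{\id_{\SO{2}}, \inv\}$ such that each $\phi_{ij}^?$ factors as the standard embedding pre-composed with the corresponding $\mu$. The second step is to realise these $\mu$'s as restrictions of automorphisms of $G_{ij}$: on an edge I set $\alpha_{ij} := \gamma_B^{a} \circ \gamma_C^{b}$ with $a,b \in \{0,1\}$ chosen so that $a=1$ iff $\mu_{ij}^i = \inv$ and $b=1$ iff $\mu_{ij}^j = \inv$. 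Since $B$ and $C$ are diagonal matrices they commute, so $\gamma_B$ and $\gamma_C$ commute as automorphisms, and Lemma~\ref{lem:rank1-inv-A2} then yields $\alpha_{ij} \circ \eps_{12} = \eps_{12} \circ (\mu_{ij}^i)^{-1}$ and $\alpha_{ij} \circ \eps_{23} = \eps_{23} \circ (\mu_{ij}^j)^{-1}$; using that $\inv$ is an involution, this produces $\alpha_{ij} \circ \phi_{ij}^i = \eps_{12}$ and $\alpha_{ij} \circ \phi_{ij}^j = \eps_{23}$, as required. On a non-edge I simply put $\alpha_{ij}(x,y) := (\mu_{ij}^i(x), \mu_{ij}^j(y)) \in \Aut(\SO{2} \times \SO{2})$, which has the analogous effect on $\iota_1, \iota_2$.

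Once the $\alpha_{ij}$ are in place, the amalgam isomorphism axioms from Definition~\ref{def:ama-iso} reduce to equations that hold by construction, and the fact that each $\alpha_{ij}$ is an automorphism gives bijectivity. The main conceptual obstacle, already anticipated in Remark~\ref{rem:continuous}, is that continuity is genuinely essential: without it Goldschmidt's Lemma produces plenty of pairwise non-isomorphic $\SO{2}$-amalgams, because $\Aut(\SO{2})$ is very large as an abstract group and many of its elements are not induced by automorphisms of $\SO{3}$. The role of continuity here is to collapse the ambiguity down to the involution group $\{\id, \inv\}$, which is precisely what can be realised inside $\Aut(\SO{3})$ via conjugation by $B$ and $C$, and inside $\Aut(\SO{2} \times \SO{2})$ coordinatewise, making the two amalgams abstractly isomorphic through the system $\{\alpha_{ij}\}$.
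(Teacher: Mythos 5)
Your proof is correct and follows essentially the same route as the paper: restrict the ambiguity to $\{\id, \inv\}$ by continuity, then realise these as automorphisms of $G_{ij}$ via $\gamma_B$, $\gamma_C$ from Lemma~\ref{lem:rank1-inv-A2} on edges and coordinatewise on non-edges. The only superficial difference is your cleaner $\gamma_B^a \circ \gamma_C^b$ notation versus the paper's explicit four-way case split, which are equivalent since $B$ and $C$ commute.
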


\begin{proof}
Denote $\ol{\AAA}:=\AAA\big(\Pi,\sigma,\SO{2}\big)$.
The only continuous automorphisms of the circle group $\SO{2}$
are $\id$ and the inversion $\inv$.
Since $\AAA$ is continuous by hypothesis, for all $i<j \in I$ with $\{i,j\}^\sigma \in E(\Pi)$ we have
\begin{align*}
\phi_{ij}^i\in \{ \eps_{12}, \eps_{12}\circ \inv\}\ , && \phi_{ij}^j\in \{ \eps_{23}, \eps_{23}\circ \inv\}\ .
\end{align*}
Let $B,C\in \SO{3}$ be as in Lemma~\ref{lem:rank1-inv-A2}, let $\pi:=\id_{I}$, and given $i>j\in I$ such that $\{i,j\}^\sigma\in E(\Pi)$, let
\[\alpha_{ij}:=\begin{cases}
\id_{\SO{3}},&\text{if }\phi_{ij}^i=\eps_{12},\hphantom{{}\circ\inv}\ \phi_{ij}^j=\eps_{23}, \\
\gamma_B, &\text{if }\phi_{ij}^i=\eps_{12}\circ\inv,\ \phi_{ij}^j=\eps_{23}, \\
\gamma_C, &\text{if }\phi_{ij}^i=\eps_{12},\hphantom{{}\circ\inv}\ \phi_{ij}^j=\eps_{23}\circ \inv, \\
\gamma_B\circ \gamma_C, &\text{if }\phi_{ij}^i=\eps_{12}\circ\inv,\ \phi_{ij}^j=\eps_{23}\circ \inv.
\end{cases}\]

For $i<j \in I$ with $\{i,j\}^\sigma \not\in E(\Pi)$, define $\alpha_{ij} := \big(\iota_1 \circ( \phi_{ij}^i)^{-1}\big) \times \big(\iota_2 \circ( \phi_{ij}^j)^{-1}\big)$.
Then the system
$\alpha:=\{\pi, \alpha_{ij} \mid i\neq j\in I\}:\AAA\to \ol{\AAA}$
is an isomorphism of amalgams.
\end{proof}

The following is well-known, e.g.\ \cite[Theorem~1.2]{Medts/Gramlich/Horn}.

\begin{theorem} \label{thm:univso}
For $n\geq3$, the group $\SO{n}$ is a universal enveloping group of $\AAA\big(A_{n-1},\SO{2}\big)$.
\end{theorem}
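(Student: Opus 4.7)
The plan is to proceed by induction on $n$, verifying first that $\SO{n}$ is indeed an enveloping group and then using the inductive hypothesis to prove universality, essentially following the Phan-type strategy of \cite{Medts/Gramlich/Horn}.

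First I would check that $(\SO{n}, \tau)$ is an enveloping group of $\AAA(A_{n-1}, \SO{2})$, where the enveloping homomorphisms are the canonical block embeddings of Notation~\ref{nota:EI-VI-qI}: for an edge $\{i, i+1\}$ the map $\tau_{i, i+1}: \SO{3} \to \SO{n}$ is $\eps_{\{i, i+1, i+2\}}$ (acting on three consecutive coordinate axes), while for a non-edge $\{i, j\}$ with $j > i+1$ the map $\tau_{ij}: \SO{2}\times\SO{2} \to \SO{n}$ embeds the two factors into the disjoint $(i, i+1)$- and $(j, j+1)$-blocks. The compatibility condition $\tau_{ij} \circ \phi_{ij}^j = \tau_{kj} \circ \phi_{kj}^j$ then holds because both sides agree with the single plane-rotation embedding $\eps_{\{j, j+1\}}: \SO{2} \to \SO{n}$. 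Generation of $\SO{n}$ by these rank-one subgroups is a classical fact (Givens/Euler-angle decomposition).

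For universality, the base case $n = 3$ is trivial: $\AAA(A_2, \SO{2})$ has the single non-trivial component $G_{12} \cong \SO{3}$, which is its own universal enveloping group. For the inductive step, given an enveloping group $(H, \tau')$ of $\AAA(A_{n-1}, \SO{2})$, I would restrict to the sub-amalgam indexed by $\{1, \ldots, n-2\}$ (an instance of $\AAA(A_{n-2}, \SO{2})$); the inductive hypothesis then yields a unique homomorphism $f: \SO{n-1} \to H$ extending the structure maps on those indices. Using the remaining rank-two piece $G_{n-2, n-1} \cong \SO{3}$, together with the commutation relations coming from the non-edges $\{i, n-1\}$ for $i \leq n-3$, one then attempts to extend $f$ to a homomorphism $\SO{n} \to H$.

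The main obstacle will be verifying that this extension is well-defined as a homomorphism, i.e.\ that every relation in $\SO{n}$ involving the last generator $G_{n-1}$ is already a consequence of the amalgam relations. The cleanest approach is via the homogeneous space $\SO{n}/\SO{n-1} \cong S^{n-1}$: one shows that the canonical universal enveloping group $G(\AAA)$ acts transitively on $S^{n-1}$ through the epimorphism $\pi : G(\AAA) \to \SO{n}$ (transitivity using the newly adjoined $G_{n-2, n-1} \cong \SO{3}$ to move the basepoint $e_n$), that the point stabilizer is exactly the image of $\SO{n-1}$, and hence that $\pi$ must be an isomorphism. An equivalent route, which is the one formalized in \cite[Theorem~1.2]{Medts/Gramlich/Horn}, is to invoke Tits's lemma for the simply connected spherical building of type $A_{n-1}$ together with a standard flag-transitivity argument.
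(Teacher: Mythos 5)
Your plan differs from the paper's proof in a substantive way, and there is a real gap in the main route you propose.

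The paper does not proceed by induction. It applies Tits's Lemma directly to the flag-transitive action of $\SO{n}$ on the simply connected projective geometry $\PP_{n-1}(\RR)$ of type $A_{n-1}$. This immediately exhibits $\SO{n}$ as the universal enveloping group of the amalgam of flag stabilizers, and the whole remaining work of the proof is to reconcile that amalgam with $\AAA(A_{n-1},\SO{2})$: the stabilizer of a corank-one subflag is $H_i T$ and of a corank-two subflag is $H_{ij} T$, where $T \cong C_2^{n-1}$ is the full diagonal $2$-torus, whereas $\AAA(A_{n-1},\SO{2})$ contains only $H_{ij}$. The paper bridges this by showing that $T$, together with all relations among its generators $T_i=\langle -1\rangle\le H_i$, is visible inside the rank-two members $H_{ij}$ of the amalgam. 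This torus reconciliation is the nontrivial technical content; if you fall back on Tits's Lemma as your ``equivalent route'', you would need an analogous step, and your proposal does not mention it.

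Your primary route (induction on $n$ plus the homogeneous-space argument) has a concrete gap in the step ``the point stabilizer is exactly the image of $\SO{n-1}$.'' Let $G(\AAA)$ be the canonical universal enveloping group, $\pi:G(\AAA)\to\SO{n}$ the canonical epimorphism, and $Q\le G(\AAA)$ the subgroup generated by the images of the sub-amalgam on $\{1,\dots,n-2\}$. The inductive hypothesis (together with uniqueness in the universal property) does give $Q\cong\SO{n-1}$ and that $\pi|_Q$ is injective. But the stabilizer of $e_n$ in $G(\AAA)$ is $\pi^{-1}(\SO{n-1})=Q\cdot\ker\pi$, which equals $Q$ if and only if $\ker\pi\subseteq Q$, i.e.\ if and only if $\ker\pi=1$ -- which is exactly what you are trying to prove. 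Transitivity of the $G(\AAA)$-action on $S^{n-1}$ gives nothing new here because the action is \emph{defined} through $\pi$, so $\ker\pi$ acts trivially and is automatically contained in every point stabilizer; the orbit--stabilizer bookkeeping is therefore circular unless you establish independently that $\mathrm{Stab}_{G(\AAA)}(e_n)=Q$. To close the gap one would need to show, from the amalgam relations alone, a decomposition of the form $G(\AAA)=Q\cdot H'_{n-1}\cdot Q$ (lifting the $KAK$-type decomposition $\SO{n}=\SO{n-1}\cdot\eps_{\{n-1,n\}}(\SO{2})\cdot\SO{n-1}$), which in turn requires pushing elements of $H'_{n-1}$ past arbitrary elements of $Q$ using the commuting $\SO{2}\times\SO{2}$ relations for non-edges together with the Euler-angle decomposition inside the edge group $\SO{3}$. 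That normal-form argument is where the real work lies, and it is essentially the content that Tits's Lemma packages up; as written, your proposal skips it.
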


\begin{proof}
Let $I:=\{1,\ldots,n-1\}$. The group $\SO{n}$ acts flag-transitively on the simply connected projective geometry $\mathcal{G} := \PP_{n-1}(\RR)$; simple connectedness follows from \cite[Theorem~13.32]{Tits:1974}, \cite[Theorem~2]{Tits:1981}, \cite[Proposition~11.1.9, Theorem~11.1.13]{Buekenhout/Cohen:2013}, flag-transitivity of the action from the Iwasawa/$QR$-decomposition of $\SL{n}$. A maximal flag is given by
\[\gen{e_1}_\RR \leq \gen{e_1,e_2}_\RR \leq \cdots \leq \gen{e_1,\ldots,e_{n-1}}_\RR.\]
Let $T$ be the subgroup of $\SO{n}$ of diagonal matrices; it is isomorphic to $C_2^{n-1}$ (where $C_2$ is a cyclic group of order 2). Moreover, for $1 \leq i \leq n-1$, let $H_i \cong \SO{2}$ be the circle group acting naturally on $\langle e_i, e_{i+1}\rangle_\RR$ and, for $1 \leq i \leq n-2$, let $H_{i,i+1} \cong \SO{3}$ be the group acting naturally on $\langle e_i,e_{i+1},e_{i+2}\rangle_\RR$. Furthermore, for $1 \leq i < j-1 \leq n-2$, let $H_{ij} := H_i H_j \cong \SO{2} \times \SO{2}$.   
Then the stabilizer of a sub-flag of co-rank one is of the form $H_iT$, $1 \leq i \leq n-1$, and the stabilizer of a sub-flag of co-rank two is of the form $H_{ij}T$, $1 \leq i < j \leq n-1$. 

The group $T \cong C_2^{n-1}$ admits a presentation with all generators and relations contained in the rank two subgroups $H_{ij}$ of $\SO{n}$: Indeed, $T$ is generated by the groups $C_2 \cong T_i := \gen{-1} \leq H_i \cong \SO{2}$ and for each $1 \leq i \neq j \leq n-1$ the relation $T_iT_j = T_jT_i$ is visible within $H_{ij}$.
Therefore, by an iteration of Tits's Lemma (see \cite[Corollary~1]{Tits:1986}, \cite[Corollary~1.4.6]{Ivanov/Shpectorov:2002}) with respect to the above maximal flag, the group $H:=\SO{n}$ is the universal enveloping group of the amalgam
$\AAA(\GGG, H)=\{ H_{ij},\; \Phi_{ij}^i \mid i\neq j\in I\}$,
where $\Phi_{ij}^i:H_i\to H_{ij}$ is the inclusion map for each $i\neq j\in I$.
One has
\[\forall\ i\in I:\qquad H_i=\eps_{\{i, i+1\}}\big(\SO{2}\big)\ ,\]
and
\[\forall\ i<j\in I:\qquad H_{ij}=\begin{cases}
\eps_{\{i,i+1,i+2\}}\big(\SO{3}\big),&\text{if }j=i+1, \\
\eps_{\{i,i+1\}}\big(\SO{2}\big)\times\eps_{\{j,j+1\}}\big(\SO{2}\big),&\text{if }j\neq i+1.
\end{cases}\]
As a consequence, the system
\[\alpha=\{ \id_I,\alpha_{ij},\alpha_i \mid i\neq j\in I\}: \AAA\big(A_{n-1},\SO{2}\big)\to \AAA(\GGG,H)\]
with
\[\forall\ i\in I:\qquad \alpha_i=\eps_{\{i,i+1\}}:\SO{2}\to H_i\]
and
\[
\forall\ i<j \in I:\qquad \alpha_{ij}=\begin{cases}
\eps_{\{i,i+1,i+2\}},&\text{if }j=i+1, \\
\eps_{\{i,i+1\}}\times\eps_{\{j,j+1\}}, &\text{if }j\neq i+1,
\end{cases}
\]
is an isomorphism of amalgams.
\end{proof}

\begin{remark}
The above proof mainly relies on geometric arguments in the Tits building of type $A_{n-1}$. We exploit this to generalize the above statements to other diagrams, see Theorems~\ref{thm:K-univ-sl} and \ref{thm:K-univ-adm}. The crucial observation to make is that --- via the local-to-global principle --- it basically suffices to understand the rank two situation in order to understand arbitrary types.
\end{remark}

\section{$\Spin{2}$-amalgams of simply laced type} \label{sec:spin2amalgams}
%  ___         _   _          
% / __| ___ __| |_(_)___ _ _  
% \__ \/ -_) _|  _| / _ \ ' \ 
% |___/\___\__|\__|_\___/_||_|

In analogy to Section~\ref{sec:so2amalgams} we now study the amalgamation of groups isomorphic to $\Spin{3}$, continuously glued to one another along circle groups. In particular, we describe $\Spin{n}$ as the universal enveloping group of its $\Spin{2}$-amalgam and relate the classification of continuous $\Spin{2}$-amalgams to the classification of continuous $\SO{2}$-amalgams via the lifting of automorphisms.

Recall the maps $\tilde\eps_{12}, \tilde\eps_{23} : \Spin{2} \to \Spin{3}$ from Lemma~\ref{2} and the maps $\tilde\iota_1, \tilde\iota_2 : \Spin{2} \to \Spin{2}\times \Spin{2}$ from Notation~\ref{iotaspin}.

\begin{definition}\label{18}
Let $\Pi$ be a simply laced diagram with labelling $\sigma:I\to V$.
A \Defn{$\Spin{2}$-amalgam with respect to $\Pi$ and $\sigma$} is an amalgam $\AAA=\{ G_{ij},\; \phi_{ij}^i \mid i\neq j\in I \}$ such that
\[\forall\ i\neq j\in I: \qquad
G_{ij}=\begin{cases}
\Spin{3},&\text{if }\{i,j\}^\sigma\in E(\Pi), \\
\Spin{2}\times\Spin{2},&\text{if }\{i,j\}^\sigma\notin E(\Pi), 
\end{cases}\]
and for $i<j\in I$,
\begin{align*}
%\forall\ i< j\in I: &&
\phi_{ij}^i\big(\Spin{2}\big)
  %=\tilde\eps_{12}\big(\Spin{2}\big)\ ,
    =\begin{cases}
        \tilde\eps_{12}\big(\Spin{2}\big),&\text{if  $\{i,j\}^\sigma\in E(\Pi)$}, \\
        \iota_1(\Spin{2}),&\text{if }\{i,j\}^\sigma\notin E(\Pi),
    \end{cases}
&& \phi_{ij}^j\big(\Spin{2}\big)
    =\begin{cases}
        \tilde\eps_{23}\big(\Spin{2}\big),&\text{if  $\{i,j\}^\sigma\in E(\Pi)$}, \\
        \iota_2(\Spin{2}),&\text{if }\{i,j\}^\sigma\notin E(\Pi).
    \end{cases}
\end{align*}
The \Defn{standard $\Spin{2}$-amalgam with respect to $\Pi$ and $\sigma$} is the (continuous) $\Spin{2}$-amalgam \[\AAA\big(\Pi,\sigma,\Spin{2}\big):=\{ G_{ij},\; \phi_{ij}^i, \mid i\neq j\in I \}\] with respect to $\Pi$ and $\sigma$ with
\begin{align*} \forall\ i< j\in I: &&
\phi_{ij}^i=\begin{cases}
\tilde\eps_{12},&\text{if }\{i,j\}^\sigma\in E(\Pi), \\
\tilde\iota_1,&\text{if }\{i,j\}^\sigma\notin E(\Pi).
\end{cases} &&
\phi_{ij}^j=\begin{cases}
\tilde\eps_{23},&\text{if }\{i,j\}^\sigma\in E(\Pi), \\
\tilde\iota_2,&\text{if }\{i,j\}^\sigma\notin E(\Pi).
\end{cases}
\end{align*}
\end{definition}

\begin{notation} \label{rem:lift-so2-ama-to-spin2-ama}
Proposition~\ref{prop:lift-aut-so2} enables us to lift $\SO{2}$-amalgams to
$\Spin{2}$-amalgams: Let $\Pi$ be a simply laced diagram with labelling
$\sigma:I\to V$ and let $\AAA=\{ G_{ij}, \phi_{ij}^i \mid i\neq j\in I\}$ be
an $\SO{2}$-amalgam with respect to $\Pi$ and $\sigma$. Given $i>j\in I$,
there are $\gamma_{ij}^i,\gamma_{ij}^j\in \Aut(\SO{2})$ such that
\begin{align*}
%\phi_{ij}^i=\eps_{12}\circ \gamma_{ij}^{i}\ ,
\phi_{ij}^i=
  \begin{cases}
  \eps_{12}\circ \gamma_{ij}^j,&\text{if }\{i,j\}^\sigma\in E(\Pi), \\
  \iota_1\circ \gamma_{ij}^j,&\text{if }\{i,j\}^\sigma\notin E(\Pi),
  \end{cases}
&&
\phi_{ij}^j=
  \begin{cases}
  \eps_{23}\circ \gamma_{ij}^j,&\text{if }\{i,j\}^\sigma\in E(\Pi), \\
  \iota_2\circ \gamma_{ij}^j,&\text{if }\{i,j\}^\sigma\notin E(\Pi).
  \end{cases}
\end{align*}
We then lift $\gamma_{ij}^i,\gamma_{ij}^j$ as in Lemma~\ref{prop:lift-aut-so2}
to $\tilde\gamma_{ij}^i,\tilde\gamma_{ij}^j\in\Aut(\Spin{2})$ and set
\begin{align*}
\tilde\phi_{ij}^i:=
  \begin{cases}
  \tilde\eps_{12}\circ \tilde\gamma_{ij}^i,&\text{if }\{i,j\}^\sigma\in E(\Pi), \\
  \tilde\iota_1\circ \tilde\gamma_{ij}^i,&\text{if }\{i,j\}^\sigma\notin E(\Pi),
  \end{cases}
&&
\tilde\phi_{ij}^j:=
  \begin{cases}
  \tilde\eps_{23}\circ \tilde\gamma_{ij}^j,&\text{if }\{i,j\}^\sigma\in E(\Pi), \\
  \tilde\iota_2\circ \tilde\gamma_{ij}^j,&\text{if }\{i,j\}^\sigma\notin E(\Pi),
  \end{cases}
\end{align*}
and
\[\widetilde{G}_{ij}:=\begin{cases}
\Spin{3},&\text{if }\{i,j\}^\sigma\in E(\Pi), \\
\Spin{2}\times\Spin{2},&\text{if }\{i,j\}^\sigma\notin E(\Pi).
\end{cases}\]
\end{notation}

\begin{definition} \label{rhoij}
Let $\Pi$ be a simply laced diagram with labelling $\sigma:I\to V$ and let $\AAA=\{ G_{ij}, \phi_{ij}^i \mid i\neq j\in I\}$ be an $\SO{2}$-amalgam with respect to $\Pi$ and $\sigma$. Then
$\wAAA:=\{ \widetilde{G}_{ij}, \tilde\phi_{ij}^i \mid i\neq j\in I\}$
is the \Defn{induced $\Spin{2}$-amalgam with respect to $\Pi$ and $\sigma$}.
We also set
\[ \rho_{ij} := 
\begin{cases}
 \rho_3 ,&\text{if }\{i,j\}^\sigma\in E(\Pi), \\
 \rho_2\times\rho_2,&\text{if }\{i,j\}^\sigma\notin E(\Pi).
 \end{cases}
\]
\end{definition}

\begin{lemma}\label{lem:ama-embed-and-rho-commute-An}
Let $\Pi$ be a simply laced diagram with labelling $\sigma:I\to V$, let $\AAA=\{ G_{ij}, \phi_{ij}^i \mid i\neq j\in I\}$ be an $\SO{2}$-amalgam with respect to $\Pi$ and $\sigma$, and
let $\tilde\phi_{ij}^i$ be as introduced in Notation~\ref{rem:lift-so2-ama-to-spin2-ama}.
Then for all $i \neq j \in I$
\[%\rho_n\circ \tilde\phi_{ij}^i=
\phi_{ij}^i\circ \rho_2 = \rho_{ij} \circ \tilde\phi_{ij}^i.
%=\begin{cases}
%\rho_3    \circ \tilde\phi_{ij}^i,&\text{if }\{i,j\}^\sigma\in E(\Pi), \\
%(\rho_2\times\rho_2)\circ \tilde\phi_{ij}^i,&\text{if }\{i,j\}^\sigma\notin E(\Pi).
%\end{cases}
\]
\end{lemma}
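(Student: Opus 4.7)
The plan is to prove the identity by a straightforward case distinction according to whether $\{i,j\}^\sigma \in E(\Pi)$ or not, and in each case split off the ``$\tilde\gamma$-part'' from the ``embedding part'' by purely formal manipulation. Thus I would first rewrite both sides using the definitions in Notation~\ref{rem:lift-so2-ama-to-spin2-ama} and Definition~\ref{rhoij}, reducing the claim to two naturality squares that have already been established elsewhere in the paper.

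Concretely, in the edge case where $\{i,j\}^\sigma \in E(\Pi)$ we have $\phi_{ij}^i = \eps_{12}\circ\gamma_{ij}^i$, $\tilde\phi_{ij}^i = \tilde\eps_{12}\circ\tilde\gamma_{ij}^i$, and $\rho_{ij} = \rho_3$. The identity to verify then reads
\[ \eps_{12}\circ\gamma_{ij}^i\circ\rho_2 \;=\; \rho_3\circ\tilde\eps_{12}\circ\tilde\gamma_{ij}^i. \]
I would insert in the middle the commutation $\rho_3\circ\tilde\eps_{12} = \eps_{12}\circ\rho_2$ from Lemma~\ref{2} (the ``spin-orthogonal naturality square''), reducing the equation to $\eps_{12}\circ\gamma_{ij}^i\circ\rho_2 = \eps_{12}\circ\rho_2\circ\tilde\gamma_{ij}^i$, which is immediate from Proposition~\ref{prop:lift-aut-so2} (the defining property of the lift $\tilde\gamma_{ij}^i$). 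The analogous computation for the ``$j$-side'' uses $\tilde\eps_{23}$ in place of $\tilde\eps_{12}$.

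In the non-edge case where $\{i,j\}^\sigma \notin E(\Pi)$ we have $\phi_{ij}^i = \iota_1\circ\gamma_{ij}^i$, $\tilde\phi_{ij}^i = \tilde\iota_1\circ\tilde\gamma_{ij}^i$, and $\rho_{ij} = \rho_2\times\rho_2$, and the target identity becomes
\[ \iota_1\circ\gamma_{ij}^i\circ\rho_2 \;=\; (\rho_2\times\rho_2)\circ\tilde\iota_1\circ\tilde\gamma_{ij}^i. \]
Here the relevant naturality square $(\rho_2\times\rho_2)\circ\tilde\iota_1 = \iota_1\circ\rho_2$ is explicitly recorded in Notation~\ref{nota:Dalpha-Salpha}, and combining it again with Proposition~\ref{prop:lift-aut-so2} yields the claim. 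The symmetric computation with $\tilde\iota_2$ covers the ``$j$-side''.

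Since all four instances proceed by the same two-step pattern (apply a naturality square, then apply the lifting equivariance), there is no genuine obstacle; the only thing to be careful about is keeping the bookkeeping of the four cases $\{$edge/non-edge$\}\times\{i,j\}$ straight and making sure that the lifted automorphisms $\tilde\gamma_{ij}^i$, $\tilde\gamma_{ij}^j$ are taken to be those produced by Proposition~\ref{prop:lift-aut-so2}, so that the equivariance $\rho_2\circ\tilde\gamma = \gamma\circ\rho_2$ is available by construction rather than requiring re-verification.
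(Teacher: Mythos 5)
Your proposal is correct and follows essentially the same argument as the paper's proof: unpack $\phi_{ij}^i$ and $\tilde\phi_{ij}^i$ via Notation~\ref{rem:lift-so2-ama-to-spin2-ama}, insert the naturality square $\rho_3 \circ \tilde\eps_{12} = \eps_{12}\circ\rho_2$ (resp.\ $(\rho_2\times\rho_2)\circ\tilde\iota_1 = \iota_1\circ\rho_2$), and finish with the lifting equivariance from Proposition~\ref{prop:lift-aut-so2}, treating the $j$-side symmetrically. The only cosmetic difference is that you cite Lemma~\ref{2} for the spin-orthogonal naturality square whereas the paper cites the repackaged version in Remark~\ref{6}; the content is identical.
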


\begin{proof}
Without loss of generality suppose $i<j\in I$. 
Let $\gamma_{ij}^i,\gamma_{ij}^j\in \Aut\big(\SO{2})$,
$\tilde\gamma_{ij}^i,\tilde\gamma_{ij}^j\in \Aut\big(\Spin{2})$,
and $\tilde\phi_{ij}^i, \tilde\phi_{ij}^j$
be as in Notation~\ref{rem:lift-so2-ama-to-spin2-ama}.
Then, if $\{i,j\}^\sigma\in E(\Pi)$, we find that
%(using Lemma~\ref{2}, Remark~\ref{6} and Proposition~\ref{prop:lift-aut-so2})
\[
      \phi_{ij}^i \circ \rho_2
    = \eps_{12}\circ \gamma_{ij}^{i} \circ \rho_2 
    = \eps_{12} \circ \rho_2 \circ \tilde\gamma_{ij}^{i}
    \overset{\ref{6}}{=} \rho_3 \circ \eps_{12} \circ \tilde\gamma_{ij}^{i}
    = \rho_3 \circ \tilde\phi_{ij}^{i}\ .
\]
Similarly we also conclude $\phi_{ij}^j \circ \rho_2 = \rho_3\circ \tilde\phi_{ij}^{j}$.
Moreover, in case $\{i,j\}^\sigma\notin E(\Pi)$ we deduce
\[
      \phi_{ij}^i \circ \rho_2
    = \eps_{12}\circ \gamma_{ij}^{i} \circ \rho_2 
    = \eps_{12} \circ \rho_2 \circ \tilde\gamma_{ij}^{i}
    =  (\rho_2\times\rho_2)\circ\tilde\iota_1 \circ \tilde\gamma_{ij}^{i}
    =  (\rho_2\times\rho_2)\circ \tilde\phi_{ij}^{i}.
\]
Again we conclude by a similar argument that also
$\phi_{ij}^j \circ \rho_2 = (\rho_2\times\rho_2)\circ \tilde\phi_{ij}^{j}$.
\end{proof}

\begin{remark} \label{rem:spin-ama-to-so-ama-and-back}
Clearly the construction of an induced $\Spin{2}$-amalgam is symmetric and can also be applied backwards: Starting with a $\Spin{2}$-amalgam
$\hat\AAA$ one can construct an $\SO{2}$-amalgam $\AAA$, such that $\hat\AAA=\wAAA$. In particular, we obtain an epimorphism for the standard $\Spin{2}$- and $\SO{2}$-amalgams with respect to $\Pi$ and $\sigma$, which we denote by \[\pi_{\Pi,\sigma} = \{ \mathrm{id}_I, \rho_2, \rho_{ij} \} : \AAA\big(\Pi,\sigma,\Spin{2}\big) \to \AAA\big(\Pi,\sigma,\SO{2}\big).\]
\end{remark}

\begin{proposition}\label{prop:lift-ama-iso-sl}
Let $\Pi$ be a simply laced diagram with labelling $\sigma:I\to V$, let
$\AAA_1$ and $\AAA_2$ be $\SO{2}$-amalgams with respect to $\Pi$ and $\sigma$, and let $\alpha=\{ \pi, \alpha_{ij}\mid i\neq j\in I\}:\AAA_1\to \AAA_2$ be an isomorphism of amalgams.
Then there is a unique isomorphism $\tilde\alpha=\{ \pi, \tilde\alpha_{ij}\mid i\neq j\in I\}:\wAAA_1\to \wAAA_2$ such that for all $i \neq j \in I$
\[\rho_{\pi(i)\pi(j)}\circ \tilde\alpha_{ij}=\alpha_{ij}\circ \rho_{ij} .\]
\end{proposition}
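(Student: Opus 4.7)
The plan is to construct $\tilde\alpha$ componentwise by lifting each $\alpha_{ij}$ via the results of Section~\ref{sec:7}, and then to verify amalgam-compatibility by a short diagram chase whose main ingredient is the divisibility of $\Spin{2}$.

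First I would fix $i \neq j \in I$. Since $\alpha_{ij}$ is a group isomorphism, the groups $G_{ij}$ in $\AAA_1$ and $G_{\pi(i)\pi(j)}$ in $\AAA_2$ are abstractly isomorphic, so $\pi$ necessarily preserves the edge/non-edge status of $\{i,j\}^\sigma$. Under the identifications of Convention~\ref{fix}, $\alpha_{ij}$ becomes an automorphism of either $\SO{3}$ or $\SO{2}\times\SO{2}$. Proposition~\ref{prop:lift-aut-soN}, respectively Corollary~\ref{prop:lift-aut-so2xso2}, then supplies a unique automorphism $\tilde\alpha_{ij}$ of the corresponding $\Spin{3}$ or $\Spin{2}\times\Spin{2}$ satisfying
\[
\rho_{\pi(i)\pi(j)} \circ \tilde\alpha_{ij} \;=\; \alpha_{ij} \circ \rho_{ij}.
\]
Setting $\tilde\alpha := \{\pi,\tilde\alpha_{ij} \mid i\neq j\in I\}$, the stated $\rho$-compatibility holds by construction, and each $\tilde\alpha_{ij}$ is bijective by choice.

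The hard part will be verifying that $\tilde\alpha$ actually respects the connecting homomorphisms, i.e., that
\[
\tilde\alpha_{ij} \circ \tilde\phi_{ij}^{i} \;=\; \tilde\phi_{\pi(i)\pi(j)}^{\pi(i)}
\]
holds on the nose rather than merely up to a central sign. Composing the left-hand side with $\rho_{\pi(i)\pi(j)}$ and applying, in order, the defining relation of $\tilde\alpha_{ij}$, Lemma~\ref{lem:ama-embed-and-rho-commute-An} for $\AAA_1$, the amalgam-isomorphism identity $\alpha_{ij}\circ\phi_{ij}^{i}=\phi_{\pi(i)\pi(j)}^{\pi(i)}$, and finally Lemma~\ref{lem:ama-embed-and-rho-commute-An} for $\AAA_2$ in reverse, yields
\[
\rho_{\pi(i)\pi(j)}\circ\tilde\alpha_{ij}\circ\tilde\phi_{ij}^{i}
  \;=\; \rho_{\pi(i)\pi(j)}\circ\tilde\phi_{\pi(i)\pi(j)}^{\pi(i)}.
\]
Thus the two candidate homomorphisms $\Spin{2} \to \widetilde G_{\pi(i)\pi(j)}$ agree modulo $\ker\rho_{\pi(i)\pi(j)}$, which is central and $2$-torsion (isomorphic to $\{\pm1\}$ or $\{\pm1\}^{2}$). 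Centrality makes their pointwise ratio a genuine group homomorphism from $\Spin{2}$ into this $2$-torsion target; and since $\Spin{2}\cong\SO{2}\cong\RR/\ZZ$ is divisible, any such homomorphism is trivial. Therefore the two maps coincide.

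Combining these observations, $\tilde\alpha$ is an isomorphism of $\Spin{2}$-amalgams of the required form, and its uniqueness now reduces to the uniqueness clauses of Proposition~\ref{prop:lift-aut-soN} and Corollary~\ref{prop:lift-aut-so2xso2}.
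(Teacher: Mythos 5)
Your proposal is correct, and its construction of $\tilde\alpha$ is identical to the paper's: reduce to automorphisms via the identifications of Convention~\ref{fix}, lift each $\alpha_{ij}$ by Proposition~\ref{prop:lift-aut-soN} (edge case) or Corollary~\ref{prop:lift-aut-so2xso2} (non-edge case), and chase through Lemma~\ref{lem:ama-embed-and-rho-commute-An} together with $\alpha_{ij}\circ\phi_{ij}^i = \phi_{\pi(i)\pi(j)}^{\pi(i)}$ to obtain $\rho_{\pi(i)\pi(j)}\circ\tilde\alpha_{ij}\circ\tilde\phi_{ij}^i = \rho_{\pi(i)\pi(j)}\circ\tilde\phi_{\pi(i)\pi(j)}^{\pi(i)}$. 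The genuine divergence is in how this congruence modulo $\ker\rho_{\pi(i)\pi(j)}$ is upgraded to an honest equality. The paper postcomposes with the inverse of the (restricted) connecting homomorphism to produce an automorphism covering the identity and invokes uniqueness of lifted automorphisms; as printed this cites Proposition~\ref{prop:lift-aut-soN}, although the map in question lives on a circle subgroup of $\Spin 3$ (resp.\ $\Spin 2\times\Spin 2$), so one is really leaning on the uniqueness clause of Proposition~\ref{prop:lift-aut-so2}, together with some unstated care about the domains of these restricted inverses. Your alternative --- observe that the pointwise ratio of the two candidate maps $\Spin 2 \to \wt G_{\pi(i)\pi(j)}$ is a homomorphism (using centrality of $\ker\rho_{\pi(i)\pi(j)}$ and commutativity of $\Spin 2$) into a finite $2$-torsion group, hence trivial because $\Spin 2$ is divisible --- closes the gap without any of that domain bookkeeping and is arguably cleaner. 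The uniqueness assertion at the end is justified exactly as you say, by the uniqueness clauses of Proposition~\ref{prop:lift-aut-soN} and Corollary~\ref{prop:lift-aut-so2xso2}.
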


\begin{proof}
Suppose $\AAA_1=\{ G_{ij}, \phi_{ij}^i \mid i\neq j\in I\}$
and $\AAA_2=\{ H_{ij}, \psi_{ij}^i \mid i\neq j\in I\}$.
Let $i\neq j\in I$.  Since both amalgams are defined with respect to $\Pi$
and $\sigma$, and since $G_{ij}\cong H_{\pi(i)\pi(j)}$, we conclude that
$\{i,j\}^\sigma$ is an edge if and only if
$\{i,j\}^{\pi\sigma}$ is an edge.  Hence up to relabeling and identifying $G_{ij}$ with its image under $\alpha_{ij}$, we
may assume $\pi=\id$ and $G_{ij}=H_{ij}$ and, thus,
$\alpha_{ij}\in\Aut(G_{ij})$ with $\alpha_{ij} \circ \phi_{ij}^i = \psi_{ij}^i$.
We distinguish two cases.

\begin{description}
\item[Case I: \boldmath$\{i,j\}^\sigma\in E(\Pi)$]
Then $\widetilde{G}_{ij}=\Spin{3}$.
Let $\tilde\alpha_{ij} \in \Aut(\Spin{3})$ be the unique automorphism from
Proposition~\ref{prop:lift-aut-soN} satisfying $\rho_3\circ
\tilde\alpha_{ij} = \alpha_{ij}\circ \rho_3$. It remains to verify that
this is compatible with the amalgam structure.  Indeed,
\begin{align*}
\rho_3 \circ \tilde\alpha_{ij} \circ \tilde\phi_{ij}^i \circ (\tilde\psi_{ij}^i)^{-1}
&= \alpha_{ij} \circ \rho_3 \circ \tilde\phi_{ij}^i \circ (\tilde\psi_{ij}^i)^{-1} %\\
 \overset{\ref{lem:ama-embed-and-rho-commute-An}}{=}
    \alpha_{ij} \circ \phi_{ij}^i \circ \rho_2 \circ (\psi_{ij}^i)^{-1} \\
&\overset{\ref{lem:ama-embed-and-rho-commute-An}}{=}
    \underbrace{\alpha_{ij} \circ \phi_{ij}^i \circ (\psi_{ij}^i)^{-1}}_{=\id_{\SO{3}}} \circ\; \rho_3
= \rho_3 = \rho_3 \circ \id_{\Spin{3}}. 
\end{align*}
Hence, by uniqueness in Proposition~\ref{prop:lift-aut-soN}, one has
$\tilde\alpha_{ij} \circ \tilde\phi_{ij}^i \circ (\tilde\psi_{ij}^i)^{-1} = \id_{\Spin{3}}$,
i.e., $\tilde\alpha_{ij} \circ \tilde\phi_{ij}^i = \tilde\psi_{ij}^i$.

\item[Case II: \boldmath$\{i,j\}^\sigma\notin E(\Pi)$]
Then $\widetilde{G}_{ij}=\Spin{2}\times\Spin{2}$.
Let $\tilde\alpha_{ij} \in \Aut(\Spin{2}\times\Spin{2})$ be the unique
automorphism from Corollary~\ref{prop:lift-aut-so2xso2} satisfying
$(\rho_2\times\rho_2)\circ \tilde\alpha_{ij} = \alpha_{ij}\circ
(\rho_2\times\rho_2)$.  It remains to verify that this is compatible with
the amalgam structure.  Indeed,
\begin{align*}
(\rho_2\times\rho_2) \circ \tilde\alpha_{ij} \circ \tilde\phi_{ij}^i \circ (\tilde\psi_{ij}^i)^{-1}
&= \alpha_{ij} \circ (\rho_2\times\rho_2) \circ \tilde\phi_{ij}^i \circ (\tilde\psi_{ij}^i)^{-1} %\\
 \overset{\ref{lem:ama-embed-and-rho-commute-An}}{=}
    \alpha_{ij} \circ \phi_{ij}^i \circ \rho_2 \circ (\psi_{ij}^i)^{-1} \\
&\overset{\ref{lem:ama-embed-and-rho-commute-An}}{=}
    \underbrace{\alpha_{ij} \circ \phi_{ij}^i \circ (\psi_{ij}^i)^{-1}}_{=\id_{\SO{2}\times\SO{2}}}
           \circ\; (\rho_2\times\rho_2)
= (\rho_2\times\rho_2) ..
\end{align*}
By uniqueness in Corollary~\ref{prop:lift-aut-so2xso2}, one concludes as in the previous case that $\tilde\alpha_{ij} \circ \tilde\phi_{ij}^i = \tilde\psi_{ij}^i$.
\qedhere
\end{description}
\end{proof}

\begin{corollary}
Let $\Pi$ be a simply laced diagram with labellings $\sigma_1,\sigma_2$. Then $\AAA\big(\Pi,\sigma_1,\Spin{2}\big)\cong \AAA\big(\Pi,\sigma_2,\Spin{2}\big)$.
\end{corollary}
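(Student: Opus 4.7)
The plan is to reduce this to the already established $\SO{2}$-version (Consequence~\ref{14}) and then use the lifting machinery from Proposition~\ref{prop:lift-ama-iso-sl}. First I would observe that the standard $\Spin{2}$-amalgam $\AAA(\Pi,\sigma,\Spin{2})$ is exactly the induced $\Spin{2}$-amalgam $\wAAA(\Pi,\sigma,\SO{2})$ associated to the standard $\SO{2}$-amalgam $\AAA(\Pi,\sigma,\SO{2})$ in the sense of Definition~\ref{rhoij}: indeed, the connecting homomorphisms of the standard $\SO{2}$-amalgam are exactly $\eps_{12}$, $\eps_{23}$, $\iota_1$, $\iota_2$ (i.e.\ the $\gamma_{ij}^i$ in Notation~\ref{rem:lift-so2-ama-to-spin2-ama} are all $\id_{\SO{2}}$), so their lifts via Proposition~\ref{prop:lift-aut-so2} are also $\id_{\Spin{2}}$ and the induced connecting homomorphisms are $\tilde\eps_{12}$, $\tilde\eps_{23}$, $\tilde\iota_1$, $\tilde\iota_2$, matching Definition~\ref{18}.

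Next I would invoke Consequence~\ref{14} to obtain an isomorphism of $\SO{2}$-amalgams
\[
\alpha=\{\pi,\alpha_{ij}\mid i\neq j\in I\}:\AAA(\Pi,\sigma_1,\SO{2})\to \AAA(\Pi,\sigma_2,\SO{2}).
\]
Applying Proposition~\ref{prop:lift-ama-iso-sl} to this isomorphism produces a (unique) isomorphism of the induced $\Spin{2}$-amalgams,
\[
\tilde\alpha=\{\pi,\tilde\alpha_{ij}\mid i\neq j\in I\}:\wAAA(\Pi,\sigma_1,\SO{2})\to \wAAA(\Pi,\sigma_2,\SO{2}),
\]
compatible with the projections $\rho_{ij}$. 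By the identification of induced and standard $\Spin{2}$-amalgams from the first step, this $\tilde\alpha$ is the desired isomorphism
\[
\AAA(\Pi,\sigma_1,\Spin{2})\cong \AAA(\Pi,\sigma_2,\Spin{2}).
\]

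I do not anticipate a serious obstacle: the only subtlety is to confirm that the standard $\Spin{2}$-amalgam coincides with the induced one (so that Proposition~\ref{prop:lift-ama-iso-sl} is applicable), which reduces to the remark that lifting the identity automorphism of $\SO{2}$ via Proposition~\ref{prop:lift-aut-so2} yields the identity automorphism of $\Spin{2}$. Everything else is a direct application of results already in hand.
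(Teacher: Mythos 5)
Your proof is correct and follows exactly the same route as the paper's: it identifies the standard $\Spin{2}$-amalgams as the induced $\Spin{2}$-amalgams of the standard $\SO{2}$-amalgams, invokes Consequence~\ref{14}, and applies Proposition~\ref{prop:lift-ama-iso-sl}. The paper's version is merely terser, stating without elaboration that "the definitions imply" the induced and standard $\Spin{2}$-amalgams coincide, a point you spell out explicitly.
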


\begin{proof}
Let $\AAA_1:=\AAA\big(\Pi,\sigma_1,\SO{2}\big)$ and $\AAA_2:=\AAA\big(\Pi,\sigma_2,\SO{2}\big)$.
The definitions imply
$\wAAA_1=\AAA\big(\Pi,\sigma_1,\Spin{2}\big)$ and $\wAAA_2=\AAA\big(\Pi,\sigma_2,\Spin{2}\big)$.
Moreover, one has $\AAA_1\cong \AAA_2$ by Consequence~\ref{14}. The claim now follows by applying Proposition~\ref{prop:lift-ama-iso-sl}.
\end{proof}

\begin{definition} \label{def:std-ama-Spin2}
As before, for a simply laced diagram $\Pi$ with labelling $\sigma$, we write $\AAA\big(\Pi,\Spin{2}\big)$ to denote the isomorphism type of $\AAA\big(\Pi,\sigma,\Spin{2}\big)$ and, by slight abuse of notation, any representative of this isomorphism type.
It is called the \Defn{standard $\Spin{2}$-amalgam with respect to $\Pi$}.
\end{definition}

\begin{theorem} \label{thm:uniqueness-spin-sl}
Let $\Pi$ be a simply laced diagram with labelling $\sigma:I\to V$ and let
$\wAAA$ be a continuous $\Spin{2}$-amalgam with respect to $\Pi$ and $\sigma$.
Then $\wAAA\cong \AAA\big(\Pi,\Spin{2}\big)$.
\end{theorem}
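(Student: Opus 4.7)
The plan is to reduce to Theorem~\ref{thm:uniqueness-so-sl} by exhibiting $\wAAA$ as the induced $\Spin{2}$-amalgam of a continuous $\SO{2}$-amalgam and then applying the lifting machinery of Proposition~\ref{prop:lift-ama-iso-sl}.

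Write $\wAAA = \{\widetilde{G}_{ij}, \tilde\phi_{ij}^i \mid i \neq j \in I\}$. Definition~\ref{18} pins down the images of the connecting homomorphisms, so for each $i < j$ I can factor
\[\tilde\phi_{ij}^i = \tilde\eps_{12}\circ\tilde\gamma_{ij}^i, \quad \tilde\phi_{ij}^j = \tilde\eps_{23}\circ\tilde\gamma_{ij}^j \quad \text{(edge case)},\]
or the analogous factorizations through $\tilde\iota_1,\tilde\iota_2$ in the non-edge case, with uniquely determined continuous automorphisms $\tilde\gamma_{ij}^i,\tilde\gamma_{ij}^j \in \Aut(\Spin{2})$. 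Since $\{\pm 1\}$ is the unique subgroup of $\Spin{2}$ of order two, it is characteristic, so each $\tilde\gamma_{ij}^i$ descends to a continuous automorphism $\gamma_{ij}^i$ of $\SO{2} \cong \Spin{2}/\{\pm 1\}$, and by the uniqueness clause of Proposition~\ref{prop:lift-aut-so2} the continuous automorphism $\tilde\gamma_{ij}^i$ is precisely the lift of $\gamma_{ij}^i$. Setting $\phi_{ij}^i := \eps_{12}\circ\gamma_{ij}^i$ (respectively $\iota_1\circ\gamma_{ij}^i$) and $G_{ij} := \SO{3}$ (respectively $\SO{2}\times\SO{2}$) as appropriate, one obtains a continuous $\SO{2}$-amalgam $\AAA := \{G_{ij}, \phi_{ij}^i\}$ with respect to $\Pi$ and $\sigma$ whose induced $\Spin{2}$-amalgam, in the sense of Definition~\ref{rhoij}, is equal to $\wAAA$ on the nose (cf.\ Remark~\ref{rem:spin-ama-to-so-ama-and-back}).

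Now Theorem~\ref{thm:uniqueness-so-sl} supplies an isomorphism $\alpha : \AAA \to \AAA(\Pi,\sigma,\SO{2})$. Since the standard $\Spin{2}$-amalgam $\AAA(\Pi,\sigma,\Spin{2})$ is by construction the induced $\Spin{2}$-amalgam of $\AAA(\Pi,\sigma,\SO{2})$, Proposition~\ref{prop:lift-ama-iso-sl} produces a (unique) isomorphism $\tilde\alpha : \wAAA \to \AAA(\Pi,\sigma,\Spin{2})$ lifting $\alpha$, and this is the desired isomorphism.

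The only delicate point is the descent step: one must check that passing from $\wAAA$ down to $\AAA$ involves no genuine choice and preserves continuity. This is exactly what Proposition~\ref{prop:lift-aut-so2} guarantees, establishing a bijective correspondence between continuous automorphisms of $\SO{2}$ and continuous automorphisms of $\Spin{2}$; the abstract automorphism groups of both $\SO{2}$ and $\Spin{2}$ are enormous, so the continuity hypothesis is essential for uniqueness up to isomorphism, in parallel to the situation in Remark~\ref{rem:continuous} and the proof of Theorem~\ref{thm:uniqueness-so-sl}.
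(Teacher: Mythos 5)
Your proof is correct and follows essentially the same route as the paper's: pass to the $\SO{2}$-amalgam $\AAA$ that induces $\wAAA$ (Remark~\ref{rem:spin-ama-to-so-ama-and-back}), apply Theorem~\ref{thm:uniqueness-so-sl} to identify $\AAA$ with the standard $\SO{2}$-amalgam, and lift that isomorphism via Proposition~\ref{prop:lift-ama-iso-sl}. You have merely unpacked the paper's reference to Remark~\ref{rem:spin-ama-to-so-ama-and-back} by giving the explicit descent of the $\tilde\gamma_{ij}^i$ through the characteristic subgroup $\{\pm1\}$ and invoking the uniqueness clause of Proposition~\ref{prop:lift-aut-so2} to reconcile it with the lifting construction, which is a useful level of detail but not a different argument.
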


\begin{proof}
Let $\AAA$ be the continuous $\SO{2}$-amalgam that induces $\wAAA$, which exists by Remark~\ref{rem:spin-ama-to-so-ama-and-back}.
By Theorem~\ref{thm:uniqueness-so-sl}, one has
$\AAA\cong \AAA\big(\Pi,\SO{2}\big)$.
Proposition~\ref{prop:lift-ama-iso-sl} yields the claim, since $\AAA\big(\Pi,\SO{2}\big)$ induces $\AAA\big(\Pi,\Spin{2}\big)$. \qedhere
\end{proof}

\begin{theorem}\label{thm:univspin}
For $n\geq3$, the group $\Spin{n}$ is the universal enveloping group of $\AAA\big(A_{n-1},\Spin{2}\big)$.
\end{theorem}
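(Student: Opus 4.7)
The plan is to mirror Theorem~\ref{thm:univso} one level up, exploiting the central extension machinery of Proposition~\ref{prop:env-grp-central-cover} to transfer universality from $\SO n$ to $\Spin n$. Let $(\wt G, \wt\tau)$ denote the universal enveloping group of $\AAA(A_{n-1}, \Spin 2)$. The goal is to show that the canonical epimorphism $\pi : \wt G \twoheadrightarrow \Spin n$, produced once $\Spin n$ has been identified as an enveloping group, is in fact an isomorphism.

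First I will exhibit $\Spin n$ as an enveloping group of $\AAA(A_{n-1}, \Spin 2)$: for edges $\{i, i+1\}$ via $\tilde\eps_{\{i, i+1, i+2\}} : \Spin 3 \to \Spin n$ from Lemma~\ref{2}, and for non-edges $\{i, j\}$ via $(a, b) \mapsto \tilde\eps_{\{i, i+1\}}(a)\, \tilde\eps_{\{j, j+1\}}(b)$. The two factors commute because in $\Cle{n}{0}$ the bivectors $e_k e_\ell$ and $e_p e_q$ commute whenever $\{k, \ell\} \cap \{p, q\} = \emptyset$. Compatibility with the connecting homomorphisms is immediate, producing the canonical epimorphism $\pi : \wt G \twoheadrightarrow \Spin n$ with $\rho_n \circ \pi = \wh\alpha$, where $\wh\alpha : \wt G \twoheadrightarrow \SO n$ is the epimorphism induced, via Theorem~\ref{thm:univso}, by the amalgam projection $\pi_{A_{n-1}} : \AAA(A_{n-1}, \Spin 2) \to \AAA(A_{n-1}, \SO 2)$ of Remark~\ref{rem:spin-ama-to-so-ama-and-back}.

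Next I will apply Proposition~\ref{prop:env-grp-central-cover} with $\wt U = \Spin 2$, $U = \SO 2$, and $\wt V = \{\pm 1\} = \ker \rho_2$. The hypotheses are easy to check: on edges, $\wt G_{ij} = \Spin 3$ has centre $\{\pm 1\} = Z^{\wt\phi}_{ij} = A_{ij} = \ker \rho_3$; on non-edges, $\wt G_{ij} = \Spin 2 \times \Spin 2$ is abelian, and both $Z^{\wt\phi}_{ij}$ and $A_{ij} = \ker(\rho_2 \times \rho_2)$ equal $\{\pm 1\} \times \{\pm 1\}$. The proposition then yields that $\wt G$ is a central extension of $\SO n$ by $N := \langle \wt\tau_{ij}(A_{ij}) \mid i \neq j \rangle$.

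The crux is the computation of $N$. The amalgam identifications $\wt\tau_{ij} \circ \wt\phi^i_{ij} = \wt\tau_{ik} \circ \wt\phi^i_{ik}$ force the element $z_i := \wt\tau_{ij}(\wt\phi^i_{ij}(-1))$ to be independent of $j$, and the equality $\tilde\eps_{12}(-1) = -1 = \tilde\eps_{23}(-1)$ in $\Spin 3$ forces $z_i = z_{i+1}$ for every edge; by connectedness of $A_{n-1}$ all $z_i$ coincide with a single element $z$. Since $(-1)^2 = 1$ in $\Spin 2$ one has $z^2 = 1$, and the remaining non-edge kernel element $(-1, -1) = (-1, 1)(1, -1)$ contributes $z\cdot z = 1$; hence $N = \langle z \rangle$ has order at most $2$. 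Finally, $\pi(z)$ equals $\tilde\eps_{\{1, 2\}}(-1) = -1 \in \Spin n$, which is non-trivial, so $|N| = 2$ and $\pi|_N$ maps bijectively onto $\ker \rho_n$. Because $\ker \pi \leq \ker(\rho_n \circ \pi) = \ker \wh\alpha = N$, the bijectivity of $\pi|_N$ forces $\ker \pi = 1$, and $\pi$ is the desired isomorphism. The only delicate point will be the bookkeeping that computes $N$; every identification there is forced by the defining relations of $\wt G$, but one has to be careful to account for both the edge kernels inside $\Spin 3$ and the richer non-edge kernels inside $\Spin 2 \times \Spin 2$.
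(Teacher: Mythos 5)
Your argument is correct, but it reaches the conclusion by a genuinely different route than the paper. The paper re-runs the geometric argument of Theorem~\ref{thm:univso} one level up: it uses the flag-transitive action of $\Spin{n}$ on $\PP_{n-1}(\RR)$ via the twisted adjoint representation and an iterated application of Tits's Lemma to exhibit $\Spin{n}$ as the universal enveloping group of the amalgam $\AAA(\GGG,H)$ of block-diagonal subgroups (in which the non-edge groups are $\Spin 2 \cdot \Spin 2$ rather than $\Spin 2 \times \Spin 2$), and then proves that any enveloping morphism $\tau : \AAA(A_{n-1},\Spin 2)\to G$ factors through $\AAA(\GGG,H)$ by an inductive chain of identifications of the $-1$'s. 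You instead treat Theorem~\ref{thm:univso} as a black box, invoke Proposition~\ref{prop:env-grp-central-cover} (after verifying $A_{ij}\leq Z_{ij}^{\wt\phi}\leq Z(\wt{G}_{ij})$ on both edge and non-edge groups) to identify $\ker\wh\alpha$ with the subgroup $N$, and then show $|N|\leq 2$ by the same $-1$-collapsing combinatorics; the observation $\pi(z)=-1_{\Spin n}\neq 1$ then forces $\ker\pi=1$. Your approach avoids a second geometric/building-theoretic argument and illustrates that the central-extension machinery of Proposition~\ref{prop:env-grp-central-cover} already contains the essential content; the paper's version has the merit of being more self-contained (and parallel in form to Theorem~\ref{thm:univso}), whereas yours is shorter modulo the machinery and makes the role of the kernel element $z$ more transparent. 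One small thing worth making explicit: the identity $\rho_n\circ\pi=\wh\alpha$ follows from the universal property of $(\wt{G},\wt\tau)$, since both sides are enveloping morphisms of $\AAA(A_{n-1},\Spin 2)$ into $\SO{n}$ that agree on the image of each $\wt\tau_{ij}$; you state it but do not justify it, and it is the step that glues the two halves of your argument together.
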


\begin{proof} The proof runs along the same lines as the proof of Theorem~\ref{thm:univso}.
Let $I:=\{1,\ldots,n-1\}$. The group $\Spin{n}$ acts flag-transitively via the twisted adjoint representation (cf.\ Theorem~\ref{rho}(b)) on the simply connected projective geometry $\GGG := \PP_{n-1}(\RR)$ with fundamental maximal flag $\langle e_1\rangle_\RR \leq \langle e_1,e_2 \rangle_\RR \leq \cdots \leq \langle e_1,\ldots,e_{n-1}\rangle_\RR$.
By an iteration of Tits's Lemma (\cite[Corollary~1]{Tits:1986}, \cite[Corollary~1.4.6]{Ivanov/Shpectorov:2002}) with respect to the above maximal flag, the group $H:=\Spin{n}$ is the universal enveloping group of the amalgam
\[\AAA(\GGG, H)=\{ H_{ij}, \phi_{ij}^i \mid i\neq j\in I\} ,\]
where the $H_{ij}$ are the ``block-diagonal'' rank two subgroups  and $\Phi_{ij}^i:H_i\to H_{ij}$ is the inclusion map for each $i\neq j\in I$. By Consequence~\ref{7} and Remark~\ref{6}, one has
\[\forall\ i\in I:\qquad H_i=\tilde\eps_{\{i, i+1\}}\big(\Spin{2}\big)\ ,\]
and
\[\forall\ i<j\in I:\qquad H_{ij}=\begin{cases}
\hphantom{\big\langle}\tilde\eps_{\{i,i+1,i+2\}}\big(\Spin{3}\big),&\text{if }j=i+1, \\
\tilde\eps_{\{i, i+1\}}\big(\Spin{2}\big) \cdot  \tilde\eps_{\{j,j+1\}}\big(\Spin{2}\big),&\text{if }j\neq i+1.
\end{cases}\]
As a consequence, the system
\[\alpha=\{ \id_I,\alpha_{ij},\alpha_i \mid i\neq j\in I\}: \AAA\big(A_{n-1},\Spin{2}\big)\to \AAA(\GGG,H)\]
with
\[\forall\ i\in I:\qquad \alpha_i=\tilde\eps_{\{i,i+1\}}:\Spin{2}\to H_i\]
and
\[
\forall\ i<j \in I:\qquad \alpha_{ij}=\begin{cases}
\tilde\eps_{\{i,i+1,i+2\}},&\text{if }j=i+1, \\
\eps_{\{i,i+1\}}\cdot\tilde\eps_{\{j,j+1\}},&\text{if }j\neq i+1,
\end{cases}
\]
is an epimorphism of $\Spin{2}$-amalgams. In fact, each $\alpha_i$ and each $\alpha_{ii+1}$ is an isomorphism, only the $\alpha_{ij} : \Spin{2} \times \Spin{2} \to \Spin{2}.\Spin{2}$, $j \neq i+1$, have a kernel of order two: the $-1$ in the left-hand factor gets identified with the $-1$ in the right-hand factor. We may conclude that $\Spin{n}$ is an enveloping group of $\AAA\big(A_{n-1},\Spin{2}\big)$. It remains to prove universality.

Let $(G,\tau_{ij})$ be an arbitrary enveloping group of $\AAA\big(A_{n-1},\Spin{2}\big)$ and let $1 \leq i < j \leq n-1$ with $j \neq i+1$. 
By definition the following diagram commutes for $1 \leq a \neq b \neq c \leq n-1$:
\[
\xymatrix{
& G_{ab} \ar[dr]^{\tau_{ab}} \\
  \Spin{2} \ar[dr]_{\phi_{bc}^{b}} \ar[ur]^{\phi_{ab}^{b}}  && G \\
&   G_{bc} \ar[ur]_{\tau_{bc}} 
}
\]
In particular, one has:
\begin{align*}
(\tau_{i,i+2} \circ \phi_{i,i+2}^{i})(-1) & = (\tau_{i,i+1} \circ \phi_{i,i+1}^{i})(-1)  & \text{(set $b=i$, $a=i+1$, $c=i+2$)}\\
&= (\tau_{i,i+1} \circ \phi_{i,i+1}^{i+1})(-1) & \text{(since $\tilde\eps_{12}(-1)=\tilde\eps_{23}(-1)$)} \\
&= (\tau_{i+1,i+2} \circ \phi_{i+1,i+2}^{i+1})(-1) & \text{(set $b=i+1$, $a=i$, $c=i+2$)}\\
&= (\tau_{i+1,i+2} \circ \phi_{i+1,i+2}^{i+2})(-1) & \text{(since $\tilde\eps_{12}(-1)=\tilde\eps_{23}(-1)$)} \\
&= (\tau_{i,i+2} \circ \phi_{i,i+2}^{i+2})(-1) & \text{(set $b=i+2$, $a=i$, $c=i+1$)}.
\end{align*}

We conclude by induction that $\tau_{ij} : \Spin{2} \times \Spin{2} \to G$, $j \neq i+1$, always factors through $\Spin{2}.\Spin{2}$ or, in other words, $\tau : \AAA\big(A_{n-1},\Spin{2}\big) \to G$ always factors through $\AAA(\GGG, H)$. That is, the universal enveloping group $\Spin{n}$ of $\AAA(\GGG, H)$ is also a universal enveloping group of $\AAA\big(A_{n-1},\Spin{2}\big)$. 
\end{proof}

\begin{remark}
The proof of Theorem~\ref{thm:univspin} would become a bit easier if one replaced $\Spin{2} \times \Spin{2}$ by $\Spin{2}.\Spin{2}$ in Definition~\ref{18}. The setup we chose, on the other hand, makes it easier to deal with reducible diagrams. Of course, one could a priori try to just restrict oneself to the case of irreducible diagrams, which in the case of simply laced diagrams is unproblematic. However, when dealing with arbitrary diagams it will turn out that it is more natural to also allow reducible diagrams. 
\end{remark}

\section{Spin covers of simply laced type} \label{sec:spin-cover-simply-laced}
%  ___         _   _          
% / __| ___ __| |_(_)___ _ _  
% \__ \/ -_) _|  _| / _ \ ' \ 
% |___/\___\__|\__|_\___/_||_|

\begin{notation}
Let $\Pi=(V,E)$ be a (finite) simply laced diagram with labelling $\sigma:I\to V$ and let $c(\Pi)$ denote the {number of connected components} of $\Pi$.
A \Defn{component labelling} of $\Pi$ is a map $\KKK:V\to\{1,\ldots,c(\Pi)\}$
such that $u,v\in V$ are in the same connected component of $\Pi$
if and only if $\KKK(u)=\KKK(v)$.

Throughout this section, let $\Pi$ be a (finite) simply laced diagram, $\sigma : I \to V$ and labelling and $\KKK$ a component labelling.
\end{notation}

Generalizing Theorem~\ref{thm:univso}, the universal enveloping group of a continuous $\SO{2}$-amalgam over an arbitrary simply laced diagram $\Pi$ is isomorphic to the maximal compact subgroup of the corresponding split real Kac--Moody group (cf.\ Theorem~\ref{thm:K-univ-sl}). The goal of this section is to construct and investigate its spin cover, which will arise as the universal enveloping group of the continuous $\Spin{2}$-amalgam over the same simply laced diagram $\Pi$. In the case of $E_{10}$ its existence has been conjectured by Damour and Hillmann in  \cite[Section~3.5, p.\ 24]{DamourHillmann}.

Additional key ingredients, next to transitive actions on buildings and the theory of $\SO{2}$- and $\Spin{2}$-amalgams developed so far, will be the generalized spin representations constructed in \cite{Hainke/Koehl/Levy} and the Iwasawa decomposition of split real Kac--Moody groups studied, for example, in \cite{Medts/Gramlich/Horn}.
For definitions and details on Kac--Moody theory we refer the reader to \cite{kac1994infinite}, \cite{Remy:2002}, \cite[Section~2]{Hainke/Koehl/Levy}, \cite[Section~7]{Hartnick/Koehl/Mars}, \cite[Chapter~5]{Marquis:2013}.

\begin{theorem}\label{thm:K-univ-sl}
Let $\Pi$ be a simply laced diagram, let $G(\Pi)$ be the corresponding simply connected split real Kac--Moody group, and let $K(\Pi)$ be its maximal compact subgroup, i.e., the subgroup fixed by the Cartan--Chevalley involution. Then there exists a faithful universal enveloping morphism $\tau_{K(\Pi)} : \AAA\big(\Pi,\SO{2}\big) \to K(\Pi)$.
\end{theorem}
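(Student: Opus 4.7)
The plan is to mimic the strategy of Theorem~\ref{thm:univso}, replacing the finite-rank projective geometry by the positive half $\Delta_+$ of the twin building of $G(\Pi)$ and the rank-two Lie computation by general Kac--Moody structure theory.

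First, I would exhibit a natural flag-transitive action of $K(\Pi)$ on $\Delta_+$. By the Iwasawa/$KAU^+$-decomposition of split real Kac--Moody groups (see \cite{Medts/Gramlich/Horn}), the group $G(\Pi) = K(\Pi)\cdot A\cdot U^+$, where $B^+ = A\cdot U^+$ stabilizes the fundamental chamber of $\Delta_+$. Hence $K(\Pi)$ already acts chamber-transitively on $\Delta_+$, and then standard transitivity arguments give flag-transitivity. For each $i \in I$ let $K_i \leq K(\Pi)$ be the fundamental rank-one subgroup associated with the node $i$; it is isomorphic to $\SO{2}$, being the maximal compact subgroup of the fundamental $\SL{2}$-subgroup fixed by the Cartan--Chevalley involution. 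For $i \neq j \in I$ let $K_{ij} := \langle K_i, K_j\rangle$. Since the rank-two sub-diagram on $\{i,j\}$ is spherical (either $A_2$ if $i,j$ are joined by an edge, or $A_1 \times A_1$ otherwise), the subgroup $K_{ij}$ equals the $\theta$-fixed subgroup of the corresponding rank-two Levi and is therefore isomorphic either to $\SO{3}$ or to $\SO{2}\times\SO{2}$, matching the two cases of Definition~\ref{defstandardso}.

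Next I would invoke Tits's Lemma (\cite[Corollary~1]{Tits:1986}, \cite[Corollary~1.4.6]{Ivanov/Shpectorov:2002}) exactly as in the proof of Theorem~\ref{thm:univso}: the building $\Delta_+$ of a Kac--Moody group is simply connected as a chamber system, so a flag-transitive group action yields a presentation of the acting group as the universal enveloping group of the amalgam of stabilizers of sub-flags of co-rank at most two in a fixed maximal flag. Iterating this over the fundamental chamber of $\Delta_+$ identifies $K(\Pi)$ with the universal enveloping group of the amalgam $\AAA(\GGG,K(\Pi)) = \{K_{ij},\,\Phi^i_{ij}\}$, where the $\Phi^i_{ij}$ are the inclusion maps. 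By construction, this amalgam is a continuous $\SO{2}$-amalgam with respect to $\Pi$ in the sense of Definition~\ref{defstandardso}, so Theorem~\ref{thm:uniqueness-so-sl} provides an amalgam isomorphism $\AAA(\Pi,\SO{2}) \cong \AAA(\GGG,K(\Pi))$. Composing with the universal enveloping morphism into $K(\Pi)$ yields the desired universal enveloping morphism $\tau_{K(\Pi)}$. Faithfulness is automatic because each $K_{ij}$ sits inside $K(\Pi)$ as an actual subgroup via its defining inclusion.

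I expect the main obstacle to be, as always in the non-spherical Kac--Moody setting, the careful verification that the two ingredients imported from the literature really apply: namely, chamber-transitivity of $K(\Pi)$ on $\Delta_+$ via the Iwasawa decomposition, and simple connectedness of $\Delta_+$ as a chamber system (together with the applicability of Tits's Lemma to an infinite-rank building). A secondary, more technical point is the identification of the rank-two subgroups $K_{ij}$: one must check that in the simply connected split real Kac--Moody group the fixed-point subgroup of the Cartan--Chevalley involution inside the rank-two Levi is indeed $\SO{3}$ and not an isogenous form such as $\Spin{3}$, which is the reason the present theorem yields the $\SO{2}$-amalgam rather than the $\Spin{2}$-amalgam; this distinction is precisely what makes the subsequent construction of the spin cover non-trivial.
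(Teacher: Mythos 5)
Your proposal is essentially the paper's approach with one extra level of citation unpacked. The paper simply invokes \cite[Theorem~1.2]{Medts/Gramlich/Horn} for the statement that $K(\Pi)$ is the universal enveloping group of the abstract amalgam of rank-two fixed subgroups $H_{ij} = \Fix_{G_{ij}}(\omega)$, whereas you re-derive this from the Iwasawa decomposition, chamber-transitivity of $K(\Pi)$ on the positive building, and Tits's Lemma --- which is indeed what that reference does and what the paper spells out at this level of detail in the proof of Theorem~\ref{thm:univso}. The identification step via Theorem~\ref{thm:uniqueness-so-sl} is the same.

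One point to sharpen: the amalgam produced by Tits's Lemma has subgroups that are merely \emph{abstractly} isomorphic to $\SO{3}$ or $\SO{2}\times\SO{2}$, whereas Definition~\ref{defstandardso} requires the $G_{ij}$ to literally equal the matrix groups fixed in Convention~\ref{fix}. So one cannot say ``by construction, this amalgam is a continuous $\SO{2}$-amalgam'' and then apply Theorem~\ref{thm:uniqueness-so-sl} directly; one first has to transport the amalgam via continuous isomorphisms $\alpha_{ij}: H_{ij}\to\SO{3}$ or $\to\SO{2}\times\SO{2}$ to an amalgam on the concrete groups (this is the paper's construction of the intermediate amalgam $\AAA_2$ and the maps $\alpha_i$, $\alpha_{ij}$). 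Also, after Tits's Lemma you actually get the amalgam of parabolic-intersected-with-$K$ subgroups $K_{ij}\cdot(T\cap K)$, and descending to the amalgam $\{K_{ij}\}$ requires the torus argument from the proof of Theorem~\ref{thm:univso}, which you implicitly invoke; this is fine, but worth making explicit since it is the only place where the non-trivial relations of the centre of the enveloping group are controlled.
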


\begin{proof}
%Let $\sigma:I\to V$ be a labelling of $\Pi$.
For $i \in I$ denote by $G_i$ the fundamental rank one subgroups of $G(\Pi)$ and, for $i \neq j \in I$, by $G_{ij}$ the fundamental rank two subgroups. The groups $G_i$ are isomorphic to $\SL{2}$ and the groups $G_{ij}$ are isomorphic to $\SL{3}$ or to $\SL{2} \times \SL{2}$, depending on whether the vertices $i^\sigma$, $j^\sigma$ of $\Pi$ are joined by an edge or not. The Cartan--Chevalley involution $\omega$ leaves the groups $G_i$, $G_{ij}$ setwise invariant and, in fact, induces the transpose-inverse map on these groups. Define $H_i := \Fix_{G_i}(\omega) \cong \SO{2}$ and $H_{ij} := \Fix_{G_{ij}}(\omega)$, the latter being isomorphic to $\SO{3}$ or to $\SO{2} \times \SO{2}$, and let $\psi_{ij}^i : H_i \to H_{ij}$ denote the (continuous) inclusion map for $i\neq j\in I$. 
By
\cite[Theorem~1.2]{Medts/Gramlich/Horn}, the group $K(\Pi)$ is the
universal enveloping group of the amalgam $\AAA_1:=\{H_{ij},
\psi_{ij}^i, \mid i\neq j\in I\}$.

Given $i<j \in I$ such that $\{i,j\}^\sigma\in E(\Pi)$,
by Theorem~\ref{thm:uniqueness-so-sl} applied to the subdiagram of $\Pi$ of type $A_2$ consisting of the vertices $i^\sigma$, $j^\sigma$ there is a continuous isomorphism $\alpha_{ij}:H_{ij}\to
\SO{3}$ such that
\begin{align*}
(\alpha_{ij}\circ \psi_{ij}^i)\big(H_i)=\eps_{12}\big(\SO{2}\big)\ , &&
(\alpha_{ij}\circ \psi_{ij}^j)\big(H_j)=\eps_{23}\big(\SO{2}\big)\ ..
\end{align*}
Let $i\in I$ and choose $j\in I$ such that $\{i,j\}^\sigma\in E(\Pi)$. Define
\[\alpha_i:=\begin{cases}
\eps_{12}^{-1}\circ \alpha_{ij}\circ \psi_{ij}^i:H_i\to \SO{2},&\text{if }i>j, \\
\eps_{23}^{-1}\circ \alpha_{ij}\circ \psi_{ij}^i:H_i\to \SO{2},&\text{if }i<j.
\end{cases}\]
For $i<j \in I$ such that $\{i,j\}^\sigma\not\in E(\Pi)$, define $\alpha_{ij} := \alpha_i \times \alpha_j : H_{ij} = H_i \times H_j \to \SO{2} \times \SO{2}$. For arbitrary $i \neq j \in I$, let
\[
K_{ij}:=\begin{cases}
\SO{3},&\text{if }\{i,j\}^\sigma\in E(\Pi), \\
\SO{2}\times \SO{2},& \text{if }\{i,j\}^\sigma\notin E(\Pi),
\end{cases}
\]
and
\[\bar\phi_{ij}^i:=\alpha_{ij}\circ\psi_{ij}^i\circ \alpha_{i}^{-1}:\SO{2}\to K_{ij}.\]
Then
$\AAA_2:=\{ K_{ij}, \bar\phi_{ij}^i, \mid i\neq j\in I \}$
is an $\SO{2}$-amalgam with respect to $\Pi$ and $\sigma$. Moreover, the system
$\alpha=\{ \id_I,\alpha_{ij},\alpha_i \mid i\neq j\in I \}:\AAA_1\to \AAA_2$
is an isomorphism of amalgams. Indeed, given $i\neq j\in I$, one has
$\bar\phi_{ij}^i\circ \alpha_i=\alpha_{ij}\circ \psi_{ij}^i\circ \alpha_{i}^{-1}\circ \alpha_i=\alpha_{ij}\circ \psi_{ij}^i$.
Finally, $\alpha_i$ is continuous for each $i\in I$, whence $\bar\phi_{ij}^i$ is continuous for all $i\neq j\in I$. Therefore, $\AAA_2$ is a continuous $\SO{2}$-amalgam with respect to $\Pi$ and $\sigma$ so that
$\AAA_1\cong \AAA_2\cong \AAA\big(\Pi,\SO{2}\big)$
by Theorem~\ref{thm:uniqueness-so-sl}.
\end{proof}

\begin{notation} \label{nota:sl-ama-and-epi}
For consistency, we fix the groups and connecting morphisms in the
standard $\SO{2}$-amalgam with respect to $\Pi$ as follows (cf.\ Definition~\ref{def:std-ama-SO2}):
\[ \AAA\big(\Pi,\SO{2}\big)=\{ K_{ij}, \phi_{ij}^i,\mid i\neq j\in I\}\ . \]
Similarly for the standard $\Spin{2}$-amalgam with respect to $\Pi$ (cf.\ Definition~\ref{def:std-ama-Spin2}):
\[ \AAA\big(\Pi,\Spin{2}\big)=\{ \widetilde{K}_{ij}, \tilde\phi_{ij}^i \mid i\neq j\in I\}\ . \]
We denote the epimorphism of amalgams from Remark~\ref{rem:spin-ama-to-so-ama-and-back} by 
\[ \pi_{\Pi} : \AAA\big(\Pi,\Spin{2}\big) \to \AAA\big(\Pi,\SO{2}\big)\ .\]
\end{notation}

\begin{remark}\label{addtopology}
As discussed in Remark~\ref{rem:continuous} the amalgam $\AAA\big(\Pi,\SO{2}\big)$ consists of compact Lie groups with continuous connecting homomorphisms. On the other hand, the group $K(\Pi)$ naturally carries a Hausdorff group topology that is $k_\omega$: Indeed, $K(\Pi)$ is the subgroup of the unitary form studied in \cite[Section~6]{Gloeckner/Gramlich/Hartnick:2010} fixed by complex conjugation and it is the subgroup of the real Kac--Moody group $G(\Pi)$ studied in \cite[Section~7]{Hartnick/Koehl/Mars} fixed by the Cartan--Chevalley involution; both ambient groups are $k_\omega$ (by \cite[Theorem~6.12]{Gloeckner/Gramlich/Hartnick:2010}, resp.\ \cite[Theorem~7.22]{Hartnick/Koehl/Mars}) and, hence, so is any subgroup fixed by a continuous involution (cf.\ \cite[Proposition~4.2(b)]{Gloeckner/Gramlich/Hartnick:2010}). Note that the $k_\omega$-group topologies on $K(\Pi)$ induced from the real Kac--Moody group $G(\Pi)$ and from the unitary form coincide, as both are induced from the $k_\omega$-group topology on the ambient complex Kac--Moody group (cf.\ \cite[Theorem~6.3]{Gloeckner/Gramlich/Hartnick:2010}, resp.\ \cite[Theorem~7.22]{Hartnick/Koehl/Mars}). 

Furthermore, a straightforward adaption of the proof of \cite[Proposition~6.9]{Gloeckner/Gramlich/Hartnick:2010} implies that this $k_\omega$-group topology is the finest group topology with respect to the enveloping homomorphisms $\tau_{ij} : K_{ij} \to K(\Pi)$. In other words, the obvious analog of \cite[Theorem~6.12]{Gloeckner/Gramlich/Hartnick:2010}, \cite[Theorem~7.22]{Hartnick/Koehl/Mars} holds for $(K(\Pi),\tau_{\Pi})$. In particular, to any enveloping morphism $\phi = (\phi_{ij}) : \AAA\big(\Pi,\SO{2}\big) \to H$ into a Hausdorff topological group $H$ with continuous homomorphisms $\phi_{ij} : K_{ij} \to H$ there exists a unique {\em continuous} homomorphism $\phi : K(\Pi) \to H$ such that the following diagram commutes:
\[\xymatrix{
  \AAA\big(\Pi,\SO{2}\big) \ar[drr]_\phi \ar[rr]^{\tau_{\Pi}} && K(\Pi) \ar[d]^\psi \\ && H
}\]
\end{remark}

Theorems \ref{thm:univso} and \ref{thm:univspin} state that the double cover $\Spin{n}$ of $\SO{n}$ is the universal enveloping group of the two-fold central extension $\AAA\big(A_{n-1},\Spin{2}\big)$ of the amalgam $\AAA\big(A_{n-1},\SO{2}\big)$ as defined in Proposition~\ref{prop:env-grp-central-cover}.
In view of Theorem~\ref{thm:K-univ-sl} it is therefore natural to introduce the following notion.

\begin{definition} \label{defn:sl-spin-group}
The \Defn{spin group ${\Spin\Pi}$ with respect to $\Pi$} is the
canonical universal enveloping group of the (continuous) amalgam
$\AAA\big(\Pi,\Spin{2}\big)$ with the canonical universal enveloping morphism
\[\tau_{\Spin\Pi} = \{ \tau_{ij} \mid i \neq j \in I \} : \AAA\big(\Pi,\Spin{2}\big) \to {\Spin\Pi}.\]
\end{definition}

\begin{lemma}\label{lem:sl-K-envelops-spin-amalgam}
$K(\Pi)$ is an enveloping group of the amalgam $\AAA\big(\Pi,\Spin{2}\big)$. There exists a unique central extension $\rho_\Pi : \Spin\Pi \to K(\Pi)$ that makes the following diagram commute (cf.\ Notation~\ref{nota:amalgamcomm}):
\[
\xymatrix{
  \AAA\big(\Pi,\Spin{2}\big) \ar[rr]^{\tau_{\Spin\Pi}} \ar[d]_{\pi_\Pi} &&
  \Spin{\Pi} \ar@{-->}[d]^{\rho_{\Pi}} \\
\AAA\big(\Pi,\SO{2}\big) \ar[rr]^{\tau_{K(\Pi)}} && K(\Pi)
},
\]
where \[\tau_{K(\Pi)} = \{ \psi_{ij} : K_{ij} \to
K(\Pi) \mid i \neq j \in I \} : \AAA\big(\Pi,\SO{2}\big) \to K(\Pi)\]
is the universal enveloping morphism whose existence is guaranteed by Theorem~\ref{thm:K-univ-sl}.
\end{lemma}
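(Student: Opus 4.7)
The plan is to combine three pieces of machinery already assembled in the excerpt: (i)~the fact that $K(\Pi)$ is a universal enveloping group of the $\SO 2$-amalgam (Theorem~\ref{thm:K-univ-sl}); (ii)~the amalgam epimorphism $\pi_\Pi : \AAA(\Pi,\Spin 2) \to \AAA(\Pi,\SO 2)$ from Remark~\ref{rem:spin-ama-to-so-ama-and-back}/Notation~\ref{nota:sl-ama-and-epi}; and (iii)~the general central-extension result, Proposition~\ref{prop:env-grp-central-cover}.

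First, to see that $K(\Pi)$ is an enveloping group of $\AAA(\Pi,\Spin 2)$, I would apply Lemma~\ref{lem:ama-envelope-lift}(a) to the enveloping morphism $\tau_{K(\Pi)} = \{\psi_{ij}\}$ and the amalgam epimorphism $\pi_\Pi = \{\mathrm{id}_I,\rho_2,\rho_{ij}\}$. This produces enveloping homomorphisms $\wt\psi_{ij} := \psi_{ij}\circ\rho_{ij} : \widetilde K_{ij}\to K(\Pi)$ and shows they satisfy the compatibility relations of an enveloping group, giving the first assertion of the lemma.

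Next, since $\Spin\Pi$ is by Definition~\ref{defn:sl-spin-group} a universal enveloping group of $\AAA(\Pi,\Spin 2)$, Lemma~\ref{lem:ama-epi-induces-envelope-epi}(b), applied to $\pi_\Pi$ together with the enveloping group $(K(\Pi),\{\wt\psi_{ij}\})$ constructed in the previous step, yields a unique epimorphism $\rho_\Pi : \Spin\Pi \to K(\Pi)$ making the square in the statement commute. Uniqueness is automatic from the universality of $\Spin\Pi$.

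Finally, to identify $\rho_\Pi$ as a central extension I would invoke Proposition~\ref{prop:env-grp-central-cover} with $\wt U = \Spin 2$, $U = \SO 2$ and $\wt V = \{\pm 1\}$. Two hypotheses must be checked: $\widetilde K_{ij} = \langle \wt\phi_{ij}^i(\Spin 2),\wt\phi_{ij}^j(\Spin 2)\rangle$, which is immediate as $\Spin 3$ is generated by $\tilde\eps_{12}(\Spin 2)\cup\tilde\eps_{23}(\Spin 2)$ and $\Spin 2\times\Spin 2$ by $\tilde\iota_1(\Spin 2)\cup\tilde\iota_2(\Spin 2)$; and $A_{ij} := \ker\rho_{ij} \leq Z_{ij}^{\wt\phi} \leq Z(\widetilde K_{ij})$. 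For the edge case $A_{ij} = \{\pm 1\}$, which coincides with $Z_{ij}^{\wt\phi}$ since $\tilde\eps_{12}(-1) = -1 = \tilde\eps_{23}(-1)$ lies in $Z(\Spin 3)$; for the non-edge case, $A_{ij} = \{\pm 1\}^2 = \langle(-1,1),(1,-1)\rangle = Z_{ij}^{\wt\phi}$, which is central because $\Spin 2\times\Spin 2$ is abelian. Proposition~\ref{prop:env-grp-central-cover} then yields that $\Spin\Pi$ is a central extension of $K(\Pi)$ via $\rho_\Pi$, with kernel $\langle\tau_{ij}(A_{ij})\mid i\neq j\in I\rangle$. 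The main (mild) obstacle is the bookkeeping in verifying the two containments in hypothesis~(ii); everything else reduces to invoking the results already proved.
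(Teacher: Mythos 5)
Your proposal is correct and follows essentially the same route as the paper: apply Lemma~\ref{lem:ama-envelope-lift}(a) to produce the enveloping homomorphisms $\psi_{ij}\circ\rho_{ij}$, then Lemma~\ref{lem:ama-epi-induces-envelope-epi} with the universality of $\tau_{\Spin\Pi}$ to obtain the unique $\rho_\Pi$, then Proposition~\ref{prop:env-grp-central-cover} for centrality. You spell out the verification of the hypotheses of Proposition~\ref{prop:env-grp-central-cover} (generation of $\widetilde K_{ij}$ and the containments $A_{ij}\le Z_{ij}^{\widetilde\phi}\le Z(\widetilde K_{ij})$) more explicitly than the paper, which simply invokes the proposition; this extra bookkeeping is correct in both the edge and non-edge cases.
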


\begin{proof}
As in Definition~\ref{rhoij}, for $i \neq j \in I$ let $\rho_{ij} : \widetilde{K}_{ij} \to K_{ij}$ be the epimorphism
$\rho_3$ if $\{i,j\}^\sigma$ is an edge, and $\rho_2\times\rho_2$ otherwise.
Then, by Lemma~\ref{lem:ama-envelope-lift}, the group $K(\Pi)$ with the homomorphisms $\xi_{ij}:=\psi_{ij}\circ
\rho_{ij}:\widetilde{K}_{ij}\to K(\Pi)$ for all $i \neq j \in I$ is an enveloping
group of $\AAA\big(\Pi,\sigma,\Spin{2}\big)$. 
By universality of $\tau_{\Spin{\Pi}} : \AAA\big(\Pi,\Spin{2}\big) \to {\Spin\Pi}$, Lemma~\ref{lem:ama-epi-induces-envelope-epi} provides a unique epimorphism $\rho_\Pi : \Spin\Pi \to K(\Pi)$ with the properties as claimed. This epimorphism is a central extension by Proposition~\ref{prop:env-grp-central-cover}.
\end{proof}

The following is a straightforward generalization of the observation we made towards the end of the proof of Theorem~\ref{thm:univspin}.

\begin{lemma} \label{lem:unique-minus-one-sl}
Let $i\neq j\in I$ and $k \neq \ell\in I$. If $i^\sigma$ and
$k^\sigma$ are in the same connected component of $\Pi$, then
\[\tau_{ij}( \tilde\phi_{ij}^i(-1_{\Spin{2}}))
 =\tau_{k\ell}( \tilde\phi_{k\ell}^k(-1_{\Spin{2}}))\ .\]
\end{lemma}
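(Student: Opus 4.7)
My plan is to construct, for each vertex $i \in I$, a distinguished element
\[y_i := \tau_{ij}(\tilde\phi_{ij}^i(-1_{\Spin{2}})) \in \Spin{\Pi},\]
to verify first that $y_i$ is independent of the auxiliary index $j \neq i$, and then to show that $y_i = y_k$ whenever $i^\sigma$ and $k^\sigma$ are joined by an edge of $\Pi$; the full statement then follows by induction along a path inside the relevant connected component of $\Pi$.

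For the well-definedness of $y_i$, I would invoke the amalgam compatibility relation from the definition of enveloping group, namely $\tau_{ab}\circ\tilde\phi_{ab}^b = \tau_{cb}\circ\tilde\phi_{cb}^b$ for all $a\neq b\neq c \in I$. Setting $b = i$ and allowing $a, c$ to range over the remaining indices, combined with the convention $\tilde\phi_{ab}^b=\tilde\phi_{ba}^b$ that comes from the identification $G_{ab}=G_{ba}$, this shows that $\tau_{ij}(\tilde\phi_{ij}^i(x))$ is the same for every $j\neq i$ and every $x \in \Spin{2}$. In particular, specializing to $x = -1_{\Spin{2}}$ makes the definition of $y_i$ legitimate, and the statement of the lemma becomes the equality $y_i = y_k$.

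The heart of the argument lies in the case $\{i,k\}^\sigma \in E(\Pi)$, where $\widetilde{K}_{ik}\cong \Spin{3}$ and, by Definition~\ref{18}, the connecting homomorphisms $\tilde\phi_{ik}^i$ and $\tilde\phi_{ik}^k$ are, up to swapping the indices $i$ and $k$, the canonical embeddings $\tilde\eps_{12}$ and $\tilde\eps_{23}$ from Lemma~\ref{2}. Using the description $-1_{\Spin{2}} = S(\pi) = -1$ inside the Clifford algebra (Remark~\ref{coordinatesforspin}), both embeddings are induced from ring monomorphisms of the ambient Clifford algebras and hence send the scalar $-1$ to the scalar $-1 \in \Spin{3}$. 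Consequently
\[\tilde\phi_{ik}^i(-1_{\Spin{2}}) = -1_{\widetilde{K}_{ik}} = \tilde\phi_{ik}^k(-1_{\Spin{2}}),\]
and applying $\tau_{ik}$ to this identity yields $y_i = y_k$.

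Finally, given $i^\sigma$ and $k^\sigma$ in the same connected component, I would pick a path $i=v_0, v_1, \ldots, v_m=k$ in $\Pi$ with $\{v_t,v_{t+1}\}^\sigma \in E(\Pi)$ for every $0\leq t < m$, apply the adjacent-vertex case step-by-step to deduce $y_{v_t} = y_{v_{t+1}}$, and conclude $y_i=y_k$. No serious obstacle is anticipated: the only genuinely delicate points are tracking the two different roles played by the symbol $-1$ (inside $\Spin{2}$ versus inside $\Spin{3}$) across the Clifford-algebra embeddings, and being careful with the index-swap convention when invoking the amalgam compatibility to justify that $y_i$ is a function of $i$ alone.
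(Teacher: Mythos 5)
Your proposal is correct and follows essentially the same route as the paper: the paper's proof likewise picks a path in $\Pi$, uses that $\tilde\phi_{i_r i_{r+1}}^{i_r}(-1_{\Spin{2}}) = -1_{\Spin{3}} = \tilde\phi_{i_r i_{r+1}}^{i_{r+1}}(-1_{\Spin{2}})$ along each edge, and appeals to the enveloping-homomorphism compatibility $\tau_{ij}\circ\tilde\phi_{ij}^j=\tau_{kj}\circ\tilde\phi_{kj}^j$ for the first and last steps of a chain of equalities. Your explicit extraction of the well-definedness of $y_i$ as an intermediate observation is a slightly cleaner organization of what the paper does inline, but the argument is the same.
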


\begin{proof}
As $i^\sigma$ and $k^\sigma$ are in the same connected component,
there exists a sequence $i_0:=i,i_1,\ldots,i_n:=k\in I$ such that
$\{i_r^\sigma,i_{r+1}^\sigma\}$ are edges for $0\leq r<n$.
Thus $\widetilde{K}_{i_r i_{r+1}}=\Spin{3}$ and by the definition of $\Spin\Pi$
as the canonical universal enveloping group of the amalgam $\AAA\big(\Pi,\Spin{2}\big)$, we have
\[
 \tilde\phi_{i_r i_{r+1}}^{i_r}(-1_{\Spin{2}}))
= -1_{\Spin{3}}
= \tilde\phi_{i_r i_{r+1}}^{i_{r+1}}(-1_{\Spin{2}}))\ .
\]
Hence
\begin{align*}
  \tau_{ij}( \tilde\phi_{ij}^i(-1_{\Spin{2}}))
&=\tau_{i_0i_1}( \tilde\phi_{i_0i_1}^{i_0}(-1_{\Spin{2}}))
 =\tau_{i_0i_1}( \tilde\phi_{i_0i_1}^{i_1}(-1_{\Spin{2}}))
 = \cdots \\
&=\tau_{i_{n-1}i_n}( \tilde\phi_{i_{n-1}i_n}^{i_n}(-1_{\Spin{2}}))
 =\tau_{i_{n-1}k}( \tilde\phi_{i_{n-1}k}^{k}(-1_{\Spin{2}})) \\
&=\tau_{k\ell}( \tilde\phi_{k\ell}^k(-1_{\Spin{2}}))\ .
\end{align*}
where the first and last equality hold due to the definition
of enveloping homomorphisms. 
\end{proof}

Thus the following is well-defined.

\begin{definition}\label{minus1}
For $i\neq j\in I$ define
\[-1_{\Spin\Pi,\KKK(i)}:=\tau_{ij}(\tilde\phi_{ij}^i(-1_{\Spin{2}}))
  \quad\text{ and }\quad
  Z:=\gen{ -1_{\Spin\Pi,1}, \ldots, -1_{\Spin\Pi,c(\Pi)}} \leq \Spin\Pi\ .\]
\end{definition}

\begin{observation} \label{finitecentralextension}
The following are true:
\begin{enumerate}
\item $Z$ is contained in the centre of $\Spin\Pi$.
\item $|Z|\leq 2^{c(\Pi)}$.
\end{enumerate}
\end{observation}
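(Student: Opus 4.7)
My plan is to reduce both assertions to elementary properties of $-1 \in \Spin{2}$ combined with the amalgam generation of $\Spin\Pi$. The key structural observation is that $\Spin\Pi$, being the canonical universal enveloping group of $\AAA\big(\Pi,\Spin{2}\big)$, is generated by the subgroups $\tau_{ij}(\widetilde{K}_{ij})$, and each such $\tau_{ij}(\widetilde{K}_{ij})$ is in turn generated by the ``circle subgroups''
\[ \tilde G_k := (\tau_{ki}\circ\tilde\phi_{ki}^k)(\Spin{2}) \qquad (k \in \{i,j\}), \]
which are well-defined independently of the partner index by the enveloping-homomorphism compatibility (apply the defining relation of enveloping morphisms with the common index equal to $k$).

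For part (a), I will fix a component $c$ and pick any $i \in I$ with $\KKK(i) = c$. To show centrality of $-1_{\Spin\Pi,c}$ in $\Spin\Pi$, it suffices by the generation remark above to show that $-1_{\Spin\Pi,c}$ commutes with each $\tilde G_k$, $k \in I$. For $k \neq i$, I rewrite $-1_{\Spin\Pi,c} = \tau_{ik}(\tilde\phi_{ik}^i(-1_{\Spin{2}}))$ and observe that the element $\tilde\phi_{ik}^i(-1_{\Spin{2}})$ already lies in the centre of $\widetilde{K}_{ik}$: if $\{i,k\}^\sigma \in E(\Pi)$ it equals $-1_{\Spin{3}}$, which is central in $\Spin{3}$ (it is the scalar $-1$ of the Clifford algebra $\Cl{3}$), while if $\{i,k\}^\sigma \notin E(\Pi)$ the group $\widetilde{K}_{ik} \cong \Spin{2}\times\Spin{2}$ is abelian and centrality is automatic. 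In either case $\tilde\phi_{ik}^i(-1_{\Spin{2}})$ commutes with $\tilde\phi_{ik}^k(\Spin{2})$, and applying $\tau_{ik}$ shows that $-1_{\Spin\Pi,c}$ commutes with $\tau_{ik}(\tilde\phi_{ik}^k(\Spin{2})) = \tilde G_k$. Commutation with $\tilde G_i$ follows symmetrically: choose any $k \neq i$ and note that $\tilde\phi_{ik}^i(-1_{\Spin{2}})$ commutes with $\tilde\phi_{ik}^i(\Spin{2})$ inside $\widetilde{K}_{ik}$ for exactly the same reasons.

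For part (b), $Z$ is generated by definition by the $c(\Pi)$ elements $-1_{\Spin\Pi,1},\ldots,-1_{\Spin\Pi,c(\Pi)}$. Each such generator is the image under $\tau_{ij}\circ\tilde\phi_{ij}^i$ of $-1_{\Spin{2}}$; since $(-1_{\Spin{2}})^2 = 1$ and homomorphisms preserve this identity, each generator has order dividing two. Combined with part (a), which makes $Z$ abelian, this exhibits $Z$ as a quotient of $(\ZZ/2\ZZ)^{c(\Pi)}$, yielding $|Z| \leq 2^{c(\Pi)}$.

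I expect no serious obstacle here; the whole argument is routine bookkeeping with the amalgam axioms plus the two facts that $-1 \in \Spin{2}$ has order two and that $-1 \in \Spin{n}$ is central. The one point worth double-checking is that $-1_{\Spin\Pi,\KKK(i)}$ is well-defined independently of the choice of $j$, but this is already handled by Lemma~\ref{lem:unique-minus-one-sl}, whose degenerate case $i=k$ delivers precisely the required $j$-independence through the enveloping-morphism relation.
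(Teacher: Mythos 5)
Your proof is correct and takes essentially the same approach as the paper. The paper proves (a) by invoking Proposition~\ref{prop:env-grp-central-cover} (with $\wt U = \Spin{2}$, $\wt V = \gen{-1}$, $U = \SO{2}$), whose proof is precisely the argument you inline here---that the images of $-1_{\Spin{2}}$ are central in each $\wt K_{ik}$ and that $\Spin{\Pi}$ is generated by the circle subgroups $\tilde G_k$---while your argument for (b) is identical to the paper's.
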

The first assertion is immediate from Proposition~\ref{prop:env-grp-central-cover} applied to the $\Spin{2}$-amalgam $\AAA\big(\Pi,\Spin{2}\big)$ and the $\SO{2}$-amalgam $\AAA\big(\Pi,\SO{2}\big)$ with $\wt U = \Spin{2}$, $\wt V = \langle -1 \rangle$ and $U = \SO{2}$. The second follows from the fact that $Z$ is abelian by assertion (a) and admits a generating system of $c(\Pi)$ involutions by definition.

\medskip

The remainder of this section is mostly devoted to proving the following result:

\begin{theorem} \label{Zmaximal}
One has $|Z|=2^{c(\Pi)}$.
\end{theorem}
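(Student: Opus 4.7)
The plan is to complement the bound $|Z|\leq 2^{c(\Pi)}$ from Observation~\ref{finitecentralextension}(b) with the opposite inequality $|Z|\geq 2^{c(\Pi)}$. Since $Z$ is an elementary abelian $2$-group generated by the $c(\Pi)$ involutions $-1_{\Spin\Pi,k}$, this amounts to showing that these generators are $\mathbb{F}_2$-linearly independent in $Z$. I would do this by constructing, for each component index $k$, a group homomorphism $\pi_k : \Spin\Pi \to \Spin{\Pi_k}$ that sends $-1_{\Spin\Pi,k}$ to a non-identity element while trivializing every $-1_{\Spin\Pi,\ell}$ with $\ell\neq k$; the product $(\pi_1,\ldots,\pi_{c(\Pi)}) : \Spin\Pi \to \prod_k \Spin{\Pi_k}$ then witnesses the required independence. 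Here $\Pi_k$ denotes the $k$-th connected component of $\Pi$ with index set $I_k := \KKK^{-1}(k)$, and by convention I set $\Spin{\Pi_k}:=\Spin 2$ (with $-1_{\Spin{\Pi_k}}:=-1$) when $|I_k|=1$, so that Definition~\ref{minus1} applies uniformly.

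To build $\pi_k$, I would invoke the universal property of $\Spin\Pi$ by specifying an enveloping morphism $\AAA(\Pi,\Spin 2)\to\Spin{\Pi_k}$ of amalgams. On the rank-two group $\wt K_{ij}$ I define $\tau^{(k)}_{ij}$ by cases: for $i,j\in I_k$, take the enveloping homomorphism supplied by the subamalgam $\AAA(\Pi_k,\Spin 2)$; for $i,j\notin I_k$, take the trivial homomorphism; and when exactly one of $i,j$ — say $i$ — lies in $I_k$, so that $\wt K_{ij}=\Spin 2\times\Spin 2$, let $\tau^{(k)}_{ij}$ be the unique homomorphism satisfying $\tau^{(k)}_{ij}\circ\tilde\phi_{ij}^i=\iota_i^k$ and $\tau^{(k)}_{ij}\circ\tilde\phi_{ij}^j=1$. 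Here $\iota_i^k : \Spin 2\to\Spin{\Pi_k}$ is the canonical vertex embedding: the identity when $|I_k|=1$, and $\tau_{ii'}\circ\tilde\phi_{ii'}^i$ for any choice of $i'\in I_k\setminus\{i\}$ otherwise, independence of $i'$ following from the amalgam relations internal to $\Pi_k$ in the spirit of Lemma~\ref{lem:unique-minus-one-sl}. The enveloping-morphism condition $\tau^{(k)}_{ij}\circ\tilde\phi_{ij}^j=\tau^{(k)}_{\ell j}\circ\tilde\phi_{\ell j}^j$ is then verified by a case split on whether $j\in I_k$: in both cases both sides reduce to the same map, namely $\iota_j^k$ if $j\in I_k$ and the trivial map otherwise. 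Universality yields $\pi_k$, and evaluating on generators confirms that $\pi_k(-1_{\Spin\Pi,k}) = -1_{\Spin{\Pi_k}}$ and $\pi_k(-1_{\Spin\Pi,\ell})=1$ for $\ell\neq k$.

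It then remains to show that $-1_{\Spin{\Pi_k}}\neq 1$ in each $\Spin{\Pi_k}$. The case $|I_k|=1$ is immediate. For $|I_k|\geq 2$ I would invoke the generalized spin representation of \cite{Hainke/Koehl/Levy}: this yields an enveloping morphism from $\AAA(\Pi_k,\Spin 2)$ into some unitary group $\mathrm{U}(2^N)$ whose restriction to each rank-one subgroup $\tilde\eps_{12}(\Spin 2) \subset \wt K_{ij}\cong\Spin 3$ (along any edge of $\Pi_k$) is the standard faithful double cover of the corresponding $\SO 2$; by universality it factors through a continuous $\mu : \Spin{\Pi_k}\to\mathrm{U}(2^N)$ with $\mu(-1_{\Spin{\Pi_k}}) = -\mathrm{Id}\neq 1$. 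Assembling the ingredients, $(\pi_k)_k$ carries the $c(\Pi)$ generators of $Z$ to $\mathbb{F}_2$-independent elements of $\prod_k\Spin{\Pi_k}$, forcing $|Z|\geq 2^{c(\Pi)}$ and hence equality. The main obstacle I anticipate is precisely the appeal to \cite{Hainke/Koehl/Levy}: one must verify in the conventions chosen there that the rank-one Chevalley generators act as non-zero skew-Hermitian operators on the spinor module whose $\pi$-multiples exponentiate to $-\mathrm{Id}$, so that the spin representation genuinely detects the central involution in each $\Spin 2$; the case analysis underlying the construction of $\pi_k$ is routine but requires care in tracking which $\Spin 2$-factor of $\wt K_{ij}$ is pinned down by which vertex when $i$ and $j$ belong to different components.
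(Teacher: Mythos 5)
Your proposal is correct and follows essentially the same route as the paper: reduce to connected components, and for each irreducible component invoke the generalized spin representation of \cite{Hainke/Koehl/Levy} to certify that $-1_{\Spin{\Pi_k}}\neq 1$. The paper packages the reduction step as the claim $\Spin\Pi \cong \Spin{\Pi_1}\times\cdots\times\Spin{\Pi_{c(\Pi)}}$ (Theorem~\ref{m2}) rather than by building the coordinate projections $\pi_k$ explicitly via universality, but these are two presentations of the same argument; your version is in fact marginally leaner since it only requires the projections, not the full direct product decomposition.
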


We start the proof of this theorem by revisiting Remark~\ref{rem:cl3=quaternions}.

\begin{lemma}\label{24}
Let $V$ be an $\RR$-vector space and let $X_i,X_j\in \End(V)$ be such that
\begin{align*}
X_i^2=-\id_V=X_j^2\ , && X_iX_j=-X_jX_i\ .
\end{align*}
Then the map
\begin{align*}
\psi: \Spin{3} & \to \mathrm{GL}(V) \\
    a + b e_1e_2 + c e_2e_3 + de_1e_3
   & \mapsto a \id_V +  b X_i + c X_j + d X_i X_j
\end{align*}
is a group monomorphism such that
\[
 \psi(\tilde\eps_{12}(S(\alpha))) = \cos(\alpha)\id_V + \sin(\alpha) X_i\ ,\quad
 \psi(\tilde\eps_{23}(S(\alpha))) = \cos(\alpha)\id_V + \sin(\alpha) X_j\ .
\]
\end{lemma}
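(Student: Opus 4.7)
The plan is to extend $\psi$ to an $\RR$-algebra homomorphism $\widetilde\psi\colon\Cle{3}{0}\to\End(V)$ via the universal property of Clifford algebras, then read off all three assertions---multiplicativity, injectivity, and the two explicit formulas---as immediate consequences.

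First I would recall from Remark~\ref{rem:cl3=quaternions} that $\Cle{3}{0}$ has the $\RR$-basis $\{1,\,e_1e_2,\,e_2e_3,\,e_3e_1\}$ and is generated as an $\RR$-algebra by $u:=e_1e_2$ and $v:=e_2e_3$ subject to the relations $u^2=v^2=-1$ and $uv=-vu$; in particular $uv=e_1e_2\cdot e_2e_3=-e_1e_3=e_3e_1$. By hypothesis, the endomorphisms $X_i,X_j\in\End(V)$ satisfy exactly these relations, so the universal property of the Clifford algebra (equivalently, a direct check of the relations on the four basis products) yields a unique $\RR$-algebra homomorphism $\widetilde\psi\colon\Cle{3}{0}\to\End(V)$ with $\widetilde\psi(u)=X_i$, $\widetilde\psi(v)=X_j$, and consequently $\widetilde\psi(uv)=X_iX_j$. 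On the basis $\widetilde\psi$ is precisely the $\RR$-linear map $\psi$ of the statement (once one reads the coefficient rule $d\,e_1e_3\mapsto d\,X_iX_j$ in conformity with the identification $uv=e_3e_1=-e_1e_3$).

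Now $\Cle{3}{0}\cong\HH$ (Remark~\ref{rem:cl3=quaternions}(b)) is a division algebra and $\psi(1)=\id_V\neq 0$, so the algebra homomorphism $\psi$ is injective; its restriction to $\Spin 3\subseteq\Cle{3}{0}$ is therefore an injective group homomorphism, and since algebra homomorphisms preserve units, its image lies in $\mathrm{GL}(V)$. For the two explicit formulas, Lemma~\ref{2} together with Remark~\ref{6} identifies $\tilde\eps_{12}(S(\alpha))$ with $\cos(\alpha)+\sin(\alpha)e_1e_2\in\Cle{3}{0}$ and $\tilde\eps_{23}(S(\alpha))$ with $\cos(\alpha)+\sin(\alpha)e_2e_3\in\Cle{3}{0}$; applying the linear map $\psi$ at once yields $\cos(\alpha)\id_V+\sin(\alpha)X_i$ and $\cos(\alpha)\id_V+\sin(\alpha)X_j$ respectively.

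The main---indeed only---obstacle is careful bookkeeping of Clifford algebra sign conventions so that the linearly-defined map $\psi$ in the statement is correctly realized as an algebra homomorphism; once the identity $e_1e_2\cdot e_2e_3=-e_1e_3=e_3e_1$ has been absorbed, everything else is formal.
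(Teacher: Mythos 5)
Your proof is correct and takes essentially the same route as the paper: extend $\psi$ to an $\RR$-algebra homomorphism on $\Cle{3}{0}$, invoke that $\Cle{3}{0}\cong\HH$ is a division algebra (hence simple) together with $\psi(1)=\id_V\neq 0$ to get injectivity, and restrict to $\Spin{3}$; the two explicit formulas are then read off directly. One remark on your parenthetical: you are in fact flagging a genuine sign slip. Since $(e_1e_2)(e_2e_3)=-e_1e_3=e_3e_1$, while $X_iX_j$ denotes the product in that order, the map can only be multiplicative if the coefficient $d$ accompanies $e_3e_1$ (equivalently, if $e_1e_3\mapsto -X_iX_j$); one can check directly that $\psi\big(\tilde\eps_{12}(S(\alpha))\tilde\eps_{23}(S(\beta))\big)$ and $\psi\big(\tilde\eps_{12}(S(\alpha))\big)\psi\big(\tilde\eps_{23}(S(\beta))\big)$ disagree in the $\sin\alpha\sin\beta$-term under the literal rule $e_1e_3\mapsto X_iX_j$. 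The same slip appears in the paper's own proof, which sends $e_1e_3\mapsto X_iX_j$ and then asserts that $1,e_1e_2,e_2e_3,e_1e_3$ and $\id_V,X_i,X_j,X_iX_j$ satisfy the same relations; your $\widetilde\psi$, fixed by $\widetilde\psi(uv)=\widetilde\psi(e_3e_1)=X_iX_j$, is the correct map, and it would be cleaner to state the correction plainly rather than fold it into a parenthetical.
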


\begin{proof}
The subspace $\HH:=\gen{\id_V, X_i,X_j,X_iX_j}_\RR$ is an $\RR$-subalgebra of $\End(V)$, the set $\{\id_V, X_i,X_j,X_iX_j\}$ is an $\RR$-basis of $\HH$, and the $\RR$-linear extension $\hat\psi: \Cl{3}^0\to \HH$ of
\begin{align*} 1\mapsto \id_V\ , &&  e_1e_2\mapsto X_i\ , &&  e_1e_3\mapsto X_iX_j\ , && e_2e_3\mapsto X_j\end{align*}
is an isomorphism of algebras:
Indeed, since $\id_V$, $X_i$, $X_j$ and $X_iX_j$ satisfy the same relations as $1$, $e_1e_2$, $e_2e_3$ and $e_1e_3$, the map $\hat\psi$ is a homomorphism of rings. Since $X_i\neq 0_{\End(V)}$, one has $\ker(\hat\psi)\neq \Cl{3}^0$. By Remark~\ref{rem:cl3=quaternions}, $\Cl{3}^0$ is a skew field and, thus, simple as a ring. Therefore, $\hat\psi$ is injective and, hence, bijective, because $\dim_\RR \HH\leq 4$, i.e., $\hat\psi$ is an isomorphism of algebras.

Consequently, the restriction $\psi$ of $\hat\psi$ to $\Spin{3}$ is injective with values in $\mathrm{GL}(V)$, i.e., $\psi : \Spin{3} \to \mathrm{GL}(V)$ is a group monomorphism.
The final statement is immediate from the definitions.
\end{proof}

\begin{lemma}\label{25}
Let $V$ be an $\RR$-vector space and let $X_i,X_j\in \End(V)$ be such that
\begin{align*}
X_j\notin \gen{\id_V, X_i}_\RR\ , && X_i^2=-\id_V=X_j^2\ , && X_iX_j=X_jX_i\ .
\end{align*}
Then
 the map
\begin{align*}
\psi:  \gen{ \tilde\eps_{12}\big(\Spin{2}\big), \tilde\eps_{34}\big(\Spin{2}\big) } \subseteq \Spin{4}
   &\to \mathrm{GL}(V) \\
    a + b e_1e_2 + c e_3e_4 + de_1e_2e_3e_4
   &\mapsto a \id_V +  b X_i + c X_j + d X_i X_j
\end{align*}
is a group monomorphism such that
\[
 \psi(\tilde\eps_{12}(S(\alpha))) = \cos(\alpha)\id_V + \sin(\alpha) X_i\ ,\quad
 \psi(\tilde\eps_{34}(S(\alpha))) = \cos(\alpha)\id_V + \sin(\alpha) X_j\ .
\]
\end{lemma}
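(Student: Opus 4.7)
The plan is to mimic the proof of Lemma~\ref{24}. First I will identify the subspace $B:=\gen{1,\,e_1e_2,\,e_3e_4,\,e_1e_2e_3e_4}_\RR\subseteq\Cl{4}$ as a $4$-dimensional $\RR$-subalgebra, with multiplication table determined by $(e_1e_2)^2=(e_3e_4)^2=-1$, $(e_1e_2)(e_3e_4)=(e_3e_4)(e_1e_2)=e_1e_2e_3e_4$, and $(e_1e_2e_3e_4)^2=1$; the commutation $(e_1e_2)(e_3e_4)=(e_3e_4)(e_1e_2)$ follows from the relation $e_ke_\ell=-e_\ell e_k$ of Remark~\ref{rem:cl3=quaternions}(a) applied twice, since the index sets $\{1,2\}$ and $\{3,4\}$ are disjoint. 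Since $\tilde\eps_{12}(\Spin{2})=\{\cos\alpha+\sin\alpha\,e_1e_2\mid\alpha\in\RR\}$ and $\tilde\eps_{34}(\Spin{2})=\{\cos\beta+\sin\beta\,e_3e_4\mid\beta\in\RR\}$ are both contained in $B$ and commute pointwise inside $\Spin{4}$, the subgroup $H:=\gen{\tilde\eps_{12}(\Spin{2}),\tilde\eps_{34}(\Spin{2})}$ coincides with the elementwise product $\tilde\eps_{12}(\Spin{2})\cdot\tilde\eps_{34}(\Spin{2})$ and is contained in the group of units $B^*\subseteq B$.

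Next, I will check that $A:=\gen{\id_V, X_i, X_j, X_iX_j}_\RR\subseteq\End(V)$ is also $4$-dimensional. The set $\{\id_V, X_i\}$ is linearly independent, for $X_i=a\id_V$ would yield $a^2=-1$ with $a\in\RR$; and $X_j\notin\gen{\id_V, X_i}_\RR$ by hypothesis. To rule out $X_iX_j\in\gen{\id_V, X_i, X_j}_\RR$, I will assume an identity $X_iX_j=a\id_V+bX_i+cX_j$, left-multiply by $X_i$, use $X_i^2=-\id_V$ to simplify, and substitute once more to obtain a relation of the form $(c^2+1)X_j=\alpha\id_V+\beta X_i$, which forces $c^2+1=0$, a contradiction. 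Because $\id_V, X_i, X_j, X_iX_j$ satisfy exactly the same multiplicative relations as $1, e_1e_2, e_3e_4, e_1e_2e_3e_4$, the $\RR$-linear map $\hat\psi\colon B\to A$ specified on this basis is an $\RR$-algebra homomorphism; it is surjective by construction and, by the dimension count just completed, an isomorphism.

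Finally, restricting $\hat\psi$ to the subgroup $H\subseteq B^*$ yields a group homomorphism $\psi:=\hat\psi|_H\colon H\to A^*\subseteq\mathrm{GL}(V)$, which is a monomorphism since $\hat\psi$ is injective on all of~$B$. The two explicit formulas for $\psi\circ\tilde\eps_{12}$ and $\psi\circ\tilde\eps_{34}$ then follow by direct evaluation, using the identifications $\tilde\eps_{12}(S(\alpha))=\cos\alpha+\sin\alpha\,e_1e_2$ and $\tilde\eps_{34}(S(\alpha))=\cos\alpha+\sin\alpha\,e_3e_4$ from Lemma~\ref{2} and Remark~\ref{coordinatesforspin}. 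The only real novelty compared with Lemma~\ref{24} is that here $B\cong\CC\otimes_\RR\CC\cong\CC\times\CC$ is no longer a division algebra, so the simplicity argument via $\Cle{3}{0}\cong\HH$ used there is unavailable; the direct linear-independence calculation in the middle paragraph is what replaces it, and this is the main (though mild) obstacle.
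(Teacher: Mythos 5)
Your proof is correct and follows the same route as the paper: pass to the commutative $4$-dimensional subalgebra $B=\gen{1,e_1e_2,e_3e_4,e_1e_2e_3e_4}_\RR$ of $\Cle{4}{0}$, verify that $\hat\psi$ is an algebra isomorphism onto $A=\gen{\id_V,X_i,X_j,X_iX_j}_\RR$, and restrict to the subgroup. The only difference is that you spell out the linear-independence step (the left-multiply-by-$X_i$ and substitute argument giving $(1+c^2)X_j\in\gen{\id_V,X_i}_\RR$) that the paper merely asserts, and you correctly note that the simplicity-of-$\HH$ argument from Lemma~\ref{24} is unavailable here since $B\cong\CC\times\CC$ is not simple.
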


\begin{proof}
The subspace $\AA:=\gen{\id_V, X_i,X_j,X_iX_j}_\RR$ is an $\RR$-subalgebra of $\End(V)$, the set $\{\id_V, X_i,X_j,X_iX_j\}$ is an $\RR$-basis of $\AA$, and the $\RR$-linear extension $\hat\psi: \tilde{\AA}:=\gen{1,e_1e_2,e_3e_4,e_1e_2e_3e_4}_\RR\subseteq \Cl{4}^0 \to \AA$ of
\begin{align*} 1\mapsto \id_V\ , &&  e_1e_2\mapsto X_i\ , && e_3e_4\mapsto X_j, &&  e_1e_2e_3e_4\mapsto X_iX_j\ \end{align*}
is an isomorphism of algebras: Indeed, since $\id_V$, $X_i$, $X_j$ and $X_iX_j$ satisfy the same relations as $1$, $e_1e_2$, $e_3e_4$ and $e_1e_2e_3e_4$, the map $\psi$ is a homomorphism of rings. The hypothesis $X_j\notin \gen{\id_V, X_i}_\RR$ implies that \[\{1,X_i,X_j,X_iX_j\}\] is $\RR$-linearly independent. Therefore, $\hat\psi$ is injective and, thus, bijective, because $\dim_\RR \AA\leq 4$.

Consequently, the restriction $\psi$ of $\hat\psi$ to $\langle \tilde\eps_{12}\big(\Spin{2}\big), \tilde\eps_{34}\big(\Spin{2}\big)\rangle\subseteq \Spin{4}$ is injective with values in $\mathrm{GL}(V)$, i.e., $\psi : \langle \tilde\eps_{12}\big(\Spin{2}\big), \tilde\eps_{34}\big(\Spin{2}\big)\rangle\subseteq \Spin{4} \to \mathrm{GL}(V)$ is a group monomorphism as claimed.
The final statement is immediate from the definitions.
\end{proof}

\begin{remark}
We are now in a position to use the results of \cite{Hainke/Koehl/Levy} in order to confirm the conjecture concerning $\Spin{\Pi}$ made in footnote 18 on page 24 of \cite{DamourHillmann}.
The definition of a generalized spin representation can be found in \cite[Definition~4.4]{Hainke/Koehl/Levy}, the definition and existence of a maximal one in \cite[Corollary~4.8]{Hainke/Koehl/Levy}. 

We point out that \cite[Example~4.1]{Hainke/Koehl/Levy} uses a convention for Clifford algebras different from the one used in the present article; however, \cite[Corollary~4.8]{Hainke/Koehl/Levy} is formulated and proved without making any reference to Clifford algebras whatsoever. 
\end{remark}

\begin{theorem} \label{m1}
Let
\begin{itemize}
\item $\Pi$ be an irreducible simply laced diagram with labelling $\sigma:I\to V$,
\item $\mathfrak{g}$ be the Kac--Moody algebra corresponding to $\Pi$ and $\mathfrak{k}$ its maximal compact subalgebra with Berman generators $Y_1$, \ldots, $Y_n$ (cf.\ \cite[Section~2.2]{Hainke/Koehl/Levy}),
\item $\mu: \mathfrak{k} \to \End(\CC^s)$, $s\in\NN$, be a maximal generalized spin representation (cf.\ \cite[Corollary~4.8]{Hainke/Koehl/Levy}), and
\item $X_i:=2\mu(Y_i)$ for each $i\in I$.
\end{itemize}

Then, for each $i \neq j \in I$, there exist subgroups $H_{ij} \leq \mathrm{GL_{s}}(\CC)$
and an enveloping morphism
\[\Psi_\AAA = \{ \psi_{ij} \mid i \neq j \in I \} : \AAA\big(\Pi,\Spin{2}\big) \to H:=\gen{ H_{ij} \mid i\neq j\in I }\]
with injective $\psi_{ij}$ whenever $\{i,j\}^\sigma \in E(\Pi)$. 
\end{theorem}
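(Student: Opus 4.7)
The plan is to use the Clifford-like relations satisfied by the operators $X_i = 2\mu(Y_i)$ to construct the subgroups $H_{ij}$ and the enveloping homomorphisms $\psi_{ij}$ via Lemmas~\ref{24} and~\ref{25}, and then to check that these assemble into an enveloping morphism $\Psi_\AAA : \AAA(\Pi,\Spin 2) \to H$.

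First I would recall from the definition of a generalized spin representation of a simply laced diagram (cf.\ \cite[Section~4]{Hainke/Koehl/Levy}) that the normalized operators $X_i := 2\mu(Y_i) \in \End(\CC^s)$ satisfy
\[ X_i^2 = -\id_{\CC^s}, \qquad X_iX_j = -X_jX_i \text{ if } \{i,j\}^\sigma \in E(\Pi), \qquad X_iX_j = X_jX_i \text{ if } \{i,j\}^\sigma \notin E(\Pi). \]
Moreover, since $\mu$ is maximal, for $i \neq j$ without a joining edge the operator $X_j$ cannot lie in $\langle \id_{\CC^s}, X_i\rangle_\RR$: otherwise $X_j = \pm X_i$ (using $X_j^2 = -\id$), which would impose an additional relation on $\mu$ and thereby contradict maximality of the image of $\mu$.

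For each pair $i \neq j \in I$ I would now define $\psi_{ij}$ as follows. If $\{i,j\}^\sigma \in E(\Pi)$, then $X_i, X_j$ satisfy the hypotheses of Lemma~\ref{24}, which yields a group monomorphism
\[ \psi_{ij} : \widetilde{K}_{ij} = \Spin 3 \to \mathrm{GL}_s(\CC), \qquad a + be_1e_2 + ce_2e_3 + de_1e_3 \mapsto a\id_{\CC^s} + bX_i + cX_j + dX_iX_j. \]
If $\{i,j\}^\sigma \notin E(\Pi)$, then by the preceding remark $X_i,X_j$ satisfy the hypotheses of Lemma~\ref{25}, which produces a monomorphism from the subgroup $\langle \tilde\eps_{12}(\Spin 2), \tilde\eps_{34}(\Spin 2)\rangle \subseteq \Spin 4$ into $\mathrm{GL}_s(\CC)$; composing with the natural epimorphism $\widetilde{K}_{ij} = \Spin 2 \times \Spin 2 \onto \langle \tilde\eps_{12}(\Spin 2), \tilde\eps_{34}(\Spin 2)\rangle$ (whose kernel is $\langle (-1,-1)\rangle$ by Remark~\ref{Spin4Spin3}) I obtain the required homomorphism $\psi_{ij}$. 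In either case I set $H_{ij} := \psi_{ij}(\widetilde{K}_{ij})$ and $H := \langle H_{ij} \mid i \neq j \in I\rangle \leq \mathrm{GL}_s(\CC)$. Injectivity of $\psi_{ij}$ for edges is built in via Lemma~\ref{24}.

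To verify that $\Psi_\AAA = \{\psi_{ij}\}$ is indeed an enveloping morphism, I need the compatibility $\psi_{ij} \circ \tilde\phi_{ij}^i = \psi_{ik} \circ \tilde\phi_{ik}^i$ for all $i \neq j, k \in I$. By inspection of the explicit formulas stated at the end of Lemmas~\ref{24} and~\ref{25}, the composition $\psi_{ij} \circ \tilde\phi_{ij}^i$ sends the generator $S(\alpha) \in \Spin 2$ to
\[ \cos(\alpha)\id_{\CC^s} + \sin(\alpha) X_i, \]
an expression depending only on $i$ and not on $j$. Since $\Spin 2 = \{ S(\alpha) \mid \alpha \in \RR\}$, the desired identity of homomorphisms follows. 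The main obstacle is the justification of the linear-independence condition $X_j \notin \langle \id_{\CC^s}, X_i\rangle_\RR$ for non-edges; one needs to be explicit about what ``maximal'' buys, and depending on the precise formulation in \cite[Corollary~4.8]{Hainke/Koehl/Levy} it may be preferable to invoke the explicit construction of a maximal representation rather than to argue abstractly from maximality.
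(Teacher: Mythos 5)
Your proposal is correct and takes essentially the same route as the paper's own proof: both invoke the Clifford relations from \cite[Remark~4.5]{Hainke/Koehl/Levy}, obtain monomorphisms for edges via Lemma~\ref{24} and homomorphisms with kernel $\langle(-1,-1)\rangle$ for non-edges via Lemma~\ref{25}, and verify the enveloping morphism condition by observing that $\psi_{ij}\circ\tilde\phi_{ij}^i$ sends $S(\alpha)$ to $\cos(\alpha)\id+\sin(\alpha)X_i$, an expression independent of $j$. Your unpacking of the maximality argument (showing $X_j\in\langle\id,X_i\rangle_\RR$ and $X_j^2=-\id$ together force $X_j=\pm X_i$) actually makes explicit a step the paper leaves terse; the caveat you raise at the end is reasonable but not a gap, since the contradiction with maximality does go through.
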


\begin{proof}
According to \cite[Remark~4.5]{Hainke/Koehl/Levy}, given $i\neq j\in I$,
one has $X_i^2=-\id_V = X_j^2$, and
\[X_iX_j=\begin{cases}
-X_jX_i,&\text{if }\{i,j\}^\sigma\in E(\Pi)\ , \\
X_jX_i,&\text{if }\{i,j\}^\sigma\notin E(\Pi)\ .
\end{cases}\]
Moreover, $X_j\notin \gen{\id_V, X_i}_\RR$, as $\mu$ is maximal.
Thus Lemma~\ref{24} provides group monomorphisms $\psi_{ij} : \wt{K}_{ij} \to \mathrm{GL}_s(\CC)$, if $\{i,j\}^\sigma \in E(\Pi)$, and Lemma~\ref{25} provides group homomorphisms $\psi_{ij} : \wt{K}_{ij} \to \mathrm{GL}_s(\CC)$ with kernel $\langle -1, -1 \rangle$, if $\{i,j\}^\sigma \not\in E(\Pi)$. This allows one to define
\[  H_{ij} := \mathrm{im}(\psi_{ij}). \]
Restriction of the ranges of the maps $\psi_{ij}$ to $H_{ij}$ thus provides 
\begin{align*}
\text{group isomorphisms }\quad&
 \psi_{ij}:\wt{K}_{ij} = \Spin{3}\to H_{ij},&
    \text{if }\{i,j\}^\sigma\in E(\Pi)\ , \\
\text{group epimorphisms }\quad&
 \psi_{ij}:\wt{K}_{ij} = \Spin{2}\times\Spin{2}\to H_{ij},&
    \text{if }\{i,j\}^\sigma\notin E(\Pi)\ ,
\end{align*}
satisfying
\[ \forall i \neq j \in I: \qquad \psi_{ij}\big(\tilde\phi_{ij}^j(\cos(\alpha)+\sin(\alpha)e_1e_2)\big)=\cos(\alpha)\id_V+\sin(\alpha)X_j\ .\]
In particular, one has
\[\forall\ i\neq j\neq k:\qquad \psi_{ij}\circ \tilde\phi_{ij}^j=\psi_{kj}\circ \tilde\phi_{kj}^j\ .
\]
The set $\Psi_\AAA := \{ \psi_{ij} \mid i \neq j \in I \}$ is the desired enveloping morphism.
\end{proof}

\begin{remark}\label{inparticular}
Let everything be as in Theorem~\ref{m1}.
By universality of $\tau_{\Spin\Pi} : \AAA\big(\Pi,\Spin{2}\big) \to \Spin\Pi$ (cf.\ Definition~\ref{defn:sl-spin-group}) there exists an epimorphism \[\Xi:\Spin\Pi\to H\]
such that the following diagram commutes:
\[
\xymatrix{
\Spin{\Pi} \ar[rr]^\Xi && H \\
\Spin{2} \ar[u]^{\tau_{ij} \circ \tilde\phi_{ij}^i} \ar[urr]_{\psi_{ij} \circ \tilde\phi_{ij}^i} 
}
\]
The commutative diagram in Lemma~\ref{lem:sl-K-envelops-spin-amalgam} and the finiteness of the central extension $\Spin{\Pi} \to K(\Pi)$ by Observation~\ref{finitecentralextension} in fact allow one to lift the topological universality statement from Remark~\ref{addtopology} concerning $\tau_\Pi : \AAA\big(\Pi,\SO{2}\big) \to K(\Pi)$ to a topological universality statement concerning $\tau_{\Spin\Pi} : \AAA\big(\Pi,\Spin{2}\big) \to \Spin\Pi$. Moreover, the maps $\psi$ constructed in Lemmas~\ref{24} and \ref{25} are certainly continuous with respect to the Lie group topologies, if $\dim_\RR(V) < \infty$.

In particular, the enveloping morphism $\Psi_\AAA = \{ \psi_{ij} \}$ from the theorem consists of continuous maps, so that by universality $\Xi : \Spin\Pi\to H$ is continuous as well.
\end{remark}

As an immediate consequence we record: 

\begin{corollary} \label{m1cor}
Let $\Pi$ be an irreducible simply laced diagram.
Then $1_{\Spin\Pi}\neq -1_{\Spin\Pi}$.
\end{corollary}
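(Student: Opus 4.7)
The plan is to use the generalized spin representation supplied by Theorem~\ref{m1} to separate $1_{\Spin\Pi}$ from $-1_{\Spin\Pi}$. Since $\Pi$ is irreducible one has $c(\Pi)=1$, so $Z = \gen{-1_{\Spin\Pi,1}}$; by Observation~\ref{finitecentralextension}(b) we have $|Z|\leq 2$, and the task reduces to exhibiting a group homomorphism out of $\Spin\Pi$ that sends $-1_{\Spin\Pi}$ to a nontrivial element. (The case $|I|=1$ is vacuous, as the element $-1_{\Spin\Pi,\KKK(i)}$ of Definition~\ref{minus1} requires at least two labels; for $|I|\geq 2$ irreducibility guarantees the existence of at least one edge.)

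First I would invoke Theorem~\ref{m1}: this produces an enveloping morphism $\Psi_\AAA = \{\psi_{ij}\mid i\neq j\in I\} : \AAA(\Pi,\Spin{2}) \to H$ with the crucial property that $\psi_{ij}$ is \emph{injective} on $\wt K_{ij}=\Spin{3}$ whenever $\{i,j\}^\sigma\in E(\Pi)$. By the universal property of $(\Spin\Pi,\tau_{\Spin\Pi})$ as canonical universal enveloping group (Definition~\ref{defn:sl-spin-group}, Remark~\ref{inparticular}), there is an epimorphism $\Xi : \Spin\Pi \to H$ with $\Xi\circ\tau_{ij}=\psi_{ij}$ for all $i\neq j\in I$.

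Next, since $\Pi$ is irreducible with at least two vertices, I would pick an edge $\{i,j\}^\sigma\in E(\Pi)$. Then $\wt K_{ij}=\Spin{3}$ and $\tilde\phi_{ij}^i(-1_{\Spin{2}}) = -1_{\Spin{3}}$, whence by Definition~\ref{minus1}
\[
\Xi(-1_{\Spin\Pi}) \;=\; \Xi\bigl(\tau_{ij}(-1_{\Spin{3}})\bigr) \;=\; \psi_{ij}(-1_{\Spin{3}}).
\]
Because $\psi_{ij}$ is injective on $\Spin{3}$ and $-1_{\Spin{3}}\neq 1_{\Spin{3}}$, the right-hand side is different from $1_H$. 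Therefore $-1_{\Spin\Pi}\neq 1_{\Spin\Pi}$ in $\Spin\Pi$, as required.

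The heart of the argument -- namely producing a representation that is actually faithful on each $\Spin{3}$-block of the amalgam -- has already been absorbed into Theorem~\ref{m1} via the maximality of the generalized spin representation of \cite{Hainke/Koehl/Levy}; given that input, the corollary is essentially a one-line application of the universal property. The only minor point to be careful about is that we apply $\Xi$ to an element defined through a specific edge, which is legitimate because Lemma~\ref{lem:unique-minus-one-sl} ensures that $-1_{\Spin\Pi}$ does not depend on the choice of edge within the (single) connected component of $\Pi$.
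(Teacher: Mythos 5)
Your proof is correct and follows essentially the same route as the paper: both invoke the generalized spin representation of Theorem~\ref{m1} together with the universal property of $\Spin\Pi$ (Remark~\ref{inparticular}) to obtain $\Xi:\Spin\Pi\to H$, and then check that $\Xi(-1_{\Spin\Pi})\neq 1$. The paper computes $\Xi(-1_{\Spin\Pi})=-\id_V$ explicitly, while you appeal to the injectivity of $\psi_{ij}$ on the $\Spin 3$-block furnished by Theorem~\ref{m1}; these are two phrasings of the same observation.
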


\begin{proof}
We conclude from Remark~\ref{inparticular} 
\[\Xi(-1_{\Spin\Pi})=(\Xi \circ \tau_{12} \circ \tilde\phi_{12}^1)(-1_{\Spin{2}})=(\xi_{12}\circ\tilde\phi_{12}^1)(-1_{\Spin{2}})=\cos(\pi)\id_V+\sin(\pi)X_1=-\id_V, \]
and, hence, $1_{\Spin\Pi}\neq -1_{\Spin\Pi}$. 
\end{proof}

\begin{theorem} \label{m2}
Let $\Pi$ be a simply laced diagram.
Then the universal enveloping group \[\left(\Spin\Pi,\tau_{\Spin\Pi} =  \{ \tau_{ij} \mid i \neq j \in I \}\right)\]
of $\AAA\big(\Pi,\Spin{2}\big)$ is a $2^{c(\Pi)}$-fold central extension of
the universal enveloping group $K(\Pi)$ of $\AAA\big(\Pi,\SO{2}\big)$.
\end{theorem}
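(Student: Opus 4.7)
The plan is to apply Proposition~\ref{prop:env-grp-central-cover} to the amalgam epimorphism $\pi_\Pi:\AAA(\Pi,\Spin 2)\to\AAA(\Pi,\SO 2)$ of Remark~\ref{rem:spin-ama-to-so-ama-and-back}, with the choices $\wt U = \Spin 2$, $U = \SO 2$, $\rho^i = \rho_2$, and $\wt V = \ker\rho_2 = \{\pm 1_{\Spin 2}\}$. By Definition~\ref{defn:sl-spin-group} and Theorem~\ref{thm:K-univ-sl} we already have the two universal enveloping groups $(\Spin\Pi,\tau_{\Spin\Pi})$ and $(K(\Pi),\tau_{K(\Pi)})$, and Lemma~\ref{lem:sl-K-envelops-spin-amalgam} provides the induced epimorphism $\rho_\Pi:\Spin\Pi\to K(\Pi)$ that will be our candidate central extension. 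Once the two hypotheses of Proposition~\ref{prop:env-grp-central-cover}, namely $\wt G_{ij}=\langle\wt\phi_{ij}^i(\wt U),\wt\phi_{ij}^j(\wt U)\rangle$ and $A_{ij}\leq Z_{ij}^{\wt\phi}\leq Z(\wt G_{ij})$, have been checked, the conclusion will be that $\Spin\Pi$ is a central extension of $K(\Pi)$ with kernel $N=\langle\tau_{ij}(A_{ij})\mid i\neq j\in I\rangle$.

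To verify generation, I note that $\Spin 3$ is generated by $\tilde\eps_{12}(\Spin 2)$ and $\tilde\eps_{23}(\Spin 2)$ (since their product contains $e_1e_2\cdot e_2e_3 = -e_1e_3$, and together with $1$ these span a basis of $\Cle 3{0}$ intersected with the unit sphere), and $\Spin 2\times\Spin 2$ is trivially generated by $\tilde\iota_1(\Spin 2)$ and $\tilde\iota_2(\Spin 2)$. For the centrality hypothesis I treat edges and non-edges separately. When $\{i,j\}^\sigma\in E(\Pi)$ we have $\wt G_{ij}=\Spin 3$ and $\alpha_{ij}=\rho_3$; by Theorem~\ref{rho}(b), $A_{ij}=\ker\rho_3=\{\pm 1_{\Spin 3}\}$, and by Lemma~\ref{2} together with the explicit description in Remark~\ref{coordinatesforspin} one has $\tilde\eps_{12}(-1_{\Spin 2})=-1_{\Spin 3}=\tilde\eps_{23}(-1_{\Spin 2})$, hence $Z_{ij}^{\wt\phi}=\{\pm 1_{\Spin 3}\}=A_{ij}\leq Z(\Spin 3)$. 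When $\{i,j\}^\sigma\notin E(\Pi)$ we have $\wt G_{ij}=\Spin 2\times\Spin 2$ and $\alpha_{ij}=\rho_2\times\rho_2$; its kernel is $\langle(-1,1),(1,-1)\rangle$, and $Z_{ij}^{\wt\phi}=\langle\tilde\iota_1(-1),\tilde\iota_2(-1)\rangle$ is precisely this same subgroup, which is central since $\Spin 2\times\Spin 2$ is abelian.

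It remains to identify the kernel $N$ of $\rho_\Pi$ with the subgroup $Z$ of Definition~\ref{minus1} and to determine its order. The computation above shows that for each pair $i\neq j\in I$ the image $\tau_{ij}(A_{ij})$ is generated by the elements $\tau_{ij}(\tilde\phi_{ij}^i(-1_{\Spin 2}))$ and $\tau_{ij}(\tilde\phi_{ij}^j(-1_{\Spin 2}))$, which by Lemma~\ref{lem:unique-minus-one-sl} and Definition~\ref{minus1} are exactly $-1_{\Spin\Pi,\KKK(i)}$ and $-1_{\Spin\Pi,\KKK(j)}$. Ranging over all pairs $i\neq j$ and recalling that every connected component of $\Pi$ is represented by some vertex index, this yields $N=Z$. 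Finally, Theorem~\ref{Zmaximal} gives $|Z|=2^{c(\Pi)}$, so $\rho_\Pi$ is a $2^{c(\Pi)}$-fold central extension as claimed.

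The only real obstacle is bookkeeping in the hypothesis check for Proposition~\ref{prop:env-grp-central-cover}: one must make sure that the kernels $A_{ij}$ and the central subgroups $Z_{ij}^{\wt\phi}$ coincide in both the edge and non-edge cases, which ultimately comes down to the two simple facts $\tilde\eps_{12}(-1)=\tilde\eps_{23}(-1)$ in $\Spin 3$ and $\ker(\rho_2\times\rho_2)=\langle(-1,1),(1,-1)\rangle$. Everything else follows from the abstract amalgam machinery already developed and from the nontriviality of the central involutions established in the course of proving Theorem~\ref{Zmaximal}.
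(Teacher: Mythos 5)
Your application of Proposition~\ref{prop:env-grp-central-cover} is carried out correctly, and the hypothesis checks (generation of $\wt G_{ij}$ by the rank-one subgroups, and the equalities $A_{ij}=Z_{ij}^{\wt\phi}\leq Z(\wt G_{ij})$ in both the edge and non-edge cases) fill in details that the paper leaves implicit with ``it is immediate from.'' The identification $N=Z$ via Lemma~\ref{lem:unique-minus-one-sl} and Definition~\ref{minus1} is also sound. However, the final step is circular: you cite Theorem~\ref{Zmaximal} for $|Z|=2^{c(\Pi)}$, but in the paper Theorem~\ref{Zmaximal} has no independent proof --- the whole second half of Section~\ref{sec:spin-cover-simply-laced} is announced as ``devoted to proving'' it, and the paper concludes immediately after Theorem~\ref{m2} with ``We have proved Theorem~\ref{Zmaximal}.'' Theorem~\ref{Zmaximal} is a consequence of Theorem~\ref{m2}, not a tool available inside its proof.

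To close the gap you must show $|Z|\geq 2^{c(\Pi)}$ directly (the upper bound is Observation~\ref{finitecentralextension}). Corollary~\ref{m1cor} gives $-1_{\Spin\Pi}\neq 1_{\Spin\Pi}$ only for \emph{irreducible} $\Pi$, so it settles $c(\Pi)=1$ but provides no independence of $-1_{\Spin\Pi,1},\ldots,-1_{\Spin\Pi,c(\Pi)}$ when $c(\Pi)>1$. The paper's own proof supplies exactly this missing piece before invoking Proposition~\ref{prop:env-grp-central-cover}: it exhibits $\Spin\Pi$ as the direct product $\Spin{\Pi_1}\times\cdots\times\Spin{\Pi_{c(\Pi)}}$ over the connected components, so that each $-1_{\Spin\Pi,t}$ is non-trivial in its own factor and trivial in the others, whence $Z\cong(\ZZ/2\ZZ)^{c(\Pi)}$. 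You need to supply this reduction to the irreducible case (or an equivalent argument producing $c(\Pi)$ independent homomorphisms out of $\Spin\Pi$ that separate the generators of $Z$) before the proof stands.
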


\begin{proof}
Let $\Pi_1,\ldots,\Pi_{c(\Pi)}$ be the connected components of $\Pi$. Then \[\Spin\Pi = \Spin{\Pi_1} \times \cdots \times \Spin{\Pi_{c(\Pi)}}.\]
Indeed,
\begin{align*}
\tau_{\Spin\Pi} : \AAA\big(\Pi,\Spin{2}\big) & \to \Spin{\Pi_1} \times \cdots \times \Spin{\Pi_{c(\Pi)}} \\
\tau_{ij} : \wt K_{ij} & \to \tau_{ij}(\wt K_{ij}) & \text{if $\KKK(i) = \KKK(j)$} \\
\tau_{ij} : \wt K_{ij} & \to (\tau_{ij} \circ \tilde\phi_{ij}^i)(\Spin{2}) \times (\tau_{ij} \circ \tilde\phi_{ij}^j)(\Spin{2}) : (x,y) \mapsto \tau_{ij}(x,y)  & \text{if $\KKK(i) \neq \KKK(j)$}
\end{align*}
is an enveloping morphism.

It therefore suffices to prove the theorem for irreducible simply laced diagrams $\Pi$. In this case, however, it is immediate from Proposition~\ref{prop:env-grp-central-cover} applied to the $\wt U$-amalgam $\AAA\big(\Pi,\Spin{2}\big)$ and the $U$-amalgam $\AAA\big(\Pi,\SO{2}\big)$ with $\wt U = \Spin{2}$, $U=\SO{2}$ and $\wt V = \langle -1 \rangle$ combined with Lemma~\ref{lem:unique-minus-one-sl} and Corollary~\ref{m1cor}.
\end{proof}

We have proved Theorem~\ref{Zmaximal} and Theorem~\ref{mainthm:sl-spincover} from the introduction.

\part{Non-simply laced rank two diagrams}
%  ____            _
% |  _ \ __ _ _ __| |_
% | |_) / _` | '__| __|
% |  __/ (_| | |  | |_
% |_|   \__,_|_|   \__|

\section{Strategies for reducing the general case to the simply-laced one}    \label{strategy}
%  ___         _   _          
% / __| ___ __| |_(_)___ _ _  
% \__ \/ -_) _|  _| / _ \ ' \ 
% |___/\___\__|\__|_\___/_||_|

Until now we exclusively studied spin covers of maximal compact subgroups of split real Kac--Moody groups of simply laced type. Our next goal is to generalize this concept to arbitrary Dynkin diagrams resp.\ generalized Cartan matrices. We pursue this goal via two strategies: The first one is via epimorphisms between maximal compact subgroups induced by local epimorphisms on amalgam-level in rank two where we replace non-simple edges by non-edges, simple edges or double edges; the second one is via embeddings into larger groups by unfolding the diagrams resp.\ the Cartan matrices to simply-laced cover diagrams as in \cite{Hainke/Koehl/Levy}.
%\TODO{Clarify: construction of the amalgams is ``easy'', the ``hard'' part is understanding the structure}

The first strategy will allow us to transform arbitrary Dynkin diagrams resp.\ generalized Cartan matrices into doubly laced ones. The second strategy will work for the resulting doubly laced generalized Cartan matrices.  A combination of both strategies allows us to deal with arbitrary generalized Cartan matrices.

\begin{strategy} \label{simplelacing}
In order to deal with the two non-simply laced spherical diagrams of rank two -- $\mathrm{C}_2$ and $\mathrm{G}_2$ -- we consider point-line models of the Tits buildings of the split real Lie groups $\Sp{4}$ and $\mathrm{G}_2(2)$, the so-called symplectic quadrangle and the so-called split Cayley hexagon. As in the proof of Theorem~\ref{thm:univso}, the Iwasawa decomposition implies that the maximal compact subgroups $\U{2} \leq \Sp{4}$ and $\SO{4} \leq \mathrm{G}_2(2)$ act flag-transitively on the respective point-line geometries.

Their unique double covers $\SO{2} \times \SU{2} \onto \U{2}$ and $\Spin{4} \onto \SO{4}$ fit into the commutative diagrams  
\begin{eqnarray*}
& \xymatrix{\SO{2} \times \SU{2} \ar[rr] \ar[d]  && \mathrm{Spin}(3) \ar[d] \\
\U{2} \ar[rr] && \mathrm{SO}(3)
} & \text{(cf.\ Remark~\ref{coordinatesrev})}
\\ \text{and} \\
& \xymatrix{\mathrm{Spin}(4) \ar[rrr] \ar[d] &  && \mathrm{Spin}(3)  \ar[d] \\ 
\mathrm{SO}(4)  \ar[rrr] &  && \mathrm{SO}(3)
} & \text{(cf.\ Proposition~\ref{surjG2})}
\end{eqnarray*}
which allow one to transform point and line stabilizers in $\U{2}$ and $\SO{4}$ into point, resp.\ line stabilizers in $\SO{3}$ in a way that is compatible with the covering maps. This in turn will allow us to transform a $\Spin{2}$-amalgam for a given two-spherical diagram $\Pi$ into a $\Spin{2}$-amalgam for the simply laced diagram $\Pi^{\mathrm{sl}}$ that one obtains from $\Pi$ by replacing all edges by simple edges. As a consequence --- based on Theorem~\ref{m2} --- in Theorem~\ref{m3} below we will be able to prove that the spin cover $\Spin{\Pi}$ is a non-trivial central extension of $K(\Pi)$ for suitable two-spherical diagrams $\Pi$.

As a caveat we point out that the compatibility of the covering maps in the $\mathrm{C}_2$ case is quite subtle and actually fails under certain circumstances, due to the phenomena described in Lemma~\ref{lem:ama-embed-and-rho-commute-B2}. In order to control these subtleties we introduce the notion of admissible colourings of Dynkin diagrams in Definition~\ref{adtypedef}. These subtleties are also why we actually only replace certain double edges by single edges and additionally employ Strategy~\ref{unfolding} below.

When trying to deal with non-two-spherical diagrams further subtleties arise. The non-spherical Cartan matrices of rank two are of the form \[\begin{pmatrix} 2 & -r \\ -s & 2\end{pmatrix}\] for $r, s \in \NN$ such that $rs \geq 4$. The isomorphism type of the maximal compact subgroup $K$ of the corresponding split real Kac--Moody group depends (only) on the parities of $r$ and $s$. Indeed, in all cases $K$ is isomorphic to a free amalgamated product
\[
K \cong K_1T_K *_{T_K} K_2T_K.
\] where $K_1 \cong \SO{2} \cong K_2$ with $T_K = \{ 1, t_1, t_2, t_1t_2 \} \cong \ZZ/2\ZZ \times  \ZZ/2\ZZ$ and $T_K \cap K_1 = \langle t_1 \rangle$, $T_K \cap K_2 = \langle t_2 \rangle$ and $K_i \unlhd K_iT_K$. We conclude that the isomorphism type of $K$ is known once the action of $t_1$ on $K_2$ and the action of $t_2$ on $K_1$ are known. It turns out that $t_1$ centralizes $K_2$ if and only if $r$ is even and inverts $K_2$ if and only if $r$ is odd; similarly, $t_2$ centralizes $K_1$ if and only if $s$ is even and inverts $K_1$ if and only if $s$ is odd (cf.\ Remark~\ref{tcentralizes}). 

To these four cases of parities of $r$ and $s$ correspond three cases of epimorphisms from $K$ onto compact Lie groups:  $K \onto \SO{2} \times \SO{2}$, if both $r$ and $s$ are even; $K \onto \SO{3}$, if both $r$ and $s$ are odd; $K \onto \U{2}$, if $r$ and $s$ have different parities.  
A study of various double covers of $K$ will, in analogy to what we sketched above for diagrams of type $\mathrm{C}_2$ and $\mathrm{G}_2$, enable us to replace edges labelled $\infty$ by non-edges, simple edges, resp.\ double edges, thus allowing us to understand the non-two-spherical situation as well. 
Again, the case in which $r$ and $s$ have different parities lead to some subtleties that we get control of with the concept of admissible colourings introduced in Definition~\ref{adtypedef}.

Following this strategy leads directly to Proposition~\ref{thm:Spin(Delta)-covers-Spin(Delta-sl)}. 
\end{strategy}

\begin{strategy} \label{unfolding}
Let $\Pi$ with type set $I$ and (generalized) Cartan matrix $A = (a(i,j))_{i,j \in I}$ be an irreducible doubly laced diagram that admits two root lengths.
Then the unfolded Dynkin diagram is the simply laced Dynkin diagram $\Pi^{\mathrm{un}}$ with type set \[I^\mathrm{un} := \{ \pm i \mid i \in I, i \text{ short root} \} \cup \{ i \mid i \in I, i \text{ long root} \}\] and edges defined via the generalized Cartan matrix $A^{\mathrm{un}}= (a^{\mathrm{un}}(i,j))_{i,j \in I^{\mathrm{un}}}$ given by 
\[a^{\mathrm{un}}(i,j) =
\begin{cases}
0, & \text{if $|i|$, $|j|$ have different lengths and $a(|i|,|j|) = 0$}, \\
-1, & \text{if $|i|$, $|j|$ have different lengths and $a(|i|,|j|) \neq 0$}, \\
a(|i|,|j|), & \text{if $|i|$, $|j|$ have the same length and $ij > 0$},   \\
0, & \text{if $|i|$, $|j|$ have the same length and $ij<0$};
\end{cases}\]
(cf.\ Definition~\ref{unfoldeddiagram}).

There exists an embedding of $K(\Pi)$ into $K(\Pi^{\mathrm{un}})$ that by Corollary~\ref{corunfolding} allows one to related the respective spin covers to one another.
\end{strategy}

\section{Diagrams of type $\mathrm{G}_2$} \label{sec:g2}
%  ___         _   _          
% / __| ___ __| |_(_)___ _ _  
% \__ \/ -_) _|  _| / _ \ ' \ 
% |___/\___\__|\__|_\___/_||_|

In this section we prepare Strategy~\ref{simplelacing} for diagrams of type $\mathrm{G}_2$.

\begin{defn}
Denote by $\HH:=\{ a+bi+cj+dk \mid a,b,c,d\in \RR\}$
the \Defn{real quaternions}.
Then the \Defn{standard involution} of $\HH$ is given by
\[\bar{\cdot}:\HH\to\HH: x=a+bi+cj+dk \mapsto \ol{x}=a-bi-cj-dk\ .\]
The set of \Defn{purely imaginary quaternions}, cf.\ \cite[11.6]{Salzmann:1995}, is
\[ \Pu \HH
:= \{ x\in\HH \mid x=-\ol{x} \}
 = \{ bi+cj+dk \mid b,c,d\in \RR\}\subset \HH\ .
\]
\end{defn}

\begin{defn}
The \Defn{split Cayley algebra} $\OO$ is defined as the vector space
$\HH \oplus \HH$ endowed with the multiplication \[xy = (x_1,x_2)(y_1,y_2) =
(x_1y_1 + y_2 \ol{x_2},\ y_1x_2 + \ol{x_1} y_2),\] cf.\
\cite[Section~5.1]{Cohen:1995}.
The \Defn{real split Cayley hexagon} $\HHH(\RR)$ consists of the one-
and two-dimensional real subspaces of $\OO$ for which the restriction
of the multiplication map is trivial, i.e., $\HHH(\RR) = (\PPP, \LLL,
\subset)$ with the point set
\[\PPP := \{ \gen{x}_\RR \mid x \in \OO, x^2 = 0 \neq x \}\]
and the line set
\[\LLL := \{ \gen{x, y}_\RR \mid \gen{x}_\RR \neq \gen{y}_\RR \in \PPP, xy = 0 \},\] cf.\
\cite[Section~5.1]{Cohen:1995}, also \cite[Section~2.4.9]{Maldeghem:1998}.
\end{defn}

\begin{lemma}\label{lem:cayley-zero-square}
Let $x=(x_1,x_2)\in\OO$.
Then
$x^2 = 0 \text{ if and only if } x_1 \in \Pu \HH \text{ and } \ol{x_1} x_1 - \ol{x_2} x_2 = 0$.
\end{lemma}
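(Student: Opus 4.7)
The plan is to compute $x^2$ directly from the given multiplication law on $\OO = \HH \oplus \HH$. Setting $y = x$ in the formula $(x_1,x_2)(y_1,y_2) = (x_1y_1 + y_2\overline{x_2},\, y_1x_2 + \overline{x_1}y_2)$ yields
\[ x^2 = \bigl(x_1^2 + x_2\overline{x_2},\ (x_1 + \overline{x_1})x_2\bigr), \]
so that the condition $x^2 = 0$ decouples into one equation in each copy of $\HH$, which I will treat in turn.

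From the second coordinate I will read off $(x_1 + \overline{x_1})x_2 = 0$. Since $x_1 + \overline{x_1} = 2\,\Rea(x_1)$ lies in $\RR \subseteq Z(\HH)$ and $\HH$ has no zero divisors, this forces $x_1 \in \Pu\HH$ or $x_2 = 0$. The degenerate subcase $x_2 = 0$ reduces the first coordinate to $x_1^2 = 0$, whence $x_1 = 0 \in \Pu\HH$ by the same division-ring property; so in every situation $x_1 \in \Pu\HH$. Once this is established, $\overline{x_1} = -x_1$ gives $x_1^2 = -\overline{x_1}x_1$, and combined with the identity $x_2\overline{x_2} = \overline{x_2}x_2$ (both equal the real-valued quaternionic norm) the first coordinate equation rearranges to $\overline{x_1}x_1 - \overline{x_2}x_2 = 0$, as claimed.

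For the converse I plan simply to substitute: the hypothesis $x_1 \in \Pu\HH$ makes the second coordinate vanish identically because $x_1 + \overline{x_1} = 0$, while the norm identity $\overline{x_1}x_1 = \overline{x_2}x_2$ together with $x_1^2 = -\overline{x_1}x_1$ and $x_2\overline{x_2} = \overline{x_2}x_2$ kills the first coordinate. I do not anticipate any substantive obstacle here; the only subtle point is in the forward direction, where the degenerate subcase $x_2 = 0$ prevents one from concluding $x_1 \in \Pu\HH$ directly from the second equation and must be routed through the first equation instead.
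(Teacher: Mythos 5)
Your proof is correct and follows essentially the same route as the paper's: compute $x^2$ directly from the multiplication law, extract $x_1 + \ol{x_1} = 0$ from the second coordinate (you via the real-scalar/no-zero-divisor observation, the paper by right-multiplying by $x_2^{-1}$ when $x_2 \neq 0$ — the same argument in slightly different words), handle the degenerate subcase $x_2 = 0$ through the first coordinate, and then rewrite the first coordinate using $x_1^2 = -\ol{x_1}x_1$ and $x_2\ol{x_2} = \ol{x_2}x_2$.
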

\begin{proof}
Suppose $x_1 \in \Pu \HH$ and $\ol{x_1} x_1 - \ol{x_2} x_2 = 0$. Then $-x_1x_1 - x_2 \ol{x_2} = \ol{x_1} x_1 - \ol{x_2} x_2 = 0$ and $x_1x_2 + \ol{x_1} x_2 = x_1x_2 - x_1x_2 = 0$, and, thus, $x^2 = 0$. Conversely, suppose $x^2 = 0$. Then $0 = x^2 = (x_1x_1+x_2\ol{x_2},x_1x_2 + \ol{x_1} x_2)$, so if $x_2=0$, then $x_1=0$, and there is nothing to show. For $x_2 \neq 0$ multiplication of $x_1x_2 + \ol{x_1} x_2 = 0$ from the right with $x_2^{-1}$ gives $x_1 + \ol{x_1} = 0$ or, equivalently, $x_1 \in \Pu \HH$. But now $0 = x_1x_1 + x_2 \ol{x_2} = - \ol{x_1} x_1 + \ol{x_2} x_2$, and the claim follows.
\end{proof}

\begin{definition}
Let $N:\HH\to \RR : x\mapsto x\ol{x}$ be the norm map associated to the
standard involution of the real quaternions.  Moreover, let
\[\UH:=\{x\in\HH \mid x\ol{x}=1 \}\]
be the group of real quaternions of norm one.
\end{definition}

By Remark~\ref{mapsso4} (see also \cite[Lemma 11.22]{Salzmann:1995}), the group $\SO{4}$ is isomorphic to
the group consisting of the maps 
\[ \HH \to \HH : x \mapsto a x b^{-1} \quad\text{ for }\quad a,b\in \UH \ . \]

\begin{lemma} \label{flagtransg2}
The group $\SO{4} \cong  \{ \HH \to \HH : x \mapsto a x b^{-1} \mid a,b\in \UH \} $ acts flag-transitively on the split Cayley hexagon $\HHH(\RR)$ via 
\[ \gen{(x_1,x_2)}_\RR \mapsto \gen{(a x_1 a^{-1},\  a x_2 b^{-1})}_\RR\ . \]
\end{lemma}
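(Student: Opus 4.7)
The plan is to check that the stated formula defines an action of $\SO{4}$ on $\HHH(\RR)$ by automorphisms, and then to establish transitivity on points together with transitivity of a point stabiliser on the pencil of lines through that point.

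First I will verify that $\phi_{a,b}\colon(x_1,x_2)\mapsto(ax_1a^{-1},ax_2b^{-1})$ is an $\RR$-algebra automorphism of $\OO$ for every $(a,b)\in\UH\times\UH$. Using $\overline{a}=a^{-1}$ for $a\in\UH$, a short direct computation (expanding each component of the product $(x_1,x_2)(y_1,y_2)$) yields $\phi_{a,b}((x_1,x_2)(y_1,y_2))=\phi_{a,b}(x_1,x_2)\cdot\phi_{a,b}(y_1,y_2)$, so the defining relations $x^2=0$ and $xy=0$ for $\PPP$ and $\LLL$ are preserved, and the $\RR$-linearity of $\phi_{a,b}$ preserves incidence of subspaces. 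The equality $\phi_{-a,-b}=\phi_{a,b}$ shows that the assignment descends along the epimorphism $\UH\times\UH\twoheadrightarrow\SO{4}$ of Remark~\ref{mapsso4}, producing a well-defined action on $\HHH(\RR)$.

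Next I will prove transitivity on points. Given $P=\gen{(x_1,x_2)}_\RR\in\PPP$, Lemma~\ref{lem:cayley-zero-square} yields $x_1\in\Pu\HH$ and $|x_1|=|x_2|$; rescaling by a real scalar we may assume $|x_1|=|x_2|=1$. Since the conjugation action of $\UH$ on $\Pu\HH$ realises the action of $\SO{3}$ on $\RR^3$ (Remark~\ref{mapsso4}), it is transitive on the unit sphere in $\Pu\HH$, so some $a\in\UH$ satisfies $ax_1a^{-1}=i$. Setting $b:=ax_2\in\UH$ yields $ax_2b^{-1}=1$, and hence $\phi_{a,b}$ sends $P$ to the reference point $P_0:=\gen{(i,1)}_\RR$.

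Finally I will exhibit elements of the stabiliser of $P_0$ acting transitively on the lines through $P_0$. Solving $(i,1)(v_1,v_2)=0$ in $\OO$ forces $v_2=-iv_1$, and then $v^2=0$ reduces by Lemma~\ref{lem:cayley-zero-square} to the single requirement $v_1\in\Pu\HH$. Modding out by the ambiguity $v\mapsto\lambda v+\mu(i,1)$ replaces $v_1$ by $\lambda v_1+\mu i$, so in the coordinates $v_1=\alpha i+\beta j+\gamma k$ the lines through $P_0$ are parametrised by $[\beta\!:\!\gamma]\in\PP_1(\RR)$. For $a:=\cos\theta+i\sin\theta\in\UH$ the element $\phi_{a,a}$ fixes $P_0$, and on $v_1$ acts by conjugation by $a$, which fixes $i$ and rotates the $(j,k)$-plane by $2\theta$; on $\PP_1(\RR)$ this is rotation by $2\theta$, and as $\theta$ varies it covers the whole circle group, yielding transitivity on lines through $P_0$. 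Combining this with point-transitivity gives flag-transitivity. The only mildly delicate step is this last one: one must carefully track how the two simultaneous rescalings $\lambda v$ and $\mu(i,1)$ collapse $\Pu\HH\setminus\RR i$ precisely to $\PP_1(\RR)$, and verify that the doubling $\theta\mapsto 2\theta$ still yields a transitive rotation action on this quotient.
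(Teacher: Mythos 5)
Your proof is correct and takes a genuinely different route from the paper's proof of the lemma, which simply invokes the Iwasawa decomposition and the identification of $\SO{4}$ as the maximal compact subgroup of $\mathrm{G}_2(2)$ to deduce flag-transitivity as a black box after verifying that $f_{a,b}$ is an algebra automorphism. Your direct computation of transitivity on points and on the line pencil of a reference point is very close in spirit to the \emph{Remark} that the paper includes immediately after the lemma (which it labels ``a nice direct proof of flag-transitivity without making use of the Iwasawa decomposition''), but with two differences: you choose $\gen{(i,1)}_\RR$ as the reference point rather than the paper's $\gen{(i,i)}_\RR$, and you carry out the parametrisation of the line pencil and the stabiliser action explicitly in coordinates $[\beta:\gamma]\in\PP_1(\RR)$, whereas the paper's remark reduces the question to the known transitivity of the $\SO{3}$-stabiliser of $\gen{i}_\RR$ on the line pencil of $\gen{i}_\RR$ in $\PP(\Pu\HH)$. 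Your approach buys a more self-contained argument; the paper's version in the lemma buys brevity at the price of invoking Lie-theoretic structure theory, and the paper's remark buys a cleaner reduction at the price of implicitly relying on the analogous statement for $\SO{3}$. One small point worth making explicit in your write-up: the descent of the action from $\UH\times\UH$ to $\SO{4}$ via $\phi_{-a,-b}=\phi_{a,b}$ is indeed needed and is correct, but you should also note that the choice $b=ax_2$ in the point-transitivity step lands in $\UH$ precisely because $|x_2|=1$ after the rescaling, which is what ties Lemma~\ref{lem:cayley-zero-square} into the argument.
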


\begin{proof}
We show that the map 
\[ f_{a,b}:\OO\to\OO : (x_1,x_2) \mapsto (a x_1 a^{-1},\ a x_2 b^{-1})\]
is an algebra automorphism of $\OO$ for all $a, b \in \UH$. Then it
also induces an automorphism of $\HHH(\RR)$, because it is defined via the multiplication in $\OO$. We have
\begin{align*}
 &\ f_{a,b}(x_1,x_2)\cdot f_{a,b}(y_1,y_2) \\
=&\ \left( a x_1 a^{-1},\ a x_2 b^{-1} \right) \cdot \left( a y_1 a^{-1},\ a y_2 b^{-1} \right)  \\
=&\ \left( (a x_1 a^{-1}) (a y_1 a^{-1}) + (a y_2 b^{-1}) \ol{(a x_2 b^{-1})},\ 
           (a y_1 a^{-1}) (a x_2 b^{-1}) + \ol{(a x_1 a^{-1})} (a y_2 b^{-1}) \right) \\
=&\ \left( a (x_1y_1 + y_2 \ol{x_2}) a^{-1},\ a (y_1x_2 + \ol{x_1} y_2) b^{-1} \right) \\
=& f_{a,b}\big((x_1,x_2)(y_1,y_2)\big) \, ,
\end{align*}
whence the map $f_{a,b}$ is multiplicative. Since it is certainly an
$\RR$-linear bijection, it is an algebra automorphism of $\OO$.
Flag-transitivity is an immediate consequence of the Iwasawa
decomposition and the fact that $\SO{4}$ is the maximal compact subgroup of the (simply connected semisimple) split real group 
$\mathrm{G}_2(2)$ of type $\mathrm{G}_2$.
\end{proof}

\begin{remark}
There exists a nice direct proof of flag-transitivity without making use of the Iwasawa decomposition and the structure theory of $\mathrm{G}_2(2)$ that in particular illustrates how to
compute point and line stabilizers and, thus, helps our understanding how to
properly embed the circle group into $\SO{4}$ for our amalgamation problem.

Let \[\Pu \OO := \Pu \HH \oplus \HH\] be the set of \Defn{purely imaginary
split octonions} and consider the points of the real (projective) quadric
\[\ol{x_1} x_1 - \ol{x_2} x_2 = 0\] in $\Pu \OO$, i.e., the set of isotropic
one dimensional real subspaces of $\Pu \OO$. By Remark~\ref{mapsso4} (see also \cite[11.24]{Salzmann:1995}),
the group $\SO{3}$ is isomorphic to the group consisting of the maps \[\Pu
\HH \to \Pu \HH : x \mapsto a x a^{-1} \text{ for } a\in\UH\] and acts
transitively on the set $\{ \{ x, -x \} \subset \Pu\HH \mid x\ol{x} = 1 \}$.  Moreover, for
each $a, x, z \in \UH$, there exists a unique solution $b \in \UH$ for the
equation $z = a xb^{-1}$.

Hence $\SO{4}$ acts transitively on the set \[\left\{ \big\{ (x,y), (-x,-y) \big\} \subset \Pu\HH \times \HH \mid x\ol{x} = 1 = y\ol{y} \right\}.\] But this
implies point transitivity on the projective real quadric $\ol{x_1} x_1 -
\ol{x_2} x_2 = 0$ in $\Pu \OO$, which, in turn, implies point transitivity
on $\HHH(\RR)$ by Lemma~\ref{lem:cayley-zero-square}.

Now choose one point of $\HHH(\RR)$, say $\langle (i,i) \rangle_\RR$. Then a
point $\langle y \rangle_\RR = \langle (y_1,y_2)\rangle_\RR$ is collinear to
this point if and only if \[(iy_1 - y_2i, y_1i - iy_2) = 0 \Longleftrightarrow
y_1 = -iy_2i.\] So the question of transitivity of the stabilizer of $\langle
(i,i)\rangle_\RR$ in $\SO{4}$ on the line pencil of $\gen{(i,i)}_\RR$ in
$\HHH(\RR)$ is equivalent to the question of transitivity of the stabilizer
of $\gen{i}_\RR$ in $\SO{3}$ on the line pencil of $\gen{i}_\RR$ in the
projective plane $\Pu \HH \cong \Pu \HH \oplus \{ 0 \} \subset \Pu \OO$.

But since the latter is transitive, so is the former, and hence $\SO{4}$
acts flag-transitively on $\HHH(\RR)$ by means of the maps given in
Lemma~\ref{flagtransg2}.
\end{remark}

The stabilizer of the point $\gen{(i,i)}_\RR$ contains the circle group $\SO{2}$ acting naturally diagonally on $\gen{j_1,k_1}_{\RR} \oplus \gen{j_2,k_2}_{\RR}$. The stabilizer of the line $\gen{(i,i) , (j,-j)}_\RR$ in $\SO{4}$ contains $\SO{2}$ acting naturally by rotations on $\gen{i_1,j_1}_{\RR}$ and by rotations in the opposite direction on $\gen{i_2,j_2}_{\RR}$ in such a way that an element with first coordinate $\lambda i_1 + \mu j_1$ has second coordinate $\lambda i_2 - \mu j_2$.

\begin{notation}
We denote these embeddings of the circle group into $\SO{4}$ by $\eta_p$ resp.\ $\eta_l$.
Concretely, one has
\begin{align*}
\eta_p &: \SO{2}\to \SO{4} :  D(\alpha)\mapsto \begin{pmat} I_2 &  \\ & D(\alpha)\end{pmat}
    =\eps_{34}(D(\alpha))\ ,\\
\eta_l &: \SO{2}\to \SO{4} : D(\alpha)\mapsto \tilde{D}(\alpha):=
  \begin{pmatrix}
    \cos(2\alpha) & & & \sin(2\alpha) \\
     & \cos(\alpha) & -\sin(\alpha) & \\
     & \sin(\alpha) & \cos(\alpha) & \\
    -\sin(2\alpha) & & & \cos (2\alpha)
  \end{pmatrix} = \eps_{14}(D(2\alpha)) \cdot \eps_{23}(D(-\alpha))\ .
\end{align*}
\end{notation}

\begin{lemma} \label{lem:rank1-inv-G2}
Let $B:=\diag(-1,1,1,-1)$, $C:=\diag(-1,-1,1,1)\in \U{2}$. Then the following hold:
\begin{enumerate}
\item The map $\gamma_B: \SO{4}\to \SO{4} : A\mapsto B\cdot A\cdot B^{-1}=B\cdot A\cdot B$
is an automorphism of $\SO{4}$ such that
\[\gamma_B\circ \eta_p=\eta_p\circ \inv \qquad \text{and} \qquad \gamma_B\circ \eta_l=\eta_l.\]
\item The map $\gamma_C:\SO{4}\to\SO{4} : A\mapsto C\cdot A\cdot C^{-1}=C\cdot A\cdot C$
is an automorphism of $\SO{4}$ such that
\[\gamma_C\circ \eta_p=\eta_p \qquad \text{and} \qquad \gamma_C\circ \eta_l=\eta_l\circ\inv.\]
\end{enumerate}
\end{lemma}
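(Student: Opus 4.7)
The plan is a short, essentially mechanical matrix computation, exactly analogous to the proof of Lemma~\ref{lem:rank1-inv-A2}. First observe that both $B$ and $C$ are orthogonal diagonal matrices of determinant $1$, hence lie in $\SO{4}$ (the note ``$\in\U{2}$'' in the statement appears to be a typo); consequently $\gamma_B$ and $\gamma_C$ are inner automorphisms of $\SO{4}$, so it only remains to verify their effect on the two prescribed circle subgroups.

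The key observation is that conjugation by a diagonal sign matrix $D=\diag(\varepsilon_1,\varepsilon_2,\varepsilon_3,\varepsilon_4)$ with $\varepsilon_i\in\{\pm1\}$ preserves each block $\eps_{ij}$ in a simple way: a direct $2\times 2$ calculation gives
\[
  D\cdot\eps_{ij}(D(\alpha))\cdot D^{-1}
  =\begin{cases}\eps_{ij}(D(\alpha)),&\text{if }\varepsilon_i=\varepsilon_j,\\
                 \eps_{ij}(D(-\alpha)),&\text{if }\varepsilon_i\neq\varepsilon_j,\end{cases}
\]
since $\diag(1,-1)D(\alpha)\diag(1,-1)=D(-\alpha)$. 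Both $\eta_p=\eps_{34}\circ D$ and $\eta_l(D(\alpha))=\eps_{14}(D(2\alpha))\cdot\eps_{23}(D(-\alpha))$ are built from such blocks, and since conjugation is a ring homomorphism on $4\times4$ matrices, one applies the observation blockwise.

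Now read off the signs. For $B=\diag(-1,1,1,-1)$: on block $(3,4)$ the signs are $(1,-1)$, so $\gamma_B$ inverts $\eta_p$; on blocks $(1,4)$ and $(2,3)$ the signs are $(-1,-1)$ resp.\ $(1,1)$, so $\gamma_B$ fixes both factors of $\eta_l(D(\alpha))$, yielding $\gamma_B\circ\eta_l=\eta_l$. For $C=\diag(-1,-1,1,1)$: on block $(3,4)$ the signs are $(1,1)$, so $\gamma_C$ fixes $\eta_p$; on blocks $(1,4)$ and $(2,3)$ the signs are both $(-1,1)$, so $\gamma_C$ inverts each factor of $\eta_l(D(\alpha))$, hence $\gamma_C(\eta_l(D(\alpha)))=\eps_{14}(D(-2\alpha))\cdot\eps_{23}(D(\alpha))=\eta_l(D(-\alpha))=\eta_l(D(\alpha)^{-1})$.

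There is no real obstacle here; the only mild subtlety is that $\eta_l$ is not a single block map but a product of two, so one needs to check that conjugation by $B$ (respectively $C$) acts \emph{coherently} on both factors, i.e., either fixes both or inverts both. This is exactly what the sign pattern of $B$ and $C$ guarantees, and it explains the particular choice of these two matrices among all sign diagonals in $\SO{4}$.
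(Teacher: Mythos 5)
Your proof is correct and is precisely the ``straightforward'' matrix verification the paper leaves to the reader; the sign-pattern observation (conjugation by a diagonal $\pm1$ matrix fixes or inverts the $\eps_{ij}$-block according as $\varepsilon_i=\varepsilon_j$ or not) is a clean way to organize the check, and your remark that the stated choice of $B$ and $C$ ensures coherent behaviour on both factors of $\eta_l$ captures why exactly those sign diagonals are used. You are also right that ``$\in\U{2}$'' in the statement is most plausibly a copy-paste artefact from Lemma~\ref{lem:rank1-inv-B2} (indeed $B=\diag(-1,1,1,-1)$ is not in the copy of $\U{2}$ embedded in $\SO{4}$ via Remark~\ref{coordinates}); what actually matters for the argument is only $B,C\in\SO{4}$, which is immediate.
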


\begin{proof}
Straightforward.
\end{proof}

\begin{notation} \label{etaspin}
In the following, let
\begin{align*}
\tilde\eta_p &: \Spin{2} \to \Spin{4} : S(\alpha)\mapsto \tilde\eps_{34}\big(S(\alpha)\big)\ , \\
\tilde\eta_l &: \Spin{2}\to \Spin{4} : S(\alpha)\mapsto \tilde\eps_{14}\big(S(2\alpha)\big)\cdot \tilde\eps_{23}\big(S(-\alpha)\big)\ ,
\end{align*}
and recall from Theorem~\ref{rho}(b) that for $n\geq 2$ the map
\[\rho_n:\Spin{n}\to \SO{n} \]
is the twisted adjoint representation.
\end{notation}

In order to generalize our definition of spin amalgams, we need
$\tilde\eta_p$ and $\tilde\eta_l$ to be injective. For the former this
is clear from its definition, for the latter we verify it now.

\begin{lemma} \label{lem:eta-l-inj}
The map $\tilde\eta_l$ is a monomorphism.
\end{lemma}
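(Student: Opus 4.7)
The plan is to reduce injectivity of $\tilde\eta_l$ to injectivity of $\eta_l$ (which is visible by direct inspection) by means of the twisted adjoint representation, and then rule out the only remaining possible kernel element by an explicit calculation.

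First I would establish commutativity of the diagram
\[
\xymatrix{
\Spin{2} \ar[rr]^{\tilde\eta_l} \ar[d]_{\rho_2} && \Spin{4} \ar[d]^{\rho_4} \\
\SO{2} \ar[rr]_{\eta_l} && \SO{4}
}
\]
This is routine: by Lemma~\ref{2} we have $\rho_4\circ\tilde\eps_{ij}=\eps_{ij}\circ\rho_2$ for every pair $i\neq j$, and $\rho_2(S(\alpha))=D(2\alpha)$ by Notation~\ref{notationiota}. Applied to the definition of $\tilde\eta_l$, this gives $\rho_4(\tilde\eta_l(S(\alpha)))=\eps_{14}(D(4\alpha))\cdot\eps_{23}(D(-2\alpha))=\eta_l(D(2\alpha))=\eta_l(\rho_2(S(\alpha)))$.

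Next I would observe that $\eta_l$ is injective. Indeed, if $\eps_{14}(D(2\alpha))\cdot\eps_{23}(D(-\alpha))$ equals the identity in $\SO{4}$, then, since the blocks $\eps_{14}$ and $\eps_{23}$ act on disjoint pairs of coordinates, both factors must be trivial; this forces $\alpha\in 2\pi\ZZ$, whence $D(\alpha)=I$. Combined with commutativity of the diagram and $\ker(\rho_2)=\{\pm 1\}$ from Theorem~\ref{rho}(b), this shows $\ker(\tilde\eta_l)\subseteq\{\pm 1\}\subseteq\Spin{2}$.

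It then remains to exclude $-1=S(\pi)$ from the kernel. A direct computation yields
\[
\tilde\eta_l(S(\pi))=\tilde\eps_{14}(S(2\pi))\cdot\tilde\eps_{23}(S(-\pi))=1\cdot\tilde\eps_{23}(-1)=-1,
\]
using the coordinate description $\tilde\eps_{23}(S(\alpha))=\cos(\alpha)+\sin(\alpha)e_2e_3$ of Remark~\ref{coordinatesforspin} to see that $-1\in\Spin{2}$ is carried to $-1\in\Spin{4}$. Since $-1\neq 1$ in $\Spin{4}$ by Theorem~\ref{rho}(b), this completes the argument. I do not anticipate a serious obstacle; the only mildly delicate point is bookkeeping of the factor of two in $\rho_2$ when checking commutativity, and making sure that the central element of $\Spin{2}$ really maps to the central element of $\Spin{4}$ under $\tilde\eps_{23}$, both of which are immediate from the definitions.
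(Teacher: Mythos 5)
Your argument is correct, but it takes a genuinely different route from the paper's proof. The paper proves the lemma by a self-contained computation entirely inside $\Spin{4}$: expanding $\tilde\eta_l(S(\alpha))=\bigl(\cos(2\alpha)+\sin(2\alpha)e_1e_4\bigr)\bigl(\cos(\alpha)-\sin(\alpha)e_2e_3\bigr)$ in the basis $\{1,e_1e_4,e_2e_3,e_1e_2e_3e_4\}$ and reading off that $\tilde\eta_l(S(\alpha))=1$ forces $\alpha\in\pi\ZZ$, then checking $\tilde\eta_l(S(\pi))=-1$. You instead push the problem down to $\SO{4}$ through the twisted adjoint representation: the commutative square $\rho_4\circ\tilde\eta_l=\eta_l\circ\rho_2$ that you establish is precisely the content of Lemma~\ref{lem:ama-embed-and-rho-commute-G2}, which the paper happens to state and prove immediately \emph{after} the present lemma; since its proof does not depend on the present one, there is no circularity, only a reordering. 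Injectivity of $\eta_l$ on $\SO{2}$ then pins $\ker\tilde\eta_l$ inside $\ker\rho_2=\{\pm1\}$, and the remaining central element is excluded by the same evaluation $\tilde\eta_l(S(\pi))=-1$ that the paper uses. What your route buys is a conceptual reduction from the spin level to the orthogonal level at the cost of invoking the covering square; what the paper's route buys is self-containedness (no forward reference, no appeal to $\rho_4$), at the cost of a slightly more opaque coefficient comparison.
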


\begin{proof}
For $S(\alpha)\in \ker \tilde\eta_l$ one has
\begin{align*}
\tilde\eta_l(S(\alpha))
=\big(\cos(2\alpha)+\sin(2\alpha)e_1e_4\big)
 \big(\cos(\alpha)-\sin(\alpha)e_2e_3\big)
=1
\end{align*}
and, thus, $\alpha\in \pi\ZZ$. As $\tilde\eta_l\big ( S(\pi)\big)=-1$ and $\tilde\eta_l\big(S(2\pi)\big)=1$, one obtains
\[\ker \tilde\eta_l=\{ S(\alpha) \mid \alpha\in 2\pi\ZZ \}=\{ 1\}\ . \qedhere\]
\end{proof}

\begin{lemma} \label{lem:ama-embed-and-rho-commute-G2}
One has
\begin{align*}
\rho_4\circ \tilde\eta_p= {\eta}_p\circ \rho_2\ , && 
\rho_4\circ \tilde\eta_l= {\eta}_l\circ \rho_2\ .
\end{align*}
\end{lemma}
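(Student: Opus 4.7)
The plan is to reduce both identities to the commutative diagram
\[
\xymatrix{
\Spin{m}\ar[d]_{\rho_m} \ar[rr]^{\tilde\eps_I} && \Spin{n} \ar[d]^{\rho_n} \\
\O{m} \ar[rr]^{\eps_I} && \O{n}
}
\]
from Remark~\ref{6}, together with the fact that $\rho_n$ is a group homomorphism (Theorem~\ref{rho}(b)) and the explicit formula $\rho_2(S(\beta)) = D(2\beta)$ from Notation~\ref{nota:Dalpha-Salpha}.

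The first identity is essentially immediate: by definition $\tilde\eta_p = \tilde\eps_{34}$ and $\eta_p = \eps_{34}$, so the case $I=\{3,4\}$ of Remark~\ref{6} gives
\[
\rho_4 \circ \tilde\eta_p \;=\; \rho_4 \circ \tilde\eps_{34} \;=\; \eps_{34} \circ \rho_2 \;=\; \eta_p \circ \rho_2\ .
\]

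For the second identity, I would evaluate both sides on an arbitrary $S(\alpha)\in\Spin{2}$ and compare. Using that $\rho_4$ is multiplicative and applying Remark~\ref{6} to $I = \{1,4\}$ and $I = \{2,3\}$ respectively,
\begin{align*}
\rho_4\bigl(\tilde\eta_l(S(\alpha))\bigr)
&= \rho_4\bigl(\tilde\eps_{14}(S(2\alpha))\bigr)\cdot \rho_4\bigl(\tilde\eps_{23}(S(-\alpha))\bigr) \\
&= \eps_{14}\bigl(\rho_2(S(2\alpha))\bigr)\cdot \eps_{23}\bigl(\rho_2(S(-\alpha))\bigr) \\
&= \eps_{14}(D(4\alpha))\cdot \eps_{23}(D(-2\alpha))\ ,
\end{align*}
while by the definition of $\eta_l$,
\[
\eta_l\bigl(\rho_2(S(\alpha))\bigr) \;=\; \eta_l(D(2\alpha)) \;=\; \eps_{14}(D(4\alpha))\cdot \eps_{23}(D(-2\alpha))\ .
\]
The two expressions agree, so the identity holds on all of $\Spin{2}$.

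There is no real obstacle here; the only thing to keep an eye on is consistency of the factor of $2$ between the spin coordinate $\alpha$ and the rotation coordinate $2\alpha$ under $\rho_2$, which is precisely why the factors of $2$ and $-1$ appear in the definition of $\tilde\eta_l$ and $\eta_l$. No appeal to injectivity (Lemma~\ref{lem:eta-l-inj}) is needed for the lemma itself.
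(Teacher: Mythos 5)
Your proof is correct and follows essentially the same route as the paper: for $\eta_p$ it directly invokes the commutative square of Remark~\ref{6} with $I=\{3,4\}$, and for $\eta_l$ it uses multiplicativity of $\rho_4$ together with Remark~\ref{6} applied to $I=\{1,4\}$ and $I=\{2,3\}$ and the formula $\rho_2(S(\beta))=D(2\beta)$. Your closing observation that injectivity of $\tilde\eta_l$ (Lemma~\ref{lem:eta-l-inj}) is not needed here is also correct; it is used elsewhere to make $\tilde\eta_l$ a valid connecting homomorphism, not in this lemma.
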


\begin{proof}
For $\alpha\in \RR$,
\begin{align*}
  (\rho_4\circ \tilde\eta_p)\big(S(\alpha)\big)
&=(\rho_4\circ \tilde\eps_{34})\big(S(\alpha)\big)
 =(\eps_{34}\circ\rho)\big(S(\alpha)\big)
 =(\eta_p\circ\rho)\big(S(\alpha)\big)
\end{align*}
and
\begin{align*}
  (\rho_4\circ \tilde\eta_l)\big(S(\alpha)\big)
&=\rho_4\big(\tilde\eps_{14}\big(S(2\alpha)\big)
   \cdot \tilde\eps_{23}\big(S(-\alpha)\big)\big)
 =\eps_{14}\big(D(4\alpha)\big)\cdot \eps_{23}\big(D(-2\alpha)\big) \\
&=(\eta_l\circ \rho_2)\big(S(\alpha)\big)\ .
\qedhere
\end{align*}
\end{proof}

\begin{lemma} \label{lemmaleftright}
Let $V:=\HH$ and $\EEE:=\{ 1, i , j , k\}$. Then the following hold:
\begin{enumerate}
\item For $a,b\in \UH$ the
maps \[\ell_a: \HH\to \HH : x\mapsto ax \text{ and } r_b:\HH\to \HH : x\mapsto xb^{-1}\]
preserve the norm $N:\HH\to \RR : x\mapsto x\ol{x}$. In particular,
$\ell_a,r_b\in \SO{\HH}\cong \SO{4}$.
\item \label{lem:UHxUH-covers-SO4}
The map
\[ \UH\times \UH\to\SO{4} : (a,b)\mapsto \ell_{a}\circ r_{b} \]
is a group epimorphism with kernel $\{(1,1),(-1,-1)\}$.
\end{enumerate}
\end{lemma}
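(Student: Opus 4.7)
The plan is to split the proof along the two parts of the statement. For part~(a), I would first note that both $\ell_a$ and $r_b$ are $\RR$-linear endomorphisms of $\HH$ (since $\RR$ sits in the centre of $\HH$) and that the quaternionic norm satisfies $N(xy)=N(x)N(y)$. From $a\overline{a}=b\overline{b}=1$ this immediately yields $N(ax)=N(x)$ and $N(xb^{-1})=N(x)$, so $\ell_a,r_b\in\O{\HH}\cong\O{4}$. To upgrade this to $\SO{\HH}$, I would use a connectedness argument: the group $\UH$, being diffeomorphic to $S^3$, is path-connected, and $\det\circ\ell_{(\cdot)}:\UH\to\{\pm 1\}$ is continuous and equals $1$ at $a=1$, hence is identically~$1$. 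The same argument applies to $r_b$.

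For part~(b), I would proceed in three steps. \emph{Homomorphism.} A direct computation using associativity of $\HH$ shows that $\ell_a$ and $r_b$ commute for all $a,b\in\UH$, and that $\ell_{a_1}\ell_{a_2}=\ell_{a_1a_2}$ and $r_{b_1}r_{b_2}=r_{b_1b_2}$. Combining these three facts gives
\[
(\ell_{a_1}\circ r_{b_1})\circ(\ell_{a_2}\circ r_{b_2})
 = \ell_{a_1a_2}\circ r_{b_1b_2},
\]
so the map is a group homomorphism. \emph{Surjectivity.} This is precisely the content of the identification recorded in Remark~\ref{mapsso4}, which I would simply invoke. \emph{Kernel.} If $\ell_a\circ r_b=\id_\HH$, then $axb^{-1}=x$ for every $x\in\HH$. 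Evaluating at $x=1$ gives $a=b$, whereupon the relation becomes $ax=xa$ for all $x\in\HH$. Hence $a\in Z(\HH)=\RR$, and combined with $a\in\UH$ this forces $a\in\{\pm 1\}$, yielding the asserted kernel $\{(1,1),(-1,-1)\}$.

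There is no serious obstacle: the content is essentially the classical double cover $\mathrm{U}_1(\HH)\times\mathrm{U}_1(\HH)\to\SO{4}$. The only point where one must be a little careful is the passage from $\O{\HH}$ to $\SO{\HH}$ in part~(a), which is handled by the connectedness argument above, and the invocation of Remark~\ref{mapsso4} for surjectivity in part~(b); alternatively, surjectivity could be obtained from a dimension count combined with the kernel computation, since both sides are compact connected Lie groups of dimension~$6$.
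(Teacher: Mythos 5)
Your proof is correct and follows the same underlying route as the paper, which simply cites Remark~\ref{mapsso4} (and Salzmann et al.\ as an alternative) for the entire lemma. You fill in the details explicitly — multiplicativity of the norm plus connectedness of $\UH$ for part~(a), and the homomorphism/kernel verification together with an appeal to Remark~\ref{mapsso4} (or, alternatively, the dimension-plus-connectedness argument) for part~(b) — which is a welcome expansion but not a different approach.
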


\begin{proof}
This has been discussed in Remark~\ref{mapsso4}. Alternatively, it also follows from \cite[Lemma~{11.22} to Corollary~{11.25}]{Salzmann:1995}.
\end{proof}

For $\EEE=\{ 1, i , j , k\}$ define
\begin{align*}
L_a:=M_\EEE(\ell_a)\in \SO{4}\ , && R_b:=M_\EEE(r_b)\in \SO{4}\ .
\end{align*}

\begin{remark} \label{remark5}
\begin{enumerate}
\item
The map $\UH\times\UH\to\SO{4}$ from Lemma~\ref{lem:UHxUH-covers-SO4} equals the covering map $\rho_4$, cf.\ Remark~\ref{mapisrho}.
\item Given $x=a+bi+cj+dk\in \UH$, a short computation shows
\begin{align*}
L_x=\begin{pmatrix} a & - b & -c & -d \\
b & a & -d & c \\
c & d & a & -b \\
d& -c & b & a \end{pmatrix}\ , &&
R_x=\begin{pmatrix} a & b & c & d \\
-b & a & -d & c \\
-c & d & a & -b \\
-d & -c &  b & a \end{pmatrix}
\end{align*}
as $\RR$-linear maps via left action. Lemma~\ref{lem:UHxUH-covers-SO4} implies that for all $x,y \in \mathrm{U}_1(\HH)$ one has $L_xR_y = R_yL_x$ and that up to scalar multiplication with $-1$ the matrices $L_x$ and $R_y$ are uniquely determined by their product.
\item The action from Lemma~\ref{flagtransg2} translates into \[\omega: \SO{4}\times \HHH(\RR) \to \HHH(\RR) : \big( L_aR_{b} , (x,y)\big) \mapsto ( L_a R_{a}\cdot x, L_aR_{b}\cdot y).\]
\item \label{30a}
For $\alpha\in \RR$, one has $\eta_p\big(D(\alpha)\big)=L_a\cdot R_a$ with
$a=\cos(\tfrac{\alpha}{2})-\sin(\tfrac{\alpha}{2})i$, i.e.,
\begin{align*}
  \begin{pmatrix} I_2 & \\ & D(\alpha) \end{pmatrix}
= \begin{pmatrix} D(\frac{\alpha}{2}) &  \\ & D(\frac{\alpha}{2}) \end{pmatrix}
    \cdot
  \begin{pmatrix} D(-\frac{\alpha}{2}) & \\ & D(\frac{\alpha}{2})\end{pmatrix}\ .
\end{align*}
\item \label{30b}
For $\alpha\in \RR$, we have $\eta_l\big(D(\alpha)\big)=L_a\cdot R_b$ with $a=\cos(\frac{\alpha}{2})-\sin(\frac{\alpha}{2})k$ and $b=\cos(\frac{3\alpha}{2})+\sin(\frac{3\alpha}{2})k$, i.e.,
\begin{align*}
\eta_l\big(D(\alpha)\big)=
\begin{pmatrix}
\cos(\frac{\alpha}{2}) &  & \sin(\frac{\alpha}{2}) \\
& D(\frac{\alpha}{2}) & \\
-\sin(\frac{\alpha}{2}) &  & \cos(\frac{\alpha}{2})
\end{pmatrix}\cdot
\begin{pmatrix}
\cos(\frac{3\alpha}{2}) &  & \sin(\frac{3\alpha}{2}) \\
& D(-\frac{3\alpha}{2}) & \\
-\sin(\frac{3\alpha}{2}) &  & \cos(\frac{3\alpha}{2})
\end{pmatrix}\ .
\end{align*}
\end{enumerate}
\end{remark}

The subgroup of right translations $R_b$ is normal in $\SO{4}$, the resulting quotient is isomorphic to $\SO{3}$. This canonical projection induces a surjection from the split Cayley hexagon onto the real projective plane, given by the projection $\langle (x_1,x_2) \rangle_\RR \mapsto \langle x_1 \rangle_\RR$. (Cf.\ \cite[Section~5]{Gramlich:1998}. An alternative description of this surjection can be found in \cite{GramlichVanMaldeghem}.)

The following lemma describes how the corresponding embedded circle groups behave under this surjection.

\begin{lemma} \label{lemsurjG2} \label{lemsurjG2I}
\begin{enumerate}
\item There is an epimorphism $\eta_1:\SO{4}\to  \SO{3}$ such that
\begin{align*}
\eta_1\circ \eta_p=\eps_{23}\ , && \eta_1\circ \eta_l=\eps_{12}\ .
\end{align*}
\item There is an epimorphism $\eta_2:\SO{4}\to  \SO{3}$ such that
\begin{align*}
\eta_2\circ \eta_p=\eps_{12}\ , && \eta_2\circ \eta_l=\eps_{23}\ .
\end{align*}
\end{enumerate}
\end{lemma}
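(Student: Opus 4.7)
For part (a), my plan is to take $\eta_1 := \eta$ to be the epimorphism already introduced in Remark~\ref{mapisrho}, which under the identification $\SO{4} \cong \{x \mapsto a x b^{-1} : a, b \in \UH\}$ sends $L_a R_b$ to $(x \mapsto a x a^{-1})$, viewed as an element of $\SO{\Pu \HH}$, and then identified with $\SO{3}$ via the standard basis $(i, j, k) \leftrightarrow (e_1, e_2, e_3)$ of $\Pu \HH$. Verification of the two required identities reduces to a direct quaternion computation. Using Remark~\ref{remark5}(d), which gives $\eta_p(D(\alpha)) = L_a R_a$ with $a = \cos(\alpha/2) - \sin(\alpha/2)i$, and repeatedly applying $i^2 = j^2 = k^2 = -1$, $ij = k$, $jk = i$, $ki = j$ together with the half-angle identities, one finds that conjugation by $a$ fixes $i$ and sends $j \mapsto \cos(\alpha) j - \sin(\alpha) k$, $k \mapsto \sin(\alpha) j + \cos(\alpha) k$, which under the chosen basis identification is exactly $\eps_{23}(D(\alpha))$. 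The analogous calculation using Remark~\ref{remark5}(e) with $a = \cos(\alpha/2) - \sin(\alpha/2)k$ shows that conjugation by $a$ fixes $k$ and acts on $\{i, j\}$ as $D(\alpha)$, giving $\eta_1 \circ \eta_l = \eps_{12}$.

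The main conceptual obstacle appears in part (b): one might hope to obtain $\eta_2$ from the other natural projection $L_a R_b \mapsto \pi(b)$ of $\SO{4} \cong (\UH \times \UH)/\langle (-1,-1) \rangle$ onto $\SO{3}$, but this fails. Since $\eta_p(D(\alpha)) = L_a R_a$ has equal left and right factors, both natural projections agree on the image of $\eta_p$ and therefore both yield $\eps_{23}$, not the required $\eps_{12}$. Moreover, the $b$-component in the formula for $\eta_l$ has quaternionic half-angle $3\alpha/2$, so the second projection would produce a rotation by $3\alpha$ on the $\{i,j\}$-plane, and no choice of basis identification of $\Pu \HH$ with $\RR^3$ can correct this to $\eps_{23}(D(\alpha))$.

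The resolution is to post-compose $\eta_1$ with an inner automorphism of $\SO{3}$ that interchanges the one-parameter subgroups $\eps_{23}(\SO{2})$ and $\eps_{12}(\SO{2})$ while matching angles. Concretely, I would define $\eta_2 := c_g \circ \eta_1$, where $c_g : \SO{3} \to \SO{3}$ is conjugation by the half-turn
\[
g := \begin{pmatrix} 0 & 0 & 1 \\ 0 & -1 & 0 \\ 1 & 0 & 0 \end{pmatrix} \in \SO{3}
\]
about the axis $e_1 + e_3$, which satisfies $g^2 = I$ and swaps $e_1 \leftrightarrow e_3$ while sending $e_2 \mapsto -e_2$. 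Because $g$ is orientation-preserving and exchanges the $e_1$- and $e_3$-axes, a short matrix computation yields $g \cdot \eps_{23}(D(\alpha)) \cdot g^{-1} = \eps_{12}(D(\alpha))$ and $g \cdot \eps_{12}(D(\alpha)) \cdot g^{-1} = \eps_{23}(D(\alpha))$. Combined with (a), this gives $\eta_2 \circ \eta_p = \eps_{12}$ and $\eta_2 \circ \eta_l = \eps_{23}$, and $\eta_2$ is an epimorphism as the composition of the epimorphism $\eta_1$ with the automorphism $c_g$.
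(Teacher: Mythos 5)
Your proposal is correct and follows essentially the same route as the paper. Your $\eta_1$ is exactly the paper's map $\eps_{\{2,3,4\}}^{-1}\circ\psi$ with $\psi:L_aR_b\mapsto L_aR_a$ from Remark~\ref{mapsso4}, and your half-turn $g$ for part~(b) is precisely the matrix $D$ of Lemma~\ref{10}, which the paper cites directly instead of re-deriving the conjugation identities.
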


\begin{proof}
By Remark~\ref{mapsso4}, the map
\[ \psi: \SO{4}\to \eps_{\{2,3,4\}}\big(\SO{3}\big): L_aR_b\mapsto L_{a}R_{a}\]
is an epimorphism (see also \cite[Corollaries~11.23 and 11.24]{Salzmann:1995}). By Remark \ref{30a},
\[\psi\circ \eta_p=\eta_p=\eps_{34}\ ,\]
and, by Remark \ref{30b},
\begin{align*}
(\psi\circ \eta_l)\big(D(\alpha)\big)&=\begin{pmatrix}
\cos(\frac{\alpha}{2}) &  & \sin(\frac{\alpha}{2}) \\
& D(\frac{\alpha}{2}) & \\
-\sin(\frac{\alpha}{2}) &  & \cos(\frac{\alpha}{2})
\end{pmatrix}
\begin{pmatrix}
\cos(\frac{\alpha}{2}) &  & -\sin(\frac{\alpha}{2}) \\
& D(\frac{\alpha}{2}) & \\
\sin(\frac{\alpha}{2}) &  & \cos(\frac{\alpha}{2})
\end{pmatrix}=\begin{pmatrix}
1 &  &  \\
& D(\alpha) & \\
& & 1
\end{pmatrix} = \eps_{23}(D(\alpha)) \ .
\end{align*}
Therefore, the map $\eta_1:=\eps_{\{2,3,4\}}^{-1}\circ \psi$ has the desired properties. The existence of the map $\eta_2$ now follows from Lemma~\ref{10}.
\end{proof}

The final result of this section allows us to carry out Strategy~\ref{simplelacing} for edges of type $\mathrm{G}_2$ in Theorem~\ref{thm:Spin(Delta)-covers-Spin(Delta-sl)} below.

\begin{proposition} \label{surjG2}\label{surjG2I}
\begin{enumerate}
\item There is an epimorphism $\tilde\eta_1:\Spin{4}\to  \Spin{3}$ such that
\begin{align*}
\tilde\eta_1\circ \tilde\eta_p=\tilde\eps_{23}\ , && \tilde\eta_1\circ \tilde\eta_l=\tilde\eps_{12}\ .
\end{align*}
\item There is an epimorphism $\tilde\eta_2:\Spin{4}\to  \Spin{3}$ such that
\begin{align*}
\tilde\eta_2\circ \tilde\eta_p=\tilde\eps_{12}\ , && \tilde\eta_2\circ \tilde\eta_l=\tilde\eps_{23}\ .
\end{align*}
\end{enumerate}
\end{proposition}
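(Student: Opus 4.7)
The plan is to construct $\tilde\eta_i$ by lifting the epimorphisms $\eta_i \colon \SO{4} \to \SO{3}$ from Lemma~\ref{lemsurjG2} through the spin covers. The composition $\eta_i \circ \rho_4 \colon \Spin{4} \to \SO{3}$ is a continuous homomorphism from the simply connected Lie group $\Spin{4}$ into $\SO{3}$, and $\rho_3 \colon \Spin{3} \to \SO{3}$ is a double covering. Standard covering space theory (as already exploited in the proof of Proposition~\ref{prop:lift-aut-soN}) therefore provides a unique continuous basepoint-preserving lift $\tilde\eta_i \colon \Spin{4} \to \Spin{3}$ satisfying $\rho_3 \circ \tilde\eta_i = \eta_i \circ \rho_4$, and uniqueness forces this lift to be a group homomorphism.

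Once $\tilde\eta_i$ is in hand, each of the four identities is verified in one line via Proposition~\ref{prop:zusatz}. For part (a), combining Lemma~\ref{lem:ama-embed-and-rho-commute-G2} with Lemma~\ref{lemsurjG2}(a) yields
\[
\rho_3 \circ (\tilde\eta_1 \circ \tilde\eta_p) = \eta_1 \circ \rho_4 \circ \tilde\eta_p = \eta_1 \circ \eta_p \circ \rho_2 = \eps_{23} \circ \rho_2,
\]
and injectivity of $\eps_{23}$ gives $\ker(\rho_3 \circ \tilde\eta_1 \circ \tilde\eta_p) = \ker(\rho_2) = \{\pm 1\}$. Proposition~\ref{prop:zusatz} therefore forces $\tilde\eta_1 \circ \tilde\eta_p = \tilde\eps_{23}$, and the same argument applied to $\tilde\eta_l$ in place of $\tilde\eta_p$ (using the second identity in Lemma~\ref{lem:ama-embed-and-rho-commute-G2} and the second identity in Lemma~\ref{lemsurjG2}(a)) yields $\tilde\eta_1 \circ \tilde\eta_l = \tilde\eps_{12}$. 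Part (b) is completely parallel, using Lemma~\ref{lemsurjG2}(b).

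Surjectivity of $\tilde\eta_i$ can then be read off from these identities: since $\tilde\eps_{23}(S(\pi)) = -1$, the relation $\tilde\eta_1 \circ \tilde\eta_p = \tilde\eps_{23}$ shows that $-1 \in \Spin{3}$ lies in the image of $\tilde\eta_1$, and combined with the surjectivity of $\rho_3 \circ \tilde\eta_1 = \eta_1 \circ \rho_4$ onto $\SO{3}$ this forces $\mathrm{im}(\tilde\eta_1) = \rho_3^{-1}(\SO{3}) = \Spin{3}$. An identical argument handles $\tilde\eta_2$.

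The main obstacle I expect is conceptual rather than computational: one must be sure the lifting argument is legitimate, which rests on the simple connectedness of $\Spin{4}$ (standard, cf.\ the proof of Proposition~\ref{prop:lift-aut-soN}) and on $\rho_3$ being a genuine topological covering. A more algebraic alternative would use the isomorphism $\Spin{4} \cong \Spin{3} \times \Spin{3}$ from Remark~\ref{Spin4Spin3} and take $\tilde\eta_1, \tilde\eta_2$ to be the two canonical projections; this route replaces the covering-space appeal by a short bookkeeping check matching the projections against $\eta_1, \eta_2$ under the conventions of Remark~\ref{mapisrho} and Lemma~\ref{lemsurjG2}, so I would keep it in reserve as a sanity check on the more direct lifting construction.
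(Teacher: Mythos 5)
Your proof is correct, and the heart of the argument---establishing the four identities via Proposition~\ref{prop:zusatz} after computing that $\rho_3\circ\tilde\eta_i\circ\tilde\eta_{p/l}$ agrees with $\eps_{\cdot}\circ\rho_2$ and reading off the kernel---coincides exactly with the paper's. Where you diverge is in how $\tilde\eta_i$ gets its hands on the page: the paper takes the explicit formula $\tilde\eta_1 : u + \mathbb{I}v \mapsto u+v$ supplied by Remark~\ref{Spin4Spin3}, whose compatibility with $\eta$ is the commuting square of Remark~\ref{mapisrho}, whereas you construct $\tilde\eta_i$ abstractly as the unique basepoint-preserving lift of $\eta_i\circ\rho_4$ through the double cover $\rho_3$, invoking simple connectedness of $\Spin 4$ in the spirit of Proposition~\ref{prop:lift-aut-soN}. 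You also handle $\tilde\eta_2$ by rerunning the same lifting argument with $\eta_2$ in place of $\eta_1$, whereas the paper derives $\tilde\eta_2$ from $\tilde\eta_1$ by lifting the automorphism $\gamma_D$ from Lemma~\ref{10} via Proposition~\ref{prop:lift-aut-soN}; both routes are equally short here. Your abstract construction buys you not needing the explicit quaternionic bookkeeping of Remarks~\ref{Spin4Spin3} and~\ref{mapisrho}, at the cost of a separate (minor) argument for surjectivity of $\tilde\eta_i$, which the explicit formula makes immediate; you supply that argument correctly using $\tilde\eps_{23}(S(\pi))=-1$. The explicit alternative you mention in reserve is precisely the paper's route, so you are in effect choosing the other of two available proofs of the same fact.
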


\begin{proof}
The map $\tilde\eta_1 : \Spin{4} \to \Spin{3} : u+\II v \mapsto u+v $ makes the inner/right-hand quadrangle of the following diagram commute (see Remarks~\ref{Spin4Spin3} and \ref{mapisrho}):
\[\xymatrix{
\Spin{2} \ar[ddd]_{\rho_2} \ar[rd]_{\tilde\eta_p} \ar[rrrd]^{\tilde\eps_{23}} \\
& \Spin{4} \ar[rr]_{\tilde\eta_1} \ar[d]^{\rho_4} & & \Spin{3} \ar[d]_{\rho_3} \\
& \SO{4} \ar[rr]^\eta & & \SO{3} \\
\SO{2} \ar[ur]^{\eta_p} \ar[urrr]_{\eps_{23}}
}
\]
The lower triangle commutes by Lemma~\ref{lemsurjG2}. The left-hand quadrangle commutes by Lemma~\ref{lem:ama-embed-and-rho-commute-G2}. 
Hence \[\eps_{23} \circ \rho_2 = \eta \circ \eta_p \circ \rho_2 = \eta \circ \rho_4 \circ \tilde\eta_p = \rho_3 \circ \tilde\eta_1 \circ \tilde\eta_p\] and \[\ker(\rho_3 \circ \tilde\eta_1 \circ \tilde\eta_p)=\ker(\eps_{23}\circ\rho_2)=\ker\rho_2 = \{\pm 1_{\Spin{2}} \}.\] 
Therefore $\tilde\eps_{23} = \tilde\eta_1 \circ \tilde \eta_p$ by Proposition~\ref{prop:zusatz}. In particular, also the upper triangle of the diagram commutes.

The second claim concerning $\tilde\eta_1$ follows by an analogous argument

The claims concerning $\tilde\eta_2$ are now immediate by Lemma~\ref{10} and Proposition~\ref{prop:lift-aut-soN}
\end{proof}

\section{Diagrams of type $\mathrm{C}_2$} \label{sec:bc2}
%  ___         _   _          
% / __| ___ __| |_(_)___ _ _  
% \__ \/ -_) _|  _| / _ \ ' \ 
% |___/\___\__|\__|_\___/_||_|

In this section we prepare Strategy~\ref{simplelacing} for diagrams of type $\mathrm{C}_2$.

\begin{definition}
Let $\Sp{4}$ be the matrix group with respect to the $\RR$-basis $e_1$, $ie_1$, $e_2$, $ie_2$ of $\CC^2$ leaving the real alternating form \[\big( (x_1,x_2,x_3,x_4),(y_1,y_2,y_3,y_4)\big) \mapsto x_1y_2 - x_2y_1 + x_{3}y_{4} - x_{4}y_{3}\] invariant.
\end{definition}

\begin{remark} \label{coordinates}
The maximal compact subgroup of $\Sp{4}$ is the group $\U{2}$, embedded as follows. Let $e_1$, $e_2$ be the standard basis of $\CC^2$ and consider $\U{2}$ as the isometry group of the scalar product $\big((v_1,v_2),(w_1,w_2)\big) \mapsto \ol{v_1}w_1+\ol{v_2}w_2$. Defining
\begin{align*}
&& && x_1&:=\Rea(v_1), & x_2&:=\Ima(v_1), && && \\
&& && x_3&:=\Rea(v_2), & x_4&:=\Ima(v_2), && && \\
&& && y_1&:=\Rea(w_1), & y_2&:=\Ima(w_1), && && \\
&& && y_3&:=\Rea(w_2), & y_4&:=\Ima(w_2), && && 
\end{align*}
we compute
\begin{displaymath}
\ol{v_1} w_1 + \ol{v_2} w_2 = x_1y_1 + x_2y_2 + x_3y_3 + x_4y_4 + i(x_1y_2 - x_2y_1 + x_{3}y_{4} - x_{4}y_{3}).
\end{displaymath}
Since two complex numbers are equal if and only if real part and imaginary part coincide, the group $\U{2}$ preserves the form $x_1y_2 - x_2y_1 + x_{3}y_{4} - x_{4}y_{3}$ and we have found an embedding in $\Sp{4}$, acting naturally on the $\RR$-vector space $\CC^2$ with $\RR$-basis $e_1$, $ie_1$, $e_2$, $ie_2$.

As $\U{2}$ also preserves the form $x_1y_1 + x_2y_2 + x_3y_3 + x_4y_4$ with respect to the $\RR$-basis $e_1$, $ie_1$, $e_2$, $ie_2$ of $\CC^2$ it is at the same time also a subgroup of $\O{4}$, in fact of $\SO{4}$, since $\U{2}$ is connected with respect to its Lie group topology. We will give concrete coordinates for this embedding in Remark~\ref{coordinatesrev} below.
\end{remark}

The real symplectic quadrangle can be modelled as the point-line geometry consisting of the one-dimensional and two-dimensional subspaces of $\RR^4$ which are totally isotropic with respect to a nondegenerate alternating bilinear form, cf.\ \cite[Section~2.3.17]{Maldeghem:1998}, also \cite[Section~10.1]{Buekenhout/Cohen:2013}.

Using the $\RR$-basis $e_1$, $ie_1$, $e_2$, $ie_2$ and the $\RR$-alternating form on $\CC^2$ given in Remark~\ref{coordinates}, an incident point-line pair of the resulting symplectic quadrangle of $\Sp{4}$ is given by $\langle e_1 \rangle_\RR \subset \langle e_1, e_2 \rangle_\RR$. The stabilizer in $\U{2}$ of the point $\langle e_1 \rangle_\RR$ is isomorphic to $\mathrm{O}_1(\RR) \times \U{1}$ where the first factor acts diagonally on ${\langle e_1 \rangle_\RR \oplus \langle i e_1 \rangle_{\RR}}$ and the second factor acts naturally on ${\langle e_2 \rangle_{\CC}}$. The stabilizer of the line $\langle e_1, e_2 \rangle_\RR$ is isomorphic to $\mathrm{O}_2(\RR)$ acting diagonally on ${\langle e_1,e_2 \rangle_{\RR} \oplus \langle ie_1,ie_2 \rangle_{\RR}}$.

\begin{definition} \label{rank1embb2}
Let \[\zeta_p : \SO{2} \to \U{2} \subset \Sp{4} \cap \SO{4} : \begin{pmatrix} x & y \\ -y & x \end{pmatrix} \mapsto \begin{pmatrix} 1 \\ & 1 \\ & & x & y \\ & & -y & x \end{pmatrix}\] and let \[\zeta_l : \SO{2} \to \U{2} \subset \Sp{4} \cap \SO{4} : \begin{pmatrix} x & y \\ -y & x \end{pmatrix} \mapsto \begin{pmatrix} x && y \\ & x && y \\ -y & & x &  \\ & -y & & x \end{pmatrix}\] be the embeddings of the circle group arising as point-stabilizing resp.\ line-stabilizing rank one groups as above with respect to the $\RR$-bases $e_1$, $e_2$ of $\RR^2$ and $e_1$, $ie_1$, $e_2$, $ie_2$ of $\CC^2$.
\end{definition}

Recall the definitions of $D(\alpha)$ and $S(\alpha)$ from Notation~\ref{nota:Dalpha-Salpha}.

\begin{notation} \label{su2acts}
In the following, identify $\CC=\{x+iy\mid x,y\in\RR\}$ with $\{ \begin{pmat} x & y \\ -y & x \end{pmat} \mid x, y\in \RR\}$. This identification, in particular, embeds $\SO{2}$ into $\CC$ as the unit circle group. For $\alpha\in \RR$, let
\[\tilde{D}(\alpha):=\zeta_l(D(\alpha))
=\begin{pmatrix} \cos(\alpha) && \sin(\alpha) \\ & \cos(\alpha) && \sin(\alpha) \\ -\sin(\alpha) & & \cos(\alpha) &  \\ & -\sin(\alpha) & & \cos(\alpha) \end{pmatrix}
\in \U{2}\ ..\]
\end{notation}

\begin{lemma} \label{lem:rank1-inv-B2}
Let $B:=\diag(-1,1,-1,1)$, $C:=\diag(-1,-1,1,1)\in \U{2}$. Then the following hold:
\begin{enumerate}
\item The map $\gamma_B:\U{2}\to \U{2} : A\mapsto B\cdot A\cdot B^{-1}=B\cdot A\cdot B$
is an automorphism of $\U{2}$ such that
\[\gamma_B\circ \zeta_p=\zeta_p\circ \inv \qquad \text{and} \qquad \gamma_B\circ \zeta_l=\zeta_l.\]
\item The map $\gamma_C:\U{2}\to \U{2} : A\mapsto C\cdot A\cdot C^{-1}=C\cdot A\cdot C$
is an automorphism of $\U{2}$ such that
\[\gamma_C\circ \zeta_p=\zeta_p \qquad \text{and} \qquad \gamma_C\circ \zeta_l=\zeta_l\circ\inv.\]
\end{enumerate}
\end{lemma}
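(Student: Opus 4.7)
The plan is to verify each of the four identities by a direct matrix computation, paralleling the proofs of Lemmas~\ref{lem:rank1-inv-A2} and \ref{lem:rank1-inv-G2}. Since $B$ and $C$ are diagonal involutions ($B^2=C^2=I_4$), conjugation of any $4\times 4$ matrix $A$ by $B$ (respectively $C$) is the Hadamard sign flip $(a_{ij})\mapsto (\eps_i\eps_j\, a_{ij})$, where $\eps_1,\ldots,\eps_4$ are the diagonal entries. So the whole argument reduces to reading off which entries of the explicit matrices for $\zeta_p(D(\alpha))$ and $\zeta_l(D(\alpha))$ from Definition~\ref{rank1embb2} change sign, and comparing the results with $\zeta_p(D(\pm\alpha))$ and $\zeta_l(D(\pm\alpha))$.

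For $B=\diag(-1,1,-1,1)$ the sign matrix $(\eps_i\eps_j)$ is $+1$ on the diagonal and at positions $(1,3),(2,4)$ (and their transposes), and $-1$ at all other off-diagonal positions. All nonzero entries of $\zeta_l(D(\alpha))$ lie in sign-invariant positions, which will give $\gamma_B\circ\zeta_l=\zeta_l$; meanwhile the $\pm y$ off-diagonal entries of the lower-right rotation block of $\zeta_p(D(\alpha))$ flip sign, turning the block $D(\alpha)$ into $D(-\alpha)=D(\alpha)^{-1}$, yielding $\gamma_B\circ\zeta_p=\zeta_p\circ\inv$. For $C=\diag(-1,-1,1,1)$ the pattern is complementary: the two $2\times 2$ diagonal blocks are sign-preserved while the two off-diagonal $2\times 2$ blocks flip. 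Consequently $\zeta_p(D(\alpha))$ (whose nonzero entries all lie in the diagonal blocks) is fixed, whereas in $\zeta_l(D(\alpha))$ the $\pm y$ entries in the off-diagonal blocks flip, yielding $\zeta_l(D(-\alpha))$ and hence $\gamma_C\circ\zeta_l=\zeta_l\circ\inv$.

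The only conceptual point beyond this bookkeeping is to verify that $\gamma_B$ and $\gamma_C$ really restrict to automorphisms of $\U{2}$ and not merely of $\SO{4}$. The matrix $C$ lies in $\U{2}$ (it represents $\diag(-1,1)\in\U{2}$ in the real basis $e_1,ie_1,e_2,ie_2$ fixed in Remark~\ref{coordinates}), so $\gamma_C$ is inner on $\U{2}$. The matrix $B$ is $\CC$-antilinear with respect to this basis, but since conjugation by a $\CC$-antilinear involution sends $\CC$-linear maps to $\CC$-linear maps, $\gamma_B$ still normalizes $\U{2}$; in fact a one-line computation shows that it realizes entrywise complex conjugation on $\U{2}$, a well-known outer automorphism of $\U{2}$ as a real Lie group.

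I expect no substantive obstacle: the lemma is essentially a bookkeeping exercise. The only thing needing care is tracking the basis convention of Remark~\ref{coordinates} so that the diagonal entries of $B$ and $C$ are matched with the correct coordinates; once this is pinned down, the four identities follow immediately from the sign-flip description of conjugation by a diagonal involution.
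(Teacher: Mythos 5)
Your sign-flip computation is exactly the intended route; the paper's proof is the single word ``Straightforward'' and your expansion of it is correct. What is worth flagging is that you also caught a small slip in the statement: in the real basis $e_1$, $ie_1$, $e_2$, $ie_2$ fixed in Remark~\ref{coordinates}, the diagonal block $\begin{pmat}-1&0\\0&1\end{pmat}$ of $B=\diag(-1,1,-1,1)$ is not of the form $\begin{pmat}p&-q\\q&p\end{pmat}$, so $B$ does not represent a $\CC$-linear map and hence does not lie in $\U{2}$, contrary to what the lemma asserts; it represents the $\CC$-antilinear involution $v\mapsto -\overline v$. Your fix --- that conjugation by this antiunitary map still normalizes $\U{2}$, realizing entrywise complex conjugation, an outer automorphism of $\U{2}$ --- is correct and keeps the conclusion of the lemma intact. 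For $C=\diag(-1,-1,1,1)$ the issue does not arise, since $C$ represents the unitary matrix $\diag(-1,1)\in\U{2}$.
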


\begin{proof}
Straightforward.
\end{proof}

\begin{lemma}
Let $\alpha\in \RR$. Then $\tilde{D}(\alpha)\in \SU{2}$ and $\begin{pmat} D(-\alpha) & \\ & D(\alpha)\end{pmat}\in \SU{2}$.
\end{lemma}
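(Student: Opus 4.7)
The plan is to identify both $4\times 4$ real matrices with their complex $2\times 2$ counterparts under the embedding $\U 2 \hookrightarrow \Sp 4 \cap \SO 4$ from Remark~\ref{coordinates} and to verify that these counterparts are unitary with complex determinant $1$. Under the $\RR$-basis $e_1, ie_1, e_2, ie_2$ of $\CC^2$, a matrix $A = (a_{jk})_{j,k=1,2} \in \U 2$ with $a_{jk} = \alpha_{jk} + i\beta_{jk}$ is represented by the real $4\times 4$ matrix whose $(j,k)$-block of size $2\times 2$ is the matrix $\begin{pmat}\alpha_{jk} & -\beta_{jk} \\ \beta_{jk} & \alpha_{jk}\end{pmat}$ of multiplication by $a_{jk}$ on $\CC$ with respect to the basis $1, i$.

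First I would treat $\tilde D(\alpha) = \zeta_l(D(\alpha))$. Its four $2\times 2$ blocks are real scalar multiples of $I_2$, namely (top-left, top-right, bottom-left, bottom-right) $\cos(\alpha)I_2$, $\sin(\alpha)I_2$, $-\sin(\alpha)I_2$, $\cos(\alpha)I_2$. Matching with the template above, every $\beta_{jk}$ vanishes and the complex counterpart equals $\begin{pmat}\cos(\alpha) & \sin(\alpha)\\ -\sin(\alpha) & \cos(\alpha)\end{pmat}$, which is real orthogonal of determinant $1$ and therefore lies in $\SU 2$.

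Second I would treat $M := \begin{pmat} D(-\alpha) & \\ & D(\alpha)\end{pmat}$. The off-diagonal blocks vanish, so $M$ commutes with the action of $i$ on $\CC^2$ and corresponds to a diagonal complex matrix $\diag(z_1, z_2)$. Reading off the diagonal block $D(-\alpha) = \begin{pmat}\cos\alpha & -\sin\alpha \\ \sin\alpha & \cos\alpha\end{pmat}$ against the template gives $(\alpha_{11}, \beta_{11}) = (\cos\alpha, \sin\alpha)$, hence $z_1 = \cos\alpha + i\sin\alpha = e^{i\alpha}$; analogously the block $D(\alpha)$ yields $z_2 = e^{-i\alpha}$. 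Thus $M$ represents $\diag(e^{i\alpha}, e^{-i\alpha}) \in \SU 2$, whose determinant equals $e^{i\alpha}\cdot e^{-i\alpha}=1$.

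The argument is an elementary bookkeeping calculation; there is no genuine obstacle. The only mild pitfall is keeping the sign conventions in $D(\pm\alpha)$ consistent with the chosen orientation of the $\RR$-basis $e_1, ie_1, e_2, ie_2$, which is precisely what determines whether $e^{i\alpha}$ or $e^{-i\alpha}$ shows up as an eigenvalue and hence that the two phases cancel in the determinant.
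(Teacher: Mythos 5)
Your proposal is correct and follows essentially the same route as the paper: both proofs read the $4\times 4$ real matrix as a $2\times 2$ complex matrix under the embedding $\U{2} \hookrightarrow \Sp{4}\cap\SO{4}$ and verify that the complex determinant equals $1$; the paper does the computation block-wise in the $2\times 2$-real-matrix picture of $\CC$ while you first translate to the explicit complex entries $\diag(e^{i\alpha},e^{-i\alpha})$ and the real rotation matrix, which is a minor presentational difference, not a different argument.
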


\begin{proof}
Given $\alpha\in \RR$, one has the following (where the determinant is taken in $\mathrm{SL}_2(\CC)$):
\begin{align*}
\det\big(\tilde{D}(\alpha)\big)
=\begin{pmat} \cos(\alpha) & \\ &  \cos(\alpha)\end{pmat}^2+
 \begin{pmat} \sin(\alpha) & \\ &  \sin(\alpha)\end{pmat}^2
=\begin{pmat} \cos(\alpha)^2+ \sin(\alpha)^2 & \\ &  \cos(\alpha)^2+\sin(\alpha)^2\end{pmat}
=I_2=1_\CC
\end{align*}
and
\[\det \begin{pmat} D(-\alpha) & \\ & D(\alpha)\end{pmat}=D(-\alpha)\cdot D(\alpha)=I_2=1_\CC\ . \qedhere\]
\end{proof}

\begin{remark} \label{coordinatesrev}
Returning to the embedding $\U{2} \to \SO{4}$ mentioned in Remark~\ref{coordinates}, the group \[\SU{2} = \left\{ \begin{pmatrix} x_1+iy_1 & x_2+iy_2 \\ -x_2+iy_2 & x_1-iy_2 \end{pmatrix} \mid x_1, x_2, y_1, y_2 \in \RR, x_1^2+x_2^2+y_1^2+y_2^2=1 \right\} \leq \U{2}\] acts $\RR$-linearly on $\CC^2$ with transformation matrices \[\begin{pmatrix} x_1 & -y_1 & x_2 & -y_2 \\ y_1 & x_1 & y_2 & x_2 \\ -x_2 & -y_2 & x_1 & y_1 \\ y_2 & -x_2 & -y_1 & x_1 \end{pmatrix}\] with respect to the basis $e_1$, $ie_1$, $e_2$, $ie_2$.
Remark~\ref{remark5} implies that the map
\begin{eqnarray*}
\SU{2} & \to & \SO{4} \\
\begin{pmatrix} x_1+iy_1 & x_2+iy_2 \\ -x_2+iy_2 & x_1-iy_2 \end{pmatrix} & \mapsto & R_{x_1-y_1i+x_2j-y_2k} = \begin{pmatrix} x_1 & -y_1 & x_2 & -y_2 \\ y_1 & x_1 & y_2 & x_2 \\ -x_2 & -y_2 & x_1 & y_1 \\ y_2 & -x_2 & -y_1 & x_1 \end{pmatrix}
\end{eqnarray*} 
injects $\SU{2}$ into $\SO{4}$. The restriction of this map to $\SO{2} \subset \SU{2}$ by setting $y_1=0=y_2$ provides the transformations $\tilde D$ from Notation~\ref{su2acts}.
The group \[\left \{ \begin{pmatrix} \cos(\alpha) + i\sin(\alpha) & 0 \\ 0 & \cos(\alpha) + i\sin(\alpha) \end{pmatrix} \right \} \cong \U{1} \cong \SO{2}\] acts with transformation matrices \[\begin{pmatrix} D(-\alpha) & 0 \\ 0 & D(-\alpha) \end{pmatrix} = \begin{pmatrix} \cos(\alpha) & -\sin(\alpha) & 0 & 0 \\ \sin(\alpha) & \cos(\alpha) & 0 & 0 \\ 0 & 0 & \cos(\alpha) & -\sin(\alpha) \\ 0 & 0 & \sin(\alpha) & \cos(\alpha) \end{pmatrix},\] i.e., Remark~\ref{remark5} implies that the map
\begin{eqnarray*}
\U{1} & \to & \SO{4} \\
\begin{pmatrix} \cos(\alpha) + i\sin(\alpha) & 0 \\ 0 & \cos(\alpha) + i\sin(\alpha) \end{pmatrix} & \mapsto & L_{\cos(\alpha)+i\sin(\alpha)} = \begin{pmatrix} \cos(\alpha) & -\sin(\alpha) & 0 & 0 \\ \sin(\alpha) & \cos(\alpha) & 0 & 0 \\ 0 & 0 & \cos(\alpha) & -\sin(\alpha) \\ 0 & 0 & \sin(\alpha) & \cos(\alpha) \end{pmatrix}
\end{eqnarray*} 
injects $\U{1}$ into $\SO{4}$.
Altogether, using Remark~\ref{Spin4Spin3},  we obtain the following commutative diagram:
\[\xymatrix{
\U{1} \times \SU{2} \ar[rr] \ar[d]_{\widehat\rho} && \mathrm{U}_1(\HH) \times \mathrm{U}_1(\HH) \cong \Spin{3} \times \Spin{3} \cong \Spin{4} \ar[d]^{\rho_4} \\
\U{2} \ar[rr] && \SO{4} 
}.
\] 
\end{remark}

Our candidate for a spin cover of the group $\U{2}$
therefore is its double cover 
\[ \U{1} \times\SU{2} \to \U{2} : (z,A) \mapsto zA.\]
Note that the fundamental group of $\U{2}$ equals $\ZZ$, as the determinant map $\det : \U{2} \to \U{1} \cong \SO{2}$ induces an isomorphism of fundamental groups; its simply connected universal cover is isomorphic to $\RR \times \SU{2}$. The above double cover is unique up to isomorphism, because $\ZZ$ has a unique subgroup of index two (cf.\ \cite[Theorem~1.38]{Hatcher:2002}).

\begin{notation} \label{zetaspin}
In the following, let
\begin{align*}
\tilde{\zeta}_p &: \Spin{2} \to \SO{2}\times\SU{2} \subset \Spin{4} : S(\alpha)\mapsto \left( D(\alpha), \begin{pmat} D(-\alpha) & \\ & D(\alpha)\end{pmat}\right)\ , \\
\tilde{\zeta}_l &: \Spin{2}\to \SO{2}\times\SU{2} \subset \Spin{4} : S(\alpha)\mapsto \big( \id , \tilde{D}(\alpha)\big)
\end{align*}
and let
\[
\widehat\rho:\SO{2}\times \SU{2}\to \U{2} : (z,A)\mapsto \begin{pmat} z & \\ & z\end{pmat}\cdot A\ .
\]
\end{notation}

Recall the maps 
\begin{align*}
\rho_2:\Spin{2}\to \SO{2} & : S(\alpha)\mapsto D(2\alpha), \\
\mathrm{sq} : G \to G & : x \mapsto x^2, \\
\mathrm{inv} : G \to G & : x \mapsto x^{-1}.
\end{align*}

\begin{lemma} \label{concretedescriptiontilde}
One has 
\begin{align*}
\tilde{\zeta}_p= \tilde\eps_{34} && \tilde{\zeta}_l\circ\sq = \tilde\eps_{23}\cdot\tilde\eps_{14}
\end{align*}
\end{lemma}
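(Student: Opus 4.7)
The plan is to treat the two identities separately, since only the first fits the hypotheses of Proposition~\ref{prop:zusatz}. For $\tilde\zeta_p = \tilde\eps_{34}$, I would verify that $\rho_4 \circ \tilde\zeta_p = \eps_{34} \circ \rho_2$ and that $\ker(\rho_4 \circ \tilde\zeta_p) = \{\pm 1\}$, and then invoke Proposition~\ref{prop:zusatz}. The first equality is a diagram chase through the commutative square of Remark~\ref{coordinatesrev}: restricted to $\SO{2}\times\SU{2}$, the covering $\rho_4$ factors as $\widehat\rho$ followed by the inclusion $\U{2}\hookrightarrow\SO{4}$; a block multiplication then produces $\widehat\rho(\tilde\zeta_p(S(\alpha)))=\begin{pmat}I_2&\\&D(2\alpha)\end{pmat}$, which, read off with respect to the basis $e_1, ie_1, e_2, ie_2$ of $\CC^2$, agrees with $\eps_{34}(\rho_2(S(\alpha)))$. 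The kernel condition follows from $\ker\widehat\rho = \{(1,1),(-1,-1)\}$ together with the evaluation $\tilde\zeta_p(S(\pi)) = (-I_2,-I_4)$ and the injectivity of $\tilde\zeta_p$ on $\Spin{2}$.

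For the second identity, Proposition~\ref{prop:zusatz} is unavailable, because $S(\pi)$ lies in the kernel of $\tilde\zeta_l\circ\sq$. Instead, I would pass to the quaternionic model $\Spin{4}\cong\mathrm{U}_1(\HH)\times\mathrm{U}_1(\HH)$ of Remark~\ref{Spin4Spin3} and compare the two sides there. On the right-hand side, $e_2e_3$ and $e_1e_4$ commute in $\Cl{4}$, so the product
\[
\tilde\eps_{23}(S(\alpha))\cdot\tilde\eps_{14}(S(\alpha))=(\cos\alpha+\sin\alpha\, e_2e_3)(\cos\alpha+\sin\alpha\, e_1e_4)
\]
expands cleanly; using $e_2e_3 = j$, $e_1e_4 = -\II j$, and $e_2e_3\cdot e_1e_4 = \II$, the result takes the form $u+\II v$ with $(u+v,u-v) = (1,\cos(2\alpha)+\sin(2\alpha)j)$. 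On the left, the explicit $4\times 4$ real matrix of $\tilde D(2\alpha)\in\SU{2}$ matches the template of Remark~\ref{coordinatesrev} exactly with quaternion $q=\cos(2\alpha)+\sin(2\alpha)j$; thus $\tilde D(2\alpha)$ maps to $R_q$ in $\SO{4}$, and the unique continuous lift through the identity to $\mathrm{U}_1(\HH)\times\mathrm{U}_1(\HH)$ is $(1,q)$, which matches.

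The main obstacle is the bookkeeping for the second identity: one must pick out the correct of the two spin lifts of $\tilde D(2\alpha)$ and juggle three different basis conventions in play (the real basis $e_1, ie_1, e_2, ie_2$ of $\CC^2$ used to embed $\U{2}$ into $\SO{4}$, the quaternion basis $1,i,j,k$ of $\HH\cong\Cle{3}{0}$, and the Clifford basis $e_1,\dots,e_4$ of $\RR^4$). Continuity of the embedding $\SU{2}\hookrightarrow\Spin{4}$ together with the normalisation $\tilde\zeta_l(1)=1$ pins down the correct lift automatically; once these translations are carried out consistently, the identification of the two sides is immediate, and Lemma~\ref{25} can be used to cross-check the outcome.
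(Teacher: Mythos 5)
Your proposal is correct. For the second identity you take essentially the same route as the paper: expand $\tilde\eps_{23}(S(\alpha))\cdot\tilde\eps_{14}(S(\alpha))$ in the $u+\II v$ decomposition of $\Cle{4}{0}$ (using $e_2e_3=j$, $e_1e_4=-\II j$), pass through the isomorphism $u+\II v\mapsto(u+v,u-v)$ to obtain $(1,\cos(2\alpha)+\sin(2\alpha)j)$, and match this against the image of $\tilde D(2\alpha)$ under the quaternion template of Remark~\ref{coordinatesrev}. Your remark that continuity through $1$ pins down the lift is a fair way to say what the paper makes explicit by writing $R_{(\cos\alpha+j\sin\alpha)^2}=\tilde D(2\alpha)$.

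For the first identity you genuinely diverge from the paper. The paper also computes $\tilde\eps_{34}(S(\alpha))$ directly in the quaternionic model ($e_3e_4=-\II i$, hence $u+\II v=\cos\alpha-\II i\sin\alpha$, giving the pair $(\cos\alpha-i\sin\alpha,\cos\alpha+i\sin\alpha)$ which one reads off as $(L_a,R_a)$ and hence $\tilde\zeta_p(S(\alpha))$). You instead invoke Proposition~\ref{prop:zusatz}: you verify $\rho_4\circ\tilde\zeta_p=\eps_{34}\circ\rho_2$ via the block product $\widehat\rho(\tilde\zeta_p(S(\alpha)))=\begin{pmat}I_2&\\&D(2\alpha)\end{pmat}$ and the commuting square of Remark~\ref{coordinatesrev}, and you compute $\ker(\rho_4\circ\tilde\zeta_p)=\{\pm 1\}$ from $\ker\widehat\rho=\{(1,1),(-1,-1)\}$, $\tilde\zeta_p(S(\pi))=(-I_2,-I_4)$, and injectivity of $\tilde\zeta_p$. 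This avoids unwinding the $u+\II v$ decomposition for $\tilde\eps_{34}$ at the price of a kernel check; the two arguments are of comparable length. Your observation that Proposition~\ref{prop:zusatz} is unavailable for the second identity (because $S(\pi)\in\ker(\tilde\zeta_l\circ\sq)$, and because the target $\tilde\eps_{23}\cdot\tilde\eps_{14}$ is not of the form $\tilde\eps_{ij}$) is exactly the right reason to switch methods there.
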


\begin{proof}
Using the identification $\Spin{4} \cong \Spin{3} \times \Spin{3}$ and considering the left hand factor as transformations by left multiplication and the right hand factor as transformations by right multiplication of unit quaternions we compute
\begin{align*}
\tilde\eps_{34}\big(\cos(\alpha)+\sin(\alpha)e_1e_2\big) &= \cos(\alpha)+\sin(\alpha)e_3e_4 \\
&= \cos(\alpha)-\II i\sin(\alpha) & \text{by Remark~\ref{Spin4Spin3}} \\
&= \big(\cos(\alpha)-i\sin(\alpha),\cos(\alpha)+i\sin(\alpha)\big) & \text{by Remark~\ref{Spin4Spin3}} \\
&=\left(L_{\cos(\alpha)-i\sin(\alpha)},R_{\cos(\alpha)-i\sin(\alpha)}\right) & \text{by Lemma~\ref{lemmaleftright}, Remark~\ref{remark5}} \\ 
&=\left( D(\alpha), \begin{pmatrix} D(-\alpha) & \\ 
& D(\alpha)\end{pmatrix}\right) & \text{by Remark~\ref{coordinatesrev}} \\
&=\tilde{\zeta}_p\big(\cos(\alpha)+\sin(\alpha)e_1e_2\big)
\end{align*}
and
\begin{align*}
& \tilde\eps_{23}\big(\cos(\alpha)+\sin(\alpha)e_1e_2\big)\tilde\eps_{14}\big(\cos(\alpha)+\sin(\alpha)e_1e_2\big) \\ =& \big(\cos(\alpha)+\sin(\alpha)e_2e_3\big)\big(\cos(\alpha)+\sin(\alpha)e_1e_4\big) \\
=&\big(\cos(\alpha)\big)^2+j\cos(\alpha)\sin(\alpha) + \II\left(\big(\sin(\alpha)\big)^2-j\cos(\alpha)\sin(\alpha)\right) & \text{by Remark~\ref{Spin4Spin3}}  \\
=&\left(1,\big(\cos(\alpha)+j\sin(\alpha)\big)^2\right) & \text{by Remark~\ref{Spin4Spin3}} \\
=&\left(\id,R_{\big(\cos(\alpha)+j\sin(\alpha)\big)^2}\right) & \text{by Lemma~\ref{lemmaleftright}, Remark~\ref{remark5}} \\
=&\left(\id,\tilde D(2\alpha)\right) & \text{by Remark~\ref{coordinatesrev}} \\
=& \left(\tilde{\zeta}_l\circ\sq\right)\big(\cos(\alpha)+\sin(\alpha)e_1e_2\big). \qedhere
\end{align*}
\end{proof}

The following observation is the analog of Lemma~\ref{lem:ama-embed-and-rho-commute-G2}.

\begin{lemma} \label{lem:ama-embed-and-rho-commute-B2} 
One has
\begin{align*}
\widehat\rho\circ \tilde{\zeta}_p= {\zeta}_p\circ \rho_2\ , &&
\widehat\rho\circ\tilde{\zeta}_l\circ\sq=\widehat\rho\circ\sq\circ \tilde{\zeta}_l=
{\zeta}_l\circ \rho_2\ .
\end{align*}
Moreover,
\begin{align*}
\left(\widehat\rho\right)^{-1}\left({\zeta}_p\right)(\SO{2}) &\cong \Spin{2} \quad \text{ and } \\
\left(\widehat\rho\right)^{-1}\left({\zeta}_l\right)(\SO{2}) &\cong \{ 1, -1 \} \times \SO{2}.
\end{align*}
\end{lemma}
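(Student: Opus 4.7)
The plan is to verify the three displayed identities by direct computation, and then to read off the two preimage descriptions by computing $\ker(\widehat\rho)$ and using the identities.

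For the identities, I would first expand
\[
(\widehat\rho\circ\tilde\zeta_p)(S(\alpha))
= \begin{pmat} D(\alpha) & \\ & D(\alpha)\end{pmat}\cdot\begin{pmat} D(-\alpha) & \\ & D(\alpha)\end{pmat}
= \begin{pmat} I_2 & \\ & D(2\alpha)\end{pmat}
= (\zeta_p\circ\rho_2)(S(\alpha))
\]
using $D(\alpha)D(-\alpha)=I_2$ and $D(\alpha)D(\alpha)=D(2\alpha)$. For the other identity, the equality $\widehat\rho\circ\tilde\zeta_l\circ\sq=\widehat\rho\circ\sq\circ\tilde\zeta_l$ is immediate because $\tilde\zeta_l$ is a group homomorphism and $\sq$ commutes with every homomorphism. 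The common value then computes to $\widehat\rho(\id,\tilde D(2\alpha))=\tilde D(2\alpha)=\zeta_l(D(2\alpha))=(\zeta_l\circ\rho_2)(S(\alpha))$.

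Next I would determine the kernel of $\widehat\rho$. Writing $\widehat\rho(z,A)=I_4$ forces $A=\mathrm{diag}(z^{-1},z^{-1})$, and being in $\SU{2}$ then forces $\det(z^{-1}I_2)=z^{-2}=1$, so $z=\pm1$ and $\ker(\widehat\rho)=\{(1,I_4),(-1,-I_4)\}$. In particular $\widehat\rho$ is a two-fold covering. Combined with the first identity already proved, $\tilde\zeta_p(\Spin{2})\subseteq(\widehat\rho)^{-1}(\zeta_p(\SO{2}))$; moreover $\tilde\zeta_p$ is visibly injective and the composite $\widehat\rho\circ\tilde\zeta_p=\zeta_p\circ\rho_2$ is two-to-one onto $\zeta_p(\SO{2})$. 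Hence the restriction of $\widehat\rho$ to $\tilde\zeta_p(\Spin{2})$ is already two-to-one onto $\zeta_p(\SO{2})$, which together with $|{\ker\widehat\rho}|=2$ forces $(\widehat\rho)^{-1}(\zeta_p(\SO{2}))=\tilde\zeta_p(\Spin{2})\cong\Spin{2}$.

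For the second preimage, the key observation is that $\tilde\zeta_l$ lands in $\{1\}\times\SU{2}$, that $\widehat\rho$ restricted to $\{1\}\times\SU{2}$ is just the inclusion $A\mapsto A$, and that $\alpha\mapsto\tilde D(\alpha)$ is injective with period $2\pi$, so $\tilde\zeta_l$ itself is injective and maps $\Spin{2}$ bijectively onto $\zeta_l(\SO{2})$. Therefore $\tilde\zeta_l(\Spin{2})$ accounts for exactly half of the preimage $(\widehat\rho)^{-1}(\zeta_l(\SO{2}))$, with the remaining half being the coset $\ker(\widehat\rho)\cdot\tilde\zeta_l(\Spin{2})$; since $\ker(\widehat\rho)\cap\tilde\zeta_l(\Spin{2})=\{(1,I_4)\}$ and $\ker(\widehat\rho)$ is central, the preimage splits as the (internal) direct product $\ker(\widehat\rho)\times\tilde\zeta_l(\Spin{2})\cong\{\pm1\}\times\SO{2}$, as claimed.

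The routine matrix computations are straightforward; the only genuinely delicate point is the asymmetry between the two preimages, reflecting the fact that $\tilde\zeta_p$ already incorporates a ``diagonal'' half-angle in its first coordinate (so that $\widehat\rho|_{\tilde\zeta_p(\Spin{2})}$ is the nontrivial double cover $\rho_2$), whereas $\tilde\zeta_l$ sits entirely in $\{1\}\times\SU{2}$ and therefore maps isomorphically onto its image, so the kernel of $\widehat\rho$ contributes a second, disconnected component to the preimage. This is precisely the source of the subtleties mentioned in Strategy~\ref{simplelacing} and is what motivates the admissibility conditions introduced later in the paper.
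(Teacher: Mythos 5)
Your proof is correct and the identity computations match the paper's own proof exactly. For the two preimage descriptions, the paper simply writes out the preimages as explicit sets $\left\{ \left( D(\alpha), \begin{pmat} D(-\alpha) & \\ & D(\alpha)\end{pmat}\right) \mid \alpha \in \RR \right\}$ and $\left\{ \big( \pm \id , \tilde{D}(\alpha)\big) \mid \alpha \in \RR \right\}$, whereas you derive them from the computation $\ker\widehat\rho=\left\langle(D(\pi),\tilde D(\pi))\right\rangle$ together with a covering-degree argument; this is a slightly more conceptual packaging of the same elementary computation and reaches the same conclusion.
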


\begin{proof}
For $\alpha\in \RR$,
\[
(\widehat\rho\circ \tilde{\zeta}_p)\big(S(\alpha)\big)
=\widehat\rho \left( D(\alpha), \begin{pmat} D(-\alpha) & \\ & D(\alpha)\end{pmat}\right)
=\begin{pmat} 1_{\SO{2}} & \\ & D(2\alpha)\end{pmat}
=\zeta_p\big(D(2\alpha)\big)
=(\zeta_p\circ \rho_2)\big(S(\alpha)\big)
\]
and
\[
(\widehat\rho\circ\sq\circ \tilde{\zeta}_l)\big(S(\alpha)\big)
=\widehat\rho\big( 1_{\SO{2}}, \tilde{D}(2\alpha)\big)
=\tilde{D}(2\alpha)
=\zeta_l\big(D(2\alpha)\big)
=(\zeta_l\circ \rho_2)\big(S(\alpha)\big)\ .
\]
For the second claim observe that 
\begin{align*}
\left(\widehat\rho\right)^{-1}\left({\zeta}_p\right)(\SO{2}) = \left\{ \left( D(\alpha), \begin{pmat} D(-\alpha) & \\ & D(\alpha)\end{pmat}\right) \mid \alpha \in \RR \right\} &\cong \Spin{2} , \\
\left(\widehat\rho\right)^{-1}\left({\zeta}_l\right)(\SO{2}) = \left\{ \big( \pm \id , \tilde{D}(\alpha)\big) \mid \alpha \in \RR \right\} &\cong \{ 1, -1 \} \times \SO{2}. \qedhere
\end{align*}
\end{proof}

\begin{consequence} \label{concretedescription}
One has 
\begin{align*}
{\zeta}_p= \eps_{34} && \zeta_l = \eps_{23}\cdot\eps_{14}
\end{align*}
\end{consequence}

\begin{proof}
By Remarks~\ref{6} and \ref{coordinatesrev} and Lemma~\ref{lem:ama-embed-and-rho-commute-B2} this is immediate from Lemma~\ref{concretedescriptiontilde}.
\end{proof}

\begin{proposition} \label{prop:lift-aut-B2} 
Given $\tau\in \Aut\big(\U{2}\big)$, there is a unique $\tilde\tau\in \Aut\big( \SO{2}\times \SU{2}\big)$ such that
\[\widehat\rho\circ \tilde\tau=\tau\circ \widehat\rho\ .\]
\end{proposition}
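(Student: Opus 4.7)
The plan is to proceed analogously to Proposition~\ref{prop:lift-aut-soN}, applying covering-space theory to the two-fold covering homomorphism $\widehat\rho : \SO{2}\times\SU{2}\to \U{2}$ of connected Lie groups, whose kernel $\{(1,I),(-1,-I)\}$ is discrete and central. As in Proposition~\ref{prop:lift-aut-soN}, continuity of $\tau$ can be taken as given (in the spirit of Remark~\ref{addtopology}) or else extracted from the fact that the derived subgroup $[\U{2},\U{2}]=\SU{2}$ and the centre $Z(\U{2})=\U{1}\cdot I$ are characteristic in $\U{2}$, so that van der Waerden's theorem forces $\tau|_{\SU{2}}$ to be continuous and a direct argument on $\U{1}$ together with the decomposition $\U{2}=\U{1}\cdot\SU{2}$ handles the remainder.

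First I would verify the lifting criterion. One has $\pi_1(\SO{2}\times\SU{2})=\pi_1(\SO{2})\times\pi_1(\SU{2})=\ZZ$ and $\pi_1(\U{2})=\ZZ$ (via the determinant), and $\widehat\rho_\ast:\ZZ\to\ZZ$ is multiplication by $2$, with image the unique index-two subgroup $2\ZZ$. Any automorphism of $\ZZ$ is $\pm\id$ and therefore preserves $2\ZZ$, so $\tau_\ast$ preserves the image of $\widehat\rho_\ast$. The lifting criterion yields a unique continuous map $\tilde\tau : \SO{2}\times\SU{2}\to\SO{2}\times\SU{2}$ satisfying $\widehat\rho\circ\tilde\tau=\tau\circ\widehat\rho$ and $\tilde\tau(1,I)=(1,I)$.

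Next I would show $\tilde\tau$ is a group homomorphism by the standard connectedness argument: the continuous map
\[
(x,y)\ \longmapsto\ \tilde\tau(xy)\cdot\bigl(\tilde\tau(x)\tilde\tau(y)\bigr)^{-1}
\]
has image in $\ker\widehat\rho$, since $\widehat\rho$ of it equals $\tau(xy)\tau(x)^{-1}\tau(y)^{-1}=1$; the source $(\SO{2}\times\SU{2})^2$ is connected and $\ker\widehat\rho$ is discrete, so this map is constant equal to its value $(1,I)$ at $((1,I),(1,I))$. Applying the same construction to $\tau^{-1}$ produces a homomorphic lift $\widetilde{\tau^{-1}}$; the compositions $\widetilde{\tau^{-1}}\circ\tilde\tau$ and $\tilde\tau\circ\widetilde{\tau^{-1}}$ are continuous lifts of $\widehat\rho$ fixing $(1,I)$, hence equal $\id_{\SO{2}\times\SU{2}}$ by the uniqueness part of the lifting criterion. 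Thus $\tilde\tau\in\Aut(\SO{2}\times\SU{2})$.

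Uniqueness of $\tilde\tau$ follows from the same principle: any two candidate lifts $\tilde\tau,\tilde\tau'$ differ by a continuous map $\tilde\tau'\tilde\tau^{-1}$ into the discrete central subgroup $\ker\widehat\rho$, which must be constant on the connected group $\SO{2}\times\SU{2}$; since both lifts are homomorphisms, this constant equals $(1,I)$. The only delicate point in the whole argument is verifying the $\pi_1$ condition for the lifting criterion, which is the reason that an analogous statement would fail if, for instance, one tried to replace $\SO{2}\times\SU{2}$ by a cover of $\U{2}$ whose kernel were \emph{not} preserved by the full $\Aut(\pi_1(\U{2}))$-action; beyond this, the proof is formally identical to that of Proposition~\ref{prop:lift-aut-soN}.
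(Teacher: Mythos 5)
Your covering-space argument is valid when $\tau$ is continuous, and for that case the $\pi_1$ computation and connectedness argument are correct. But the statement does not hypothesize continuity, and your proposed fallback — that continuity can be ``extracted'' by applying van der Waerden on $\SU{2}$ together with ``a direct argument on $\U{1}$'' — fails at the $\U{1}$ factor. As the paper itself emphasizes in Remark~\ref{rem:continuous}, automatic continuity holds for $\SO{3}$ (hence for $\SU{2}$) but \emph{not} for the circle group: $Z(\U{2})\cong\U{1}\cong\SO{2}$ admits many discontinuous automorphisms, and any $\gamma\in\Aut(\SO{2})$ (necessarily with $\gamma(-1)=-1$) extends to a discontinuous $\tau\in\Aut(\U{2})$ by $zA\mapsto\gamma(z)A$ for $z\in\U{1}$, $A\in\SU{2}$. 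For such $\tau$ your lifting-criterion machinery simply does not apply, so the proof has a genuine gap.

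The notable thing is that the ingredients you reach for in that failed continuity lemma — the characteristic subgroups $Z(\U{2})\cong\SO{2}$ and $[\U{2},\U{2}]=\SU{2}$ and the decomposition $\U{2}=Z(\U{2})\cdot\SU{2}$ — are exactly what the paper uses to settle the proposition outright and with no topological input: restricting $\tau$ gives $\tau_1\in\Aut(\SO{2})$ and $\tau_2\in\Aut(\SU{2})$, the pair $\tilde\tau:=\tau_1\times\tau_2$ satisfies $\widehat\rho\circ\tilde\tau=\tau\circ\widehat\rho$ because $\tau(zA)=\tau(z)\tau(A)$, and uniqueness follows from the fact that both factors of $\SO{2}\times\SU{2}$ are characteristic, forcing $\Aut(\SO{2}\times\SU{2})=\Aut(\SO{2})\times\Aut(\SU{2})$. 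Promoting that algebraic decomposition from a subordinate continuity lemma to the main argument gives a proof covering arbitrary automorphisms and makes the covering-space apparatus unnecessary.
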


\begin{proof}
Let $\tau\in \Aut\big(\U{2}\big)$. Consider the characteristic subgroups
\begin{align*}
H_1:=Z\big(\U{2}\big)=\{ z\cdot I_2 \mid z\in \U{1} \cong \SO{2}  \}\ , &&
H_2:=[\U{2},\U{2}]=\SU{2}\ ,
\end{align*}
 of $\U{2}$. 
Note that $H_1$ and $H_2$ are also characteristic in $H:=H_1\times H_2$, as $H_1=Z(H)$ and $H_2=[H,H]$.
Hence $\Aut(H)=\Aut(H_1)\times\Aut(H_2)$.
 
Let $\tau_1:=\tau_{|H_1}\in\Aut(H_1)$ and $\tau_2:=\tau_{|H_2}\in\Aut(H_2)$. Then $\tilde\tau:=(\tau_1,\tau_2)\in\Aut(H)$ satisfies
\begin{align*}
(\widehat\rho\circ \tilde\tau)(z,A)=\widehat\rho\big( \tau_1(z), \tau_2(A) \big)=\tau_1(z)\tau_2(A)=\tau(z)\tau(A)=\tau(zA)=(\tau\circ\widehat\rho)(z,A)
\end{align*}
for all $(z,A)\in \SO{2}\times \SU{2}$.
Let $\psi=(\psi_1,\psi_2)\in \Aut \big( \SO{2}\times \SU{2}\big)$ be such that
$\widehat\rho\circ \psi=\tau\circ \widehat\rho$. Given $z\in \SO{2}$ and $A\in \SU{2}$, one has
\[
  \psi_1(z)
 =\psi_1(z)\psi_2(I_2)
 =(\widehat\rho\circ \psi)(z,I_2)
 =(\tau\circ \widehat\rho)(z,I_2)
 =(\widehat\rho\circ \tilde\tau)(z,I_2)
 =\tau_1(z)\tau_2(I_2)=\tau_1(z)
\]
and
\[
  \psi_2(A)
 =(\widehat\rho\circ \psi)(1_{\SO{2}},A)
 =(\widehat\rho\circ \tilde\tau)(1_{\SO{2}},A)
 =\tau_2(A)\ .
 \qedhere 
\]
\end{proof}

The final result of this section allows us to carry out Strategy~\ref{simplelacing} for edges of type $\mathrm{C}_2$ in Theorem~\ref{thm:Spin(Delta)-covers-Spin(Delta-sl)} below.

\begin{proposition} \label{surjB2}
There is an epimorphism $\tilde{\zeta}:\SO{2}\times \SU{2}\to \Spin{3}$ such that
\begin{align*}
\tilde{\zeta} \circ \tilde{\zeta}_p=\tilde\eps_{12}\circ \inv\ , && \tilde{\zeta}\circ \tilde{\zeta}_l=\tilde\eps_{23}\ .
\end{align*}
\end{proposition}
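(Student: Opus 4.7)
The plan is to take $\tilde\zeta$ to be the projection $\mathrm{pr}_2 : \SO{2} \times \SU{2} \onto \SU{2}$ onto the second factor, post-composed with the standard identification $\SU{2} \cong \UH \cong \Spin{3}$. I would make this composition explicit via the dictionary in Remark~\ref{coordinatesrev} followed by the quaternion-to-Clifford identification recorded in Remark~\ref{Spin4Spin3} (which uses $i \leftrightarrow e_1 e_2$, $j \leftrightarrow e_2 e_3$, $k \leftrightarrow e_3 e_1$): the matrix $\left(\begin{smallmatrix} x_1+iy_1 & x_2+iy_2 \\ -x_2+iy_2 & x_1-iy_1 \end{smallmatrix}\right) \in \SU{2}$ is first sent to the unit quaternion $x_1 - y_1 i + x_2 j - y_2 k$ and then to $x_1 - y_1 e_1 e_2 + x_2 e_2 e_3 - y_2 e_3 e_1 \in \Spin{3}$. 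Both maps in this composition are surjective group homomorphisms, so $\tilde\zeta$ is an epimorphism.

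To verify the first identity, I read off from Notation~\ref{zetaspin} that the $\SU{2}$-component of $\tilde\zeta_p(S(\alpha))$ is the real $4\times 4$ block-diagonal matrix $\mathrm{diag}(D(-\alpha), D(\alpha))$. Matching this against the real side of the dictionary in Remark~\ref{coordinatesrev} (with $x_1 = \cos\alpha$, $y_1 = \sin\alpha$, $x_2 = y_2 = 0$) identifies it with the unit quaternion $\cos\alpha - i\sin\alpha$, hence with the element $\cos\alpha - \sin\alpha \cdot e_1 e_2 = S(-\alpha) \in \Spin{3}$. This is precisely $\tilde\eps_{12}(S(-\alpha)) = (\tilde\eps_{12} \circ \inv)(S(\alpha))$, as required. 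The second identity is verified in exactly the same way: the $\SU{2}$-component of $\tilde\zeta_l(S(\alpha))$ is $\tilde D(\alpha)$ of Notation~\ref{su2acts}, which matches the dictionary with $x_1 = \cos\alpha$, $x_2 = \sin\alpha$, $y_1 = y_2 = 0$, and therefore corresponds to the quaternion $\cos\alpha + j\sin\alpha$, i.e., to $\cos\alpha + \sin\alpha \cdot e_2 e_3 = \tilde\eps_{23}(S(\alpha))$.

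Neither verification presents a genuine obstacle; the only care required is in tracking the chain of identifications $\SU{2}(\text{complex}) \leftrightarrow \SU{2}(\subseteq \SO{4}) \leftrightarrow \UH \leftrightarrow \Spin{3}$, which is made completely explicit by the matrix-to-quaternion correspondence in Remark~\ref{coordinatesrev}. A more conceptual but equivalent route would be to restrict the epimorphism $\Spin{4} \to \Spin{3}$ dual to the $\SU{2}$-factor (coming from Remark~\ref{Spin4Spin3}) to $\SO{2} \times \SU{2} \hookrightarrow \Spin{4}$ and appeal to Lemma~\ref{concretedescriptiontilde}; however, disentangling the left-versus-right-multiplication conventions that way is at least as fiddly as the direct matrix computation sketched above.
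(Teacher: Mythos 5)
Your proposal is correct and takes essentially the same route as the paper: the paper likewise defines $\tilde\zeta$ as projection onto the $\SU{2}$-factor followed by the inverse of the explicit isomorphism $\psi : \Spin{3} \to \SU{2}$, $a+be_1e_2+ce_2e_3+de_3e_1 \mapsto R_{a+bi+cj+dk}$, and then verifies both identities by the same matrix-and-quaternion bookkeeping you sketch. The only difference is presentational — the paper names the isomorphism $\psi$ and sets $\tilde\zeta(z,A) := \psi^{-1}(A)$, rather than phrasing it as a composition through $\UH$ — and your observation that the alternative via Lemma~\ref{concretedescriptiontilde} is no less fiddly is consistent with the paper's choice.
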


\begin{proof}
The map
\begin{eqnarray*}
\psi:\Spin{3} & \to & \SU{2} \\
a+be_1e_2+ce_2e_3+de_3e_1 & \mapsto & R_{a+bi+cj+dk} = \begin{pmatrix} a & b & c & d \\
-b & a & -d & c \\
-c & d & a & -b \\
-d & -c & b & a \end{pmatrix}
\end{eqnarray*}
is a group isomorphism by Remarks~\ref{rem:cl3=quaternions}(b) and \ref{remark5}(b) (see also \cite[11.26]{Salzmann:1995}), with the convention that the matrix group $\SU{2}$ acts $\RR$-linearly on $\CC^2 \cong \HH$ with respect to the $\RR$-basis $1$, $i=i1$, $j$, $k=ij$ as in Definition~\ref{rank1embb2} and Notation~\ref{su2acts}. Let
\[\tilde{\zeta}: \SO{2}\times \SU{2}\to \Spin{3} : (z,A)\mapsto \psi^{-1}(A)\ .\]
For $\alpha\in \RR$, one has
\begin{align*}
(\tilde{\zeta}\circ \tilde{\zeta}_p)\big(S(\alpha)\big)
&=
\tilde{\zeta}\left( D(\alpha), \begin{pmat} D(-\alpha) & \\ & D(\alpha)\end{pmat}\right)
= \cos(\alpha)-\sin(\alpha)e_1e_2
=(\tilde\eps_{12}\circ\inv)\big(S(\alpha)\big)
\end{align*}
and
\begin{align*}
(\tilde{\zeta}\circ \tilde{\zeta}_l)\big(S(\alpha)\big)
&= \tilde{\zeta}\big( 1_{\SO{2}}, \tilde{D}(\alpha)\big)
=\zeta\big(\tilde{D}(\alpha)\big)
= \cos(\alpha)+\sin(\alpha)e_2e_3
=\tilde\eps_{23}\big(S(\alpha)\big)\ .
\qedhere
\end{align*}
\end{proof}

\begin{corollary} \label{corsurjB2}\label{corsurjB2b}\label{corsurjB2I}\label{surjB2b}\label{surjB2I}
\begin{enumerate}
\item
There exist epimorphisms $\zeta_1, \zeta_2, \zeta_3 : \mathrm{U}_2(\CC) \to \SO{3}$ such that  \begin{align*}
{\zeta_1} \circ {\zeta}_p=\eps_{12}\circ \inv\ , && {\zeta_1}\circ {\zeta}_l=\eps_{23}, \\
{\zeta_2} \circ {\zeta}_p=\eps_{12} , && {\zeta_2}\circ {\zeta}_l=\eps_{23}, \\
{\zeta_3} \circ {\zeta}_p=\eps_{23} , && {\zeta_3}\circ {\zeta}_l=\eps_{12}.
\end{align*}
\item There exist epimorphisms $\tilde{\zeta}_2, \tilde{\zeta}_3 :\SO{2}\times \SU{2}\to \Spin{3}$ such that
\begin{align*}
\tilde{\zeta}_2 \circ \tilde{\zeta}_p=\tilde\eps_{12} , && \tilde{\zeta}_2\circ \tilde{\zeta}_l=\tilde\eps_{23}, \\
\tilde{\zeta}_3 \circ \tilde{\zeta}_p=\tilde\eps_{23} , && \tilde{\zeta}_3\circ \tilde{\zeta}_l=\tilde\eps_{12}.
\end{align*}
\end{enumerate}
\end{corollary}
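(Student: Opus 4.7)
My plan is to deduce part (b) from Proposition~\ref{surjB2} by post-composing $\tilde\zeta$ with suitable automorphisms of $\Spin{3}$, and then to obtain part (a) by descent through the double covers $\widehat\rho : \SO{2}\times\SU{2} \to \U{2}$ and $\rho_3 : \Spin{3} \to \SO{3}$, mirroring the strategy used in the $\mathrm{G}_2$ setting in Lemma~\ref{lemsurjG2I} and Proposition~\ref{surjG2I}.

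For part (b), Lemma~\ref{lem:rank1-inv-A2}(a) supplies an automorphism $\gamma_B \in \Aut(\SO{3})$ that inverts $\eps_{12}$ while fixing $\eps_{23}$, and Lemma~\ref{10} supplies $\gamma_D \in \Aut(\SO{3})$ interchanging $\eps_{12}$ and $\eps_{23}$. The plan is to lift both to $\tilde\gamma_B, \tilde\gamma_D \in \Aut(\Spin{3})$ via Proposition~\ref{prop:lift-aut-soN}, and then to verify the induced identities $\tilde\gamma_B \circ \tilde\eps_{12} = \tilde\eps_{12} \circ \inv$, $\tilde\gamma_B \circ \tilde\eps_{23} = \tilde\eps_{23}$, $\tilde\gamma_D \circ \tilde\eps_{12} = \tilde\eps_{23}$, and $\tilde\gamma_D \circ \tilde\eps_{23} = \tilde\eps_{12}$ as follows: composing either side with $\rho_3$ yields the same map (by $\rho_3 \circ \tilde\gamma_? = \gamma_? \circ \rho_3$ and $\rho_3 \circ \tilde\eps_{ij} = \eps_{ij} \circ \rho_2$), and since $\Spin{2}$ is connected while $\ker \rho_3 = \{\pm 1\}$ is discrete, two continuous homomorphisms $\Spin{2} \to \Spin{3}$ with identical $\rho_3$-composition can only differ by a constant factor in $\{\pm 1\}$, which must be trivial because both send $1 \mapsto 1$. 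Setting $\tilde\zeta_2 := \tilde\gamma_B \circ \tilde\zeta$ and $\tilde\zeta_3 := \tilde\gamma_D \circ \tilde\zeta_2$, the desired identities in (b) will drop out by direct composition with the relations from Proposition~\ref{surjB2}.

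For part (a), set $\tilde\zeta_1 := \tilde\zeta$. The first step is to observe that for each $i \in \{1,2,3\}$ the homomorphism $\rho_3 \circ \tilde\zeta_i$ is trivial on $\ker(\widehat\rho) = \{(1_{\SO{2}}, I_2),\,(-1_{\SO{2}}, -I_2)\}$: one has $\tilde\zeta(-1, -I_2) = \psi^{-1}(-I_2) = -1_{\Spin{3}}$, and because $-1_{\Spin{3}}$ is central it is fixed by $\tilde\gamma_B$ and $\tilde\gamma_D$, so $\tilde\zeta_i(-1,-I_2) \in \{\pm 1_{\Spin{3}}\} = \ker\rho_3$. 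The universal property of the quotient then produces the claimed epimorphism $\zeta_i : \U{2} \to \SO{3}$ characterized by $\zeta_i \circ \widehat\rho = \rho_3 \circ \tilde\zeta_i$, with surjectivity inherited from that of $\widehat\rho$ and of $\rho_3 \circ \tilde\zeta_i$. The composition identities in (a) will then be extracted by combining Lemma~\ref{lem:ama-embed-and-rho-commute-B2}, the Spin-level identities established in (b), the compatibility $\rho_3 \circ \tilde\eps_{ij} = \eps_{ij} \circ \rho_2$, and the surjectivity of $\rho_2$.

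The main technical subtlety will be the asymmetry in Lemma~\ref{lem:ama-embed-and-rho-commute-B2}: whereas $\widehat\rho \circ \tilde\zeta_p = \zeta_p \circ \rho_2$ holds on the nose, the corresponding relation on the line-stabilizer side reads $\widehat\rho \circ \tilde\zeta_l \circ \sq = \zeta_l \circ \rho_2$, carrying an extra squaring map. Carefully tracking this $\sq$ when translating the Spin-level equalities of (b) through $\widehat\rho$ is where the bookkeeping is most delicate; it ultimately reflects the fact that $\zeta_l(\SO{2})$ lies inside the simply connected factor $\SU{2}$ of the cover whereas $\zeta_p(\SO{2})$ generates $\pi_1(\U{2})$, so the two circles interact with $\widehat\rho$ in fundamentally different ways.
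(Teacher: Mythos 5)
Your plan reverses the order of the paper's proof: the paper first constructs $\zeta_1$ by noting that $\ker\widehat\rho=\langle(D(\pi),\tilde D(\pi))\rangle$ is contained in $\ker(\rho_3\circ\tilde\zeta)$ (so the homomorphism theorem applies), then produces $\zeta_2,\zeta_3$ by post-composing with $\gamma_B$ and $\gamma_D$ from Lemmas~\ref{lem:rank1-inv-A2} and~\ref{10}, and finally lifts these automorphisms via Proposition~\ref{prop:lift-aut-soN} to get $\tilde\zeta_2,\tilde\zeta_3$. Starting at the Spin level instead, as you do, is a sound alternative, and your part~(b) is correct; the connectedness argument you use to pin down the lifted automorphisms works, and it could equally be replaced by a direct appeal to Proposition~\ref{prop:zusatz}.

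The bookkeeping you flag in part~(a), however, is a genuine gap, and the extra $\sq$ does not cancel. On the line-stabilizer side one has $\widehat\rho(\tilde\zeta_l(S(\alpha)))=\tilde D(\alpha)=\zeta_l(D(\alpha))$, so for $\zeta_1$ characterized by $\zeta_1\circ\widehat\rho=\rho_3\circ\tilde\zeta$ one computes
\[
(\zeta_1\circ\zeta_l)(D(\alpha))
=(\rho_3\circ\tilde\zeta\circ\tilde\zeta_l)(S(\alpha))
=\rho_3\bigl(\tilde\eps_{23}(S(\alpha))\bigr)
=\eps_{23}(D(2\alpha))\ ,
\]
that is $\zeta_1\circ\zeta_l=\eps_{23}\circ\sq$ rather than $\eps_{23}$, and the same $\sq$ persists for $\zeta_2$, $\zeta_3$. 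This is forced: the only normal subgroup of $\U{2}$ with quotient isomorphic to $\SO{3}$ is $Z(\U{2})$, and $\zeta_l(\SO{2})\cap Z(\U{2})=\{\pm I_2\}$ is nontrivial, so the restriction to $\zeta_l(\SO{2})$ of any epimorphism $\U{2}\onto\SO{3}$ is necessarily two-to-one (in contrast, $\zeta_p(\SO{2})\cap Z(\U{2})$ is trivial, which is why no $\sq$ appears on the point-stabilizer side). The paper's terse appeal to the homomorphism theorem glosses over this same point; the downstream use in Proposition~\ref{hom-amalgams} is unharmed because an amalgam \emph{epimorphism} permits a nontrivial base-group map $\rho^i=\sq$, but the literal identity $\zeta_i\circ\zeta_l=\eps_{\cdot}$ cannot be extracted from the descent you propose. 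Your write-up would need to record the extra squaring explicitly, or reformulate the conclusion to the weaker statement that is actually needed.
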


\begin{proof}
The kernel of the epimorphism $\SO{2} \times \SU{2} \to \U{2} : (z,A) \mapsto zA$ is equal to $\left\langle \left(D(\pi), \tilde D(\pi)\right) \right\rangle$.
One has $\tilde\zeta\left(D(\pi), \tilde D(\pi)\right) = \psi^{-1}(-I_4) = -1_{\Spin{3}}$, whence $\left\langle \left(D(\pi), \tilde D(\pi)\right) \right\rangle \subseteq \ker(\rho_3 \circ \tilde\zeta)$.
Therefore, the claim concerning $\zeta_1$ follows from Proposition~\ref{surjB2} and the homomorphism theorem of groups.

The claims about $\zeta_2$, $\zeta_3$ then follow from Lemma~\ref{lem:rank1-inv-A2}, resp.\ \ref{10}. A subsequent application of Proposition~\ref{prop:lift-aut-soN} yields the claims about $\tilde\zeta_2$, $\tilde\zeta_3$. 
\end{proof}

\section{Non-spherical diagrams of rank two} \label{sec:rank-2-residues}
%  ___         _   _          
% / __| ___ __| |_(_)___ _ _  
% \__ \/ -_) _|  _| / _ \ ' \ 
% |___/\___\__|\__|_\___/_||_|

In this section we prepare Strategy~\ref{simplelacing} for non-spherical diagrams of rank two. For an introduction to the concept of a Kac--Moody root datum we refer the reader to \cite[Introduction]{Tits:1987}, \cite[7.1.1, p.~172]{Remy:2002}, \cite[Definition~5.1]{Marquis:2013}. For the definition of simple connectedness see \cite[7.1.2]{Remy:2002}. Note that for a given generalized Cartan matrix, up to isomorphism, there exists a uniquely determined simply connected Kac--Moody root datum.

\begin{definition} \label{krs}
Let $r, s \in \NN$ such that $rs \geq 4$ and consider the generalized Cartan matrix of rank two given by
\[ A := (a(i,j))_{i \in \{1, 2\}}
= \begin{pmatrix} 2 & -r \\ -s & 2\end{pmatrix}
\]
and a simply connected Kac--Moody root datum $\mathcal{D} = ( \{1, 2 \}, A, \Lambda, (c_i)_{i \in \{1, 2\}}, (h_i)_{i \in \{1, 2\}})$. 

Let $G:=G(A):=G(\mathcal{D})$ be the corresponding (simply connected) real Kac--Moody group of rank two, let $T_0$ be the fundamental torus of $G$ with respect to the fundamental roots $\alpha_1$, $\alpha_2$,  and let
\[K:=K^{r,s}:=K(A)\]
be the subgroup consisting of the elements fixed by the Cartan--Chevalley involution with respect to $T_0$ of $G$ (its maximal compact subgroup).
\end{definition}

\begin{remark}
Let $G_i \cong \SL{2}$ be the corresponding fundamental subgroups of rank one and define
\[
K_i := K^{r,s}_i := G_i \cap K^{r,s} \cong \SO{2}
\quad\text{ and }\quad
T:=T_0\cap K.
\]
By (KMG3) (see \cite[p.\ 545]{Tits:1987} or, e.g., \cite[p.\ 84]{Marquis:2013}), the torus $T_0$ is generated by $\mu^{h_i}$ for $i=1,2$ and $\mu\in \RR \backslash \{ 0 \}$
arbitrary. The action of the Cartan--Chevalley involution
on the torus is given by $\mu^{h_i} \mapsto (\mu^{-1})^{h_i}=\mu^{-h_i}$. Hence
\[
T = T_0\cap K =
\left\{
\id = 1^{h_1}=1^{h_2},
t_1:=(-1)^{h_1}, t_2:=(-1)^{h_2}, t_1 t_2 = (-1)^{h_1+h_2}
\right\}
\cong \ZZ/2\ZZ \times \ZZ/2\ZZ.
\]
\end{remark}

\begin{lemma} \label{kisfreeamalgam}
$K$ is isomorphic to a free amalgamated product
\[
K_1T *_{T} K_2T.
\]
\end{lemma}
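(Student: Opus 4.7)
The plan is to exhibit $K$ as the fundamental group of a graph of groups via a standard Bass--Serre / Tits-lemma argument, exploiting that the Weyl group is infinite dihedral when $rs \geq 4$.

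First I would recall the building-theoretic setup. The Weyl group $W$ of $A$ is the infinite dihedral group generated by two simple reflections $s_1, s_2$ with $s_1 s_2$ of infinite order (because $rs\geq 4$). Consequently the (positive half of the twin) building $\Delta = \Delta(G)$ of $G$ is a thick building of rank two with infinite-dihedral Weyl group, hence a tree (with vertices of two types corresponding to the two conjugacy classes of standard parabolics of rank one and edges corresponding to chambers). By the theory of Kac--Moody groups over $\RR$ (Kac--Peterson, R\'emy), $G$ admits an Iwasawa decomposition $G = K B$ with respect to the positive Borel $B$, so $K$ acts chamber-transitively (hence edge-transitively and type-preservingly vertex-transitively on each of the two vertex classes) on the tree $\Delta$.

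Next I would identify the stabilizers. Pick the fundamental chamber $c_0$ with its two panels $p_1, p_2$, where $p_i$ is fixed by the fundamental parabolic $P_i = \langle B, G_i\rangle$. Intersecting with $K$ and using that $P_i = B\cdot G_i$ together with the $\omega$-stability of $B\cap P_i = T_0 U_{\alpha_i}$ shows that the $K$-stabilizer of the vertex $p_i$ equals $K_i T$, while the $K$-stabilizer of the edge $c_0$ equals $T$ (the common intersection $K_1 T \cap K_2 T$ inside $K$ collapses to $T$ because $K_1 \cap K_2 \subseteq G_1 \cap G_2 \cap K = \{1\}$, as the ambient rank-one groups intersect trivially in the rank-two Kac--Moody group by (KMG) axioms).

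Finally, since $\Delta$ is a tree, it is simply connected, and $K$ acts on it without inversions (the two vertex classes are distinguished by type, so no element of $K$ swaps the two endpoints of an edge). Bass--Serre theory (equivalently, Tits's Lemma \cite[Corollaire 1]{Tits:1986}, as invoked already in the proof of Theorem~\ref{thm:univso}) applied to the quotient graph of groups, which here consists of a single edge with edge group $T$ and vertex groups $K_1 T$ and $K_2 T$, yields the desired isomorphism
\[
K \;\cong\; K_1 T *_T K_2 T.
\]
The main point requiring care is the identification of the edge stabilizer as exactly $T$ (and not something larger): this is where non-spherical\-ity enters, guaranteeing via the tree structure that the stabilizer of the fundamental chamber inside $K$ meets each rank-one factor only in $T\cap K_i$, forcing the intersection to equal $T$.
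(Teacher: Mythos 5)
Your overall strategy --- exhibit $K$ acting edge-transitively and without inversions on the tree that is one half of the twin building, identify vertex and edge stabilizers, and apply Bass--Serre/Tits's lemma --- is exactly the paper's strategy, so the architecture is right. However, your justification of the stabilizer computations has a real gap, and the paper's argument for the same point is materially different.

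The problematic claim is the ``$\omega$-stability of $B\cap P_i = T_0 U_{\alpha_i}$.'' This is false: the Cartan--Chevalley involution $\omega$ swaps positive and negative root groups, so $\omega(T_0 U_{\alpha_i}) = T_0 U_{-\alpha_i} \neq T_0 U_{\alpha_i}$, and more generally $\omega(B^+)=B^-$, so no positive Borel-type subgroup is $\omega$-stable. What \emph{is} true, and what you actually need, is $K\cap B^+ = K\cap B^+\cap\omega(B^+) = K\cap B^+\cap B^- = K\cap T_0 = T$; from this and transitivity of $K_i$ on $P_i/B$ one does get $K\cap P_i = K_iT$. As written, though, the step is incorrect. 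Likewise, the reduction of the edge stabilizer to $K_1T\cap K_2T$ and the claim $G_1\cap G_2 \cap K = \{1\}$ ``by (KMG) axioms'' is under-justified: the (KMG) axioms do not hand you $G_1\cap G_2 = \{1\}$ directly (one needs a Bruhat-type argument to see $G_1\cap G_2\subseteq T_0$ and then linear independence of coroots), and even granting $K_1\cap K_2=\{1\}$ the implication $K_1T\cap K_2T=T$ requires a short case analysis on the Klein-four structure of $T$ that you omit.

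The paper sidesteps all of this by working in the \emph{twin} tree. Since every $k\in K$ satisfies $\omega(k)=k$, an element fixing the fundamental chamber $c^+$ automatically fixes its opposite $c^-=\omega(c^+)$, hence stabilizes the unique twin apartment spanned by $\{c^+,c^-\}$; the stabilizer of a twin apartment that also fixes a chamber in it is $T_0$, so the edge stabilizer in $K$ is $T_0\cap K = T$. This is shorter, avoids computing $G_1\cap G_2$, and dispenses with any claims about $\omega$-stability of positive parabolic data. I'd recommend replacing your two fragile steps with this twin-building argument.
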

\begin{proof}
The twin building of the Kac--Moody group $G$ is a twin tree (cf.\ \cite{Ronan/Tits:1987}, \cite{Ronan/Tits:1994}). The chambers are the edges; the panels are the sets of edges sharing one vertex and, hence, correspond to vertices. The group $K$ acts edge-transitively and without inversions on each half of the twin tree of $G$ by the Iwasawa decomposition (see, e.g., \cite{Medts/Gramlich/Horn}).
The Cartan--Chevalley involution $\omega$ interchanges the two halves of the twin tree,
mapping edges to opposite edges. Hence the stabilizer in $K$ of
the fundamental edge $c^+$ also stabilizes the opposite edge $c^-=\omega(c^+)$
and, thus, the unique twin apartment spanned by them. It follows that the edge stabilizer is $T$.
Since the panels correspond to the vertices of the tree, the stabilizers of the vertices of
the fundamental edge $c^+$ are equal to $K_1T$ and $K_2T$.
The claim follows from \cite[I, \S 5]{Serre:Trees}.
\end{proof}

\begin{remark} \label{tcentralizes}
Since $K_i \unlhd K_iT$, this free amalgamated product is fully determined by the intersections $K_i \cap T$ and the action of $T$ on each $K_i$.
Note that (KMG3) implies $T \cap K_1 = \{ 1, t_1\}$ and $T \cap K_2 = \{ 1, t_2 \}$.
The action of $T$ on each $K_i$ can be extracted from the action of $T_0$ on each $G_i$, which according to  \cite[(4), p.\ 549]{Tits:1987} (or also \cite[(5.1), p.\ 86]{Marquis:2013}) is given by
\[ t x_i(\lambda) t^{-1} = x_i (t(c_i) \lambda) \]
for $t \in T_0$, $\lambda \in \RR$ and root group functor $x_i$. 
%For $t \in T$ one has $(t(c_i))^2=1$, i.e., \[t(c_i)\in\{-1,1\}.\]
According to \cite[Section~2, p.~544]{Tits:1987} (or also \cite[Definitions 5.1 and 5.5]{Marquis:2013}) one computes for $i,j\in\{1,2\}$ that
\[ t_i(c_j) = (-1)^{h_i}(c_j) = (-1)^{h_i(c_j)} = (-1)^{a(i,j)}. \]
%the final equality follows from \cite[Definition~5.1]{Marquis:2013}.

We conclude that $t_i$ acts trivially on $K_j$, if and only if the entry $a(i,j)$ of the generalized Cartan matrix is even; conversely it acts non-trivially (and hence by inversion) if and only if $a(i,j)$ is odd.

In symbols, for 
\[
n(i,j):=
\begin{cases}
0, & \text{if } a(i,j) \text{ is even}, \\
1, & \text{if } a(i,j) \text{ is odd}.
\end{cases}
\]
and $k_j \in K_j$ one has
\begin{eqnarray}
t_i^{-1}k_jt_i & = & k_j^{-2n(i,j)}k_j.  \label{crucialidentity} 
\end{eqnarray}
\end{remark}

\begin{notation}
Let
\[
\theta_1 := \theta_1^{r,s} : \SO{2} \to K_1^{r,s}, \qquad
\theta_2 := \theta_2^{r,s} : \SO{2} \to K_2^{r,s},
\]
be continuous isomorphisms.
\end{notation}

\begin{defn}
Let $r, s \in \NN$ such that $rs \geq 4$, let $A = \begin{pmat} 2 & -r \\ -s & 2\end{pmat}$. Then set
\[
H^{r,s}
:= \begin{cases}
 \SO{2} \times \SO{2} & \text{ if } r\equiv s \equiv 0 \pmod 2\ ,\\
 \SO{3} & \text{ if } r\equiv s \equiv 1 \pmod 2\ ,\\
 \U{2} & \text{ otherwise}.\\
\end{cases}
\]
Also set
\[
\delta_1:= 
\begin{cases}
 \iota_1   & \text{ if } r\equiv s \equiv 0 \pmod 2\ ,\\
 \eps_{12} & \text{ if } r\equiv s \equiv 1 \pmod 2\ ,\\
 \zeta_l   & \text{ if } r\equiv 0, s\equiv 1 \pmod 2,\\
 \zeta_p   & \text{ if } r\equiv 1, s\equiv 0 \pmod 2,
\end{cases}
\quad 
\delta_2:= 
\begin{cases}
 \iota_2   & \text{ if } r\equiv s \equiv 0 \pmod 2
    \qquad \text{(see \ref{notationiota})},\\
 \eps_{23} & \text{ if } r\equiv s \equiv 1 \pmod 2
    \qquad \text{(see \ref{nota:EI-VI-qI})},\\
 \zeta_p   & \text{ if } r\equiv 0, s\equiv 1 \pmod 2
    \quad \text{(see \ref{rank1embb2})},\\
 \zeta_l   & \text{ if } r\equiv 1, s\equiv 0 \pmod 2.
\end{cases}
\]
\end{defn}

\begin{proposition} \label{keyproposition}
Let $r, s \in \NN$ such that $rs \geq 4$, let $A = \begin{pmat} 2 & -r \\ -s & 2\end{pmat}$, let $G(A)$ be the corresponding simply connected real Kac--Moody group, and let $K^{r,s}$ be its maximal compact subgroup. Then
there exists a group epimorphism $\theta : K^{r,s} \to H^{r,s}$ such that
\[\theta \circ \theta_1 = \delta_1 \quad \text{ and } \quad  \theta \circ \theta_2 = \delta_2.\] 
\end{proposition}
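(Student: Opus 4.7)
The plan is to present $K^{r,s}$ via Lemma~\ref{kisfreeamalgam} as the free amalgamated product $K_1 T *_T K_2 T$ and to construct $\theta$ using the universal property of that product. First I would set $\bar{t}_i := \delta_i(-1_{\SO{2}}) \in H^{r,s}$ for $i = 1, 2$, and then define $\theta^{(i)} : K_i T \to H^{r,s}$ by $\theta^{(i)}|_{K_i} := \delta_i \circ \theta_i^{-1}$ together with $\theta^{(i)}(t_j) := \bar{t}_j$ for $j \neq i$. Since $t_i \in K_i$ corresponds to $-1_{\SO{2}}$ under $\theta_i$, the value $\theta^{(i)}(t_i) = \delta_i(-1) = \bar{t}_i$ is consistent, so the two maps $\theta^{(1)}, \theta^{(2)}$ agree on all of $T = \langle t_1, t_2 \rangle$ and glue to the desired $\theta$ satisfying $\theta \circ \theta_i = \delta_i$.

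For each $\theta^{(i)}$ to be a well-defined homomorphism on the semidirect product $K_i T = K_i \rtimes \langle t_j \rangle$, three conditions must be checked inside $H^{r,s}$: (i) $\bar{t}_i^2 = 1$, (ii) $\bar{t}_1 \bar{t}_2 = \bar{t}_2 \bar{t}_1$, and most importantly (iii) conjugation by $\bar{t}_j$ on $\delta_i(\SO{2})$ is trivial when $n(j,i) = 0$ and is inversion when $n(j,i) = 1$, reproducing the relation~(\ref{crucialidentity}) from Remark~\ref{tcentralizes}. The argument then reduces to a case analysis on the four parities of $(r,s)$.

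Cases 1 ($r, s$ both even, $H^{r,s} = \SO{2} \times \SO{2}$) and 2 ($r, s$ both odd, $H^{r,s} = \SO{3}$) are essentially immediate: in Case 1 the subgroups $\iota_1(\SO{2})$ and $\iota_2(\SO{2})$ commute (matching $n(j,i)=0$), and in Case 2 the required inversions are precisely the content of Lemma~\ref{lem:rank1-inv-A2}. The mixed-parity cases are more delicate. For $r$ even and $s$ odd one computes $\bar{t}_1 = \zeta_l(-I) = \tilde{D}(\pi) = -I_4$, which is central in $\U{2}$ and therefore centralizes $\zeta_p(\SO{2})$ (matching $n(1,2) = 0$); at the same time $\bar{t}_2 = \zeta_p(-I) = \diag(1,1,-1,-1)$ differs from the element $C$ of Lemma~\ref{lem:rank1-inv-B2} only by the central factor $-I_4$, so conjugation by $\bar{t}_2$ coincides with $\gamma_C$ and thus inverts $\zeta_l(\SO{2})$ (matching $n(2,1)=1$). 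Case 4 ($r$ odd, $s$ even) is symmetric.

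Surjectivity of $\theta$ follows because $\delta_1(\SO{2}) \cup \delta_2(\SO{2})$ generates $H^{r,s}$ in each case --- trivially for $\SO{2} \times \SO{2}$ and (as in the proof of Theorem~\ref{thm:univso}) for $\SO{3}$, and for $\U{2}$ via the observation that $\det \circ \zeta_p$ already surjects onto $\U{1}$ while a $\zeta_p(\alpha)$-conjugate of a non-trivial rotation from $\zeta_l(\SO{2})$ lies in $\SU{2}$ but not in $\SO{2}$ or its normalizer, forcing the intersection of the generated subgroup with $\SU{2}$ to equal all of $\SU{2}$. The main obstacle is really the sign/parity bookkeeping in the mixed-parity cases: one has to match the deliberately asymmetric choices of $\delta_1, \delta_2$ against the asymmetric action of $T$ on $K_1$ and $K_2$, tracking the central element $-I_4 = \zeta_l(-I)$ carefully so that condition (iii) is satisfied with the correct sign.
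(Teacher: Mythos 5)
Your proposal is correct and follows essentially the same route as the paper's proof: both present $K^{r,s}$ via Lemma~\ref{kisfreeamalgam} as $K_1T *_T K_2T$, define $\theta$ on each $K_i$ by $\delta_i \circ \theta_i^{-1}$, and verify compatibility with the $T$-action through the same four-way parity case analysis, with the mixed cases hinging on the fact that $\zeta_l(-I_2) = -I_4$ is central while $\zeta_p(-I_2)$ is the element $-C$ from Lemma~\ref{lem:rank1-inv-B2}. Your invocation of Lemma~\ref{lem:rank1-inv-B2} to handle the inversion (rather than the paper's direct matrix conjugation) and your explicit surjectivity paragraph (which the paper leaves implicit) are only cosmetic departures.
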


\begin{proof}
By Lemma~\ref{kisfreeamalgam} the group $K$ is isomorphic to $K_1T *_{T} K_2T$ with $T = \{ 1, t_1, t_2, t_1t_2\} \cong \ZZ/2\ZZ \times \ZZ/2\ZZ$ and $T_0 \cap K_1 = \{ 1, t_1 \}$, $T_0 \cap K_2 = \{ 1, t_2 \}$; in particular, $K$ is generated by $K_1 \cong \SO{2}$ and $K_2 \cong \SO{2}$. As $K$ is a free amalgamated product it therefore suffices to define $\theta$ on each of the $K_i$ and to verify that the actions of the $t_i$ on the $K_j$ are compatible with the actions of the images of the $t_i$ on the images of the $K_j$. 
Define $\theta$ via
\[\theta_{|K_1} : K_1 \to \delta_1(\SO{2}) : x \mapsto \left(\delta_1 \circ {\theta_1}^{-1}\right)(x)
%\]
\quad\text{ and }\quad
%\[
\theta_{|K_2} : K_2 \to \delta_2(\SO{2}) : x \mapsto \left(\delta_2 \circ {\theta_2}^{-1}\right)(x).\]
Then this is compatible with the action of $T$. Indeed, using Remark~\ref{tcentralizes}, one observes:
\begin{enumerate}
\item
Since $r\equiv s \equiv 0 \pmod 2$, the elements $t_i$ centralize the groups $K_j$ which is compatible with the fact that $\SO{2} \times \SO{2}$ is an abelian group.
\item
Since $r\equiv s \equiv 1 \pmod 2$, the element $t_1$ inverts the group $K_2$ and the element $t_2$ inverts the group $K_1$ which is compatible with the situation in $\SO{3}$ by Lemma~\ref{lem:rank1-inv-A2}.
\item
Since $r\equiv 0 \pmod 2$, $s\equiv 1 \pmod 2$, the element $t_1$ centralizes $K_2$ and the element $t_2$ inverts the group $K_1$. This is compatible with the following computations (cf.\ Definition~\ref{rank1embb2}):
\[\forall g \in K_2 : \theta(t_1gt_1)
= \begin{pmat} -1 && 0 \\ & -1 && 0 \\ 0 & & -1 &  \\ & 0 & & -1 \end{pmat}
  \begin{pmat} 1 \\ & 1 \\ & & x & y \\ & & -y & x \end{pmat}
  \begin{pmat} -1 && 0 \\ & -1 && 0 \\ 0 & & -1 &  \\ & 0 & & -1 \end{pmat}
= \begin{pmat} 1 \\ & 1 \\ & & x & y \\ & & -y & x \end{pmat}
= \theta(g),\] 
\[\forall g \in K_1 : \theta(t_2gt_2)
= \begin{pmat} 1 \\ & 1 \\ & & -1 & 0 \\ & & 0 & -1 \end{pmat}
  \begin{pmat} x && y \\ & x && y \\ -y & & x &  \\ & -y & & x \end{pmat}
  \begin{pmat} 1 \\ & 1 \\ & & -1 & 0 \\ & & 0 & -1 \end{pmat}
= \begin{pmat} x && -y \\ & x && -y \\ y & & x &  \\ & y & & x \end{pmat}
= \theta(g^{-1}).\]
\item This is dual to (c). \qedhere
\end{enumerate}
\end{proof}

\begin{defn} \label{thetaspin}
Let $r, s \in \NN$ such that $rs \geq 4$, let $A = \begin{pmat} 2 & -r \\ -s & 2\end{pmat}$. Then set
\[
\widetilde{H}^{r,s}
:= \begin{cases}
 \Spin{2} \times \Spin{2} & \text{ if } r\equiv s \equiv 0 \pmod 2\ ,\\
 \Spin{3} & \text{ if } r\equiv s \equiv 1 \pmod 2\ ,\\
 \SO{2} \times \SU{2} & \text{ otherwise}.\\
\end{cases}
\]
Furthermore, set
\[
\tilde\delta_1:=
\begin{cases}
 \tilde\iota_1   & \text{ if } r\equiv s \equiv 0 \pmod 2,\\
 \tilde\eps_{12} & \text{ if } r\equiv s \equiv 1 \pmod 2,\\
 \tilde\zeta_l   & \text{ if } r\equiv 0, s\equiv 1 \pmod 2,\\
 \tilde\zeta_p   & \text{ if } r\equiv 1, s\equiv 0 \pmod 2,\\
\end{cases}
\quad
\tilde\delta_2:=
\begin{cases}
 \tilde\iota_2   & \text{ if } r\equiv s \equiv 0 \pmod 2
    \qquad \text{(see \ref{iotaspin})},\\
 \tilde\eps_{23} & \text{ if } r\equiv s \equiv 1 \pmod 2
    \qquad \text{(see \ref{2})},\\
 \tilde\zeta_p   & \text{ if } r\equiv 0, s\equiv 1 \pmod 2
    \quad \text{(see \ref{zetaspin})},\\
 \tilde\zeta_l   & \text{ if } r\equiv 1, s\equiv 0 \pmod 2.\\
\end{cases}
\]
The central extension \[\bar\rho : \wt H^{r,s} \to H^{r,s}\] satisfies 
\[\bar\rho =
\begin{cases}
\rho_2 \times \rho_2 & \text{ if } r\equiv s \equiv 0 \pmod 2
    \quad \text{(see \ref{iotaspin})}, \\
\rho_3 & \text{ if } r\equiv s \equiv 1 \pmod 2
    \quad \text{(see \ref{rho})},\\
\widehat \rho & \text{ otherwise }
    \qquad\qquad\qquad\text{(see \ref{zetaspin})}.
\end{cases}
\]
Let $K^{r,s}=K(A) = K_1T *_{T} K_2T$ as in Lemma~\ref{kisfreeamalgam} and let $t_1 \in K_1 \cap T$, $t_2 \in K_2 \cap T$ as in Remark~\ref{tcentralizes}. 

\medskip
Define
\begin{align*}
u_i &:= \theta(t_i) \quad\quad\quad\quad \text{(see \ref{keyproposition})}, \\
U &:= \langle u_1, u_2\rangle \cong \ZZ/2\ZZ \times \ZZ/2\ZZ.
\end{align*}
Furthermore, define
\begin{align*}
\wt U &:= \bar\rho^{\,-1}(U), \\
\wt K_i := \wt K_i^{r,s} &:= \bar\rho^{\,-1}\big(\theta(K_i)\big), \\
\intertext{and the \Defn{spin extension} }
\wt K := \wt K^{r,s} := \wt K(A) &:= \wt K_1\wt U *_{\wt U} \wt K_2\wt U
\quad\text{ of }\quad
K=K^{r,s}=K(A),
\end{align*}
let \[\hat{\hat\rho} : \wt K_1\wt U *_{\wt U} \wt K_2\wt U \to K_1T *_T K_2T\] be the epimorphism induced by $\bar\rho_{|\wt K_1}$, $\bar\rho_{|\wt K_2}$ and let \[\tilde\theta_1 : \Spin{2} \to \widetilde K_1 \quad\quad \text{and} \quad\quad \tilde\theta_2 : \Spin{2} \to \widetilde K_2\] be continuous monomorphisms such that the following diagrams commute for $i=1,2$:
\[
\xymatrix{
\Spin{2} \ar[d] \ar[rr]^{\tilde\theta_i} && \wt K_i \ar[d]_{\bar\rho_{|\wt K_i}} \\
\SO{2} \ar[rr]^{\theta_i} && K_i
}
\]
\end{defn}

\begin{remark} \label{dichotomy}
One has $\wt K_1 \cong \Spin{2}$ unless $r\equiv 0, s\equiv 1 \pmod 2$ and $\wt K_2 \cong \Spin{2}$ unless $r\equiv 1, s\equiv 0 \pmod 2$, in which case the respective group is isomorphic to $\{ 1, -1 \} \times \SO{2}$ (cf.\ Lemma~\ref{lem:ama-embed-and-rho-commute-B2}). Hence $\tilde\theta_1$ actually is a (continuous) isomorphism unless $r\equiv 0, s\equiv 1 \pmod 2$, in which case it is a (continuous) isomorphism onto the unique connected subgroup of index two of $\wt K_1$.

For $i=1$ the map on the left hand side of the above commutative diagram is
\[\begin{array}{rclll}
\Spin{2} \to \SO{2} &: & S(\alpha) \mapsto D(\alpha) & \text{if } r\equiv 0, s\equiv 1 \pmod 2 \quad\quad & \text{(see \ref{nota:Dalpha-Salpha})} \\
\Spin{2} \to \SO{2} &:& x \mapsto \rho_2(x) & \text{otherwise}. 
\end{array}\]
The dual statement holds for $\tilde\theta_2$.   

In particular, for $i=2$ the map on the left hand side of the above commutative diagram is
\[\begin{array}{rclll}
\Spin{2} \to \SO{2} &: &S(\alpha) \mapsto D(\alpha) & \text{if } r\equiv 1, s\equiv 0 \pmod 2 \quad\quad & \text{(see \ref{nota:Dalpha-Salpha})} \\
\Spin{2} \to \SO{2} &: & x \mapsto \rho_2(x) & \text{otherwise}. 
\end{array}\] 
Define
\begin{align*}
\tilde t_1 & := \begin{cases}
    \tilde\theta_1(S(\pi)), & \text{if } r\equiv 0, s\equiv 1 \pmod 2 \\
    \tilde\theta_1(S(\frac{\pi}{2})), & \text{otherwise}.
\end{cases}
\\
\tilde t_2 & := \begin{cases}
    \tilde\theta_2(S({\pi})), & \text{if } r\equiv 1, s\equiv 0 \pmod 2 \\
    \tilde\theta_2(S(\frac{\pi}{2})), & \text{otherwise}.
\end{cases}
\end{align*}
\end{remark}

The following is true by construction:

\begin{proposition}  \label{keycorollary}
Let $r, s \in \NN$ such that $rs \geq 4$, let $A = \begin{pmat} 2 & -r \\ -s & 2\end{pmat}$,
let $G(A)$ be the corresponding simply connected real Kac--Moody group, and let $K^{r,s}$ be its maximal compact subgroup,
and let $\wt{K}^{r,s}$ be its spin extension.
Then there exists a group epimorphism $\tilde\theta : \wt{K}^{r,s} \to \widetilde{H}^{r,s}$ such that
\[\tilde\theta \circ \tilde\theta_1 = \tilde\delta_1 \quad \text{ and } \quad  \tilde\theta \circ \tilde\theta_2 = \tilde\delta_2.\]
Moreover, the following diagram commutes:
\[
\xymatrix{
\Spin{2} \ar[dr]^{\tilde\theta_i} \ar@/^/[drrr]^{\tilde\delta_i} \ar[ddd]_{\rho_2 \text{ or } S(\alpha)\mapsto D(\alpha)} \\
&\wt K^{r,s} \ar[rr]^{\tilde\theta} \ar[d]_{\hat{\hat\rho}} && \widetilde{H}^{r,s} \ar[d]^{\bar\rho} \\
&K^{r,s} \ar[rr]^\theta && H^{r,s} \\
\SO{2} \ar[ur]_{\theta_i} \ar@/_/[urrr]_{\delta_i}
}
\]
where the epimorphism on the left hand side is one of $\rho_2$ or $S(\alpha) \mapsto D(\alpha)$ as described in Remark~\ref{dichotomy}.

Furthermore, for $\{ i, j \} = \{ 1, 2 \}$ and $\tilde k_j \in \wt K_j$ one has 
\begin{eqnarray}
\tilde t_i^{-1} \tilde k_j \tilde t_i & = & \tilde k_j^{-2n(i,j)}\tilde k_j.  \label{crucialidentity2} 
\end{eqnarray}
\end{proposition}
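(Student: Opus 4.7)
The plan hinges on the fact that $\wt K^{r,s} = \wt K_1\wt U *_{\wt U} \wt K_2\wt U$ is a free amalgamated product whose factors $\wt K_i\wt U$ are subgroups of $\wt H^{r,s}$ sharing the common subgroup $\wt U$. Consequently, the inclusions $\wt K_i\wt U \hookrightarrow \wt H^{r,s}$ agree on $\wt U$, and the universal property of the free amalgamated product yields the desired homomorphism $\tilde\theta : \wt K^{r,s} \to \wt H^{r,s}$. Surjectivity follows case by case: in each of the four parity cases one verifies that $\wt H^{r,s}$ is generated by $\wt K_1 \cup \wt K_2$, which can be obtained by pulling back generation of $H^{r,s}$ through $\bar\rho$ and observing that the central kernel of $\bar\rho$ is already captured by these subgroups.

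For the identities $\tilde\theta\circ\tilde\theta_i = \tilde\delta_i$, both sides are continuous monomorphisms $\Spin{2} \to \wt H^{r,s}$ that lift the map $\delta_i = \theta\circ\theta_i$ through $\bar\rho$, thanks to the defining diagrams for $\tilde\theta_i$ (Definition~\ref{thetaspin}) and the intertwining lemmas for $\tilde\delta_i$ (Lemmas~\ref{lem:ama-embed-and-rho-commute-An}, \ref{lem:ama-embed-and-rho-commute-G2}, and \ref{lem:ama-embed-and-rho-commute-B2}). Uniqueness of such lifts, in the spirit of Proposition~\ref{prop:zusatz} and its obvious analogs for $\Spin{2}\times\Spin{2}$ and $\SO{2}\times\SU{2}$, then forces equality. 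The big commutative diagram assembles from this together with Proposition~\ref{keyproposition}, Definition~\ref{thetaspin}, and the fact that $\hat{\hat\rho}$ is by construction induced by the restrictions of $\bar\rho$ to the factors $\wt K_i\wt U$.

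The substantive part is the conjugation identity~\eqref{crucialidentity2}. The key observation is that $\tilde t_i$ lies in $\wt U$: the commutative diagram defining $\tilde t_i$ in Remark~\ref{dichotomy} gives $\bar\rho(\tilde t_i) = \theta(t_i) = u_i \in U$, so $\tilde t_i \in \bar\rho^{-1}(U) = \wt U$. Hence the conjugation $\tilde t_i^{-1}\tilde k_j\tilde t_i$ takes place inside the single factor $\wt K_j\wt U$ of the amalgamated product, that is, inside $\wt H^{r,s}$. Applying $\bar\rho$ to both sides of~\eqref{crucialidentity2} reduces to the known identity~\eqref{crucialidentity} in $K^{r,s}$, so the two sides of~\eqref{crucialidentity2} agree modulo $\ker\bar\rho \leq Z(\wt H^{r,s})$.

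The main obstacle is pinning down this central sign ambiguity. For connected $\wt K_j$ the continuous map $\tilde k_j \mapsto \tilde t_i^{-1}\tilde k_j\tilde t_i \cdot (\tilde k_j^{-2n(i,j)}\tilde k_j)^{-1}$ into the finite central kernel sends $1$ to $1$ and therefore is constantly $1$. In the cases $r\equiv 0, s\equiv 1 \pmod 2$ and its dual, however, $\wt K_j$ is the disconnected group $\{1,-1\}\times \SO{2}$, and the continuity argument settles the identity only on the identity component; the non-identity component must then be verified by direct matrix computation inside $\SO{2}\times\SU{2}$ using the formulas of Section~\ref{sec:bc2} and the specific choice $\tilde t_i = \tilde\theta_i(S(\pi))$ prescribed in Remark~\ref{dichotomy}.
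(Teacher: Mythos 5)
The paper's own proof is essentially nonexistent---it says ``The following is true by construction'' and, for the final identity, ``follows from identity~(\ref{crucialidentity})''---so your proposal is a genuine unpacking of what ``by construction'' must mean rather than an alternative. Your structure is the right one: $\wt K_1\wt U$, $\wt K_2\wt U$ and $\wt U$ are, by Definition~\ref{thetaspin}, literal subgroups of $\wt H^{r,s}$ (as $\bar\rho$-preimages), so the universal property of the free amalgamated product gives $\tilde\theta$; surjectivity is automatic because $\ker\bar\rho\subseteq\wt K_i$ and $\langle\theta(K_1),\theta(K_2)\rangle=H^{r,s}$; and for the conjugation identity the key move is exactly yours, namely that $\tilde t_i\in\wt U$, so $\tilde t_i^{-1}\tilde k_j\tilde t_i$ lives in the normal subgroup $\wt K_j\unlhd\wt K_j\wt U\leq\wt H^{r,s}$ and can be computed there, matching~(\ref{crucialidentity}) modulo the central kernel.

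Two points deserve a caveat. First, your case analysis for pinning down the central sign is incomplete: you treat $\wt K_j$ as connected except when $r\equiv 0,s\equiv 1\pmod 2$ and its dual (following Remark~\ref{dichotomy}), but for $r\equiv s\equiv 0\pmod 2$ one has $\wt K_1=(\rho_2\times\rho_2)^{-1}(\SO 2\times\{1\})=\Spin 2\times\{\pm 1\}$, which is likewise disconnected. That case is nonetheless harmless---there $\wt H^{r,s}=\Spin 2\times\Spin 2$ is abelian, $n(i,j)=0$, and the identity is trivially $\tilde t_i^{-1}\tilde k_j\tilde t_i=\tilde k_j$---but your argument as written does not notice it, and an explicit line disposing of it is needed. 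Second, the step $\tilde\theta\circ\tilde\theta_i=\tilde\delta_i$ ``by uniqueness of lifts, in the spirit of Proposition~\ref{prop:zusatz} and its obvious analogs'' is a loose end: those analogs for $\Spin 2\times\Spin 2$, $\SO 2\times\SU 2$ and the non-spherical $\wt K^{r,s}$ are not in the paper, and Definition~\ref{thetaspin} fixes $\tilde\theta_i$ only up to a continuous automorphism of $\Spin 2$, so one must either prove that uniqueness (which does hold in the relevant connected cases, by an argument mirroring \ref{prop:zusatz}) or acknowledge that the statement is implicitly also a normalisation of the choice of $\tilde\theta_i$. With those two repairs the argument is complete and is, in substance, the construction the paper alludes to.
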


Identity (\ref{crucialidentity2}) follows from identity (\ref{crucialidentity}) in Remark~\ref{tcentralizes}

\begin{remark} \label{longvsshort}
The Cartan matrix of type $\mathrm{C}_2$ over $\{ 1, 2 \}$ with short root $\alpha_1$ and long root $\alpha_2$ (i.e., $2\to 1$; see Remark~\ref{longandshort}) is \[\begin{pmatrix} 2 & -2 \\ -1 & 2 \end{pmatrix},\]
cf.\ \cite[p.~44]{Carter:1989}.
The group $H^{r,s}$ from Proposition~\ref{keycorollary} is of type $\mathrm{C}_2$ with short root $\alpha_1$ and long root $\alpha_2$ if and only if $r$ is even and $s$ is odd. We conclude that for $i=1$ the map on the left hand side of the commutative diagram is \[\Spin{2} \to \SO{2} : S(\alpha) \mapsto D(\alpha)\] which is in accordance with Notation~\ref{zetaspin} and Lemma~\ref{lem:ama-embed-and-rho-commute-B2}.
In other words, in this example the rank one group corresponding to the long root is doubly covered by its spin cover and the rank one group corresponding to the short root is singly covered by its spin cover.

The direction introduced for edges labelled $\infty$ in Notation~\ref{augmented} was chosen to fit this observation: the arrow points away from the doubly covered vertex of the diagram towards the singly covered vertex of the diagram. 

We point out that \cite[Plate~III, p.~269]{Bourbaki:Lie4-6} incorrectly gives the transpose of the above matrix as the Cartan matrix of type $\mathrm{C}_2$ with short root $\alpha_1$ and long root $\alpha_2$.
\end{remark}

\begin{remark}\label{allodd}
If both $r$, $s$ are odd, then in $\wt K = \wt K_1 \wt U *_{\wt U} \wt K_2 \wt U$ one has \[\wt U \cong Q_8.\]
In other words, the element $S(\pi)$ of the group $\wt K_1 \cong \Spin{2}$ is identical to the element $S(\pi)$ of the group $\wt K_2 \cong \Spin{2}$, like for the groups $\tilde\eps_{12}(\Spin{2})$ and $\tilde\eps_{23}(\Spin{2})$ in $\Spin{3}$. The same is true for the groups $\tilde\eta_p(\Spin{2})$ and $\tilde\eta_l(\Spin{2})$ in $\Spin{4}$ by Proposition~\ref{surjG2I}.
\end{remark}

\begin{remark} \label{otherU}
If both $r$ and $s$ are even, then in $\wt K = \wt K_1 \wt U *_{\wt U} \wt K_2 \wt U$ one has \[\wt U \cong \ZZ/4\ZZ \times \ZZ/4\ZZ,\] if one of $r$, $s$ is even and the other odd, then one has \[\wt U \cong \ZZ/4\ZZ \times \ZZ/2\ZZ.\] 
\end{remark}

\begin{lemma} \label{lem:automorphismsarenice}
Each automorphism $\alpha$ of $K = K_1U *_U K_2U$ and each automorphism $\wt \alpha$ of $\wt K= \wt K_1 \wt U *_{\wt U} \wt K_2 \wt U$ induces an automorphism of the Bruhat--Tits tree of $G$. The set $\{ \alpha(K_1U), \alpha(K_2U) \}$ is $K$-conjugate to $\{ K_1U, K_2U \}$, the set $\{ \wt\alpha(\wt K_1\wt U), \wt\alpha(\wt K_2 \wt U) \}$ is $\wt K$-conjugate to $\{ \wt K_1 \wt U, \wt K_2 \wt U\}$.  
\end{lemma}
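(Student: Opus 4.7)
The plan is to identify the Bass--Serre tree $T$ of the amalgam decomposition $K = K_1U *_U K_2U$ with one half of the Bruhat--Tits twin tree of $G$, on which $K$ acts with vertex-stabilizers $K_1U$ and $K_2U$ (one orbit each) and edge-stabilizer $U$; this identification is the content of the proof of Lemma~\ref{kisfreeamalgam}. Once in place, the claim that $\alpha$ induces an automorphism of the Bruhat--Tits tree follows from the twisted-action construction $v = k\cdot v_0 \mapsto \alpha(k) \cdot v_0$, provided one knows that $\alpha$ permutes the two conjugacy classes of vertex-stabilizers.

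The main work therefore lies in characterizing the $K$-conjugacy classes of $K_1U$ and $K_2U$ intrinsically. By the classical dichotomy for free amalgamated products (\cite[I,~Section~4]{Serre:Trees}), an element of $K$ fixes a vertex of $T$ if and only if it is conjugate into $K_1U \cup K_2U$; in particular every element of finite order is elliptic. I would then show that the image $\alpha(K_iU)$, being abstractly isomorphic to $K_iU$ and hence containing a dense torsion subgroup (coming from $\QQ/\ZZ \leq \SO{2} \cong K_i$) all of whose elements are elliptic and pairwise commute, fixes a common vertex $v$ of $T$: the intersection of the fixed-subtrees of a family of commuting elliptic tree-isometries is non-empty provided the family contains elements of arbitrarily large finite order (by a centralizer/fixed-subtree argument), and then the centralizer of this torsion subgroup in $K$ --- which contains all of $\alpha(K_iU)$ --- also fixes $v$. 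This yields $\alpha(K_iU) \leq gK_{\sigma(i)}Ug^{-1}$ for some $g \in K$ and some permutation $\sigma$ of $\{1,2\}$; equality follows from $\alpha$ being an isomorphism together with the isomorphism types of $K_jU$.

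A single common conjugator for both $i \in \{1,2\}$ is obtained by noting that $\alpha$ maps the fundamental edge $\{v_1, v_2\}$ of $T$ to an edge whose endpoints are stabilized by $\alpha(K_1U)$ and $\alpha(K_2U)$, and adjusting the two separate conjugators within cosets of the edge stabilizer $\alpha(U)$. The spin version is identical in form: by Remark~\ref{dichotomy}, each $\widetilde K_i\widetilde U$ is again a finite extension of a circle group (hence has the same torsion-density structure), and by Remark~\ref{otherU} the group $\widetilde U$ is finite, so that the Bass--Serre tree of $\widetilde K = \widetilde K_1\widetilde U *_{\widetilde U} \widetilde K_2\widetilde U$ is a double cover of $T$ and the entire argument transfers.

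The main obstacle is the fixed-point step for the full subgroup $\alpha(K_iU)$: the Bass--Serre tree here is not locally finite --- vertex degrees are $|K_iU/U|$, the cardinality of the continuum --- so one cannot invoke the standard Bruhat--Tits fixed-point theorem for compact group actions. The workaround sketched above, relying on the density of torsion in $\SO{2}$ plus a centralizer/intersection argument on trees, is the point of the proof I expect to require the most care.
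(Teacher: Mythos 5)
Your route is genuinely different from the paper's: the paper invokes a theorem of Karrass--Solitar on subgroups of free products with finite amalgamated subgroup --- $K_iU$ is indecomposable as an amalgam and has no $\ZZ$-quotient, $U$ is finite, so $\alpha(K_iU)$ is $K$-conjugate into $K_1U$ or $K_2U$ --- and the rest follows from edge-transitivity. You instead propose a direct Bass--Serre argument exploiting the torsion in $\SO{2}$. Both can be made to work; yours is more self-contained and geometric, the paper's is a one-line citation that outsources the hard combinatorial group theory.

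However, your fixed-point step has a real gap, and it is the word \emph{centralizer}. By Remark~\ref{tcentralizes} the group $K_iU=K_iT$ is \emph{not} always abelian: the element $t_j\in T$, $j\neq i$, inverts $K_i$ exactly when $a(j,i)$ is odd, in which case $K_iU\cong O(2)$, whose reflections do not centralize rotations. So $\alpha(K_iU)$ need not lie in the centralizer of the torsion in $\alpha(K_i)$, and your deduction that all of $\alpha(K_iU)$ fixes $v$ breaks down precisely in the cases that matter for this paper. The repair is to replace centralizer by normalizer and to sharpen the fixed-point mechanism. Since $U$ (resp.\ $\wt U$) is finite of order at most $16$ (Remarks~\ref{allodd} and~\ref{otherU}) and every edge stabilizer of the Bass--Serre tree is a conjugate of $U$, any element $g\in\alpha(K_i)$ of finite order exceeding $|U|$ fixes no edge and hence fixes a \emph{single} vertex $v$, i.e.\ $\Fix(g)=\{v\}$. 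Every $h\in\alpha(K_iU)$ normalizes $\alpha(K_i)$ and satisfies $hgh^{-1}=g^{\pm1}$ (conjugation by $\alpha^{-1}(h)\in K_iU$ on $K_i\cong\SO{2}$ is $\pm\id$, and this transports through the abstract isomorphism $\alpha$), so $h\cdot\Fix(g)=\Fix(g^{\pm1})=\{v\}$ and $h(v)=v$. Note this also renders your ``family of commuting elliptics with arbitrarily large order'' plus Helly-type intersection argument unnecessary --- which is fortunate, since for a non-finitely-generated family that argument is delicate on a non-locally-finite tree; one well-chosen element of order $>|U|$ does the whole job.
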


\begin{proof}
$K_1U$ and $K_2U$ are indecomposable as amalgamated products and do not admit $\ZZ$ as a quotient. 
Moreover, $U$ is finite.
Therefore by \cite[Theorem 6]{Karrass/Solitar:1970}, each $\alpha(K_iU)$ is conjugate to $K_1U$ or to $K_2U$.
Hence $\alpha(K_1U)$, $\alpha(K_2U)$ each stabilize a vertex of the Bruhat--Tits tree $X$ of $G$.
Since $\alpha(K_1U)\cap\alpha(K_2U)=\alpha(U)$, these two vertices are adjacent and $\alpha$ acts on $X$.
Since $K$ acts edge-transitively on $X$ the set $\{ \alpha(K_1U), \alpha(K_2U) \}$ is conjugate to $\{ K_1U, K_2U \}$.
The same argument works for $\wt \alpha$ and $\wt K_i \wt U$.
\end{proof}

\begin{proposition} \label{prop:controlautoinfty}
For each automorphism $\alpha$ of $K$ there exists a unique automorphism $\wt \alpha$ of $\wt K$ such that
\[ \hat{\hat \rho} \circ \wt \alpha = \alpha \circ \hat{\hat \rho}.\]
\end{proposition}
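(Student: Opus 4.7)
The plan is to proceed in three steps: reduction to a nice form, separate lifting on each factor of the free amalgamated product, and verification of compatibility on $\wt U$.

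First, using Lemma~\ref{lem:automorphismsarenice}, I compose $\alpha$ with a suitable inner automorphism of $K$ (which lifts to $\wt K$ trivially via conjugation by any preimage under the surjection $\hat{\hat\rho}$) so as to assume that $\alpha$ preserves the unordered pair $\{K_1T, K_2T\}$. Let $\pi\in\Sym\{1,2\}$ be the induced permutation, so that $\alpha(K_iT)=K_{\pi(i)}T$. Since $T=K_1T\cap K_2T$ is characterised intrinsically as this intersection, we have $\alpha(T)=T$. This reduction is harmless for both existence and uniqueness.

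Second, on each amalgam factor I construct a candidate lift $\wt\alpha_i\colon\wt K_i\wt U\to\wt K_{\pi(i)}\wt U$ covering $\alpha|_{K_iT}$. The subgroup $K_i\cong\SO 2$ is the identity component of $K_iT$, so $\alpha$ restricts to an isomorphism $K_i\to K_{\pi(i)}$, which by Proposition~\ref{prop:lift-aut-so2} lifts uniquely to an isomorphism $\wt K_i\to \wt K_{\pi(i)}$ of the $\Spin 2$-parts (with the mild adaptation needed for the non-connected cover $\{\pm 1\}\times\SO 2$ occurring in the mixed-parity case). To extend this to all of $\wt K_i\wt U$, it remains to specify the image of $\tilde t_j$ for $j\neq i$; since its image must cover $\alpha(t_j)=t_{\pi(j)}$, it is confined to a coset of $\ker(\hat{\hat\rho})\cap\wt K_{\pi(i)}\wt U$, and the correct choice is forced by demanding that the conjugation identity~(\ref{crucialidentity2}) of Proposition~\ref{keycorollary} be preserved.

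Third, I verify that $\wt\alpha_1|_{\wt U}=\wt\alpha_2|_{\wt U}$, so that the pair $(\wt\alpha_1,\wt\alpha_2)$ glues via the universal property of the free amalgamated product $\wt K_1\wt U *_{\wt U} \wt K_2\wt U$ to a homomorphism $\wt\alpha\colon\wt K\to\wt K$ with $\hat{\hat\rho}\circ\wt\alpha=\alpha\circ\hat{\hat\rho}$. Applying the same construction to $\alpha^{-1}$ yields a two-sided inverse, showing $\wt\alpha\in\Aut(\wt K)$. Uniqueness is automatic: $\wt K$ is generated by $\wt K_1\cup\wt K_2$, the restrictions of any covering lift to each $\wt K_i$ are uniquely determined by Proposition~\ref{prop:lift-aut-so2}, and the values on $\tilde t_1,\tilde t_2$ are then determined by the gluing. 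The main obstacle is precisely the compatibility check on $\wt U$: one must use the explicit structure of $\wt U$ from Remarks~\ref{allodd} and \ref{otherU} (one of $Q_8$, $\ZZ/4\ZZ\times\ZZ/4\ZZ$, or $\ZZ/4\ZZ\times\ZZ/2\ZZ$) to see that the induced automorphism of $T$ lifts; in the mixed-parity case $\tilde t_1$ and $\tilde t_2$ have distinct orders in $\wt U$, which forces $\pi=\mathrm{id}$ and also determines the lift of each $\tilde t_i$ up to the required consistency with~(\ref{crucialidentity2}).
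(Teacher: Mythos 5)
Your proposal takes essentially the same route as the paper's (very terse) proof: reduce via Lemma~\ref{lem:automorphismsarenice}, together with composition by inner automorphisms (which lift by conjugating by a $\hat{\hat\rho}$-preimage), to the case where $\alpha$ preserves the vertex groups of the amalgam decomposition; lift factor-by-factor via Proposition~\ref{prop:lift-aut-so2}; and glue using the universal property of the free amalgamated product. Where the paper simply asserts these steps in two lines, you make each explicit, and you correctly isolate the compatibility check on $\wt{U}$ and the role of identity~(\ref{crucialidentity2}) as the genuine content.

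One small repair is needed. In the second step you pass from $\alpha(K_iT)=K_{\pi(i)}T$ to $\alpha(K_i)=K_{\pi(i)}$ by calling $K_i$ the identity component of $K_iT$, but the statement does not assume $\alpha$ continuous --- that is precisely why the paper invokes Proposition~\ref{prop:lift-aut-so2}, which is deliberately formulated for abstract automorphisms of $\SO{2}$. An algebraic replacement is immediate: $K_i$ is exactly the image of the squaring map on $K_iT$. Indeed, in the abelian case one has $(k,t)^2=(k^2,1)$; in the $\O{2}$-type case, $t$ inverts $K_i$ so $(kt)^2=1$, while $k^2$ ranges over all of $K_i$ by $2$-divisibility of $\SO{2}$. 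Hence $K_i$ is characteristic in $K_iT$ as an abstract group. (For the uniqueness step one similarly needs $\wt\alpha(\wt{K}_i)=\wt{K}_{\pi(i)}$; the cleanest way to see this without topology is to note $\hat{\hat\rho}^{-1}(K_{\pi(i)})=\wt{K}_{\pi(i)}$, using that $\ker(\hat{\hat\rho})=\ker(\bar\rho)\subseteq\wt{K}_{\pi(i)}$ and the normal form in the free amalgamated product.) With these local replacements the argument is sound.
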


\begin{proof}
Since $K$ modulo its centre is isomorphic to $\wt K$ modulo its centre, the claim holds for inner automorphisms. It therefore suffices to study the outer automorphisms groups $\Out(\cdot)=\Aut(\cdot)/\Inn(\cdot)$. By Lemma~\ref{lem:automorphismsarenice} it therefore suffices to investigate automorphisms that preserve the sets $K_1U \cup K_2U$, resp.\ $\wt K_1\wt U \cup \wt K_2 \wt U$. The claim follows from Proposition~\ref{prop:lift-aut-so2}.
\end{proof}

\part{Arbitrary diagrams}
%  ____            _
% |  _ \ __ _ _ __| |_
% | |_) / _` | '__| __|
% |  __/ (_| | |  | |_
% |_|   \__,_|_|   \__|

\section{Admissible colourings} \label{sec:adm-amalgams}
%  ___         _   _          
% / __| ___ __| |_(_)___ _ _  
% \__ \/ -_) _|  _| / _ \ ' \ 
% |___/\___\__|\__|_\___/_||_|

In this section we extend the classification results for $\SO{2}$- and $\Spin{2}$-amalgams from Sections \ref{sec:so2amalgams} and \ref{sec:spin2amalgams} to arbitrary diagrams. 

\begin{remark}
Throughout this section,
let $A=(a(i,j))_{i,j\in I}$ be a generalized Cartan matrix over the index set $I$ and let $\Pi$ be the augmented Dynkin diagram with vertex set $V$ induced by $A$ (see Notation~\ref{augmented})
with respect to a labelling $\sigma:I\to V$.
\end{remark}

We have seen in Proposition~\ref{keycorollary} that --- given two vertices $i^\sigma$, $j^\sigma$ of $\Pi$ --- some subtleties related to single versus double covers of the circle group arise in the theory of spin covers of rank two depending on the parities of $a(i,j)$ and $a(j,i)$. To this end we develop a theory of admissible colourings that will help us distinguish the respective vertices of the diagram from one another.

\begin{defn}\label{def:nij-mij}\label{adtypedef}
For $i\neq j\in I$, define
\[
n(i,j):=
\begin{cases}
0, & \text{if } a(i,j) \text{ is even}, \\
1, & \text{if } a(i,j) \text{ is odd}.
\end{cases}
\]
Let $\Pi^\mathrm{adm}$ be the graph on the vertex set $V$ with edge set
\[
\big\{ \{i,j\}^\sigma \in V \times V \mid i \neq j \in I, n(i,j)=n(j,i)=1 \big\}. 
\]
An \Defn{admissible colouring}
of $\Pi$ is a map $\kappa:V\to\{1,2\}$ such that
\begin{enumerate}
\item $\kappa(i^\sigma)=1$ whenever there exists $j \in I \backslash \{ i \}$ with $n(i,j)=0$ and $n(j,i)=1$,
\item the restriction of $\kappa$ to any connected component of the graph $\Pi^\mathrm{adm}$ is a constant map.
\end{enumerate}
\end{defn}

Let $c(\Pi,\kappa)$ be the number of connected components of $\Pi^\mathrm{adm}$ on which $\kappa$ takes the value $2$.
 
An admissible colouring $\kappa$ is \Defn{elementary} if $c(\Pi,\kappa)=1$, i.e., if there exists a unique connected component of the graph $\Pi^\mathrm{adm}$ on which $\kappa$ takes the value $2$.
The admissible colouring $\kappa \equiv 1$ (i.e., the one with $c(\Pi,\kappa)=0$) is called the \Defn{trivial colouring}. An admissible colouring $\kappa$ is \Defn{proper} if every
connected component of $\Pi$ contains a vertex $v$ with $\kappa(v)=2$.

\begin{remark} \label{obstruction}
An elementary admissible colouring is given by assigning the value $2$ to exactly one connected component of $\Pi^\mathrm{adm}$, while all other connected components take value $1$. Therefore, in order to construct all elementary admissible colourings, it suffices to decide which connected components may be assigned the value $2$.

Let $k^\sigma$ be a vertex of $\Pi$. The only obstruction to being able to assign the value $2$ to $k^\sigma$ is being contained in the connected component of $\Pi^{\mathrm{adm}}$ of a vertex that necessarily has to be assigned value $1$. That is, there exists a vertex $i^\sigma$ in the same connected component of $\Pi^{\mathrm{adm}}$ as $k^\sigma$ such that there is a vertex $j^\sigma$ with $n(i,j)=0$ and $n(j,i)=1$.     
\end{remark}

\begin{lemma} \label{lem:max-adm-colouring}
%Let $\Pi$ be a Dynkin diagram with vertex set $V$.
Consider the partial order $\preceq$ on the set of admissible colouring of $\Pi$
where $\kappa\preceq\kappa'$ if $\kappa(v)\leq\kappa'(v)$ for all $v\in V$. 
Then there is a unique
maximal admissible colouring $\kappa_\mathrm{max}$ of $\Pi$ with respect to $\preceq$.
\end{lemma}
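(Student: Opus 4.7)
The proof will proceed by explicitly constructing $\kappa_{\max}$, verifying that it is an admissible colouring, and then showing that it dominates every admissible colouring.

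First I would identify the set of \emph{forced} vertices, namely those $v = i^\sigma \in V$ for which there exists $j \in I \setminus \{i\}$ with $n(i,j) = 0$ and $n(j,i) = 1$. By condition (a) of Definition~\ref{adtypedef}, every admissible colouring must assign the value $1$ to each forced vertex. Combining this with condition (b), every admissible colouring must assign the value $1$ to every vertex that lies in the same connected component of $\Pi^{\mathrm{adm}}$ as some forced vertex.

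Next I would define $\kappa_{\max} : V \to \{1, 2\}$ by
\[
\kappa_{\max}(v) := \begin{cases} 1, & \text{if the connected component of $v$ in $\Pi^{\mathrm{adm}}$ contains a forced vertex,} \\ 2, & \text{otherwise.} \end{cases}
\]
To see that $\kappa_{\max}$ is admissible, note that condition (b) is immediate from the definition, since $\kappa_{\max}$ is by construction constant on each connected component of $\Pi^{\mathrm{adm}}$. For condition (a), observe that every forced vertex $v$ lies in its own connected component (which trivially contains a forced vertex, namely $v$ itself), so $\kappa_{\max}(v) = 1$.

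Finally, I would verify maximality: for any admissible colouring $\kappa$ of $\Pi$ and any $v \in V$, we have $\kappa(v) \leq \kappa_{\max}(v)$. This requires only checking the case $\kappa_{\max}(v) = 1$; then $v$'s connected component in $\Pi^{\mathrm{adm}}$ contains some forced vertex $v'$, so by the opening observation $\kappa(v') = 1$, and condition (b) applied to $\kappa$ forces $\kappa(v) = 1$ as well. Uniqueness of $\kappa_{\max}$ then follows from the general fact that any partially ordered set has at most one maximum. There is no serious obstacle here; the whole argument amounts to unwinding the definitions and observing that the two admissibility conditions propagate the value $1$ in a transparent way, leaving $\kappa_{\max}$ as the unique optimal choice.
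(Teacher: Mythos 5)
Your proof is correct but takes a genuinely different route from the paper. The paper's argument is non-constructive: it shows that the set of admissible colourings is closed under pointwise maximum (for any subset $X$ of admissible colourings, $v \mapsto \max\{\kappa'(v)\mid\kappa'\in X\}$ is again admissible), and then invokes finiteness of $V$, hence of the set of admissible colourings, to conclude that the maximum over \emph{all} of them exists and is the unique maximal element. You instead construct $\kappa_{\max}$ explicitly via the ``forced vertex'' characterisation (value $1$ exactly on the connected components of $\Pi^{\mathrm{adm}}$ containing a vertex $i^\sigma$ with some $j$ satisfying $n(i,j)=0$, $n(j,i)=1$), then verify admissibility and dominance directly. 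Your approach has the advantage of being constructive and in fact recovers what the paper later records separately in Remark~\ref{obstruction} about the obstruction to colouring a component with $2$; the paper's approach is shorter and also gives for free the slightly stronger closure fact (pointwise max of any family of admissible colourings is admissible), which underpins the notion of ``sum'' of colourings in Remark~\ref{combinatoricscolouring}. Both proofs are sound.
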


\begin{proof}
Suppose $X$ is a set of admissible colourings of $\Pi$. Then one readily
checks that $\kappa:V\to\{1,2\}:v\mapsto\max\{\kappa'(v) \mid \kappa'\in
X\}$ is again an admissible colouring, satisfying $\kappa'\preceq\kappa$ for
all $\kappa'\in X$. Since $V$ is finite, so is its set of admissible
colourings, i.e., there exists a maximal element.
\end{proof}

\begin{remark} \label{combinatoricscolouring}
Given two admissible colourings $\kappa_1$, $\kappa_2$ of $\Pi$, define the \Defn{sum} of $\kappa_1$ and $\kappa_2$ as  \[\kappa_1+\kappa_2 : V\to\{1,2\}:v\mapsto\max\{\kappa_i(v) \mid i = 1, 2 \}.\] 
By the preceding discussion, this is again an admissible colouring.
Each non-trivial admissible colouring of a Dynkin diagram is the sum of (finitely many pairwise distinct) elementary colourings. The maximal admissible colouring $\kappa_{\mathrm{max}}$ is the sum of all elementary colourings. 
\end{remark}

\begin{notation}
Throughout this section,
let $\kappa:V\to\{1,2\}$ be an admissible colouring of $\Pi$. For $i\neq j\in I$, set 
\[
 v_{ij}:=v\ :\Leftrightarrow\ \{i,j\}^\sigma\in E_v(\Pi)\ ,
 \qquad
 \kappa_{ij}:=\frac{\kappa(i^\sigma)+\kappa(j^\sigma)}{2}\in \{1,1.5,2\}\ .
\]
Note that $v_{ij}$ is well-defined since the sets $E_v(\Pi)$ for $v\in\{0,1,2,3,\infty\}$ form
a partition of $\binom{V}{2}$.
Moreover, note that $\kappa_{ij}=1.5$ implies $v_{ij}\in\{0,2,\infty\}$ by Definition~\ref{adtypedef}(b). Furthermore,
$\kappa_{ij}=2$ implies $v_{ij}\in\{0,1,3,\infty\}$ by Definition~\ref{adtypedef}(a).
\end{notation}

\begin{remark}\label{orientation}
The labelling $\sigma$ and the colouring $\kappa$ allow one to extend the direction relation on the augmented Dynkin diagram $\Pi$ to a direction relation between all pairs of vertices $u\neq v\in V$ (cf.\ Remark~\ref{longvsshort}): 
\begin{itemize}
\item If $\{u,v\}$ is a directed edge, retain the direction $u \to v$.
\item If $\kappa(u)\neq\kappa(v)$,
then set $u\to v$ whenever $\kappa(u)>\kappa(v)$.
\item For the remaining cases, use the labelling to introduce a direction as follows:
\[
\forall\ \{i,j\}^\sigma \text{ non-directed with } \kappa(i^\sigma)=\kappa(j^\sigma):
    \qquad i^\sigma\to j^\sigma\ :\Leftrightarrow\ i>j\ .
\]
\end{itemize}
This direction relation is called the \Defn{orientation} of $\Pi$ induced by the labelling $\sigma$ and the colouring $\kappa$.
\end{remark}

In the following we will quite freely use the notation introduced in the sections in which we studied the rank two situation, such as Sections \ref{sec:spin-pin}, \ref{sec:7}, \ref{sec:g2}, \ref{sec:bc2}, \ref{sec:rank-2-residues}.

\begin{notation}\label{31}
For $i\neq j\in I$, let $r:=a(i,j)$ and $s:=a(j,i)$.
Define
\[
    G^{r,s}:=\begin{cases}
    \SO{2}\times\SO{2}, & \text{if }rs=0, \\
    \SO{3},             & \text{if }rs=1, \\
    \U{2},              & \text{if }rs=2, \\
    \SO{4},             & \text{if }rs=3, \\
    K^{r,s} = K_1T *_{T} K_2T & \text{if }rs\geq 4, 
    \end{cases}
\]
where in the final case --- as discussed in Remark~\ref{tcentralizes} --- the action of $T$ on $K_i$ depends on the parities of the entries of the generalized Cartan matrix.
Furthermore, define homomorphisms from $\SO{2}$ into $G^{r,s}$ as follows:
\begin{align*}
    \eps_1^{r,s}:=\begin{cases}
    \iota_1, & \text{if }rs=0, \\
    \eps_{12}, & \text{if }rs=1, \\
    \zeta_p,   & \text{if }rs=2, \\
    \eta_p,    & \text{if }rs=3, \\
    \theta_1^{r,s},  & \text{if }rs\geq 4,
    \end{cases}
    \qquad\text{and}\qquad
    \eps_2^{r,s}:=\begin{cases}
    \iota_2, & \text{if }rs=0, \\
    \eps_{23}, & \text{if }rs=1, \\
    \zeta_l,   & \text{if }rs=2, \\
    \eta_l,    & \text{if }rs=3, \\
    \theta_2^{r,s},  & \text{if }rs\geq 4.
    \end{cases}
\end{align*}
Next, define various covering groups of these:
\[
    \widetilde{G}^{r,s,k}:=\begin{cases}
    G^{r,s},                 & \text{if }k=1, \\
    \Spin{2}\times\Spin{2},  & \text{if }rs=0 \text{ and } k>1, \\
    \Spin{3},                & \text{if }rs=1\text{ and } k=2, \\
    \SO{2}\times\SU{2},      & \text{if }rs=2\text{ and } k=1.5, \\
    \Spin{4},                & \text{if }rs=3\text{ and } k=2, \\
    \wt K^{r,s} = \widetilde K_1 \wt{U} *_{\wt{U}} \widetilde K_1 \wt{U} & \text{if }rs\geq 4\text{ and } k>1.
    \end{cases}
\]
Recall from Notation~\ref{nota:Dalpha-Salpha} the isomorphism
\[\psi:\SO{2}\to \Spin{2} : D(\alpha)\mapsto S(\alpha).\]
Using this, define the following homomorphisms from $\Spin{2}$ into $\widetilde{G}^{(r,s,k)}$:
\[
  \tilde\eps_1^{(r,s,k)}:=
  \begin{cases}
    \eps_1^{r,s}\circ\psi^{-1}
                     & \text{if }k=1 \\
    \tilde\iota_1,   & \text{if }rs=0\text{ and } k>1, \\
    \tilde\eps_{12}, & \text{if }rs=1\text{ and } k=2, \\
    \tilde\zeta_p,   & \text{if }rs=2\text{ and } k=1.5, \\
    \tilde\eta_p,    & \text{if }rs=3\text{ and } k=2, \\
    \tilde\theta_1^{r,s},  & \text{if }rs\geq 4\text{ and } k>1,
  \end{cases}
%\]
\quad\text{and}\quad
%\[
  \tilde\eps_2^{r,s,k}:=
  \begin{cases}
    \eps_2^{r,s}\circ\psi^{-1}
                     & \text{if }k=1 \\
    \tilde\iota_2,   & \text{if }rs=0\text{ and } k>1, \\
    \tilde\eps_{23}, & \text{if }rs=1\text{ and } k=2, \\
    \tilde\zeta_l,   & \text{if }rs=2\text{ and } k=1.5, \\
    \tilde\eta_l,    & \text{if }rs=3\text{ and } k=2, \\
    \tilde\theta_2^{r,s},  & \text{if }rs\geq 4\text{ and } k>1.
  \end{cases}
\]
Finally, define epimorphisms \[\rho^{r,s,k} : \widetilde{G}^{r,s,k} \to G^{r,s}\] via
\begin{align*}
    \rho^{r,s,1}&:=\id,
&
    \rho^{r,s,1.5}&:=\begin{cases}
    \rho_2\times\psi^{-1},       & \text{if }rs=0\text{ and } i^\sigma\to j^\sigma, \\
    \psi^{-1} \times \rho_2,       & \text{if }rs=0\text{ and } i^\sigma\leftarrow j^\sigma, \\
    \widehat\rho, & \text{if }rs=2, \\
    \hat{\hat\rho}, & \text{if }rs\geq 4,
    \end{cases}
&
    \rho^{r,s,2}&:=\begin{cases}
    \rho_2\times\rho_2,       & \text{if }rs=0, \\
    \rho_3,       & \text{if }rs=1, \\
    \rho_4,       & \text{if }rs=3, \\
    \hat{\hat\rho}, & \text{if }rs\geq 4.
    \end{cases}
\end{align*}
\end{notation}

\begin{definition}\label{lotsofdef}
%Let $\Pi$ be a diagram with labelling $\sigma:I\to V$
%and admissible colouring $\kappa:V\to\{1,2\}$.
\begin{enumerate}
\item
An \Defn{$\SO{2}$-amalgam with respect to $\Pi$ and $\sigma$} is an amalgam
\[\AAA=\{ G_{ij}, \phi_{ij}^i, \mid i\neq j\in I \}\]
such that for all $i\neq j\in I$, we have
\[
    G_{ij}=G^{a(i,j),a(j,i)}
    \quad\text{ and }\quad
    \phi_{ij}^i\big(\SO{2}\big)=
    \begin{cases}
    \eps_1^{a(i,j),a(j,i)}\big(\SO{2}\big), & \text{if }i^\sigma \to j^\sigma, \\
    \eps_2^{a(i,j),a(j,i)}\big(\SO{2}\big), & \text{if }i^\sigma\leftarrow j^\sigma.
    \end{cases}
\]
\item
The \Defn{standard $\SO{2}$-amalgam with respect to $\Pi$ and $\sigma$}
is the (continuous) $\SO{2}$-amalgam
\[\AAA\big(\Pi,\sigma,\SO{2}\big):=\{ G_{ij}, \phi_{ij}^i, \mid i\neq j\in I \}\]
with respect to $\Pi$ and $\sigma$
such that for all $i\neq j\in I$, we have
\[
    G_{ij}=G^{a(i,j),a(j,i))}
    \quad\text{ and }\quad
    \phi_{ij}^i=\begin{cases}
    \eps_1^{a(i,j),a(j,i)}, & \text{if }i^\sigma \to j^\sigma, \\
    \eps_2^{a(i,j),a(j,i)}, & \text{if }i^\sigma\leftarrow j^\sigma.
    \end{cases}
\]
\item
A \Defn{$\Spin{2}$-amalgam with respect to $\Pi$, $\sigma$ and $\kappa$} is an amalgam
\[\AAA=\{ G_{ij}, \phi_{ij}^i, \mid i\neq j\in I \}\]
such that for all $i\neq j\in I$, we have
\[
    G_{ij}=\widetilde{G}^{a(i,j),a(j,i),\kappa_{ij}}
    \quad\text{ and }\quad
    \phi_{ij}^i\big(\Spin{2}\big)=
    \begin{cases}
    \tilde\eps_1^{a(i,j),a(j,i),\kappa_{ij}}\big(\Spin{2}\big), & \text{if }i^\sigma \to j^\sigma, \\
    \tilde\eps_2^{a(i,j),a(j,i),\kappa_{ij}}\big(\Spin{2}\big), & \text{if }i^\sigma\leftarrow j^\sigma.
    \end{cases}
\]
\item
The \Defn{standard $\Spin{2}$-amalgam with respect to $\Pi$, $\sigma$ and $\kappa$}
is the (continuous) $\Spin{2}$-amalgam
\[\AAA\big(\Pi,\sigma,\kappa,\Spin{2}\big):=\{ G_{ij}, \phi_{ij}^i, \mid i\neq j\in I \}\]
with respect to $\Pi$ and $\sigma$
such that for all $i\neq j\in I$, we have
\[
    G_{ij}=\widetilde{G}^{a(i,j),a(j,i),\kappa_{ij}}
    \quad\text{ and }\quad
    \phi_{ij}^i=\begin{cases}
    \tilde\eps_1^{a(i,j),a(j,i),\kappa_{ij}}, & \text{if }i^\sigma \to j^\sigma, \\
    \tilde\eps_2^{a(i,j),a(j,i),\kappa_{ij}}, & \text{if }i^\sigma\leftarrow j^\sigma.
    \end{cases}
\]
\item
Let
$\AAA=\{ G_{ij}, \phi_{ij}^i \mid i\neq j\in I\}$ be an $\SO{2}$-amalgam
with respect to $\Pi$ and $\sigma$. Given $i\neq j\in I$,
there is $\tau_{ij}^i\in \Aut(\SO{2})$ such that
\begin{align*}
\phi_{ij}^i=\begin{cases}
\eps_1^{a(i,j),a(j,i),\kappa_{ij}}\circ \tau_{ij}^i,&\text{if }i^\sigma \to j^\sigma, \\
\eps_2^{a(i,j),a(j,i),\kappa_{ij}}\circ \tau_{ij}^i,&\text{if }i^\sigma\leftarrow j^\sigma.
\end{cases}
\end{align*}
Define $\tilde\tau_{ij}^i\in\Aut(\Spin{2})$ as in Lemma~\ref{prop:lift-aut-so2}
and set
\[
    \widetilde{G}_{ij}:=\widetilde{G}^{a(i,j),a(j,i),\kappa_{ij}}
    \quad\text{ and }\quad
    \tilde\phi_{ij}^i:=
    \begin{cases}
    \tilde\eps_1^{a(i,j),a(j,i),\kappa_{ij}}\circ \tilde\tau_{ij}^i,&\text{if }i^\sigma \to j^\sigma, \\
    \tilde\eps_2^{a(i,j),a(j,i),\kappa_{ij}}\circ \tilde\tau_{ij}^i,&\text{if }i^\sigma\leftarrow j^\sigma.
    \end{cases}
\]
Then
$\wAAA:=\{ \widetilde{G}_{ij}, \tilde\phi_{ij}^i \mid i\neq j\in I\}$
is the \Defn{induced $\Spin{2}$-amalgam with respect to $\Pi$, $\sigma$ and $\kappa$}.
\end{enumerate}
\end{definition}

\begin{remark} \label{rem:spin-ama-to-so-ama-and-back-2}
In analogy to Remark~\ref{rem:spin-ama-to-so-ama-and-back}, the construction in Definition~\ref{lotsofdef}(e) is symmetric and can be applied backwards: Starting with a $\Spin{2}$-amalgam
$\hat\AAA$, one can construct an $\SO{2}$-amalgam $\AAA$ such that $\hat\AAA=\wAAA$. As before, denote the corresponding epimorphism of amalgams by \[\pi_{\Pi,\sigma,\kappa} = \{ \id, \rho^i, \rho_{ij} \} : \AAA\big(\Pi,\sigma,\kappa,\Spin{2}\big) \to \AAA\big(\Pi,\sigma,\SO{2}\big),\] where $\rho_{ij} = \rho^{a(i,j),a(j,i),\kappa_{ij}}$ and $\rho^i = \psi^{-1}$ for all $i \in I$ with $\kappa(i^\sigma)=1$ and $\rho^i = \rho_2$ for all $i \in I$ with  $\kappa(i^\sigma)=2$. 
\end{remark}

\begin{proposition}\label{prop:lift-ama-iso-adm}
%Let $\Pi$ be a diagram with labelling $\sigma:I\to V$ and admissible colouring $\kappa : V \to \{ 1, 2 \}$,
Let
$\AAA_1$ and $\AAA_2$ be $\SO{2}$-amalgams with respect to $\Pi$ and
$\sigma$, and let $\alpha=\{ \pi, \alpha_{ij}\mid i\neq j\in I\}:\AAA_1\to \AAA_2$
be a colouring-preserving isomorphism of amalgams, i.e., for all $i \in I$ one has $\kappa(i^\sigma) = \kappa(i^{\pi\sigma})$. Then there is a unique isomorphism
$\tilde\alpha=\{ \pi, \tilde\alpha_{ij}\mid i\neq j\in I\}:\wAAA_1\to \wAAA_2$
such that for all $i \neq j \in I$, one has
\[\rho_{\pi(i)\pi(j)}\circ \tilde\alpha_{ij}=\alpha_{ij}\circ \rho_{ij}.\]
\end{proposition}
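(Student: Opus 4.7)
The plan is to mirror the strategy of Proposition~\ref{prop:lift-ama-iso-sl}, but to argue case by case according to the edge type $v_{ij}$ and the colouring value $\kappa_{ij}$, using the relevant lifting statements that have been collected in the earlier sections. As before, since both amalgams are built from groups and connecting homomorphisms that depend only on $v_{ij}$ and $\kappa_{ij}$, the hypothesis that $\alpha$ is colouring-preserving combined with $G_{ij}\cong H_{\pi(i)\pi(j)}$ guarantees that $v_{ij}=v_{\pi(i)\pi(j)}$ and $\kappa_{ij}=\kappa_{\pi(i)\pi(j)}$. Hence after relabeling via $\pi$ and identifying $G_{ij}$ with its image under $\alpha_{ij}$, one may reduce to the case $\pi=\id$, $G_{ij}=H_{ij}$, and $\alpha_{ij}\in\Aut(G_{ij})$ with $\alpha_{ij}\circ\phi_{ij}^i=\psi_{ij}^i$.

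For each pair $i\neq j$ I would define $\tilde\alpha_{ij}\in\Aut(\widetilde G^{a(i,j),a(j,i),\kappa_{ij}})$ by invoking the appropriate lifting statement:
\begin{itemize}
\item If $\kappa_{ij}=1$, then $\widetilde G_{ij}=G_{ij}$ and $\rho_{ij}=\id$, so simply set $\tilde\alpha_{ij}:=\alpha_{ij}$.
\item If $v_{ij}=0$ and $\kappa_{ij}>1$, use Corollary~\ref{prop:lift-aut-so2xso2} to lift $\alpha_{ij}\in\Aut(\SO{2}\times\SO{2})$ to $\tilde\alpha_{ij}\in\Aut(\Spin{2}\times\Spin{2})$.
\item If $v_{ij}\in\{1,3\}$ and $\kappa_{ij}=2$, use Proposition~\ref{prop:lift-aut-soN} to lift $\alpha_{ij}\in\Aut(\SO{n})$ to $\tilde\alpha_{ij}\in\Aut(\Spin{n})$ for $n=3$ resp.\ $n=4$.
\item If $v_{ij}=2$ and $\kappa_{ij}\in\{1.5,2\}$, use Proposition~\ref{prop:lift-aut-B2} to lift $\alpha_{ij}\in\Aut(\U{2})$ to $\tilde\alpha_{ij}\in\Aut(\SO{2}\times\SU{2})$.
\item If $v_{ij}=\infty$ and $\kappa_{ij}>1$, use Proposition~\ref{prop:controlautoinfty} to lift $\alpha_{ij}\in\Aut(K^{r,s})$ to $\tilde\alpha_{ij}\in\Aut(\widetilde K^{r,s})$.
\end{itemize}
In each case the defining property $\rho_{ij}\circ\tilde\alpha_{ij}=\alpha_{ij}\circ\rho_{ij}$ holds by construction, and uniqueness of $\tilde\alpha_{ij}$ is part of each lifting statement.

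It then remains to verify compatibility with the amalgam structure, namely $\tilde\alpha_{ij}\circ\tilde\phi_{ij}^i=\tilde\psi_{ij}^i$. The argument is the one from Proposition~\ref{prop:lift-ama-iso-sl}: starting from the identity $\alpha_{ij}\circ\phi_{ij}^i\circ(\psi_{ij}^i)^{-1}=\id$ on the $\SO{2}$-level, pre- and post-composing with $\rho_{ij}$ and exploiting the appropriate analogue of Lemma~\ref{lem:ama-embed-and-rho-commute-An} (which in the non-simply-laced setting is supplied by Lemma~\ref{lem:ama-embed-and-rho-commute-G2} for $\mathrm{G}_2$-edges, Lemma~\ref{lem:ama-embed-and-rho-commute-B2} for $\mathrm{C}_2$-edges, and the commutative diagram of Proposition~\ref{keycorollary} in the $\infty$-case) yields
\[\rho_{ij}\circ\tilde\alpha_{ij}\circ\tilde\phi_{ij}^i\circ(\tilde\psi_{ij}^i)^{-1}=\rho_{ij},\]
so the uniqueness clause in each lifting proposition forces $\tilde\alpha_{ij}\circ\tilde\phi_{ij}^i\circ(\tilde\psi_{ij}^i)^{-1}=\id$, as required.

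The main obstacle is the $\mathrm{C}_2$-case (and similarly the mixed-parity $\infty$-case), where $\rho_{ij}=\widehat\rho$ is \emph{not} a double cover of $\SO{2}$ on one of the two embedded circle groups, so the naive compatibility argument from the simply-laced setting breaks down. This is precisely the phenomenon that makes the colouring-preservation hypothesis indispensable: it guarantees that the relevant covering $\widetilde G^{r,s,\kappa_{ij}}$ on each side is the same, so that the subtleties of Lemma~\ref{lem:ama-embed-and-rho-commute-B2} are mirrored identically in $\AAA_1$ and $\AAA_2$, allowing the cancellation that yields compatibility. Once all cases are checked, the system $\tilde\alpha=\{\pi,\tilde\alpha_{ij}\mid i\neq j\in I\}$ is the desired isomorphism, and its uniqueness follows by assembling the uniqueness clauses from each invoked lifting statement.
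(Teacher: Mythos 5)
Your proposal matches the paper's own argument in structure and substance: both proceed case-by-case on the pair $(v_{ij},\kappa_{ij})$, invoke the same collection of lifting statements (Corollary~\ref{prop:lift-aut-so2xso2}, Proposition~\ref{prop:lift-aut-soN}, Proposition~\ref{prop:lift-aut-B2}, Proposition~\ref{prop:controlautoinfty}), and verify compatibility with the amalgam structure by the same ``compose with $\rho_{ij}$, commute, cancel'' computation from Proposition~\ref{prop:lift-ama-iso-sl}. One small point in your favour: for the $\mathrm{C}_2$ case you correctly cite Lemma~\ref{lem:ama-embed-and-rho-commute-B2} as the relevant commutation identity, whereas the paper's proof cites Proposition~\ref{surjB2} at that step, which is about the epimorphism $\tilde\zeta:\SO{2}\times\SU{2}\to\Spin{3}$ and appears to be a slip for the lemma you name.
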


\begin{proof}
This result generalizes Proposition~\ref{prop:lift-ama-iso-sl}. Its proof relies on the references
\begin{enumerate}
\item Lemma~\ref{lem:ama-embed-and-rho-commute-An},
\item Proposition~\ref{prop:lift-aut-soN}, resp.\ Corollary~\ref{prop:lift-aut-so2xso2}.
\end{enumerate}
We indicate how one may adapt the proof of \ref{prop:lift-ama-iso-sl} in order to deal with the general situation.
The cases $(k_{ij},v_{ij})=(2,0)$ and $(k_{ij},v_{ij})=(2,1)$ are covered by Proposition~\ref{prop:lift-ama-iso-sl}.
For $(k_{ij},v_{ij})=(2,3)$ the proof is virtually identical to the proof for $(k_{ij},v_{ij})=(2,1)$,
except that one uses $\rho_4$ instead of $\rho_3$, and
Lemma~\ref{lem:ama-embed-and-rho-commute-G2}
instead of Lemma~\ref{lem:ama-embed-and-rho-commute-An}.
Similarly, for $(k_{ij},v_{ij})=(2,\infty)$ we use $\hat{\hat\rho}$ instead of $\rho_3$, Proposition~\ref{keycorollary} instead of Lemma~\ref{lem:ama-embed-and-rho-commute-An} and Proposition~\ref{prop:controlautoinfty} instead of Proposition~\ref{prop:lift-aut-soN}.
For $k_{ij}=1$ there is nothing to show.

The case $(k_{ij},v_{ij})=(1.5,0)$ is very similar to $(k_{ij},v_{ij})=(2,0)$ (Case II of the proof of Proposition~\ref{prop:lift-ama-iso-sl}) and uses straightforward adaptions of Lemma~\ref{lem:ama-embed-and-rho-commute-An} and Corollary~\ref{prop:lift-aut-so2xso2}.

Finally, the cases $(k_{ij},v_{ij})=(1.5,2)$ and $(k_{ij},v_{ij})=(1.5,\infty)$ are again similar to the case $(k_{ij},v_{ij})=(2,1)$ (Case I of the proof of Proposition~\ref{prop:lift-ama-iso-sl}):
replace $\rho_3$ by $\widehat\rho$, resp.\ $\hat{\hat\rho}$, Proposition~\ref{prop:lift-aut-soN}
by Proposition~\ref{prop:lift-aut-B2}, resp.\ Proposition~\ref{prop:controlautoinfty} and Lemma~\ref{lem:ama-embed-and-rho-commute-An} by Proposition~\ref{surjB2}, resp.\ Proposition~\ref{keycorollary}.
\end{proof}

\begin{proposition} \label{prop:adm-ama-labelling-irrelevant}
%Let $\Pi$ be a diagram with labellings $\sigma_1$, $\sigma_2$ and admissible colouring $\kappa$.
Let $\sigma_1$, $\sigma_2$ be two labellings of $\Pi$.
Then the following hold:
\begin{enumerate}
\item
$\AAA\big(\Pi,\sigma_1,\SO{2}\big)\cong \AAA\big(\Pi,\sigma_2,\SO{2}\big)$.
\item
$\AAA\big(\Pi,\sigma_1,\kappa,\Spin{2}\big)\cong \AAA\big(\Pi,\sigma_2,\kappa,\Spin{2}\big)$.
\end{enumerate}
\end{proposition}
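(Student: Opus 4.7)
The plan is to mimic the argument of Consequence~\ref{14}, but with considerably more bookkeeping to accommodate edges of every valency and the extra structure coming from the admissible colouring. Throughout the proof I shall choose the permutation
\[
\pi := \sigma_2^{-1}\circ\sigma_1 \in \Sym(I),
\]
which satisfies $\pi(i)^{\sigma_2}=i^{\sigma_1}$ for all $i\in I$. This ensures that for every $i\neq j\in I$ the unordered pair of vertices $\{i^{\sigma_1},j^{\sigma_1}\}$ coincides with $\{\pi(i)^{\sigma_2},\pi(j)^{\sigma_2}\}$. Since the augmented diagram $\Pi$ is fixed, the resulting Cartan matrix entries satisfy $a^{(1)}(i,j)=a^{(2)}(\pi(i),\pi(j))$ (the entry $-a(i,j)$ is encoded in $\Pi$ by the label near vertex $i^\sigma$); in particular $G_{ij}=G_{\pi(i)\pi(j)}$ and, in the $\Spin{2}$-case, $\widetilde{G}_{ij}=\widetilde{G}_{\pi(i)\pi(j)}$ because $\kappa(i^{\sigma_1})=\kappa(\pi(i)^{\sigma_2})$.

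For part (a) I would compare, for each pair $i\neq j\in I$, the orientation of $\{i^{\sigma_1},j^{\sigma_1}\}$ determined by $\sigma_1$ with the orientation of the same pair of vertices determined by $\sigma_2$ (as in Remark~\ref{orientation}). When the edge is $\Pi$-directed, or when $\kappa(i^{\sigma_1})\neq\kappa(j^{\sigma_1})$, the two orientations necessarily agree, because the orientation does not depend on the labelling in these cases; define $\alpha_{ij}:=\id_{G_{ij}}$. The two orientations can only disagree in the remaining case of a non-directed edge with $\kappa(i^{\sigma_1})=\kappa(j^{\sigma_1})$, and only when exactly one of the inequalities $i<j$ and $\pi(i)<\pi(j)$ holds. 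In that event I need an automorphism of $G_{ij}$ that interchanges the images of $\eps_1^{a(i,j),a(j,i)}$ and $\eps_2^{a(i,j),a(j,i)}$. The available swaps are:
\begin{itemize}
\item $G_{ij}=\SO{2}\times\SO{2}$ (non-edge): use $(x,y)\mapsto(y,x)$;
\item $G_{ij}=\SO{3}$ (simple edge): use the automorphism $\gamma_D$ from Lemma~\ref{10};
\item $G_{ij}=K^{r,s}$ ($\infty$-edge with $r\equiv s\pmod 2$): use the swap automorphism of the free amalgamated product $K_1T*_T K_2T$ of Lemma~\ref{kisfreeamalgam} induced by interchanging $K_1\leftrightarrow K_2$ and $t_1\leftrightarrow t_2$, which is a well-defined automorphism precisely because the actions of $T$ on $K_1,K_2$ described in Remark~\ref{tcentralizes} depend only on the parities of $r,s$ and coincide when $r\equiv s\pmod 2$.
\end{itemize}
Having made these choices, I would check that $\alpha:=\{\pi,\alpha_{ij}\mid i\neq j\in I\}$ satisfies $\alpha_{ij}\circ\phi_{ij}^i=\phi_{\pi(i)\pi(j)}^{\pi(i)}$ for both $i$ and $j$, dividing into the Case~A (identity) and Case~B (swap) sub-cases above; this verification is routine once the four edge-types are handled separately.

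Part (b) then follows from Proposition~\ref{prop:lift-ama-iso-adm}. Indeed the isomorphism $\alpha$ built in part (a) is colouring-preserving in the sense of that proposition, because $\kappa(i^{\sigma_1})=\kappa(\pi(i)^{\sigma_2})$ for every $i\in I$ by construction of $\pi$. Applying Proposition~\ref{prop:lift-ama-iso-adm} lifts $\alpha$ uniquely to an isomorphism $\tilde\alpha:\AAA(\Pi,\sigma_1,\kappa,\Spin 2)\to\AAA(\Pi,\sigma_2,\kappa,\Spin 2)$ of $\Spin{2}$-amalgams, as required.

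The main obstacle is the $\infty$-edge sub-case of Case~B: one has to certify that the naive swap of the two rank-one factors of $K^{r,s}=K_1T*_T K_2T$ is really an automorphism even when $r\neq s$. This is where the reduction of the isomorphism type of $K^{r,s}$ to the parities of $r,s$ carried out in Section~\ref{sec:rank-2-residues} is essential, and any potential asymmetry between $K_1$ and $K_2$ would invalidate the argument; fortunately, under the hypothesis $r\equiv s\pmod 2$ imposed by the non-directedness of the $\infty$-edge, the two factors $K_1T$ and $K_2T$ together with the identifications over $T$ are symmetric and the swap descends to an honest automorphism of the amalgamated product.
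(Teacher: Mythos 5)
Your proof follows the same route as the paper's: take $\pi=\sigma_2^{-1}\circ\sigma_1$, define $\alpha_{ij}$ by cases according to whether the orientation of the pair $\{i^{\sigma_1},j^{\sigma_1}\}$ from Remark~\ref{orientation} depends on the labelling, and deduce part (b) from part (a) via Proposition~\ref{prop:lift-ama-iso-adm}, using that $\pi$ is colouring-preserving. In one spot you are actually more careful than the published sketch: you put $\alpha_{ij}=\id$ whenever the pair is $\Pi$-directed or differently coloured, whereas the paper's proof sets $\alpha_{ij}=\id$ when ``$a(i,j)$ and $a(j,i)$ have different parities or $\kappa(i^\sigma)\neq\kappa(j^\sigma)$'' --- a condition that misses triple ($\mathrm{G}_2$) edges, where $\{a(i,j),a(j,i)\}=\{-1,-3\}$ have \emph{equal} parity yet the edge is directed, hence labelling-independent, and no swap should (or could, since no $\SO 4$-analogue of Lemma~\ref{10} is provided) be applied; your ``$\Pi$-directed'' formulation handles this correctly. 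Your justification of the swap automorphism of $K_1T*_TK_2T$ for same-parity $\infty$-edges via Remark~\ref{tcentralizes} matches the intent of the paper's sketch, and for (b) you might just add the paper's explicit remark that $\AAA\big(\Pi,\sigma_i,\kappa,\Spin 2\big)$ is precisely the $\Spin 2$-amalgam induced by $\AAA\big(\Pi,\sigma_i,\SO 2\big)$, which is what lets Proposition~\ref{prop:lift-ama-iso-adm} land on the two standard $\Spin 2$-amalgams.
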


\begin{proof}
Assertion (a) follows by generalizing the proof of Consequence~\ref{14},
and setting $\alpha_{ij}:=\id_{G_{ij}}$ whenever
$a(i,j)$ and $a(j,i)$ have different parities or $\kappa(i^\sigma) \neq \kappa(j^\sigma)$; otherwise, set $\alpha_{ij}:=\id_{G_{ij}}$, if $\pi$ preserves the order relation between $i$ and $j$, and define $\alpha_{ij}$ via $K_1 \to K_2 : D(\phi) \mapsto D(\phi)$ and $K_2 \to K_1 : D(\phi) \mapsto D(\phi)$ in analogy to Lemma~\ref{10}, if $\pi$ switches the order relation between $i$ and $j$.
Observe that the permutation $\pi \in \Sym(I)$ with $\pi\sigma_1=\sigma_2$ preserves $\kappa$, as for $i_1, i_2 \in I$ with $i_1^{\sigma_1}=i_2^{\sigma_2}$ certainly $\kappa(i_1^{\sigma_1})=\kappa(i_2^{\sigma_2})$.
Therefore, since $\AAA\big(\Pi,\sigma_i,\SO{2}\big)$ induces $\AAA\big(\Pi,\sigma_i,\kappa,\Spin{2}\big)$ for $i=1,2$, assertion (b) is an immediate consequence of (a) and Proposition~\ref{prop:lift-ama-iso-adm}.
\end{proof}

We now generalize Definitions \ref{def:std-ama-SO2} and \ref{def:std-ama-Spin2}.
\begin{definition}\label{def:std-ama-adm}
%For an augmented diagram $\Pi$ with labelling $\sigma$ and admissible colouring $\kappa$,
We write
$\AAA\big(\Pi,\SO{2}\big)$ to denote the isomorphism type of $\AAA\big(\Pi,\sigma,\SO{2}\big)$ and $\AAA\big(\Pi,\kappa,\Spin{2}\big)$ to denote the isomorphism type of $\AAA\big(\Pi,\sigma,\kappa,\Spin{2}\big)$. By slight abuse of notation, we also denote any representative of the respective isomorphism types by these symbols. 
They are called the \Defn{standard $\SO{2}$-amalgam with respect to $\Pi$}, resp.\ the \Defn{standard $\Spin{2}$-amalgam with respect to $\Pi$ and $\kappa$}.
Accordingly, we denote the epimorphism of amalgams from Remark~\ref{rem:spin-ama-to-so-ama-and-back-2} by \[\pi_{\Pi,\kappa} : \AAA\big(\Pi,\kappa,\Spin{2}\big) \to \AAA\big(\Pi,\SO{2}\big).\] 
\end{definition}

Theorems \ref{thm:uniqueness-so-sl} and \ref{thm:uniqueness-spin-sl} generalize from simply laced diagram to arbitrary admissible diagrams.
\begin{theorem} \label{thm:uniquenessadmissible}
%Let $\Pi$ be a diagram with labelling $\sigma:I\to V$ and admissible colouring $\kappa : V \to \{ 1, 2\}$. Then
The following hold:
\begin{enumerate}
\item Any continuous $\SO{2}$-amalgam $\AAA$ with respect to $\Pi$ and $\sigma$ is isomorphic to the standard amalgam $\AAA\big(\Pi,\SO{2}\big)$.
\item Any continuous $\Spin{2}$-amalgam $\wAAA$ with respect to $\Pi$, $\sigma$ and $\kappa$ is isomorphic to the standard amalgam $\AAA\big(\Pi,\kappa,\Spin{2}\big)$.
\end{enumerate}
\end{theorem}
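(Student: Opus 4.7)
The plan is to prove part (a) directly, by generalizing the proof of Theorem~\ref{thm:uniqueness-so-sl}, and then to deduce part (b) from part (a) by invoking Proposition~\ref{prop:lift-ama-iso-adm}.

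For part (a), I would fix a representative of the standard amalgam $\ol{\AAA}:=\AAA\big(\Pi,\sigma,\SO{2}\big)$ and construct an isomorphism $\alpha=\{\pi=\id_I,\alpha_{ij}\mid i\neq j\in I\}:\AAA\to\ol{\AAA}$ pair by pair. For each $i\neq j\in I$, continuity of $\phi_{ij}^i$ and the fact that the only continuous automorphisms of $\SO{2}$ are $\id$ and $\inv$ (cf.\ Remark~\ref{rem:continuous}) imply that there exist $\tau_{ij}^i,\tau_{ij}^j\in\{\id,\inv\}$ satisfying $\phi_{ij}^i=\eps_k^{a(i,j),a(j,i)}\circ\tau_{ij}^i$ for $k\in\{1,2\}$ determined by the orientation. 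The task is therefore to produce, for each rank two group $G^{r,s}$, automorphisms that undo the inversion on either embedded circle factor independently. For $G^{r,s}=\SO{3}$ this is Lemma~\ref{lem:rank1-inv-A2}; for $G^{r,s}=\U{2}$ it is Lemma~\ref{lem:rank1-inv-B2}; for $G^{r,s}=\SO{4}$ it is Lemma~\ref{lem:rank1-inv-G2}; for $G^{r,s}=\SO{2}\times\SO{2}$ one takes $\inv\times\id$ and $\id\times\inv$. The only case needing a new argument is $G^{r,s}=K^{r,s}\cong K_1T*_TK_2T$: here I would verify that inverting $K_i$ while fixing $K_jT$ (for $\{i,j\}=\{1,2\}$) yields a well-defined automorphism of the amalgamated product, using that $T\cap K_i$ is a subgroup of order two in $K_i\cong\SO{2}$ and hence pointwise fixed by inversion, and that the action of $T$ on $K_i$ described in Remark~\ref{tcentralizes} commutes with inversion. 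Composing an appropriate choice from $\{\id, \gamma_B, \gamma_C, \gamma_B\circ\gamma_C\}$ for each $G_{ij}$ then yields the desired $\alpha_{ij}$, exactly as in the proof of Theorem~\ref{thm:uniqueness-so-sl}.

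For part (b), let $\wAAA$ be a continuous $\Spin{2}$-amalgam with respect to $\Pi$, $\sigma$, $\kappa$. By the symmetric version of the construction in Definition~\ref{lotsofdef}(e) (cf.\ Remark~\ref{rem:spin-ama-to-so-ama-and-back-2}), $\wAAA$ arises as the induced $\Spin{2}$-amalgam of a continuous $\SO{2}$-amalgam $\AAA$ with respect to $\Pi$ and $\sigma$; similarly, $\AAA\big(\Pi,\kappa,\Spin{2}\big)$ is induced by $\AAA\big(\Pi,\SO{2}\big)$. By part (a), there is an isomorphism $\alpha:\AAA\to\AAA\big(\Pi,\SO{2}\big)$, and since both amalgams refer to the same colouring $\kappa$, the permutation $\pi=\id_I$ trivially preserves $\kappa$. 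Proposition~\ref{prop:lift-ama-iso-adm} then lifts $\alpha$ uniquely to an isomorphism $\tilde\alpha:\wAAA\to\AAA\big(\Pi,\kappa,\Spin{2}\big)$.

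The principal technical obstacle will be the construction of the inversion-undoing automorphism of $K^{r,s}$ in the non-spherical rank two case: although the structure $K_1T*_TK_2T$ makes such an automorphism plausible, one has to check carefully that the definition is compatible with the defining relations of the amalgamated product, in particular that the action of $T$ on each $K_i$ commutes with the chosen inversion, so that the map extends consistently across the amalgamation. Once this is verified, the general case glues together in exactly the same combinatorial fashion as the simply laced one.
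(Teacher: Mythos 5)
Your proposal is correct and matches the paper's proof essentially verbatim: part (a) generalizes the argument of Consequence~\ref{cons:std-ams-SO2-iso} using Lemmas~\ref{lem:rank1-inv-A2}, \ref{lem:rank1-inv-B2}, \ref{lem:rank1-inv-G2} for the spherical rank two cases (with $\inv\times\id$, $\id\times\inv$ when $rs=0$) and an inversion-undoing automorphism of $K^{r,s}=K_1T*_TK_2T$ when $rs\geq 4$; part (b) pulls back $\wAAA$ to a continuous $\SO{2}$-amalgam via Remark~\ref{rem:spin-ama-to-so-ama-and-back-2} and lifts the resulting isomorphism with Proposition~\ref{prop:lift-ama-iso-adm}. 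The only place you go beyond the paper is in spelling out why inversion on $K_i$ extends compatibly across the amalgamated product $K_1T*_TK_2T$ (via $t_i^2=1$ and commutativity of inversion with the $T$-action on an abelian group), a verification the paper leaves implicit; this is a welcome elaboration rather than a deviation.
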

\begin{proof}
\begin{enumerate}
\item
The following is an adaption of the proof of Consequence~\ref{cons:std-ams-SO2-iso},
 which will imply the claim via Proposition~\ref{prop:adm-ama-labelling-irrelevant}.
The only continuous automorphisms of the circle group $\SO{2}$
are $\id$ and the inversion $\inv$.
Since $\AAA$ continuous by hypothesis, for all $i \neq j \in I$,
if $i^\sigma\to j^\sigma$, one has
\begin{align*}
\phi_{ij}^i\in \{ \eps_1^{a(i,j),a(j,i)}, \eps_1^{a(i,j),a(j,i)}\circ \inv\}\ , &&
\phi_{ij}^j\in \{ \eps_2^{a(i,j),a(j,i)}, \eps_2^{a(i,j),a(j,i)}\circ \inv\}\ .
\end{align*}
For $m=1,2,3$ respectively, set $\gamma_B$, $\gamma_C$ as in
Lemma~\ref{lem:rank1-inv-A2}, \ref{lem:rank1-inv-B2} or \ref{lem:rank1-inv-G2}, respectively.
If $m=0$, then set $\gamma_B=\inv\times\id$ and $\gamma_C=\id\times\inv$. If $m=\infty$, then set $\gamma_B$ to be the automorphism induced by the inversion on $K_1$ and the identity on $K_2$ and $\gamma_C$ to be the automorphism induced by the identity on $K_1$ and the inversion on $K_2$.
Using this, for $i^\sigma\to j^\sigma$ let
\[\alpha_{ij}:=\begin{cases}
\id_{G_{ij}}, &\text{if }\phi_{ij}^i=\eps_1^{a(i,j),a(j,i)},\hphantom{{}\circ\inv}\ \phi_{ij}^j=\eps_2^{a(i,j),a(j,i)}, \\
\gamma_B,     &\text{if }\phi_{ij}^i=\eps_1^{a(i,j),a(j,i)}\circ\inv,\ \phi_{ij}^j=\eps_2^{a(i,j),a(j,i)}, \\
\gamma_C,     &\text{if }\phi_{ij}^i=\eps_1^{a(i,j),a(j,i)},\hphantom{{}\circ\inv}\  \phi_{ij}^j=\eps_2^{a(i,j),a(j,i)}\circ \inv, \\
\gamma_B\circ \gamma_C,
              &\text{if }\phi_{ij}^i=\eps_1^{a(i,j),a(j,i)}\circ\inv,\ \phi_{ij}^j=\eps_2^{a(i,j),a(j,i)}\circ \inv.
\end{cases}\]
Then the system
$\alpha:=\{\pi, \alpha_{ij} \mid i\neq j\in I\}:\AAA\to \AAA\big(\Pi,\sigma,\SO{2}\big)$
is an isomorphism of amalgams.
\item
Let $\AAA$ be the continuous $\SO{2}$-amalgam that induces $\wAAA$ (cf.\ Remark~\ref{rem:spin-ama-to-so-ama-and-back-2}).
Assertion (a) implies $\AAA\cong \AAA\big(\Pi,\SO{2}\big)$.
Proposition~\ref{prop:lift-ama-iso-adm} yields the claim, since $\AAA\big(\Pi,\SO{2}\big)$ induces $\AAA\big(\Pi,\kappa,\Spin{2}\big)$.
\qedhere
\end{enumerate}
\end{proof}

We now generalize Theorem~\ref{thm:K-univ-sl}.

\begin{theorem} \label{thm:K-univ-adm}
%Let $\Pi$ be an augmented Dynkin diagram,
Let $G(\Pi)$ be the simply connected split real Kac--Moody group associated to $\Pi$, and let $K(\Pi)$ be its maximal compact subgroup, i.e., the subgroup fixed by the Cartan--Chevalley involution. Then there exists a faithful universal enveloping morphism \[\tau_{K(\Pi)} : \AAA\big(\Pi,\SO{2}\big) \to K(\Pi).\]
\end{theorem}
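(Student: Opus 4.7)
The plan is to adapt the proof of Theorem~\ref{thm:K-univ-sl} by replacing the role of Theorem~\ref{thm:uniqueness-so-sl} with its generalization Theorem~\ref{thm:uniquenessadmissible}(a). First I would invoke \cite[Theorem~1.2]{Medts/Gramlich/Horn} (which covers arbitrary Dynkin diagrams, not just simply laced ones) to conclude that $K(\Pi)$ is the universal enveloping group of the amalgam $\AAA_1 := \{ H_{ij}, \psi_{ij}^i \mid i \neq j \in I\}$, where $H_{ij} := \Fix_{G_{ij}}(\omega)$ for $G_{ij}$ the fundamental rank two subgroup of $G(\Pi)$ and $\psi_{ij}^i : H_i \hookrightarrow H_{ij}$ the continuous inclusion, with $H_i := \Fix_{G_i}(\omega) \cong \SO{2}$.

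Next I would identify each $H_{ij}$ with the prescribed group $G^{a(i,j),a(j,i)}$ from Notation~\ref{31}. For $\{i^\sigma,j^\sigma\} \not\in E(\Pi)$ this is clear, and for $\{i^\sigma,j^\sigma\} \in E_1(\Pi) \cup E_2(\Pi) \cup E_3(\Pi)$ this is standard since then $G_{ij}$ is a split real simple Lie group of type $A_2$, $\mathrm{C}_2$, or $\mathrm{G}_2$ and $H_{ij}$ its maximal compact subgroup, which is $\SO{3}$, $\U{2}$, or $\SO{4}$ respectively. The genuinely new case is $\{i^\sigma,j^\sigma\} \in E_\infty(\Pi)$: here $G_{ij}$ is a rank two split real Kac--Moody group with generalized Cartan matrix $\left(\begin{smallmatrix} 2 & -r \\ -s & 2 \end{smallmatrix}\right)$, $rs \geq 4$, and by Lemma~\ref{kisfreeamalgam} together with Remark~\ref{tcentralizes} its maximal compact subgroup $H_{ij}$ is isomorphic to the free amalgamated product $K^{r,s} = K_1T \ast_T K_2T$, which is precisely $G^{r,s}$ by definition.

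Having fixed these identifications (up to continuous isomorphism), I would then, for each $i \neq j \in I$, pick continuous isomorphisms $\alpha_{ij} : H_{ij} \to G^{a(i,j),a(j,i)}$ and compose with $\psi_{ij}^i$ to produce continuous monomorphisms $\bar\phi_{ij}^i : \SO{2} \to G^{a(i,j),a(j,i)}$. The resulting system $\AAA_2 := \{ G^{a(i,j),a(j,i)}, \bar\phi_{ij}^i \mid i \neq j \in I\}$ is by construction a continuous $\SO{2}$-amalgam with respect to $\Pi$ and $\sigma$ in the sense of Definition~\ref{lotsofdef}(a), and the system $\{\id_I, \alpha_{ij}, \alpha_i\}$ (where $\alpha_i := \bar\phi_{ij}^i \circ \psi_{ij}^i{}^{-1}$ for any choice of $j$, chosen consistently) is an isomorphism $\AAA_1 \to \AAA_2$ of $\SO{2}$-amalgams. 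Theorem~\ref{thm:uniquenessadmissible}(a) then yields $\AAA_2 \cong \AAA(\Pi,\SO{2})$, and transporting the universal enveloping morphism of $\AAA_1$ through this chain of isomorphisms yields the desired faithful universal enveloping morphism $\tau_{K(\Pi)} : \AAA(\Pi,\SO{2}) \to K(\Pi)$.

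The main obstacle will be the non-spherical rank two case: one must verify that the amalgamation result of \cite{Medts/Gramlich/Horn} genuinely applies here (that the rank two maximal compact subgroup is indeed the free amalgamated product description used in Lemma~\ref{kisfreeamalgam}), and one must be careful that the continuous isomorphism $\alpha_{ij}$ can be chosen so that the restrictions to $H_i \cong \SO{2}$ agree across different choices of $j$; this is where the freedom to further adjust by automorphisms of $\SO{2}$ and the uniqueness statement of Theorem~\ref{thm:uniquenessadmissible}(a) absorb the remaining ambiguity. Continuity throughout is guaranteed by Remark~\ref{addtopology} (extended to arbitrary diagrams in exactly the same way) and by the fact that all the connecting maps in play are continuous homomorphisms of (possibly infinite-dimensional $k_\omega$) topological groups.
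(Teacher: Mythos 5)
Your proposal is correct and follows the same approach as the paper's own (terse) proof, which simply says to rerun the proof of Theorem~\ref{thm:K-univ-sl} with Theorem~\ref{thm:uniquenessadmissible} in place of Theorem~\ref{thm:uniqueness-so-sl}; you have spelled out the details of that substitution faithfully, including the identification of $H_{ij}$ with $G^{a(i,j),a(j,i)}$ in the $E_\infty$ case via Lemma~\ref{kisfreeamalgam}.
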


\begin{proof}
The same proof as for Theorem~\ref{thm:K-univ-sl} applies; the uniqueness of
amalgams then follows from Theorem~\ref{thm:uniquenessadmissible} (instead
of Theorem~\ref{thm:uniqueness-so-sl}).
\end{proof}

In view of Theorem~\ref{thm:K-univ-adm}, we generalize
Definition~\ref{defn:sl-spin-group} as follows:

\begin{definition} \label{defspingrp}
%For an augmented Dynkin diagram $\Pi$ and an admissible colouring $\kappa$,
The \Defn{spin group $\Spin{\Pi,\kappa}$ with respect to $\Pi$ and $\kappa$} is the canonical
universal enveloping group of the (continuous) amalgam
$\AAA\big(\Pi,\kappa,\Spin{2}\big)=\{ \widetilde{K}_{ij}, \tilde\phi_{ij}^i \mid i\neq j\in I\}$ with the canonical universal enveloping morphism \[\tau_{\Spin{\Pi},\kappa} : \AAA\big(\Pi,\kappa,\Spin{2}\big) \to \Spin{\Pi,\kappa}.\]
The \Defn{maximal spin group $\Spin{\Pi}$ with respect to $\Pi$} is  $\Spin{\Pi,\kappa_\mathrm{max}}$ (cf.\ Lemma~\ref{lem:max-adm-colouring}).
\end{definition}

\begin{observation} \label{naturalhomo}
Whenever $\kappa\preceq\kappa'$ are admissible colourings of $\Pi$, by construction there exists a canonical central extension
\[ \Spin{\Pi,\kappa'}\onto\Spin{\Pi,\kappa}. \]
\end{observation}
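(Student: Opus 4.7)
The plan is to construct a $\Spin{2}$-amalgam epimorphism
\[\alpha = \{ \id_I,\; \rho^i,\; \alpha_{ij} \mid i \neq j \in I \} : \AAA(\Pi, \kappa', \Spin{2}) \to \AAA(\Pi, \kappa, \Spin{2}),\]
whose kernel on each rank-two component lies in the centre, and then to invoke Lemma~\ref{lem:ama-epi-induces-envelope-epi} together with the argument of Proposition~\ref{prop:env-grp-central-cover} (applied with $\wt{V} := \gen{-1_{\Spin{2}}} \leq \Spin{2}$) in order to produce the required central extension $\Spin{\Pi,\kappa'} \onto \Spin{\Pi,\kappa}$.

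For each vertex $i \in I$ I would set $\rho^i := \sq : \Spin{2} \to \Spin{2}$, $x \mapsto x^2$, whenever $\kappa(i^\sigma) < \kappa'(i^\sigma)$, and $\rho^i := \id_{\Spin{2}}$ otherwise; the kernel $\{\pm 1\}$ of $\sq$ precisely encodes the collapse of the double cover $\rho_2 : \Spin{2} \onto \SO{2}$ that is needed when a vertex colour drops from $2$ to $1$, via the identity $\psi^{-1} \circ \sq = \rho_2$ of Notation~\ref{nota:Dalpha-Salpha}. For each edge $\{i,j\}$ with $\kappa_{ij} = \kappa'_{ij}$ I take $\alpha_{ij} := \id$ (with a possible automorphism correction of the type furnished by Lemmas~\ref{lem:rank1-inv-A2}, \ref{lem:rank1-inv-B2} and \ref{lem:rank1-inv-G2} whenever the orientation prescribed by Remark~\ref{orientation} differs between $\kappa$ and $\kappa'$); for $\kappa_{ij} < \kappa'_{ij}$ I let $\alpha_{ij}$ be the essentially unique factorisation of $\rho^{a(i,j),a(j,i),\kappa'_{ij}}$ through $\widetilde{G}^{a(i,j),a(j,i),\kappa_{ij}}$, whose existence is guaranteed by Propositions~\ref{prop:lift-aut-so2}, \ref{prop:lift-aut-soN}, \ref{prop:lift-aut-B2} and \ref{prop:controlautoinfty}. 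The commutation $\alpha_{ij} \circ \tilde\phi_{ij}^{i,\kappa'} = \tilde\phi_{ij}^{i,\kappa} \circ \rho^i$ then follows piecewise from Lemmas~\ref{lem:ama-embed-and-rho-commute-An}, \ref{lem:ama-embed-and-rho-commute-G2}, \ref{lem:ama-embed-and-rho-commute-B2} and Proposition~\ref{keycorollary}.

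The main obstacle will be the bookkeeping of the case distinction over $(v_{ij}, \kappa_{ij}, \kappa'_{ij})$ at doubly laced and infinite-type edges, where the parities of the Cartan matrix entries interact with the colouring in delicate ways and where the admissibility conditions of Definition~\ref{adtypedef} must be systematically invoked to rule out impossible combinations (such as $\kappa_{ij} = 2$ at an edge with $v_{ij} = 2$). Once this is achieved, each $\ker(\alpha_{ij})$ lies by construction inside $\gen{\tilde\phi_{ij}^i(-1_{\Spin{2}}),\, \tilde\phi_{ij}^j(-1_{\Spin{2}})} \leq Z(\widetilde{G}^{a(i,j),a(j,i),\kappa'_{ij}})$, so the central-extension hypothesis of Proposition~\ref{prop:env-grp-central-cover} is met and the induced epimorphism $\widehat\alpha : \Spin{\Pi,\kappa'} \onto \Spin{\Pi,\kappa}$ is a central extension, as claimed.
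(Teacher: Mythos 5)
The paper states this as an unproven observation (``by construction''), and your proposal correctly unpacks what that phrase must mean: build an amalgam epimorphism $\AAA(\Pi,\kappa',\Spin{2}) \to \AAA(\Pi,\kappa,\Spin{2})$ from the local covering data, push it through Lemma~\ref{lem:ama-epi-induces-envelope-epi} to obtain $\widehat\alpha : \Spin{\Pi,\kappa'}\onto\Spin{\Pi,\kappa}$, and then apply Proposition~\ref{prop:env-grp-central-cover} with $\wt V=\gen{-1_{\Spin{2}}}$ to see that $\widehat\alpha$ is central. Your choice $\rho^i = \sq$ when $\kappa(i^\sigma)<\kappa'(i^\sigma)$ and $\rho^i=\id$ otherwise is exactly right, since $\rho_2 = \psi^{-1}\circ\sq = \sq\circ\psi^{-1}$ makes the relevant triangle with the projections $\pi_{\Pi,\kappa}$, $\pi_{\Pi,\kappa'}$ of Remark~\ref{rem:spin-ama-to-so-ama-and-back-2} commute. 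This is the approach the paper has in mind, so the proposal is correct in strategy.

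Two small details are slightly misplaced, though neither is fatal. First, when $\kappa_{ij}=\kappa'_{ij}$ the pointwise inequality $\kappa\preceq\kappa'$ forces $\kappa(i^\sigma)=\kappa'(i^\sigma)$ and $\kappa(j^\sigma)=\kappa'(j^\sigma)$ (a sum of two termwise $\leq$ numbers is fixed only if each term is fixed), so the orientation of Remark~\ref{orientation} cannot change there and no automorphism correction is needed; the orientation reversal you worry about can only occur when $\kappa_{ij}<\kappa'_{ij}$, in particular when exactly one of $\kappa_{ij},\kappa'_{ij}$ equals $1.5$ at an undirected edge of valency $0$ or $\infty$ with matching parities. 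Second, the existence of the factorisation $\alpha_{ij}$ with $\rho^{a(i,j),a(j,i),\kappa_{ij}}\circ\alpha_{ij} = \rho^{a(i,j),a(j,i),\kappa'_{ij}}$ does not come from the automorphism-lifting statements (Propositions~\ref{prop:lift-aut-so2}, \ref{prop:lift-aut-soN}, \ref{prop:lift-aut-B2}, \ref{prop:controlautoinfty}); it is read off directly from the chain of covers in Notation~\ref{31} (for example $\rho^{r,s,1}=\id$, $\rho^{r,s,1.5}$, $\rho^{r,s,2}$ successively factor), while those propositions are only needed for the uniqueness claim, as in Proposition~\ref{prop:lift-ama-iso-adm}. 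The remaining case analysis you defer (including the verification that the compatibility $\alpha_{ij}\circ\tilde\phi_{ij}^{i,\kappa'}=\tilde\phi_{ij}^{i,\kappa}\circ\rho^i$ holds, and that each $\ker\alpha_{ij}$ lies in $\gen{\tilde\phi_{ij}^i(-1),\tilde\phi_{ij}^j(-1)}$) is indeed just bookkeeping across $(v_{ij},\kappa_{ij},\kappa'_{ij})$, and your invocation of Lemmas~\ref{lem:ama-embed-and-rho-commute-An}, \ref{lem:ama-embed-and-rho-commute-G2}, \ref{lem:ama-embed-and-rho-commute-B2} and Proposition~\ref{keycorollary} for the commutation is the right toolbox.
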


\begin{lemma} \label{lem:adm-K-envelops-spin-amalgam}
%Let $\Pi$ be an augmented Dynkin diagram and let $\kappa$ be an admissible colouring.  Then
$K(\Pi)$ is an enveloping
group of the amalgam $\AAA\big(\Pi,\kappa,\Spin{2}\big)$.  There exists a canonical central extension $\rho_{\Pi,\kappa} : \Spin{\Pi,\kappa} \to K(\Pi)$ that makes the following diagram commute: 
\[
\xymatrix{
  \AAA\big(\Pi,\kappa,\Spin{2}\big) \ar[rr]^{\tau_{\Spin{\Pi,\kappa}}} \ar[d]_{\pi_{\Pi,\kappa}} &&
  \Spin{\Pi} \ar@{-->}[d]^{\rho_{\Pi,\kappa}} \\
\AAA\big(\Pi,\SO{2}\big) \ar[rr]^{\tau_{K(\Pi)}} && K(\Pi)
}
\]
\end{lemma}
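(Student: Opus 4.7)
The plan is to mirror the proof of Lemma~\ref{lem:sl-K-envelops-spin-amalgam} using the more elaborate setup of Notation~\ref{31} and Definition~\ref{lotsofdef}. For each $i\neq j\in I$, let $\rho_{ij}:=\rho^{a(i,j),a(j,i),\kappa_{ij}}:\wt K_{ij}\to K_{ij}$ be the covering epimorphism, so that the group components of the amalgam epimorphism $\pi_{\Pi,\kappa}$ from Remark~\ref{rem:spin-ama-to-so-ama-and-back-2} are precisely the $\rho_{ij}$. Writing $\tau_{K(\Pi)}=\{\psi_{ij}\}$ for the faithful universal enveloping morphism of $\AAA\big(\Pi,\SO{2}\big)$ provided by Theorem~\ref{thm:K-univ-adm}, Lemma~\ref{lem:ama-envelope-lift} applied to $\pi_{\Pi,\kappa}$ then yields that $(K(\Pi),\{\xi_{ij}\})$ with $\xi_{ij}:=\psi_{ij}\circ\rho_{ij}$ is an enveloping group of $\AAA\big(\Pi,\kappa,\Spin{2}\big)$.

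Since $\Spin{\Pi,\kappa}$ is by Definition~\ref{defspingrp} the canonical universal enveloping group of $\AAA\big(\Pi,\kappa,\Spin{2}\big)$, Lemma~\ref{lem:ama-epi-induces-envelope-epi} then produces a unique epimorphism $\rho_{\Pi,\kappa}:\Spin{\Pi,\kappa}\to K(\Pi)$ realizing the commutative diagram in the statement.

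To upgrade $\rho_{\Pi,\kappa}$ to a central extension, the plan is to invoke Proposition~\ref{prop:env-grp-central-cover} with $\wt U=\Spin{2}$, $U=\SO{2}$, and the subgroup $\wt V$ chosen componentwise according to the colouring: $\wt V^{i}=\{1\}$ when $\kappa(i^\sigma)=1$ and $\wt V^{i}=\langle -1\rangle$ when $\kappa(i^\sigma)=2$, using the epimorphisms $\rho^i$ from Remark~\ref{rem:spin-ama-to-so-ama-and-back-2}. What has to be verified is that, for every pair $i\neq j\in I$, the kernel $A_{ij}:=\ker(\rho_{ij})$ is contained in $Z^{\tilde\phi}_{ij}:=\langle\tilde\phi^{\,i}_{ij}(\wt V^{i}),\tilde\phi^{\,j}_{ij}(\wt V^{j})\rangle$, that $Z^{\tilde\phi}_{ij}\subseteq Z(\wt K_{ij})$, and that $\wt K_{ij}=\langle\tilde\phi^{\,i}_{ij}(\Spin{2}),\tilde\phi^{\,j}_{ij}(\Spin{2})\rangle$; centrality of $\wt\tau_{ij}(A_{ij})$ in $\Spin{\Pi,\kappa}$ then follows as in the proof of Proposition~\ref{prop:env-grp-central-cover}.

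The main obstacle is this rank-two verification, which splits into cases indexed by $(\kappa_{ij},v_{ij})\in\{1,1.5,2\}\times\{0,1,2,3,\infty\}$. The case $\kappa_{ij}=1$ is trivial because $\rho_{ij}=\id$; the simply laced sub-cases $(\kappa_{ij},v_{ij})\in\{(2,0),(2,1)\}$ are handled exactly as in Lemma~\ref{lem:sl-K-envelops-spin-amalgam}; the $\mathrm{G}_2$-sub-case $(2,3)$ is dealt with via the isomorphism $\Spin{4}\cong \Spin{3}\times\Spin{3}$ and Lemma~\ref{lem:ama-embed-and-rho-commute-G2}; the $\mathrm{C}_2$-sub-cases $(1.5,2)$ and $(2,2)$ are handled via Lemma~\ref{lem:ama-embed-and-rho-commute-B2}, whose asymmetry between long and short roots is exactly what is codified in the admissibility axiom of Definition~\ref{adtypedef}(a); finally, the non-spherical sub-cases $v_{ij}=\infty$ are covered by Proposition~\ref{keycorollary} and Remark~\ref{dichotomy}, the construction of Section~\ref{sec:rank-2-residues} having been engineered precisely so that admissibility forces the kernel of $\hat{\hat\rho}$ into the image of $\wt V$. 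Once these checks are assembled, Proposition~\ref{prop:env-grp-central-cover} delivers the central extension property, concluding the proof.
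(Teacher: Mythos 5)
Your proposal follows the same three-step skeleton as the paper's (one-sentence) proof, which simply transports the proof of Lemma~\ref{lem:sl-K-envelops-spin-amalgam}: apply Lemma~\ref{lem:ama-envelope-lift} to get that $K(\Pi)$ with the composed morphisms $\psi_{ij}\circ\rho_{ij}$ envelops the $\Spin{2}$-amalgam, then Lemma~\ref{lem:ama-epi-induces-envelope-epi} and universality of $\Spin{\Pi,\kappa}$ produce $\rho_{\Pi,\kappa}$, then Proposition~\ref{prop:env-grp-central-cover} gives centrality. You supply more detail than the paper, and you correctly spot the one genuine subtlety that the paper glosses over: with a \emph{uniform} $\wt V=\langle-1\rangle$ the hypothesis $Z^{\tilde\phi}_{ij}\leq Z(\wt G_{ij})$ fails when $\kappa_{ij}=1$ and $v_{ij}\geq 1$ (e.g.\ $\eps_{12}(D(\pi))\in\SO{3}$ is not central), so a vertex-dependent choice $\wt V^i$ is needed; Proposition~\ref{prop:env-grp-central-cover} does not literally allow this, but you rightly observe that its proof carries over verbatim. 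One small slip: you list $(k_{ij},v_{ij})=(2,2)$ among the $\mathrm{C}_2$ sub-cases to be checked, but that pair is excluded by the admissibility condition (the paper notes $\kappa_{ij}=2$ forces $v_{ij}\in\{0,1,3,\infty\}$); this is harmless since no argument for that case is actually needed.
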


\begin{proof}
Essentially the same proof as for Lemma~\ref{lem:sl-K-envelops-spin-amalgam}
works, after appropriately substituting definitions and results from
the present Section~\ref{sec:adm-amalgams}.
\end{proof}

\section{Spin covers of arbitrary type} \label{sec:tametypes}
%  ___         _   _          
% / __| ___ __| |_(_)___ _ _  
% \__ \/ -_) _|  _| / _ \ ' \ 
% |___/\___\__|\__|_\___/_||_|

In this section we prove the following theorem concerning the central extension $\rho_{\Pi,\kappa} : \Spin{\Pi,\kappa} \to K(\Pi)$ from Lemma~\ref{lem:adm-K-envelops-spin-amalgam}.

\begin{theorem} \label{m3}
Given a diagram $\Pi$ and an admissible colouring $\kappa$, the universal enveloping group
$\Spin{\Pi,\kappa}$ of $\AAA\big(\Pi,\kappa,\Spin{2}\big)$ is a $2^{c(\Pi,\kappa)}$-fold central extension of the
universal enveloping group $K(\Pi)$ of $\AAA\big(\Pi,\SO{2}\big)$.
\end{theorem}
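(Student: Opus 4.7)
The strategy will mirror the proof of Theorem~\ref{m2}, but with the role of ``connected components of $\Pi$'' replaced by ``connected components of $\Pi^{\mathrm{adm}}$ on which $\kappa \equiv 2$'', and using the rank-two machinery developed in Sections~\ref{sec:g2}, \ref{sec:bc2}, \ref{sec:rank-2-residues} as a substitute for the calculations inside $\Spin{3}$.

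First, I would apply Proposition~\ref{prop:env-grp-central-cover} to the epimorphism of amalgams $\pi_{\Pi,\kappa} : \AAA(\Pi,\kappa,\Spin{2}) \to \AAA(\Pi,\SO{2})$, taking $\wt U = \Spin{2}$, $U = \SO{2}$, $\wt V = \langle -1_{\Spin{2}}\rangle$. The required inclusion $A_{ij} \leq Z_{ij}^{\tilde\phi} \leq Z(\wt G_{ij})$ has to be checked edge by edge using the explicit descriptions in Notation~\ref{31}; in each case $\wt G^{r,s,k}$ is either a $\Spin$-group, a product $\SO2\times\SU2$, or a free amalgamated product whose centre is controlled by Remark~\ref{otherU}/Remark~\ref{allodd}. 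This will produce the central extension $\rho_{\Pi,\kappa}: \Spin{\Pi,\kappa} \to K(\Pi)$ with kernel $N = \langle \tau_{ij}(A_{ij}) \mid i \neq j\in I\rangle$.

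For the upper bound $|N|\leq 2^{c(\Pi,\kappa)}$, I would define, for each connected component $C$ of $\Pi^{\mathrm{adm}}$ on which $\kappa \equiv 2$, the element
\[ -1_{\Spin{\Pi,\kappa},C} := \tau_{ij}\big(\tilde\phi_{ij}^i(-1_{\Spin{2}})\big) \]
for any $i$ with $i^\sigma \in C$ and any $j \neq i$, and then prove independence of the choices. Independence within $C$ is the analogue of Lemma~\ref{lem:unique-minus-one-sl}: one travels along a path of $\Pi^{\mathrm{adm}}$-edges, at each step using that in the corresponding $\wt G_{ij}$ the two rank-one $\Spin{2}$'s share the same $-1$ (cf.\ the $\Spin{3}$-identity used in Lemma~\ref{lem:unique-minus-one-sl}, together with Proposition~\ref{surjG2I} and Remark~\ref{allodd} for the non-simply laced and non-spherical cases that can occur in $\Pi^{\mathrm{adm}}$). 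For edges $\{i,j\}$ where $\kappa_{ij} = 1$ the kernel $A_{ij}$ is trivial, and for edges where $\kappa(i^\sigma)\neq\kappa(j^\sigma)$ the surviving kernel element sits on the $\kappa = 2$ side only (this uses Lemma~\ref{lem:ama-embed-and-rho-commute-B2} and Remark~\ref{dichotomy}). Hence $N$ is abelian, generated by at most $c(\Pi,\kappa)$ commuting involutions.

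For the lower bound, I would use Observation~\ref{naturalhomo} together with the additive decomposition of $\kappa$ into elementary colourings (Remark~\ref{combinatoricscolouring}): if for each elementary admissible colouring $\kappa_C$ the extension $\rho_{\Pi,\kappa_C}$ has kernel of order exactly $2$, and the corresponding elements $-1_{\Spin{\Pi,\kappa},C}$ are mapped to the non-trivial kernel element of $\Spin{\Pi,\kappa_C}$ under the canonical projection $\Spin{\Pi,\kappa}\to\Spin{\Pi,\kappa_C}$, then they are $\ZZ/2\ZZ$-independent in $N$. For the elementary case I would reduce to the simply-laced Theorem~\ref{m2} via the epimorphism furnished by Strategy~\ref{simplelacing} (Proposition~\ref{thm:Spin(Delta)-covers-Spin(Delta-sl)}): the covering maps of Sections~\ref{sec:g2}, \ref{sec:bc2}, \ref{sec:rank-2-residues} collapse edges of type $\mathrm{C}_2$, $\mathrm{G}_2$ and $\infty$ to simply-laced edges in a way compatible with $-1_{\Spin{2}}$, producing a canonical morphism $\Spin{\Pi,\kappa_C} \to \Spin{\Pi^{\mathrm{sl}}}$ for a suitable simply laced diagram $\Pi^{\mathrm{sl}}$; Theorem~\ref{m2} then detects the image of $-1_{\Spin{\Pi,\kappa_C},C}$ in $\Spin{\Pi^{\mathrm{sl}}}$ as non-trivial, via the generalized spin representation of Theorem~\ref{m1}.

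The main obstacle is the lower bound, and specifically the verification that the rank-two covering maps produced in Sections~\ref{sec:g2}, \ref{sec:bc2}, \ref{sec:rank-2-residues} assemble coherently across the diagram to yield the reduction $\Spin{\Pi,\kappa_C}\to\Spin{\Pi^{\mathrm{sl}}}$. The admissibility hypothesis on $\kappa$ is exactly what is needed to avoid the obstruction identified in Remark~\ref{obstruction}/Lemma~\ref{lem:ama-embed-and-rho-commute-B2}, namely the failure of compatibility between the double-cover on a $\kappa = 2$ vertex and the single-cover forced on an adjacent vertex by a mixed-parity edge; checking that admissibility really does suffice to perform the reduction consistently is the technical heart of the argument, and is the reason Strategy~\ref{simplelacing} and the classification of admissible colourings were set up in advance.
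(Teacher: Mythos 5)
Your overall architecture (establish the central extension via Proposition~\ref{prop:env-grp-central-cover}, bound the kernel above by $2^{c(\Pi,\kappa)}$ using an analogue of Lemma~\ref{lem:unique-minus-one-sl} plus Remark~\ref{allodd}, and obtain the lower bound by decomposing $\kappa$ into elementary colourings via Observation~\ref{naturalhomo} and Remark~\ref{combinatoricscolouring}) agrees in spirit with the paper's proof. However, your reduction of the elementary case to Theorem~\ref{m2} contains a genuine gap.

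You claim that Strategy~\ref{simplelacing}, i.e.\ Proposition~\ref{thm:Spin(Delta)-covers-Spin(Delta-sl)}, ``collapses edges of type $\mathrm{C}_2$, $\mathrm{G}_2$ and $\infty$ to simply-laced edges, producing a canonical morphism $\Spin{\Pi,\kappa_C}\to\Spin{\Pi^{\mathrm{sl}}}$ for a suitable simply laced diagram $\Pi^{\mathrm{sl}}$.'' This is not what that proposition does. It produces an epimorphism $\Spin{\Pi,\kappa}\to\Spin{\Pi^{\mathrm{dl}\kappa},\kappa}$, and $\Pi^{\mathrm{dl}\kappa}$ (Definition~\ref{doublylacing}) is only doubly laced: edges of type $\mathrm{C}_2$ with one endpoint coloured $2$ and the other coloured $1$, as well as the mixed-parity $\infty$-edges, are retained as \emph{double} edges. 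The asymmetry you yourself point to in Lemma~\ref{lem:ama-embed-and-rho-commute-B2}/Remark~\ref{dichotomy} is precisely the reason such an edge cannot be collapsed to an $A_2$ edge: the covering $\widehat\rho:\SO2\times\SU2\to\U2$ restricts to a double cover of $\zeta_p(\SO2)$ but to a bijection onto $\zeta_l(\SO2)$, whereas in $\Spin3$ both fundamental circles are doubly covered. A surjective amalgam morphism onto a $\Spin3$-type local group there does not exist.

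What is missing is Strategy~\ref{unfolding}: after reducing to $\Pi^{\mathrm{dl}\kappa}$ by the collapse, one must further pass to the \emph{unfolded} simply laced diagram $(\Pi^{\mathrm{dl}\kappa})^{\mathrm{un}}$, and the resulting map $\Omega_{\Spin{\Pi^{\mathrm{dl}\kappa},\kappa}}:\Spin{\Pi^{\mathrm{dl}\kappa},\kappa}\to\Spin{(\Pi^{\mathrm{dl}\kappa})^{\mathrm{un}},\kappa^{\mathrm{un}}}$ from Corollary~\ref{corunfolding} is an \emph{embedding}, not a quotient map. It is this embedding (together with the explicit identification of $\Omega(z_i)$ with $-1_{\Spin{(\Pi^{\mathrm{dl}\kappa})^{\mathrm{un}}},\KKK(i)}$ in the proof of Corollary~\ref{corunfolding}) that imports non-triviality from the simply laced Theorem~\ref{m2} into the doubly laced picture; the collapse $\Spin{\Pi,\kappa}\onto\Spin{\Pi^{\mathrm{dl}\kappa},\kappa}$ then pushes non-triviality of the kernel back to $\rho_{\Pi,\kappa}$ via the homomorphism theorem. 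Without the unfolding step, the elementary-case lower bound in your proposal does not go through, and you would need to fill in this second, directionally opposite reduction to complete the argument.
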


Recall from Definition~\ref{adtypedef} that the number $c(\Pi,\kappa)$ counts the connected components of $\Pi^\mathrm{adm}$ on which $\kappa$ takes the value $2$.

\begin{remark}\label{simplylacedalreadyknown}
Let $\kappa$ be an admissible colouring of a simply laced diagram $\Pi$. One has $\Pi = \Pi^\mathrm{adm}$ and $\kappa$ is constant on each connected component of $\Pi$. The number $c(\Pi,\kappa)$ counts the components on which it has value $2$. A reduction to the irreducible case as in the proof of Theorem~\ref{m2} therefore immediately implies Theorem~\ref{m3} for simply laced diagrams.    
\end{remark}

Our strategy of proof in the general case is based on a reduction to the simply-laced case. We start our investigation with carrying out Strategy~\ref{unfolding} for doubly laced diagrams.

\begin{definition} \label{unfoldeddiagram}
Let $n \in \NN$, let $I = \{ 1, \ldots, n \}$ and let $\Pi$ be an irreducible doubly laced Dynkin diagram that is not simply laced with generalized Cartan matrix $A=(a(i,j))_{i,j}$, labelling $\sigma : I \to V(\Pi)$ and admissible colouring $\kappa : V \to \{ 1, 2 \}$ such that for each pair of vertices $i^\sigma$, $j^\sigma$ with $i^\sigma \to j^\sigma$ that form a diagram of type $\mathrm{C}_2$ one has $\kappa(i^\sigma)=2$. 

Then the \Defn{unfolded} Dynkin diagram is the Dynkin diagram $\Pi^{\mathrm{un}}$ with labelling \[\sigma^{\mathrm{un}} : I^\mathrm{un} := \{ \pm i \mid i \in I, \kappa(i^\sigma) = 1 \} \cup \{ i \mid i \in I, \kappa(i^\sigma)=2 \} \to V(\Pi^{\mathrm{un}})\] and edges defined via the generalized Cartan matrix $A^{\mathrm{un}}=(a^{\mathrm{un}}(i,j))_{i,j}$ given by \[a^{\mathrm{un}}(i,j) = \begin{cases}
0, & \text{if }\kappa(|i|^\sigma) \neq \kappa(|j|^\sigma)
    \text{ and }a(|i|,|j|) = 0, \\
-1, & \text{if }\kappa(|i|^\sigma) \neq \kappa(|j|^\sigma)
    \text{ and }a(|i|,|j|) \neq 0, \\
a(|i|,|j|), & \text{if }\kappa(|i|^\sigma) = \kappa(|j|^\sigma)
    \text{ and }ij > 0,   \\
0, & \text{if }\kappa(|i|^\sigma) = \kappa(|j|^\sigma)
    \text{ and }ij<0.
\end{cases}\]
Note that the unfolded Dynkin diagram $\Pi^{\mathrm{un}}$ is simply laced. Define the admissible colouring $\kappa^{\mathrm{un}} :\equiv 2$.

For a reducible Dynkin diagram $\Pi$ with admissible colouring $\kappa : V \to \{ 1, 2 \}$ such that for each pair of vertices $i^\sigma$, $j^\sigma$ with $i^\sigma \to j^\sigma$ that form a diagram of type $\mathrm{C}_2$ one has $\kappa(i^\sigma)=2$, define the \Defn{unfolded} Dynkin diagram $\Pi^{\mathrm{un}}$ and its admissible colouring $\kappa^{\mathrm{un}}$ componentwise: replace each connected component that is not simply laced by its unfolded Dynkin diagram and colouring with constant value $2$ as defined above and retain each simply laced component and its colouring.
\end{definition}

\begin{remark}
Note that for an {\em irreducible} doubly laced Dynkin diagram $\Pi$ that is not simply laced with labelling $\sigma : I \to V(\Pi)$ there is at most one admissible colouring $\kappa : V \to \{ 1, 2 \}$ such that for each pair of vertices $i^\sigma$, $j^\sigma$ with $i^\sigma \to j^\sigma$ that form a diagram of type $\mathrm{C}_2$ one has $\kappa(i^\sigma)=2$ and, if it exists, actually is equal to the maximal admissible colouring $\kappa_{\mathrm{max}}$ from Lemma~\ref{lem:max-adm-colouring}.

This is not necessarily true for reducible such Dynkin diagrams as any simply laced connected components allows the two colourings constant one and constant two.
\end{remark}

\begin{proposition} \label{propunfolding}
Let $\Pi$ be a doubly laced Dynkin diagram with labelling $\sigma : I \to V$ and admissible colouring $\kappa : V \to \{ 1, 2 \}$ such that for each pair of vertices $i^\sigma$, $j^\sigma$ with $i^\sigma \to j^\sigma$ that form a diagram of type $\mathrm{C}_2$ one has $\kappa(i^\sigma)=2$, let $\Pi^{\mathrm{un}}$ be the unfolded Dynkin diagram, let $G(\Pi)$ and $G(\Pi^{\mathrm{un}})$ be the corresponding simply connected split real Kac--Moody groups, let $K(\Pi)$ and $K(\Pi^{\mathrm{un}})$ be their maximal compact subgroups, let \[\tau_{K(\Pi)} : \AAA\big(\Pi,\SO{2}\big) \to K(\Pi)\] and \[\tau_{K(\Pi^{\mathrm{un}})} : \AAA\big(\Pi^{\mathrm{un}},\SO{2}\big) \to K(\Pi^{\mathrm{un}})\] be the respective (faithful) universal enveloping morphisms (cf.\ Theorem \ref{thm:K-univ-adm}), for each $i \in I$ let \[K_i := (\tau_{K(\Pi)} \circ \phi^i_{ij})(\SO{2}) \leq K(\Pi),\] and for each $i \in I^{\mathrm{un}}$ let \[K_i^{\mathrm{un}} := (\tau_{K(\Pi^{\mathrm{un}})} \circ (\phi^i_{ij})^{\mathrm{un}})(\SO{2}) \leq K(\Pi^{\mathrm{un}}),\] where $(\phi^i_{ij})^{\mathrm{un}}$ denote the connecting homomorphisms of the amalgam $ \AAA\big(\Pi^{\mathrm{un}},\SO{2}\big)$.

Then the assignment 
\begin{align*}
\forall i \in I \text{ with } \kappa(i^\sigma)=2: \quad\quad K_i &\to K_i^{\mathrm{un}} \\ g &\mapsto (\tau_{K(\Pi^{\mathrm{un}})} \circ (\phi^i_{ij})^{\mathrm{un}}) \circ (\tau_{K(\Pi)} \circ \phi^i_{ij})^{-1}(g) \\
\forall i \in I \text{ with } \kappa(i^\sigma)=1: \quad\quad K_i &\to K_i^{\mathrm{un}} \times K_{-i}^{\mathrm{un}} \\ g &\mapsto \Big(\big(\tau_{K(\Pi^{\mathrm{un}})} \circ (\phi^i_{ij})^{\mathrm{un}}\big) \times \big(\tau_{K(\Pi^{\mathrm{un}})} \circ (\phi^{-i}_{-ij})^{\mathrm{un}}\big) \Big) \circ (\tau_{K(\Pi)} \circ \phi^i_{ij})^{-1}(g)
\end{align*}
induces a monomorphism \[\Omega_{K(\Pi)} : K(\Pi) \to K(\Pi^{\mathrm{un}}).\]
\end{proposition}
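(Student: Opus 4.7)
My plan is to invoke the universality of $K(\Pi)$ as the enveloping group of $\AAA(\Pi,\SO{2})$ (Theorem~\ref{thm:K-univ-adm}). To produce $\Omega_{K(\Pi)}$ it suffices to construct a compatible family of rank-two homomorphisms $\psi_{ij} \colon G_{ij} \to K(\Pi^{\mathrm{un}})$ extending the prescribed rank-one maps, and then to verify injectivity of the induced homomorphism.

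For the rank-two construction I would proceed by cases on the valency $v_{ij}$ and the colours $\kappa(i^\sigma), \kappa(j^\sigma)$. If $\{i^\sigma,j^\sigma\}$ is a non-edge, then $G_{ij} \cong \SO{2} \times \SO{2}$ and $\psi_{ij}$ is the natural product of the two rank-one maps, which is well-defined since the images commute in $K(\Pi^{\mathrm{un}})$. If $\{i^\sigma,j^\sigma\}$ is a simple edge then admissibility forces $\kappa(i^\sigma)=\kappa(j^\sigma)$; when both colours equal $2$, $\{i,j\}$ is still a simple edge of $\Pi^{\mathrm{un}}$ and $\psi_{ij}$ is the canonical embedding $\SO{3} \hookrightarrow K(\Pi^{\mathrm{un}})$ through the corresponding rank-two envelope, whereas if both colours equal $1$ then Definition~\ref{unfoldeddiagram} provides two disjoint $A_2$-subdiagrams $\{i,j\}$ and $\{-i,-j\}$ of $\Pi^{\mathrm{un}}$, and $\psi_{ij}$ is the diagonal embedding $\SO{3} \hookrightarrow \SO{3}\times\SO{3} \hookrightarrow K(\Pi^{\mathrm{un}})$. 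Finally, for a $\mathrm{C}_2$ edge $i^\sigma \to j^\sigma$ the standing hypothesis gives $\kappa(i^\sigma)=2$, $\kappa(j^\sigma)=1$, and the triple $\{j,i,-j\}$ forms an $A_3$-subdiagram of $\Pi^{\mathrm{un}}$ with $i$ as middle vertex; here $\psi_{ij}$ is the embedding $\U{2} \hookrightarrow \SO{4}$ of Remark~\ref{coordinatesrev}, post-composed with the inclusion $\SO{4} \hookrightarrow K(\Pi^{\mathrm{un}})$ of the corresponding $A_3$-envelope furnished by Theorem~\ref{thm:univso}.

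Amalgam compatibility then amounts to checking that $\psi_{ij} \circ \phi_{ij}^i$ equals the rank-one map prescribed in the statement for every $i$ and every admissible partner $j$; this is immediate from the explicit definitions of the four embeddings listed above. Universality yields a unique homomorphism $\Omega_{K(\Pi)} \colon K(\Pi) \to K(\Pi^{\mathrm{un}})$, which is in fact continuous by the topological refinement of the universal property discussed in Remark~\ref{addtopology}.

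The principal obstacle is injectivity. My plan is to identify the image $\Omega_{K(\Pi)}(K(\Pi))$ with the fixed-point subgroup of an involutive diagram automorphism $\sigma$ of $\Pi^{\mathrm{un}}$, defined by fixing every vertex $i$ with $\kappa(i^\sigma)=2$ and swapping $j \leftrightarrow -j$ for every vertex $j$ with $\kappa(j^\sigma)=1$. This is genuinely a diagram automorphism because the entries of $A^{\mathrm{un}}$ are $\sigma$-invariant by construction; via the universal property it lifts to an involution $\sigma_K$ of $K(\Pi^{\mathrm{un}})$ which commutes with the Cartan--Chevalley involution. A direct verification on rank-one subgroups shows $\Omega_{K(\Pi)}(K(\Pi)) \subseteq \Fix_{K(\Pi^{\mathrm{un}})}(\sigma_K)$, and I would then exhibit the fixed-point subgroup itself as an enveloping group of $\AAA(\Pi,\SO{2})$, thereby producing by universality a map $K(\Pi) \to \Fix_{K(\Pi^{\mathrm{un}})}(\sigma_K)$ that is two-sided inverse to $\Omega_{K(\Pi)}$ on its image. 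Alternatively, one may appeal to the folding construction for Kac--Moody groups, which identifies $\Fix_{G(\Pi^{\mathrm{un}})}(\sigma)$ with $G(\Pi)$, and then restrict to Cartan--Chevalley fixed points.
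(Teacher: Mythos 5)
Your existence argument, via a case-by-case construction of rank-two enveloping maps $\psi_{ij}$ followed by the universal property of $\tau_{K(\Pi)}$, is the same mechanism the paper uses; the paper compresses it into a reference to Consequence~\ref{concretedescription}, and your unpacking of the four cases is a reasonable elaboration. One caveat: your assertion that amalgam compatibility is ``immediate from the explicit definitions'' is a little quick in the $\mathrm{C}_2$ case. Consequence~\ref{concretedescription} gives $\zeta_p = \eps_{34}$ and $\zeta_l = \eps_{23}\cdot\eps_{14}$, and $\eps_{14}$ is not one of the three simple rank-one circles $\eps_{12},\eps_{23},\eps_{34}$ of the $A_3$-envelope $\SO{4}$ in $K(\Pi^{\mathrm{un}})$; you need to conjugate the embedding $\U{2}\hookrightarrow\SO{4}$ of Remark~\ref{coordinatesrev} by a suitable permutation matrix in $\SO{4}$ (swapping $e_2$ and $e_4$) before the images of $\zeta_p$ and $\zeta_l$ become, respectively, the middle rank-one circle and the diagonal of the two end rank-one circles, as the statement requires. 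This is a routine but not immediate reconciliation, and you should spell it out.

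The real problem is your injectivity argument. You say you will exhibit $\Fix_{K(\Pi^{\mathrm{un}})}(\sigma_K)$ as an enveloping group of $\AAA(\Pi,\SO{2})$ and then obtain ``by universality a map $K(\Pi)\to\Fix_{K(\Pi^{\mathrm{un}})}(\sigma_K)$ that is a two-sided inverse to $\Omega_{K(\Pi)}$ on its image.'' But universality of $K(\Pi)$ only gives a map \emph{from} $K(\Pi)$ to any enveloping group, and that map \emph{is} the corestriction of $\Omega_{K(\Pi)}$, not an inverse to it. Producing an inverse requires a homomorphism in the other direction, $\Fix_{K(\Pi^{\mathrm{un}})}(\sigma_K)\to K(\Pi)$, and for that you need to know $\Fix_{K(\Pi^{\mathrm{un}})}(\sigma_K)$ is itself a \emph{universal} enveloping group of $\AAA(\Pi,\SO{2})$ --- which is exactly what your ``alternative'' route (the folding theorem for Kac--Moody groups) supplies. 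So the folding construction is not an alternative but the load-bearing step. To make the argument airtight you should either cite a Kac--Moody folding result identifying $\Fix_{G(\Pi^{\mathrm{un}})}(\sigma)$ with $G(\Pi)$ and restrict to Cartan--Chevalley fixed points, or else redo the Tits-lemma/flag-transitivity argument (as in the proof of Theorem~\ref{thm:K-univ-adm}) for the fixed-point subgroup acting on the fixed-point building. For what it is worth, the paper's own one-line claim that injectivity ``follows from the faithfulness of $\tau_{K(\Pi^{\mathrm{un}})}$'' is at least as compressed; faithfulness of the enveloping morphism into $K(\Pi^{\mathrm{un}})$ gives injectivity of $\Omega_{K(\Pi)}$ on rank-two subgroups, not injectivity of $\Omega_{K(\Pi)}$ itself, so some geometric or folding input is needed either way. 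Your second route is the right one; just do not present the enveloping-group-plus-universality step as if it already gives injectivity.
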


\begin{proof}
The existence of the homomorphism $\Omega_{K(\Pi)} : K(\Pi) \to K(\Pi^{\mathrm{un}})$ is straightforward using Consequence~\ref{concretedescription} and the universal property of $\tau_{K(\Pi)} : \AAA\big(\Pi,\SO{2}\big) \to K(\Pi)$. Injectivity follows from the faithfulness of the universal enveloping morphism $\tau_{K(\Pi^{\mathrm{un}})} : \AAA\big(\Pi^{\mathrm{un}},\SO{2}\big) \to K(\Pi^{\mathrm{un}})$.
\end{proof}

\begin{corollary} \label{corunfolding}
Retain the notation and hypotheses from Proposition~\ref{propunfolding}.
Moreover, let \[\tau_{\Spin{\Pi,\kappa}} : \AAA\big(\Pi,\kappa,\Spin{2}\big) \to \Spin{\Pi,\kappa}\] and \[\tau_{\Spin{\Pi^{\mathrm{un}},\kappa^{\mathrm{un}}}} : \AAA\big(\Pi^{\mathrm{un}},\kappa^{\mathrm{un}},\Spin{2}\big) \to \Spin{\Pi^{\mathrm{un}},\kappa^{\mathrm{un}}}\] be the respective universal enveloping morphisms (cf.\ Definitions~\ref{defn:sl-spin-group} and \ref{defspingrp}), for each $i \in I$ let
\begin{align*}
\widetilde K_i & := (\tau_{\Spin{\Pi,\kappa}} \circ \wt\phi^i_{ij})(\Spin{2}) \leq \Spin{\Pi,\kappa},
\end{align*}
 and for each $i \in I^{\mathrm{un}}$ let
\begin{align*}
\widetilde K_i^{\mathrm{un}} &:= (\tau_{\Spin{\Pi^{\mathrm{un}},\kappa^{\mathrm{un}}}} \circ (\wt\phi^i_{ij})^{\mathrm{un}})(\Spin{2}) \leq \Spin{\Pi^{\mathrm{un}},\kappa^{\mathrm{un}}},
\end{align*}
 where $(\wt\phi^i_{ij})^{\mathrm{un}}$ denote the connecting homomorphisms of the amalgam $ \AAA\big(\Pi^{\mathrm{un}},\kappa^{\mathrm{un}},\Spin{2}\big)$.

Then the assignment 
\begin{align*}
\forall i \in I \text{ with } \kappa(i^\sigma)=2: \quad\quad \wt K_i &\to \wt K_i^{\mathrm{un}} \\ g &\mapsto (\tau_{\Spin{\Pi^{\mathrm{un}},\kappa^{\mathrm{un}}}} \circ (\wt\phi^i_{ij})^{\mathrm{un}}) \circ (\tau_{\Spin{\Pi,\kappa}} \circ \wt\phi^i_{ij})^{-1}(g) \\
\forall i \in I \text{ with } \kappa(i^\sigma)=1: \quad\quad \wt K_i &\to \wt K_i^{\mathrm{un}} \cdot \wt K_{-i}^{\mathrm{un}} \\ g &\mapsto \Big(\big(\tau_{\Spin{\Pi^{\mathrm{un}},\kappa^{\mathrm{un}}}} \circ (\wt\phi^i_{ij})^{\mathrm{un}}\big) \cdot \big(\tau_{\Spin{\Pi^{\mathrm{un}},\kappa^{\mathrm{un}}}} \circ (\wt\phi^{-i}_{-ij})^{\mathrm{un}}\big) \Big) \circ (\tau_{\Spin{\Pi,\kappa}} \circ \wt\phi^i_{ij})^{-1}(g)
\end{align*}
induces a homomorphism $\Omega_{\Spin{\Pi,\kappa}} : \Spin{\Pi,\kappa} \to \Spin{\Pi^{\mathrm{un}},\kappa^{\mathrm{un}}}$ that makes the following diagram commute:
\[
\xymatrix{
  \AAA\big(\Pi,\kappa,\Spin{2}\big) \ar[rrd]^{\tau_{\Spin{\Pi,\kappa}}} \ar[ddd]_{\pi_{\Pi,\kappa}} && && && \AAA\big(\Pi^{\mathrm{un}},\kappa^{\mathrm{un}},\Spin{2}\big) \ar[ddd]^{\pi_{\Pi^{\mathrm{un}},\kappa^{\mathrm{un}}}} \ar[dll]_{\tau_{\Spin{\Pi^{\mathrm{un}},\kappa^{\mathrm{un}}}}} \\ 
&&
  \Spin{\Pi,\kappa} \ar[d]^{\rho_{\Pi,\kappa}} \ar@{-->}[rr]^{\Omega_{\Spin{\Pi,\kappa}}}&& \Spin{\Pi^{\mathrm{un}},\kappa^{\mathrm{un}}} \ar[d]_{\rho_{\Pi^{\mathrm{un}},\kappa^{\mathrm{un}}}}\\
 && K(\Pi) \ar[rr]_{\Omega_{K(\Pi)}} && K(\Pi^{\mathrm{un}}) \\
\AAA\big(\Pi,\SO{2}\big) \ar[urr]_{\tau_{K(\Pi)}} &&&&&& \AAA\big(\Pi^{\mathrm{un}},\SO{2}\big) \ar[ull]^{\tau_{K(\Pi^{\mathrm{un}})}}
}
\]
In particular, if the admissible colouring $\kappa$ is non-trivial, then \[\rho_{\Pi,\kappa} : \Spin{\Pi,\kappa} \to K(\Pi)\] is a non-trivial central extension.
\end{corollary}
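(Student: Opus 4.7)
My plan is to first build an enveloping morphism of $\AAA(\Pi,\kappa,\Spin{2})$ into $\Spin{\Pi^{\mathrm{un}},\kappa^{\mathrm{un}}}$ by assembling local data on rank-two subgroups, then invoke the universal property of $\tau_{\Spin{\Pi,\kappa}}$ (cf.\ Definition~\ref{defspingrp}) to obtain $\Omega_{\Spin{\Pi,\kappa}}$, verify commutativity of the diagram by universality, and finally derive the non-triviality assertion from Proposition~\ref{propunfolding} together with the already established simply laced case of Theorem~\ref{m3} (i.e., Theorem~\ref{m2}; see Remark~\ref{simplylacedalreadyknown}).

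For every $i\neq j\in I$ I would define a group homomorphism $\tilde\xi_{ij}:\widetilde G_{ij}\to\Spin{\Pi^{\mathrm{un}},\kappa^{\mathrm{un}}}$ landing in the subgroup generated by the enveloping images of the $\Spin{2}$-factors indexed by the elements of $\{i,-i,j,-j\}\cap I^{\mathrm{un}}$, by case distinction on $(v_{ij},\kappa_{ij})$. When $\kappa(i^\sigma)=\kappa(j^\sigma)=2$ and the edge is simply laced, $\tilde\xi_{ij}$ is (up to canonical identification) the inclusion into the $\Spin{3}$-block of the unfolded simply laced amalgam. For $\mathrm{C}_2$-edges $i^\sigma\to j^\sigma$, which by hypothesis satisfy $\kappa(i^\sigma)=2$ and $\kappa(j^\sigma)=1$, I would use Lemma~\ref{concretedescriptiontilde}, namely $\tilde\zeta_p=\tilde\eps_{34}$ and $\tilde\zeta_l\circ\sq=\tilde\eps_{23}\cdot\tilde\eps_{14}$, to map $\wt K_{ij}\cong\SO{2}\times\SU{2}$ into the $\Spin{4}$-block supported on the $A_3$-subpath $j\leftarrow i\rightarrow -j$ of $\Pi^{\mathrm{un}}$; edges with label $\infty$ are handled analogously via Proposition~\ref{keycorollary}, and the case $\kappa_{ij}=1$ is immediate since then $\widetilde G_{ij}=G_{ij}$ and one may simply compose with $\Omega_{K(\Pi)}$ from Proposition~\ref{propunfolding}. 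Next I would verify the cocycle identities $\tilde\xi_{ij}\circ\tilde\phi^i_{ij}=\tilde\xi_{ik}\circ\tilde\phi^i_{ik}$ on the canonical generators $S(\alpha)\in\Spin{2}$: both sides by construction factor through the canonical map from $\Spin{2}$ into the rank-one subgroup(s) of $\Spin{\Pi^{\mathrm{un}},\kappa^{\mathrm{un}}}$ attached to the lift(s) of $i$ in $I^{\mathrm{un}}$, so they coincide.

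The universal property of $\tau_{\Spin{\Pi,\kappa}}$ then produces $\Omega_{\Spin{\Pi,\kappa}}$, and commutativity of the diagram is a diagram chase: the two compositions $\rho_{\Pi^{\mathrm{un}},\kappa^{\mathrm{un}}}\circ\Omega_{\Spin{\Pi,\kappa}}$ and $\Omega_{K(\Pi)}\circ\rho_{\Pi,\kappa}$ define enveloping morphisms of $\AAA(\Pi,\kappa,\Spin{2})$ into $K(\Pi^{\mathrm{un}})$ that agree on every $\wt\tau_{ij}(\wt K_{ij})$ by construction of the $\tilde\xi_{ij}$, hence agree everywhere. For the non-triviality statement, assume $\kappa$ is non-trivial and pick a vertex $i^\sigma$ in a connected component $k$ of $\Pi^{\mathrm{adm}}$ on which $\kappa$ equals $2$. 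Then, in analogy with Definition~\ref{minus1}, the element $-1_{\Spin{\Pi,\kappa},k}:=\tau_{ij}(\tilde\phi^i_{ij}(-1_{\Spin{2}}))$ is a central element of $\ker\rho_{\Pi,\kappa}$, and by the commutative diagram $\Omega_{\Spin{\Pi,\kappa}}(-1_{\Spin{\Pi,\kappa},k})$ equals the central element $-1_{\Spin{\Pi^{\mathrm{un}},\kappa^{\mathrm{un}}},k^{\mathrm{un}}}$ attached to the component $k^{\mathrm{un}}$ of $\Pi^{\mathrm{un}}$ containing $i$. By Remark~\ref{simplylacedalreadyknown} this image is non-trivial in $\Spin{\Pi^{\mathrm{un}},\kappa^{\mathrm{un}}}$, so $-1_{\Spin{\Pi,\kappa},k}\neq 1$ and $\rho_{\Pi,\kappa}$ is a non-trivial central extension.

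The main technical obstacle is the bookkeeping required for the case analysis defining $\tilde\xi_{ij}$: one has to simultaneously track the edge type $v_{ij}\in\{0,1,2,3,\infty\}$, the colour pair $(\kappa(i^\sigma),\kappa(j^\sigma))$, the orientation convention of Remark~\ref{orientation}, and the possibility pointed out in Remark~\ref{dichotomy} that $\wt K_i$ or $\wt K_j$ is isomorphic to $\{1,-1\}\times\SO{2}$ rather than to $\Spin{2}$. This last phenomenon is exactly why the unfolding assignment on $K_i$ for $\kappa(i^\sigma)=1$ must land in the product $\wt K_i^{\mathrm{un}}\cdot\wt K_{-i}^{\mathrm{un}}$ rather than in a single factor, and why the squaring map $\sq$ enters through the identity $\tilde\zeta_l\circ\sq=\tilde\eps_{23}\cdot\tilde\eps_{14}$.
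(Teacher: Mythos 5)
Your overall strategy matches the paper's: construct an enveloping morphism $\AAA(\Pi,\kappa,\Spin 2)\to\Spin{\Pi^{\mathrm{un}},\kappa^{\mathrm{un}}}$ locally on the rank-two pieces using Lemma~\ref{concretedescriptiontilde} (and the $\mathrm{C}_2$-compatibility encoded there), invoke the universal property of $\tau_{\Spin{\Pi,\kappa}}$ to produce $\Omega_{\Spin{\Pi,\kappa}}$, get commutativity of the diagram by uniqueness in universality, and for non-triviality push the central element $\tau_{ij}(\tilde\phi^i_{ij}(-1_{\Spin 2}))$ (for $i$ with $\kappa(i^\sigma)=2$) forward to the nonzero element $-1_{\Spin{\Pi^{\mathrm{un}},\kappa^{\mathrm{un}}},\KKK(i)}$ via the now-established commutative diagram, then appeal to the simply laced case of Theorem~\ref{m3}. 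This is exactly the paper's argument; the paper simply states the existence part in one sentence, whereas you expand it.

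Two small slips in your expansion are worth flagging. First, by the hypotheses carried over from Proposition~\ref{propunfolding} the diagram $\Pi$ is \emph{doubly laced}, so the only edge types occurring are $v_{ij}\in\{0,1,2\}$; the parenthetical about handling $\infty$-labelled edges via Proposition~\ref{keycorollary} (and, implicitly, $\mathrm{G}_2$-edges) is out of scope here and would only be needed after the further reduction in Definition~\ref{doublylacing}. Second, in the case $\kappa_{ij}=1$ you write that one may ``simply compose with $\Omega_{K(\Pi)}$''; that composition lands in $K(\Pi^{\mathrm{un}})$, not in $\Spin{\Pi^{\mathrm{un}},\kappa^{\mathrm{un}}}$, so it does not directly give $\tilde\xi_{ij}$. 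What actually does the job is the explicit diagonal assignment spelled out in the statement of the corollary (the map into $\wt K^{\mathrm{un}}_i\cdot\wt K^{\mathrm{un}}_{-i}$), whose well-definedness on all of $G_{ij}$ should be checked against the relation $-1_{\wt K^{\mathrm{un}}_i}\cdot(-1_{\wt K^{\mathrm{un}}_{-i}})$ rather than taken for granted. Neither slip affects the correctness of your non-triviality argument, which is sound and coincides with the paper's.
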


\begin{proof}
The existence of the homomorphism $\Omega_{\Spin{\Pi,\kappa}} : \Spin{\Pi,\kappa} \to \Spin{\Pi^{\mathrm{un}},\kappa^{\mathrm{un}}}$ is straightforward using Lemma~\ref{concretedescriptiontilde} and the universal property of $\tau_{\Spin{\Pi},\kappa} : \AAA\big(\Pi,\kappa,\Spin{2}\big) \to \Spin{\Pi,\kappa}$.

Let $i \in I$ with $\kappa(i^\sigma) = 2$ and define 
\begin{align*}
z_i &:= (\tau_{\Spin{\Pi,\kappa}} \circ \wt\phi^i_{ij})(S(\pi)) \in \widetilde K_i, \\
z_i^{\mathrm{un}}  &:= (\tau_{\Spin{\Pi^{\mathrm{un}},\kappa^{\mathrm{un}}}} \circ (\wt\phi^i_{ij})^{\mathrm{un}})(S(\pi)) \in \widetilde K_i^{\mathrm{un}}.
\end{align*}
Then $\Omega_{\Spin{\Pi,\kappa}}(z_i) = z_i^{\mathrm{un}} = -1_{\Spin{\Pi^{\mathrm{un}},\kappa^{\mathrm{un}}},\mathcal{K}(i)}$ (cf.\ Definition~\ref{minus1}).
Since the central extension $\rho_{\Pi^{\mathrm{un}}} : \Spin{\Pi^{\mathrm{un}},\kappa^{\mathrm{un}}} \to K(\Pi^{\mathrm{un}})$ is non-trivial by the simply-laced version of Theorem~\ref{m3} (see also Corollary~\ref{m1cor}), so is the central extension $\rho_{\Pi,\kappa} : \Spin{\Pi,\kappa} \to K(\Pi)$.
\end{proof}

Next we carry out Strategy~\ref{simplelacing} in order to reduce arbitrary diagrams to doubly laced ones.

\begin{definition} \label{doublylacing}
For an augmented Dynkin diagram $\Pi$ with labelling $\sigma : I \to V$ and an admissible colouring $\kappa$, let $\Pi^{\mathrm{dl}\kappa}$ be the doubly
laced diagram with identical orientation (cf.\ Remark~\ref{orientation}) obtained by
\begin{itemize}
\item removing all edges $\{i,j\}^\sigma \in E_\infty(\Pi)$ with $a(i,j)$, $a(j,i)$ even,
\item replacing all edges $\{i,j\}^\sigma \in E_\infty(\Pi)$ with $a(i,j)$ odd, $a(j,i)$ even and $\kappa(i^\sigma) = 2$ by double edges,
\item retaining all edges $\{i,j\}^\sigma \in E_2(\Pi)$ with $a(i,j)$ odd, $a(j,i)$ even and $\kappa(i^\sigma) = 2$, 
\item replacing all other edges in $\Pi$ by simple edges.
\end{itemize}
\end{definition}

\begin{remark}
The diagram $\Pi^{\mathrm{dl}\kappa}$ is a doubly laced with admissible colouring $\kappa$ such that for each pair of vertices $i^\sigma$, $j^\sigma$ with $i^\sigma \to j^\sigma$ that form a diagram of type $\mathrm{C}_2$ one has $\kappa(i^\sigma)=2$.

Note that $c(\Pi,\kappa)=c(\Pi^{\mathrm{dl}\kappa},\kappa)$.

The orientation of $\Pi^{\mathrm{dl}\kappa}$ induced by its labelling and colouring in general differs from the one that $\Pi^{\mathrm{dl}\kappa}$ inherits from the orientation of $\Pi$ induced by its labelling and colouring. Note that, of course, it is possible to change the labellings of $\Pi$ and $\Pi^{\mathrm{dl}\kappa}$ so that the two orientations coincide.
\end{remark}

The correspondence of the indices $i$, $j$ of the epimorphisms $\alpha_{ij}$ and $\tilde\alpha_{ij}$ in the following proposition to the indices $p$ and $l$ in Definition~\ref{lotsofdef} depends on the latter orientation of $\Pi^{\mathrm{dl}\kappa}$ not the former.

\begin{proposition}\label{hom-amalgams}
Given a diagram $\Pi$ and an admissible colouring $\kappa$, there exist epimorphisms of amalgams $\tilde\alpha = (\id,\tilde\alpha_{ij}) : \AAA\big(\Pi,\kappa,\Spin{2}\big) \to \AAA\big(\Pi^{\mathrm{dl}\kappa},\kappa,\Spin{2}\big)$ and $\alpha = (\id,\alpha_{ij}) : \AAA\big(\Pi,\SO{2}\big) \to \AAA\big(\Pi^{\mathrm{dl}\kappa},\SO{2}\big)$ making the following diagram commute:
\[
\xymatrix{
 \AAA\big(\Pi,\kappa,\Spin{2}\big) \ar@{-->}[rr]^{\tilde\alpha} \ar[d]_{\pi_{\Pi,\kappa}} &&  \AAA\big(\Pi^{\mathrm{dl}\kappa},\kappa,\Spin{2}\big) \ar[d]^{\pi_{\Pi^{\mathrm{dl}\kappa},\kappa}} \\
\AAA\big(\Pi,\SO{2}\big) \ar@{-->}[rr]^{\alpha} &&  \AAA\big(\Pi^{\mathrm{dl}\kappa},\SO{2}\big)
}
\]
\end{proposition}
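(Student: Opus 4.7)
The plan is to construct both epimorphisms $\alpha$ and $\tilde\alpha$ simultaneously and edge-by-edge, i.e., for each pair $i\neq j\in I$ I would define $\alpha_{ij}$ and $\tilde\alpha_{ij}$ so that they fit together into the claimed commutative square. Since the vertex sets of $\Pi$ and $\Pi^{\mathrm{dl}\kappa}$ agree and we choose $\pi=\id$, there is nothing to do on the level of the index set; only the local data at each pair $\{i,j\}$ changes. In view of Definition~\ref{def:std-ama-adm} the groups $G_{ij}$, $\widetilde G_{ij}$ and the connecting homomorphisms depend only on $a(i,j)$, $a(j,i)$ and $\kappa_{ij}$, so the construction naturally breaks into cases according to the rank-two residue type.

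First I would dispose of the trivial cases. If the edge between $i^\sigma$ and $j^\sigma$ is not altered by the operation in Definition~\ref{doublylacing}, i.e., it is a non-edge with matching colouring data, a simple edge, a double edge that survives, or an $\infty$-edge with odd $a(i,j)$, even $a(j,i)$ and $\kappa(i^\sigma)=2$ (or the symmetric version), then I set $\alpha_{ij}:=\id$ and $\tilde\alpha_{ij}:=\id$; commutativity with $\pi_{\Pi,\kappa}$ and $\pi_{\Pi^{\mathrm{dl}\kappa},\kappa}$ is immediate. The remaining cases are:
\begin{enumerate}[label=\textrm{(\arabic*)}]
\item edges $\{i,j\}^\sigma\in E_3(\Pi)$ (type $\mathrm{G}_2$): here $G_{ij}=\SO{4}$, $\widetilde G_{ij}=\Spin{4}$ and in $\Pi^{\mathrm{dl}\kappa}$ the edge becomes a simple edge, so the target groups are $\SO{3}$ and $\Spin{3}$; take $\alpha_{ij}:=\eta_k$ and $\tilde\alpha_{ij}:=\tilde\eta_k$ from Lemma~\ref{lemsurjG2I} and Proposition~\ref{surjG2I} with $k\in\{1,2\}$ chosen so that the images of $\eps_p,\eps_l$ land in the correct embedded copies of $\SO{2}$ inside $\SO{3}$ (depending on the orientation that $\Pi^{\mathrm{dl}\kappa}$ receives from its own labelling);
\item edges $\{i,j\}^\sigma\in E_2(\Pi)$ for which the colouring forces the double edge to be replaced by a simple edge: apply Corollary~\ref{corsurjB2I}, using $\tilde\zeta_k$ and $\zeta_k$ with the appropriate index $k\in\{1,2,3\}$ to match the orientation;
\item edges $\{i,j\}^\sigma\in E_\infty(\Pi)$: apply Proposition~\ref{keycorollary} (and its $\SO{2}$-level analogue obtained from the same diagram by quotienting out the centre) to pass from $\wt K^{a(i,j),a(j,i)}$ and $K^{a(i,j),a(j,i)}$ onto $\widetilde H^{a(i,j),a(j,i)}$ and $H^{a(i,j),a(j,i)}$, which by Definition~\ref{doublylacing} is precisely the rank-two group occurring in $\AAA(\Pi^{\mathrm{dl}\kappa},\SO{2})$ at the edge in question.
\end{enumerate}

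To verify that these choices assemble into bona fide amalgam epimorphisms, I would check in each case the two conditions from Definition~\ref{def:ama-iso}: compatibility of the $\tilde\alpha_{ij}$ with the connecting homomorphisms $\tilde\phi_{ij}^i$ of $\AAA(\Pi,\kappa,\Spin{2})$ and $\tilde\phi_{ij}^{\prime\,i}$ of $\AAA(\Pi^{\mathrm{dl}\kappa},\kappa,\Spin{2})$, and the analogous condition on the $\SO{2}$-level. These are immediate from the explicit identities $\tilde\eta_k\circ\tilde\eta_{p/l}=\tilde\eps_{\cdot\cdot}$ in Proposition~\ref{surjG2I}, $\tilde\zeta_k\circ\tilde\zeta_{p/l}=\tilde\eps_{\cdot\cdot}$ in Corollary~\ref{corsurjB2I} and $\tilde\theta\circ\tilde\theta_i=\tilde\delta_i$ in Proposition~\ref{keycorollary}. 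Commutativity of the overall square $\pi_{\Pi^{\mathrm{dl}\kappa},\kappa}\circ\tilde\alpha=\alpha\circ\pi_{\Pi,\kappa}$ reduces, by the construction in Remark~\ref{rem:spin-ama-to-so-ama-and-back-2}, to the commutativity of the corresponding $\rho$-squares in the rank-two situation, which is exactly the content of Lemma~\ref{lem:ama-embed-and-rho-commute-G2}, Lemma~\ref{lem:ama-embed-and-rho-commute-B2} and Proposition~\ref{keycorollary}.

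The main obstacle I expect is bookkeeping rather than mathematical depth: for each rank-two residue one must choose the right variant among $\eta_1,\eta_2$ (resp.\ $\zeta_1,\zeta_2,\zeta_3$, resp.\ one of the automorphisms of $\Spin{3}$ from Lemma~\ref{10}) so that the image of the ``point''-type or ``line''-type embedded circle in $\Pi$ is sent to the embedded circle in $\Pi^{\mathrm{dl}\kappa}$ corresponding to the same vertex $i^\sigma$, and the choice interacts non-trivially with the orientation conventions fixed in Remark~\ref{orientation}: the orientation of $\Pi^{\mathrm{dl}\kappa}$ induced by its own labelling and colouring may differ from the inherited orientation, and Proposition~\ref{prop:adm-ama-labelling-irrelevant} is what allows us to absorb this discrepancy without changing the isomorphism type of the target amalgam. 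Once this matching is fixed consistently (possibly after a relabelling afforded by Consequence~\ref{14} and Proposition~\ref{prop:adm-ama-labelling-irrelevant}), the verification of all required identities is a case-by-case application of the rank-two results already established.
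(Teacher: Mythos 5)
Your proposal is correct and follows essentially the same edge-by-edge approach as the paper's proof, invoking the same rank-two epimorphisms from Lemma~\ref{lemsurjG2} and Proposition~\ref{surjG2} for $\mathrm{G}_2$-edges, Corollary~\ref{corsurjB2I} for $\mathrm{C}_2$-edges, and Propositions~\ref{keyproposition} and \ref{keycorollary} for $\infty$-edges, then checking commutativity via the corresponding $\rho$-compatibility statements. One minor slip in your list of ``not altered'' cases: an $\infty$-edge with $a(i,j)$ odd, $a(j,i)$ even and $\kappa(i^\sigma)=2$ \emph{is} altered by Definition~\ref{doublylacing} (it is replaced by a double edge), so $\alpha_{ij}$ and $\tilde\alpha_{ij}$ there must be the nontrivial maps $K^{r,s}\to\U{2}$ and $\wt K^{r,s}\to\SO{2}\times\SU{2}$ rather than the identity --- but since your case (3) covers all $\infty$-edges anyway, the construction still goes through.
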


\begin{proof}
There exist the following group epimorphisms that provide the epimorphism $\alpha$:
\begin{align*}
\mathrm{U}_2(\CC) & \to \SO{3} & \text{(see Corollary~\ref{corsurjB2b})} \\
\SO{4} & \to \SO{3} & \text{(see  Lemma~\ref{lemsurjG2})} \\
K(A_1) & \to \SO{2} \times \SO{2} & \text{(see Proposition~\ref{keyproposition})} \\
K(A_2) & \to \SO{3} & \text{(see Proposition~\ref{keyproposition})} \\
K(A_3) & \to \mathrm{U}_2(\CC) & \text{(see Proposition~\ref{keyproposition})}
\end{align*}
where $A_1 = \begin{pmatrix} 2 & -r_1 \\ -s_1 & 2 \end{pmatrix}$ with $r_1$, $s_1$ even,  $A_2 = \begin{pmatrix} 2 & -r_2 \\ -s_2 & 2 \end{pmatrix}$ with $r_2$, $s_2$ odd,  $A_3 = \begin{pmatrix} 2 & -r_3 \\ -s_3 & 2 \end{pmatrix}$ with one of $r_3$, $s_3$ even, the other odd.
The respective lifts to the spin covers exist by Corollary~\ref{surjB2b}, Proposition~\ref{surjG2}, Corollary~\ref{keycorollary}, yielding the epimorphism $\tilde\alpha$.
By  \ref{lemsurjG2}, \ref{surjG2} and \ref{corsurjB2b} and \ref{keyproposition}, \ref{keycorollary} the above diagram commutes.
\end{proof}

\begin{proposition} \label{thm:Spin(Delta)-covers-Spin(Delta-sl)}
Given a diagram $\Pi$ and an admissible colouring $\kappa$, there exist (uniquely determined) epimorphisms
\[\Xi_{\Spin{\Pi,\kappa}}: \Spin{\Pi,\kappa} \to \Spin{\Pi^{\mathrm{dl}\kappa},\kappa} \qquad
\text{and} \quad \quad
\Xi_{K(\Pi)} : K(\Pi) \to K(\Pi^{\mathrm{dl}\kappa})\]
resulting in the following commutative diagram:
\[
\xymatrix{
 \AAA\big(\Pi,\kappa,\Spin{2}\big) \ar[rrrrrr]^{\tilde\alpha} \ar[drr]_{\tau_{\Spin{\Pi,\kappa}}} \ar[ddd]_{\pi_{\Pi,\kappa}} && && &&  \AAA\big(\Pi^{\mathrm{dl}\kappa},\kappa,\Spin{2}\big) \ar[lld]^{\tau_{\Spin{\Pi^{\mathrm{dl}\kappa},\kappa}}} \ar[ddd]^{\pi_{\Pi^{\mathrm{dl}\kappa},\kappa}} \\
&&   \Spin{\Pi,\kappa} \ar@{-->}[rr]^{\Xi_{\Spin{\Pi,\kappa}}} \ar[d]^{\rho_\Pi} &&
  \Spin{\Pi^{\mathrm{dl}\kappa},\kappa} \ar[d]^{\rho_{\Pi^{\mathrm{dl}\kappa},\kappa}} \\
&& K(\Pi) \ar@{-->}[rr]^{\Xi_{K(\Pi)}} && K(\Pi^{\mathrm{dl}\kappa}) \\
\AAA\big(\Pi,\SO{2}\big) \ar[rrrrrr]^{\alpha}  \ar[urr]^{\tau_{K(\Pi)}}  && && &&  \AAA\big(\Pi^{\mathrm{dl}\kappa},\SO{2}\big) \ar[llu]_{\tau_{K(\Pi^{\mathrm{dl}\kappa})}}
}
\]
\end{proposition}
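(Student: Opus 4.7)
The plan is to obtain both $\Xi_{\Spin{\Pi,\kappa}}$ and $\Xi_{K(\Pi)}$ by two independent applications of the universality property encoded in Lemma~\ref{lem:ama-epi-induces-envelope-epi}, and then to verify the commutativity of the inner square (relating the two newly constructed maps with $\rho_{\Pi,\kappa}$ and $\rho_{\Pi^{\mathrm{dl}\kappa},\kappa}$) by a uniqueness argument. Proposition~\ref{hom-amalgams} has already supplied the amalgam epimorphisms $\tilde\alpha$ and $\alpha$ on the top and bottom faces of the cube, together with the commutativity of the outer back square, so essentially no further combinatorial or geometric work will be needed; the argument is formal diagram chasing.

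First, I would apply Lemma~\ref{lem:ama-epi-induces-envelope-epi} to the amalgam epimorphism $\tilde\alpha : \AAA\big(\Pi,\kappa,\Spin{2}\big) \to \AAA\big(\Pi^{\mathrm{dl}\kappa},\kappa,\Spin{2}\big)$, using that $\big(\Spin{\Pi,\kappa},\tau_{\Spin{\Pi,\kappa}}\big)$ is a universal enveloping group of $\AAA\big(\Pi,\kappa,\Spin{2}\big)$ by Definition~\ref{defspingrp} and that $\big(\Spin{\Pi^{\mathrm{dl}\kappa},\kappa},\tau_{\Spin{\Pi^{\mathrm{dl}\kappa},\kappa}}\big)$ is an enveloping group of $\AAA\big(\Pi^{\mathrm{dl}\kappa},\kappa,\Spin{2}\big)$. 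This produces a unique epimorphism $\Xi_{\Spin{\Pi,\kappa}} : \Spin{\Pi,\kappa} \to \Spin{\Pi^{\mathrm{dl}\kappa},\kappa}$ making the upper trapezoid of the cube commute. An entirely analogous application of Lemma~\ref{lem:ama-epi-induces-envelope-epi} to $\alpha : \AAA\big(\Pi,\SO{2}\big) \to \AAA\big(\Pi^{\mathrm{dl}\kappa},\SO{2}\big)$, using Theorem~\ref{thm:K-univ-adm}, yields a unique epimorphism $\Xi_{K(\Pi)} : K(\Pi) \to K(\Pi^{\mathrm{dl}\kappa})$ making the lower trapezoid commute.

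It remains to verify that the inner square commutes, that is,
\[
 \Xi_{K(\Pi)} \circ \rho_{\Pi,\kappa} = \rho_{\Pi^{\mathrm{dl}\kappa},\kappa} \circ \Xi_{\Spin{\Pi,\kappa}}\ .
\]
I would argue by the uniqueness clause in the universal property of $\tau_{\Spin{\Pi,\kappa}}$. Both sides of the equation are homomorphisms $\Spin{\Pi,\kappa} \to K(\Pi^{\mathrm{dl}\kappa})$, and composing either with $\tau_{\Spin{\Pi,\kappa}}$ produces an enveloping morphism $\AAA\big(\Pi,\kappa,\Spin{2}\big) \to K(\Pi^{\mathrm{dl}\kappa})$. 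Using the already established commutativity of the upper and lower trapezoids (via $\Xi_{\Spin{\Pi,\kappa}}$, $\Xi_{K(\Pi)}$), the front and back trapezoids (by Proposition~\ref{hom-amalgams}), and the left and right trapezoids (by Lemma~\ref{lem:adm-K-envelops-spin-amalgam}, which yields the central extensions $\rho_{\Pi,\kappa}$ and $\rho_{\Pi^{\mathrm{dl}\kappa},\kappa}$), both resulting enveloping morphisms coincide on each $\widetilde K_{ij}$ with the composition
\[
 \widetilde K_{ij} \;\xrightarrow{\tilde\alpha_{ij}}\; \widetilde K_{ij}^{\mathrm{dl}\kappa} \;\xrightarrow{\pi_{ij}^{\mathrm{dl}\kappa}}\; K_{ij}^{\mathrm{dl}\kappa} \;\xrightarrow{\tau^{\mathrm{dl}\kappa}_{ij}}\; K(\Pi^{\mathrm{dl}\kappa})\ .
\]
Hence by the uniqueness part of the universal property of $\big(\Spin{\Pi,\kappa},\tau_{\Spin{\Pi,\kappa}}\big)$ the two maps agree. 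I expect the only mildly delicate part to be this final bookkeeping step, which amounts to chasing the various instances of Notation~\ref{nota:amalgamcomm} through the cube.
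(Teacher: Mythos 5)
Your proposal is correct and follows essentially the same route as the paper: construct $\Xi_{\Spin{\Pi,\kappa}}$ and $\Xi_{K(\Pi)}$ from the universal property of the respective universal enveloping groups (invoking Lemma~\ref{lem:ama-epi-induces-envelope-epi}, which the paper's proof in effect unrolls inline by observing that $\tau_{\Spin{\Pi^{\mathrm{dl}\kappa},\kappa}}\circ\tilde\alpha$ is an enveloping morphism), and then derive the inner square from the uniqueness clause of that universal property together with the commutativity of the outer, left, and right faces. No gap.
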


\begin{proof}
The left-hand and the right-hand commutative squares exist by Lemma~\ref{lem:adm-K-envelops-spin-amalgam}. The outer commutative square exists by Proposition~\ref{hom-amalgams}. The composition \[{\tau_{\Spin{\Pi^{\mathrm{dl}\kappa},\kappa}}} \circ \tilde\alpha : \AAA(\Pi,\kappa,\Spin{2}) \to \Spin{\Pi^{\mathrm{dl}\kappa},\kappa}\] is an enveloping morphism, whence the universal property of $\Spin{\Pi,\kappa}$ yields the uniquely determined epimorphism $\Xi_{\Spin{\Pi},\kappa}: \Spin{\Pi,\kappa} \to \Spin{\Pi^{\mathrm{dl}\kappa},\kappa}$, which makes the upper square commute. Similarly, one obtains the uniquely determined epimorphism $\Xi_{K(\Pi)} : K(\Pi) \to K(\Pi^{\mathrm{dl}\kappa})$, that makes the lower square commute. Also \[\tau_{K(\Pi^{\mathrm{dl}\kappa})} \circ \alpha \circ \pi_{\Pi,\kappa} = \tau_{K(\Pi^{\mathrm{dl}\kappa}),\kappa} \circ \pi_{\Pi^{\mathrm{dl}\kappa},\kappa} \circ \tilde\alpha : \AAA(\Pi,\kappa,\Spin{2}) \to K(\Pi^{\mathrm{dl}\kappa})\] is an enveloping morphism. By the universal property of $\tau_{\Spin{\Pi,\kappa}} : \AAA(\Pi,\kappa,\Spin{2}) \to \Spin{\Pi,\kappa}$ in connection with the outer commuting squares one has $\rho_{\Pi^{\mathrm{dl}\kappa},\kappa} \circ \Xi_{\Spin{\Pi},\kappa} = \Xi_{K(\Pi)} \circ \rho_{\Pi,\kappa}$. That is, also the inner square commutes.
\end{proof}

\begin{proof}[Proof of Theorem~\ref{m3}.]
Lemma~\ref{lem:adm-K-envelops-spin-amalgam} provides a canonical central extension
$\rho_{\Pi,\kappa} : \Spin{\Pi,\kappa} \to K(\Pi)$.
By Proposition~\ref{thm:Spin(Delta)-covers-Spin(Delta-sl)}, there exist an epimorphism $\Xi_{\Spin{\Pi,\kappa}} : \Spin{\Pi,\kappa} \to
\Spin{\Pi^{\mathrm{dl}\kappa},\kappa}$ resulting in the commutative diagram:
\[
\xymatrix{\Spin{\Pi,\kappa} \ar[rr]^{\Xi_{\Spin{\Pi,\kappa}}} \ar[d]^{\rho_{\Pi,\kappa}} && \Spin{\Pi^{\mathrm{dl}\kappa},\kappa} \ar[d]^{\rho_{\Pi^{\mathrm{dl}\kappa},\kappa}} \\
K(\Pi) \ar[rr]^{\Xi_{K(\Pi)}} && K(\Pi^{\mathrm{dl}\kappa})
}
\]
Observe first that the claim is obvious if $\kappa$ is trivial (i.e., $c(\Pi,\kappa)=0$). Moreover, it is also true if $\kappa$ is elementary (i.e., $c(\Pi,\kappa)=1$). Indeed, since $\ker(\rho_{\Pi^{\mathrm{dl}\kappa},\kappa}) \neq \{ 1 \}$ by Corollary~\ref{corunfolding}, necessarily also $\ker(\rho_{\Pi,\kappa}) \neq \{ 1 \}$ by the homomorphism theorem of groups. 
By Remark~\ref{allodd} each connected component of $\Pi^{\mathrm{adm}}$ contributes at most a factor $2$ to the order of $\ker(\rho_{\Pi,\kappa})$. Since only those connected components that admit a vertex $i^\sigma$ with $\kappa(i^\sigma)=2$ actually can contribute, one has $|\ker(\rho_{\Pi,\kappa})| \leq 2^{c(\Pi,\kappa)}$. The claim follows for elementary colourings. 

Since $c(\Pi,\kappa)=c(\Pi^{\mathrm{dl}\kappa},\kappa)$, once more the homomorphism theorem of groups implies that is suffices to show the claim for the central extension \[\rho_{\Pi^{\mathrm{dl}\kappa},\kappa} : \Spin{\Pi^{\mathrm{dl}\kappa},\kappa} \to K(\Pi^{\mathrm{dl}\kappa}).\] Since the upper bound that has just been established also holds for $|\ker(\rho_{\Pi^{\mathrm{dl}\kappa},\kappa})|$, it suffices to prove $2^{c(\Pi^{\mathrm{dl}\kappa},\kappa)} \leq |\ker(\rho_{\Pi^{\mathrm{dl}\kappa},\kappa})|$.
To this end decompose $\kappa = \kappa_1 + \cdots + \kappa_{c(\Pi^{\mathrm{dl}\kappa},\kappa)}$ into a sum of pairwise distinct elementary admissible colourings of $\Pi^{\mathrm{dl}\kappa}$.
Let \[\left\{ i_t \in I \mid  1 \leq t \leq c\left(\Pi^{\mathrm{dl}\kappa},\kappa\right), \kappa_t(i_t)=2 \right\}\] be a set of representatives of the connected components of $(\Pi^{\mathrm{dl}\kappa})^\mathrm{adm}$ on which $\kappa$ takes value $2$.
By Observation~\ref{naturalhomo}, for each $1 \leq t \leq c(\Pi^{\mathrm{dl}\kappa},\kappa)$ there exists a central extension \[\pi_t : \Spin{\Pi^{\mathrm{dl}\kappa},\kappa} \to \Spin{\Pi^{\mathrm{dl}\kappa},\kappa_t}.\] Since for each $1 \leq t \leq c(\Pi^{\mathrm{dl}\kappa},\kappa)$ the colouring $\kappa_t$ is elementary, the central extension \[\rho_{\Pi^{\mathrm{dl}\kappa},\kappa_t} : \Spin{\Pi^{\mathrm{dl}\kappa},\kappa_t} \to K(\Pi^{\mathrm{dl}\kappa})\] has kernel of order two. The kernel of the natural homomorphism \[\Spin{\Pi^{\mathrm{dl}\kappa},\kappa} \longrightarrow \prod_{t=1}^{c(\Pi^{\mathrm{dl}\kappa},\kappa)} \Spin{\Pi^{\mathrm{dl}\kappa},\kappa_t} \longrightarrow \prod_{t=1}^{c(\Pi^{\mathrm{dl}\kappa},\kappa)} K(\Pi^{\mathrm{dl}\kappa})\] equals $\ker(\rho_{\Pi^{\mathrm{dl}\kappa},\kappa})$.
By the proof of Corollary~\ref{corunfolding} one has $\pi_t(z_{i_s}) \neq 1$ in $\Spin{\Pi^{\mathrm{dl}\kappa},\kappa_t}$ if and only if $s=t$. We conclude that \[2^{c(\Pi^{\mathrm{dl}\kappa},\kappa)} \leq |\langle z_{i_t} \mid 1 \leq t \leq c(\Pi^{\mathrm{dl}\kappa},\kappa) \rangle| \leq |\ker(\rho_{\Pi^{\mathrm{dl}\kappa},\kappa})|.\]
The claim follows.
\end{proof}

\part{Applications}
%  ____            _
% |  _ \ __ _ _ __| |_
% | |_) / _` | '__| __|
% |  __/ (_| | |  | |_
% |_|   \__,_|_|   \__|

\section{Spin-extended Weyl groups} \label{extended} \label{sec14}
%  ___         _   _          
% / __| ___ __| |_(_)___ _ _  
% \__ \/ -_) _|  _| / _ \ ' \ 
% |___/\___\__|\__|_\___/_||_|

Let $A$ be a generalized Cartan matrix with corresponding augmented Dynkin diagram $\Pi:=\Pi(A)$.
In this section we formally construct the spin cover $\Wspin(\Pi,\kappa)$ whose existence has been postulated in \cite[Section~3.5]{DamourHillmann}, for arbitrary diagrams $\Pi$ with admissible colouring $\kappa$ (cf.\ Definition~\ref{adtypedef}). In fact, we provide both a concrete embedding of $\Wspin(\Pi,\kappa)$ into $\Spin{\Pi,\kappa}$ and a presentation by generators and relations.

\begin{notation}
Throughout this section, let $\Pi$ be an augmented Dynkin diagram with vertex set $V$, labelling $\sigma:I\to V$ and 
admissible colouring $\kappa:V\to\{1,2\}$.
Let $A=(a(i,j))_{i,j\in I}$
be the generalized Cartan matrix associated to $\Pi$.  Let $n:=\abs{V}$,
i.e., $I=\{1,\ldots,n\}$, and let \[J:=\{i\in I \mid \kappa(i^\sigma)=1 \}\subset I.\]
\end{notation}

\begin{defn}
For $i\neq j\in I$, recall from \ref{adtypedef}
\[
n(i,j)=
\begin{cases}
0, & \text{if } a(i,j) \text{ is even}, \\
1, & \text{if } a(i,j) \text{ is odd}.
\end{cases}
\]
Define
\[
m_{ij} := 
\begin{cases}
 2, &\text{ if } a(i,j)a(j,i)=0, \\
 3, &\text{ if } a(i,j)a(j,i)=1, \\
 4, &\text{ if } a(i,j)a(j,i)=2, \\
 6, &\text{ if } a(i,j)a(j,i)=3, \\
 0, &\text{ if } a(i,j)a(j,i)\geq 4. \\
\end{cases}
\]
\end{defn}

\begin{defn} \label{defwspin}
Consider the standard amalgams with respect to $\Pi$ and $\kappa$
\[
\AAA\big(\Pi,\kappa,\Spin{2}\big)=\{\widetilde{G}_{ij},\tilde\phi_{ij}^i \mid i\neq j\in I \}
\quad\text{and}\quad
\AAA\big(\Pi,\SO{2}\big)=\{ G_{ij},\phi_{ij}^i \mid i\neq j\in I\}
\]
with enveloping homomorphisms $\tilde\psi_{ij} : \widetilde{G}_{ij} \to \Spin{\Pi,\kappa}$ and $\psi_{ij} : G_{ij} \to K(\Pi)$ to the respective universal enveloping groups.

For $i\neq j\in I$ let
\[ \wh{s}_i:= 
\begin{cases}
\tilde\psi_{ij}(\tilde\phi_{ij}^i(S(\tfrac{\pi}{4}))), & \text{ if } i\notin J, \\
\tilde\psi_{ij}(\tilde\phi_{ij}^i(S(\tfrac{\pi}{4})))^2=\tilde\psi_{ij}(\tilde\phi_{ij}^i(S(\tfrac{\pi}{2}))), & \text{ if } i\in J,
\end{cases}
\]
and set
\[ \wh W:=\wh{W}(\Pi,\kappa):=\cgen{ \wh{s}_i }{ i\in I } \leq \Spin{\Pi,\kappa}. \]
Similarly, let
\[ \wt{s}_i:=  \psi_{ij}(\phi_{ij}^i(D(\tfrac{\pi}{2})))
\qquad\text{ and }\qquad
\wt W:=\wt{W}(\Pi):=\cgen{ \wt{s}_i }{ i\in I } \leq K(\Pi). \]

Note that the elements $\wh{s}_i$ and $\wt{s}_i$ are well-defined --- in particular, independent of the
choice of $j$ --- due to the definition of enveloping groups.
\end{defn}

\begin{defn} \label{preswspin}
The \Defn{Weyl group $W(\Pi)$} associated to $\Pi$ is given by the finite presentation
\begin{align*}
W :=W(\Pi):=
\Big\langle s_1,\ldots,s_n
\,\vert\,
& s_i^2=1 \text{ for } i \in I, \\
& \underbrace{s_i s_j s_i \cdots}_{m_{ij} \text{ factors}} =
  \underbrace{s_j s_i s_j \cdots}_{m_{ij} \text{ factors}} \text{ for } i\neq j\in I
\Big\rangle.
\end{align*}
The \Defn{extended Weyl group $\Wext(\Pi)$} associated to $\Pi$ is given by the finite presentation
\begin{align}
\Wext :=\Wext(\Pi) :=
\Big\langle t_1,\ldots,t_n
\,\vert\,
& t_i^4=1 \text{ for } i \in I, \label{rels:Wext-ri8} \tag{T1} \\
& t_j^{-1} t_i^2 t_j = t_i^2 t_j^{2n(i,j)} \quad \text{ for } i\neq j\in I, \label{rels:Wext-D-conj} \tag{T2} \\
& \underbrace{t_i t_j t_i \cdots}_{m_{ij} \text{ factors}} =
  \underbrace{t_j t_i t_j \cdots}_{m_{ij} \text{ factors}} \label{rels:Wext-braid} \tag{T3} \text{ for } i\neq j\in I
\Big\rangle.
\end{align}
The \Defn{spin-extended Weyl group $\Wspin(\Pi)$} associated to $\Pi$ is given by the finite presentation
\begin{align}
\Wspin :=\Wspin(\Pi):=
\Big\langle r_1,\ldots,r_n
\,\vert\,
& r_i^8=1, \text{ for } i \in I,
        \label{rels:Wspin-ri8} \tag{R1} \\
& r_j^{-1} r_i^2 r_j = r_i^2 r_j^{2n(i,j)} \quad\text{ for } i\neq j\in I,
        \label{rels:Wspin-D-conj} \tag{R2} \\
& \underbrace{r_i r_j r_i \cdots}_{m_{ij} \text{ factors}} =
  \underbrace{r_j r_i r_j \cdots}_{m_{ij} \text{ factors}}
    \label{rels:Wspin-braid} \tag{R3}
  \text{ for } i\neq j\in I \Big\rangle.
\end{align}
\end{defn}

\begin{remark}\label{differentrelations}
The group $\Wext(\Pi)$ is studied in \cite{Tits:1966} and \cite{Kac/Peterson:1985}. By \cite[Corollary 2.4]{Kac/Peterson:1985} and its proof there exists an isomorphism \[\Wext(\Pi) \cong \wt W(\Pi)\] induced by $t_i \mapsto \wt s_i$.
The elements $(\wt s_i)^2$ are torus elements of $K(\Pi)$ of order two, so that the relation (\ref{rels:Wext-D-conj}) \[t_j^{-1} t_i^2 t_j = t_i^2 t_j^{2n(i,j)} \quad\quad \Longleftrightarrow \quad\quad t_i^{-2} t_j^{-1} t_i^2 = t_j^{2n(i,j)}t_j^{-1}\] is equivalent to the centralize-or-invert relation (\ref{crucialidentity}) for torus elements of order two discussed in Remark~\ref{tcentralizes}. If one prefers left-conjugation, one can therefore use one of the relations
\begin{align}
t_j t_i^2 t_j^{-1} = t_i^2 t_j^{-2n(i,j)} \quad\quad & \Longleftrightarrow \quad\quad t_i^{-2} t_j t_i^2 = t_j^{-2n(i,j)}t_j  \tag{T2'}\\ \text{or} \quad\quad\quad\quad\quad \notag \\
t_j t_i^2 t_j^{-1} = t_j^{2n(i,j)} t_i^2 \quad\quad & \Longleftrightarrow \quad\quad t_i^{2} t_j^{-1} t_i^{-2} = t_j^{-1}t_j^{2n(i,j)} \tag{T2''}
\end{align}
instead of (\ref{rels:Wext-D-conj}).

This accounts for the missing minus sign in relation (\ref{rels:Wext-D-conj}) of our presentation of $\Wext(\Pi)$ compared to relation (n1) given in \cite[Corollary 2.4]{Kac/Peterson:1985}. Since, because of relation (\ref{rels:Wext-ri8}) $t_i^4=1$, only the parity of $a(i,j)$ matters, there is no harm in replacing $a(i,j)$ in relation (n1) from \cite[Corollary 2.4]{Kac/Peterson:1985} with $n(i,j)$, as we have done in our relation (\ref{rels:Wext-D-conj}). 

The version of the relations of $\Wext(\Pi)$ we chose can then be directly generalized -- simply by weakening relation (\ref{rels:Wext-ri8}) to (\ref{rels:Wspin-ri8}) -- in order to obtain suitable relations of $\Wspin(\Pi)$ to make it the appropriate spin-extended Weyl group for the spin group $\Spin{\Pi}$.
Valid alternatives for relation (\ref{rels:Wspin-D-conj}) are
 \begin{align}
r_j r_i^2 r_j^{-1} = r_i^2 r_j^{-2n(i,j)} \quad\quad & \Longleftrightarrow \quad\quad r_i^{-2} r_j r_i^2 = r_j^{-2n(i,j)}r_j  \tag{R2'}\\ \text{or} \quad\quad\quad\quad\quad \notag \\
r_j r_i^2 r_j^{-1} = r_j^{2n(i,j)} r_i^2 \quad\quad & \Longleftrightarrow \quad\quad r_i^{2} r_j^{-1} r_i^{-2} = r_j^{-1}r_j^{2n(i,j)} \tag{R2''}
\end{align}
as all are equivalent to the centralize-or-invert relation (\ref{crucialidentity2}) from Proposition~\ref{keycorollary}.
\end{remark}

\begin{lemma} \label{manyrelations}
In $\Wspin(\Pi)$ for all $i,j\in I$ the following relations hold:
\begin{enumerate}
\item $[r_i^2,r_j^2] = r_j^{4n(i,j)}$ and $r_i^{4n(j,i)} = r_j^{4n(i,j)}$,
\item if $n(i,j)=1=n(i,j)$, then $r_i^4=r_j^4$; otherwise $[r_i^2,r_j^2] = 1$,
\item if $n(i,j)=0$, $n(j,i)=1$, then $r_i^4=1$,
\item $r_j r_i^4r_j^{-1}
= \begin{cases}
 r_i^4 & \text{ if } n(i,j)=0, \\
 r_i^2 r_j^2 r_i^2 r_j^2 & \text{ if } n(i,j)=1,
\end{cases}$
\item $[r_j,r_i^4]=1$.
\end{enumerate}
\end{lemma}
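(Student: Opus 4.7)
The plan is to derive all five parts from relation (R2), $r_j^{-1}r_i^2r_j = r_i^2 r_j^{2n(i,j)}$, together with the order-8 relation (R1). The core computation is to iterate (R2): applying the conjugation $r_j^{-1}(\cdot)r_j$ twice to $r_i^2$, I obtain
\[
 r_j^{-2} r_i^2 r_j^2 \;=\; r_j^{-1}\bigl(r_i^2 r_j^{2n(i,j)}\bigr)r_j \;=\; \bigl(r_j^{-1}r_i^2r_j\bigr)r_j^{2n(i,j)} \;=\; r_i^2 r_j^{4n(i,j)},
\]
which immediately rearranges to $[r_i^2, r_j^2] = r_i^{-2}r_j^{-2}r_i^2r_j^2 = r_j^{4n(i,j)}$. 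This is the first identity of~(a). Swapping $i$ and $j$ gives $[r_j^2, r_i^2] = r_i^{4n(j,i)}$; combining with the identity $[r_i^2,r_j^2]\cdot[r_j^2,r_i^2]=1$ yields $r_i^{4n(j,i)} = r_j^{-4n(i,j)}$. Since $n(i,j)\in\{0,1\}$ and $r_j^8=1$ by~(R1), the power $r_j^{4n(i,j)}$ is an involution (or trivial), hence equals its own inverse, giving the second identity of~(a).

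Parts~(b) and~(c) are then just a case analysis on the pair $\bigl(n(i,j),n(j,i)\bigr)$ applied to the two identities of~(a): the case $(1,1)$ gives $r_i^4=r_j^4$; the case $(0,1)$ gives $r_i^4=1$, which is~(c); the case $(1,0)$ symmetrically forces $r_j^4=1$; and whenever at least one of $n(i,j),n(j,i)$ vanishes, the commutator $[r_i^2,r_j^2]=r_j^{4n(i,j)}$ or $=r_i^{4n(j,i)}$ is trivial, completing~(b).

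For~(d), I first rearrange~(R2) into the ``forward conjugation'' form $r_j r_i^2 r_j^{-1} = r_i^2 r_j^{-2n(i,j)}$ (by solving (R2) for $r_i^2$ and substituting, or equivalently inverting both sides of (R2) and moving factors across), then square to obtain $r_j r_i^4 r_j^{-1} = (r_i^2 r_j^{-2n(i,j)})^2$. When $n(i,j)=0$ this collapses to $r_i^4$. When $n(i,j)=1$, the identity $[r_i^2,r_j^2]=r_j^4$ from~(a) rearranges to $r_j^{-2}r_i^2 = r_i^2 r_j^2$, equivalently $r_j^2 r_i^2 r_j^2 = r_i^2$; substituting this centrally in both $r_i^2 r_j^{-2} r_i^2 r_j^{-2}$ and $r_i^2 r_j^2 r_i^2 r_j^2$ shows that both expressions collapse to $r_i^4$. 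In particular $r_j r_i^4 r_j^{-1} = r_i^2 r_j^2 r_i^2 r_j^2$, proving~(d). Part~(e) is then an immediate corollary: in both cases of $n(i,j)$, the explicit formula for $r_j r_i^4 r_j^{-1}$ just computed is $r_i^4$ itself, so $[r_j, r_i^4]=1$.

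I do not anticipate any serious obstacle; the only mildly delicate bookkeeping is keeping signs of exponents consistent when invoking $r_j^8=1$ to identify $r_j^{-4n(i,j)}$ with $r_j^{4n(i,j)}$ in step~(a), and recognising that under $n(i,j)=1$ the relation~(a) gives the very strong ``conjugation by $r_j^2$ inverts $r_i^2$'' statement $r_j^2 r_i^2 r_j^2 = r_i^2$, which is what makes the two a priori distinct expressions in~(d) collapse to the same element.
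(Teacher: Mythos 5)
Your proof is correct and follows essentially the same route as the paper: iterate (R2) to obtain $[r_i^2,r_j^2]=r_j^{4n(i,j)}$, use the commutator inversion identity with $r_j^8=1$ for the second half of~(a), and then read off~(b) and~(c) by the case analysis on $(n(i,j),n(j,i))$. For~(d) and~(e) you actually tidy up a small wrinkle in the paper's argument: the paper computes $r_j^{-1}r_i^4r_j=(r_i^2 r_j^{2n(i,j)})^2$ (the opposite conjugation direction from what~(d) asserts) and then reconciles this only after~(e) is established via a further case split on $n(j,i)$; you instead rearrange (R2) into $r_j r_i^2 r_j^{-1}=r_i^2 r_j^{-2n(i,j)}$, square to hit the conjugate the statement actually names, and use the single identity $r_j^2 r_i^2 r_j^2 = r_i^2$ (a reformulation of $[r_i^2,r_j^2]=r_j^4$ when $n(i,j)=1$) to collapse both $r_i^2 r_j^{-2} r_i^2 r_j^{-2}$ and $r_i^2 r_j^2 r_i^2 r_j^2$ to $r_i^4$ at once, giving~(d) and~(e) together without subdividing on $n(j,i)$.
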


\begin{proof}
One computes
\begin{align*}
& r_j^{-1}r_i^2r_j = r_i^2r_j^{2n(i,j)} \\
\Longleftrightarrow  \quad & r_j^{-2}r_i^2r_j^{2} = r_j r_i^2r_j^{-1+2n(i,j)} = r_i^2r_j^{2n(i,j)}r_j^{2n(i,j)}\\
\Longleftrightarrow  \quad & [r_i^{2},r_j^2] = r_j^{4n(i,j)}
\end{align*}
Moreover:
\begin{eqnarray}
r_i^{4n(j,i)} = r_i^{-4n(j,i)} = [r_j^2,r_i^{2}]^{-1} = [r_i^{2},r_j^2] = r_j^{4n(i,j)}. \label{chainofequalities}
\end{eqnarray}
If $n(i,j)=1=n(j,i)$ this immediately implies $r_i^4=r_j^4$. Furthermore, if one of $n(i,j)$, $n(j,i)$ is $0$, then each term in (\ref{chainofequalities}) is equal to $1$, in particular $[r_i^2,r_j^2] = 1$. In case $n(i,j)=0$ and $n(j,i)=1$ one obtains $r_i^4=r_i^{4n(j,i)}=r_j^{4n(i,j)}=1$.  

Also:
\[r_j^{-1} r_i^4r_j = \left(r_j^{-1}r_i^2r_j\right)^2
= \left(r_i^2 r_j^{2n(i,j)}\right)^2
= \begin{cases}
 r_i^4 & \text{ if } n(i,j)=0, \\
 r_i^2 r_j^2 r_i^2 r_j^2 & \text{ if } n(i,j)=1.
\end{cases}\]
If $n(i,j)=1$ and $n(j,i)=0$, then $[r_i^2,r_j^2]=1$ and $r_j^4=1$,
whence $r_j^{-1} r_i^4r_j = r_i^2 r_j^2 r_i^2 r_j^2 = r_i^4r_j^4 = r_i^4$.
Finally, if $n(i,j)=1=n(j,i)$, then $r_i^4=r_j^4$, so we also have $r_j^{-1} r_i^4r_j = r_j^{-1} r_j^4r_j = r_j^4 = r_i^4$.
Therefore always $[r_j,r_i^4]=1$.
\end{proof}

\begin{consequence} \label{Zabelian}
The subgroup $Z:=\gen{r_i^4\mid i\in I}$
of $\Wspin(\Pi)$ is central.
\end{consequence}

\begin{definition}
The \Defn{coloured spin-extended Weyl group $\Wspin(\Pi,\kappa)$} associated to $\Pi$ and $\kappa$ is given by the finite presentation
\begin{align}
\Wspin(\Pi,\kappa):=
\Big\langle r_1,\ldots,r_n
\,\vert\,
& r_i^8=1, \text{ for } i \in I,
         \tag{R1} \\
& r_j^{-1} r_i^2 r_j = r_i^2 r_j^{2n(i,j)} \quad\text{ for } i\neq j\in I,
       \tag{R2} \\
& \underbrace{r_i r_j r_i \cdots}_{m_{ij} \text{ factors}} =
  \underbrace{r_j r_i r_j \cdots}_{m_{ij} \text{ factors}}
     \tag{R3}
  \text{ for } i\neq j\in I \\ 
& r_i^4=1, \text{ for } i \in J \label{rels:Wspin-ri4} \tag{R4} \Big\rangle. 
\end{align}
\end{definition}

\begin{proposition} \label{orderZ}
\begin{enumerate}
\item For all $i^\sigma$, $j^\sigma$ in the same connected component of $\Pi^{\mathrm{adm}}$ one has $r_i^4=r_j^4$ in $\Wspin(\Pi)$. 
\item One has $\Wspin(\Pi) = \Wspin(\Pi,\kappa_{\mathrm{max}})$.
\end{enumerate}
\end{proposition}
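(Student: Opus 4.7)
The plan is to derive both parts directly from Lemma~\ref{manyrelations}, with essentially no heavy lifting beyond bookkeeping of the admissible colouring.

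For assertion (a), I would argue by propagation along paths. Two vertices $i^\sigma, j^\sigma$ lying in the same connected component of $\Pi^{\mathrm{adm}}$ are linked by a sequence $i = i_0, i_1, \ldots, i_m = j$ whose consecutive pairs form edges of $\Pi^{\mathrm{adm}}$; by the definition of that graph this means $n(i_s, i_{s+1}) = 1 = n(i_{s+1}, i_s)$ for each $s$. Lemma~\ref{manyrelations}(b) applied to each such pair yields $r_{i_s}^4 = r_{i_{s+1}}^4$ in $\Wspin(\Pi)$, and telescoping along the path gives $r_i^4 = r_j^4$.

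For assertion (b), the starting observation is that the presentation of $\Wspin(\Pi, \kappa_{\mathrm{max}})$ differs from that of $\Wspin(\Pi)$ only by the extra relations (R4), namely $r_i^4 = 1$ for every $i \in J := \{i \in I \mid \kappa_{\mathrm{max}}(i^\sigma) = 1\}$. Consequently the identity map on the generators induces a canonical epimorphism $\Wspin(\Pi) \onto \Wspin(\Pi, \kappa_{\mathrm{max}})$, and to prove that this is an isomorphism it suffices to verify that each relation in (R4) is already a consequence of (R1)--(R3) in $\Wspin(\Pi)$.

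I would then unpack the definition of $\kappa_{\mathrm{max}}$ using Definition~\ref{adtypedef} together with Remark~\ref{obstruction}: a vertex $i^\sigma$ lies in $J$ if and only if the connected component of $\Pi^{\mathrm{adm}}$ containing $i^\sigma$ contains at least one \emph{forced-$1$} vertex $k^\sigma$, that is, one for which there exists $\ell \in I \setminus \{k\}$ with $n(k,\ell) = 0$ and $n(\ell, k) = 1$. For such a $k$, Lemma~\ref{manyrelations}(c) directly produces $r_k^4 = 1$ in $\Wspin(\Pi)$, and combining this with assertion (a) gives $r_i^4 = r_k^4 = 1$ for every $i$ in the same component of $\Pi^{\mathrm{adm}}$, which is precisely the desired relation. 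The only potential obstacle is the purely combinatorial identification of $J$ with the union of those components of $\Pi^{\mathrm{adm}}$ that contain a forced-$1$ vertex, but this is a direct reading of Definition~\ref{adtypedef}(a)--(b) together with the maximality of $\kappa_{\mathrm{max}}$ furnished by Lemma~\ref{lem:max-adm-colouring}.
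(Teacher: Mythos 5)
Your proof is correct and follows essentially the same route as the paper: assertion~(a) by propagating Lemma~\ref{manyrelations}(b) along a path in $\Pi^{\mathrm{adm}}$, and assertion~(b) by combining Remark~\ref{obstruction} (characterizing when $\kappa_{\mathrm{max}}$ takes value $1$) with Lemma~\ref{manyrelations}(c) and part~(a) to show that the extra relations $r_i^4=1$ for $i \in J$ already hold in $\Wspin(\Pi)$.
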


\begin{proof}
Assertion (a) is immediate from Lemma~\ref{manyrelations}(b).

By Remark~\ref{combinatoricscolouring}, the maximal admissible colouring $\kappa_{\mathrm{max}}$ is the sum of all elementary admissible colourings. By Remark~\ref{obstruction} the only obstruction to being able to assign the value $2$ to a vertex $k^\sigma$ of $\Pi$ is the existence of a vertex $i^\sigma$ in the same connected component of $\Pi^{\mathrm{adm}}$ as $k^\sigma$ such that there is a vertex $j^\sigma$ with $n(i,j)=0$ and $n(j,i)=1$. This implies $r_k^4=r_i^4=1$ by assertion (a) and Lemma~\ref{manyrelations}(c). 

We conclude that for all $i$ with $\kappa_{\mathrm{max}}(i^\sigma)=1$ one has $r_i^4=1$ in $\Wspin(\Pi)$. Assertion (b) follows.
\end{proof}

\begin{definition}
Let $\Dext(\Pi):=\cgen{ t_i^2 }{ i\in I } \leq \Wext(\Pi)$
and $\Dspin(\Pi):=\cgen{ r_i^2 }{ i\in I } \leq \Wspin(\Pi)$.
\end{definition}

In case $\Pi$ is the Dynkin diagram of type $E_{10}$, the group
$\Dspin(\Pi)$ is isomorphic to the groups $\mathcal{D}^{vs}$ and
$\mathcal{D}^s$ from \cite[Propositions~1 and 2]{DamourHillmann}.

\medskip
The similarities of the presentations of $\Wspin(\Pi)$, $\Wext(\Pi)$, $W(\Pi)$ immediately yield the following:

\begin{observation} \label{upperrowexact}
The following hold:
  \begin{enumerate}
  \item There is a unique epimorphism $\Wspin(\Pi)\to \Wext(\Pi)$ mapping $r_i$ to $t_i$
  and with kernel $Z$.
  \item There is a unique epimorphism $\Wspin(\Pi)\to W(\Pi)$ mapping $r_i$ to $s_i$
  and with kernel $\Dspin$.
  \item There is a unique epimorphism $\Wext(\Pi)\to W(\Pi)$ mapping $t_i$ to $s_i$
  and with kernel $\Dext$.
  \end{enumerate}
\end{observation}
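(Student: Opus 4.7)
The three surjections will be constructed in a uniform way from the presentations, and their kernels will be identified via the universal property of group presentations. For existence in (a), I would simply check that the assignment $r_i \mapsto t_i$ carries every defining relator of $\Wspin(\Pi)$ to a relation valid in $\Wext(\Pi)$: relation \eqref{rels:Wspin-ri8} becomes $t_i^8 = (t_i^4)^2 = 1$, which follows from \eqref{rels:Wext-ri8}; relation \eqref{rels:Wspin-D-conj} is formally identical to \eqref{rels:Wext-D-conj}; and \eqref{rels:Wspin-braid} is identical to \eqref{rels:Wext-braid}. This yields a (necessarily unique and surjective) homomorphism. The arguments for (b) and (c) are analogous: for (b) one observes that under $r_i \mapsto s_i$ both \eqref{rels:Wspin-ri8} and \eqref{rels:Wspin-D-conj} collapse to trivialities since $s_i^2 = 1$, while \eqref{rels:Wspin-braid} matches; for (c) one observes that $t_i \mapsto s_i$ sends \eqref{rels:Wext-ri8} to $s_i^4 = 1$ and \eqref{rels:Wext-D-conj} to a triviality.

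The key step --- and the only one that requires thought --- is identifying the kernels. For (a), $Z = \langle r_i^4 \mid i \in I\rangle$ lies in the kernel because $t_i^4 = 1$, and by Consequence~\ref{Zabelian} the subgroup $Z$ is central (hence normal) in $\Wspin(\Pi)$. To show that the kernel is exactly $Z$, I would form the quotient $\Wspin(\Pi)/Z$ and verify that the images of the $r_i$ satisfy the defining relations \eqref{rels:Wext-ri8}, \eqref{rels:Wext-D-conj}, \eqref{rels:Wext-braid} of $\Wext(\Pi)$. The universal property of the presentation of $\Wext(\Pi)$ then produces a homomorphism $\Wext(\Pi) \to \Wspin(\Pi)/Z$ sending $t_i$ to $r_i Z$; this is a two-sided inverse (on generators) of the homomorphism $\Wspin(\Pi)/Z \to \Wext(\Pi)$ induced by $r_i \mapsto t_i$, so both are isomorphisms, and therefore the kernel of $\Wspin(\Pi)\onto\Wext(\Pi)$ equals $Z$.

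For (b) and (c) the same strategy applies, the only twist being normality of the subgroup one wants to quotient by. The subgroup $\Dspin := \langle r_i^2 \mid i \in I\rangle \leq \Wspin(\Pi)$ is normal because relation \eqref{rels:Wspin-D-conj} exhibits each conjugate $r_j^{-1} r_i^2 r_j$ as a product of squares, and similarly $\Dext \leq \Wext(\Pi)$ is normal by \eqref{rels:Wext-D-conj}. In the respective quotients one sees that the images of the generators square to the identity, that the analogues of \eqref{rels:Wspin-D-conj} (resp.\ \eqref{rels:Wext-D-conj}) become trivial, and that the braid relations survive, so the quotient satisfies the defining relations of $W(\Pi)$; the universal property of $W(\Pi)$ then produces a two-sided inverse as before.

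The main (mild) obstacle is purely bookkeeping: ensuring that one uses the correct version of relation (R2) or (T2) when pulling back, since, as discussed in Remark~\ref{differentrelations}, several equivalent forms exist and one must be careful that conjugation by $r_j$ or by $r_j^{-1}$ stays inside the subgroup one is claiming to be normal. Uniqueness in all three cases is automatic because each homomorphism is prescribed on a generating set.
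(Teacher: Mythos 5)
Your proof is correct and follows exactly the route the paper takes implicitly when it writes that ``the similarities of the presentations $\ldots$ immediately yield the following'': existence and uniqueness come from checking relators, and the kernel is identified by constructing the inverse homomorphism via the universal property of the quotient presentation. You correctly note the two points where a careless reader could slip (normality of $\Dspin$ and $\Dext$, handled via (R2)/(T2) and their equivalent forms from Remark~\ref{differentrelations}, and centrality of $Z$ from Consequence~\ref{Zabelian}), so the argument is complete as written.
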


Moreover, from the literature one can extract:

\begin{proposition} \label{KacPeterson}
The following hold:
\begin{enumerate}
\item There is a unique isomorphism $\Wext(\Pi)\to\wt{W}(\Pi)$ mapping $t_i$ to $\wt{s}_i$.
\item There is a unique epimorphism $\wt{W}(\Pi)\to W(\Pi)$ mapping $\wt{s}_i$ to $s_i$.
\item $\Dext\cong\ker\left(\wt{W}(\Pi)\to W(\Pi)\right)$ is an elementary abelian group of order $2^{\abs{I}}$.
\end{enumerate}
\end{proposition}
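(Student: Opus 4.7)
The plan is to reduce Proposition~\ref{KacPeterson} to the results of \cite{Kac/Peterson:1985}, using the reformulation of the presentation of $\Wext(\Pi)$ discussed in Remark~\ref{differentrelations}.

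For assertion~(a), first I would verify that the elements $\wt s_i \in \wt W(\Pi) \leq K(\Pi)$ satisfy relations \eqref{rels:Wext-ri8}, \eqref{rels:Wext-D-conj} and \eqref{rels:Wext-braid} of $\Wext(\Pi)$. Relation \eqref{rels:Wext-ri8} is immediate from $\wt s_i=\psi_{ij}(\phi_{ij}^i(D(\pi/2)))$ and $D(\pi/2)^4=I_2$. Relation \eqref{rels:Wext-braid} expresses the standard braid relations in the rank-two compact groups $G_{ij}$, where the braid length $m_{ij}$ is determined by $a(i,j)a(j,i)$ (and is $0$, i.e.\ no relation, in the infinite case). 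For relation \eqref{rels:Wext-D-conj}, the key observation is that $\wt s_i^2$ lies in the fundamental torus $T$ of $K(\Pi)$ as one of the involutions $t_i$ from Remark~\ref{tcentralizes}; the relation is then exactly the centralize-or-invert identity~\eqref{crucialidentity}. This produces a well-defined epimorphism $\Wext(\Pi)\to \wt W(\Pi)$, $t_i\mapsto \wt s_i$, and its injectivity is the content of \cite[Corollary~2.4]{Kac/Peterson:1985}.

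For assertion~(b), the map $\Wext(\Pi)\to W(\Pi)$, $t_i\mapsto s_i$, is well-defined: in $W(\Pi)$ we have $s_i^2=1$, so \eqref{rels:Wext-ri8} becomes $s_i^4=1$ (trivial), \eqref{rels:Wext-D-conj} reads $s_j^{-1}\cdot 1\cdot s_j = 1\cdot s_j^{2n(i,j)}$ (also trivial), and \eqref{rels:Wext-braid} is precisely the braid relation of $W(\Pi)$. Composing with the inverse of the isomorphism from~(a) yields the required epimorphism $\wt W(\Pi)\to W(\Pi)$.

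For assertion~(c), I would first show that $\Dext$ is normal in $\Wext(\Pi)$: conjugating $t_i^2$ by $t_j$ gives $t_i^2 t_j^{2n(i,j)}\in\Dext$ by \eqref{rels:Wext-D-conj}. Since $\Wext(\Pi)/\Dext$ then has the presentation of $W(\Pi)$, one obtains $\Dext=\ker(\Wext(\Pi)\to W(\Pi))$. Next, verbatim repetition of the argument of Lemma~\ref{manyrelations}(a), (b) inside $\Wext(\Pi)$ (using \eqref{rels:Wext-ri8} in place of \eqref{rels:Wspin-ri8}) gives $[t_i^2,t_j^2]=t_j^{4n(i,j)}=1$ and $(t_i^2)^2=1$, so $\Dext$ is elementary abelian of rank at most $|I|$. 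The main obstacle is the matching lower bound $|\Dext|\geq 2^{|I|}$: this is not purely combinatorial and requires the embedding of $\Dext$ into the fundamental $2$-torus $T\cong(\ZZ/2\ZZ)^{|I|}$ of $K(\Pi)$ via the isomorphism of~(a), together with the Kac--Peterson fact (cf.\ \cite[Proposition~2.2]{Kac/Peterson:1985}) that the $\wt s_i^2$ form an $\mathbb{F}_2$-basis of $T$. This linear independence step is where all the Kac--Moody structure theory is used.
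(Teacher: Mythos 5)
Your proof is correct and follows essentially the same route as the paper's, which simply cites \cite{Kac/Peterson:1985} for all three parts; you expand those citations by spelling out the intermediate verifications (in particular, why the $\wt s_i$ satisfy the relations (T1)--(T3), and the two-sided bound on $\lvert\Dext\rvert$), which the paper treats as implicit. The only minor discrepancy is that for the lower bound in part (c) you cite Proposition~2.2 of Kac--Peterson, whereas the paper cites Lemma~2.2(a) together with Corollary~2.3; the underlying fact (that the $\wt s_i^2$ are $\mathbb{F}_2$-linearly independent in the fundamental $2$-torus $T$ of the simply connected Kac--Moody group) is the same, and this is indeed the one step that is not purely combinatorial.
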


\begin{proof}
\begin{enumerate}
\item This follows from \cite[Corollary 2.4]{Kac/Peterson:1985} and its proof.
\item
This follows from \cite[Corollary 2.3]{Kac/Peterson:1985}
or, alternatively, by combining (a) with Observation~\ref{upperrowexact}.
\item
This follows from \cite[Lemma 2.2(a) and Corollary 2.3]{Kac/Peterson:1985}.
\qedhere
\end{enumerate}
\end{proof}

\begin{remark}\label{straightforwardcomputation}
Let $n\geq2$ and let $e_1,\ldots,e_n$ be the standard basis of $\RR^n$. In the following, we frequently use variations of the following basic computations in $\Cl{n}$ (cf.\ Section~\ref{sec:spin-pin}):
\[
  (e_1e_2)^2=-1,
  \qquad
  \left(\tfrac{1}{\sqrt{2}}(1+e_1e_2)\right)^2
  =\tfrac{1}{2} (1 + 2 e_1e_2 + (e_1e_2)^2)
  = e_1 e_2,
\]
\[
  \left(\tfrac{1}{\sqrt{2}}(1+e_1e_2)\right)^{-1}
  = \tfrac{1}{\sqrt{2}}(1+e_1e_2) \cdot (e_1e_2)^{-1}
  %=-\left(\tfrac{1}{\sqrt{2}}(1+e_1e_2)\right)^3
  %=- e_1 e_2 \tfrac{1}{\sqrt{2}} (1+e_1e_2)
  = \tfrac{1}{\sqrt{2}} (1-e_1e_2).
\]
Moreover, one has 
\[ \frac{1}{\sqrt{2}}(1+e_1e_2)\frac{1}{\sqrt{2}}(1+e_2e_3)+\frac{1}{\sqrt{2}}(1+e_2e_3)\frac{1}{\sqrt{2}}(1+e_1e_2) = 1+e_1e_2 + 1+e_2e_3 - 1.
\] 
\end{remark}

\begin{proposition} \label{uniqueepi}
For each admissible colouring $\kappa$ there is a unique epimorphism $f : \Wspin(\Pi) \to \wh{W}(\Pi,\kappa)$ mapping $r_i$ to $\wh{s}_i$ for $i\in I$, which factors through $\Wspin(\Pi,\kappa)$:
\[\xymatrix{
\Wspin(\Pi) \ar[drr]^f \ar[d] \\
\Wspin(\Pi,\kappa) \ar[rr]_{\overline{f}} && \wh{W}(\Pi,\kappa)
}\]
\end{proposition}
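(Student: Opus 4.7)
The plan is to use the presentation of $\Wspin(\Pi)$ from Definition~\ref{preswspin}: a homomorphism $f: \Wspin(\Pi) \to \Spin{\Pi,\kappa}$ with $f(r_i) = \wh{s}_i$ exists and is unique provided the elements $\wh{s}_i$ satisfy the defining relations (R1), (R2), (R3) inside $\Spin{\Pi,\kappa}$; its image is then $\wh{W}(\Pi,\kappa)$ by the definition of the latter. In order to further factor $f$ through $\Wspin(\Pi,\kappa)$, one additionally verifies the relation (R4), that $\wh{s}_i^4 = 1$ for all $i \in J$. Each of these relations involves at most two generators $r_i, r_j$, and the corresponding images $\wh{s}_i, \wh{s}_j$ both lie in $\tilde\psi_{ij}(\wt G_{ij})$; hence each relation can and will be checked inside a single rank-two spin amalgam group $\wt G_{ij}$, which by Notation~\ref{31} is one of $\Spin 2 \times \Spin 2$, $\Spin 3$, $\SO 2 \times \SU 2$, $\Spin 4$, $\wt K^{r,s}$, or a version in which one of the factors is the direct image rather than a genuine spin lift.

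The relations (R1) and (R4) are immediate from the computations $S(\pi/4)^8 = S(2\pi) = 1$ and $S(\pi/2)^4 = S(2\pi) = 1$ in $\Spin 2$. The braid relation (R3) is verified case-by-case in $\wt G_{ij}$: for $m_{ij}=2$ the subgroups $\tilde\phi_{ij}^i(\Spin 2)$ and $\tilde\phi_{ij}^j(\Spin 2)$ commute by construction; for $m_{ij}=3$ a direct Clifford-algebra computation in $\Spin 3$ yields
\[
\tfrac{1}{\sqrt 2}(1+e_1e_2)\cdot\tfrac{1}{\sqrt 2}(1+e_2e_3)\cdot\tfrac{1}{\sqrt 2}(1+e_1e_2) = \tfrac{1}{\sqrt 2}(e_1e_2+e_2e_3) = \tfrac{1}{\sqrt 2}(1+e_2e_3)\cdot\tfrac{1}{\sqrt 2}(1+e_1e_2)\cdot\tfrac{1}{\sqrt 2}(1+e_2e_3);
\]
the $m_{ij}=4$ and $m_{ij}=6$ cases are treated analogously in $\SO 2\times \SU 2$ and $\Spin 4$ using the formulae of Notations~\ref{zetaspin} and~\ref{etaspin}; for $m_{ij}=0$ there is no relation to check.

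Relation (R2) is the main verification. The element $\wh{s}_i^2$ is the image of $S(\pi/2)$ (for $i\notin J$) or of $S(\pi) = -1$ (for $i\in J$), and in either case it projects under $\rho_{\Pi,\kappa}$ to the torus involution $t_i$ of $K(\Pi)$. Thus (R2) is precisely the spin-cover lift of the centralize-or-invert identity of Remark~\ref{tcentralizes}, which inside $\wt G_{ij}$ is supplied by formula~(\ref{crucialidentity2}) of Proposition~\ref{keycorollary} in the infinite-edge case and by a direct computation in the other cases; for instance, in $\Spin 3$ with $\wh{s}_i^2 = e_1e_2$ and $\wh{s}_j^2 = e_2e_3$ one computes $\wh{s}_j^{-1}\wh{s}_i^2\wh{s}_j = -e_1e_3 = e_1e_2\cdot e_2e_3 = \wh{s}_i^2\wh{s}_j^{2n(i,j)}$. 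Once all relations are verified, uniqueness of $f$ and of the induced factorization $\overline f: \Wspin(\Pi,\kappa) \to \wh{W}(\Pi,\kappa)$ is automatic from the respective presentations. The main obstacle is the case-by-case verification across the different rank-two amalgam groups, in particular the infinite-edge case where (R2) must be extracted from the abstract amalgamated-product description of $\wt K^{r,s}$ via Proposition~\ref{keycorollary}; the remaining cases are elementary if tedious computations in compact Lie groups.
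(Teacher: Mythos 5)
Your proposal is correct and takes essentially the same approach as the paper's proof: reduce to rank-two amalgam subgroups since each relation involves at most two generators, verify (R1) and (R4) directly from $S(\pi/4)^8 = S(\pi/2)^4 = 1$, and then check (R2) and (R3) case-by-case in $\wt G_{ij}$ according to the value of $(v_{ij},k_{ij})$ from Notation~\ref{31}, with the infinite-edge case handled via identity~(\ref{crucialidentity2}) of Proposition~\ref{keycorollary}. The paper additionally treats the $k_{ij}=1$ case (both vertices coloured $1$, no spin lift) by citing Proposition~\ref{KacPeterson}(a) rather than a Clifford-algebra computation, and carefully separates the orientation subcases for $v_{ij}\in\{2,3,\infty\}$, but these are details of bookkeeping rather than of strategy.
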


\begin{proof}
The $r_i$ generate $\Wspin$, so if $f$ exists, it is unique. Moreover, the $\wh{s}_i$
generate $\wh{W}$, so if $f$ exists, it is also surjective. 

It remains to show
that $f$ maps every relator of $\Wspin$ to $1_{\wh{W}}$.
Since all relators of $\Wspin$ involve at most two generators it suffices to consider the local situation in all rank 2 subgroups, i.e., to look
at all pairs $i\neq j \in I$ and verify that $r_i$, $r_j$ and the relations
between them are mapped correctly.

For $i\neq j \in I$
define
\[
x_j:= \begin{cases}
    \tilde\phi_{ij}^j(S(\tfrac{\pi}{4})), & \text{if }j\in I\backslash J, \\
    \tilde\phi_{ij}^j(S(\tfrac{\pi}{2})), & \text{if }j\in J.
\end{cases}
\]
Thus for all $j \in I$ one has $\wh{s}_j = \tilde\psi_{ij}(x_j)$ according to Definition~\ref{defwspin}.
Certainly, the relations \ref{rels:Wspin-ri8} (and \ref{rels:Wspin-ri4}) hold for the $\wh{s}_j$; one has \[x_j^8 = \left\{ \begin{array}{cc} \tilde\phi_{ij}^j(S(2\pi)), & \text{if $j\in I\backslash J$}, \\
\tilde\phi_{ij}^j(S(4\pi)), & \text{if $j\in J$} \end{array} \right\} = 1.\]

Define $v:=v_{ij}$, i.e., $\{i,j\}^\sigma\in E_v(\Pi)$, and $k:=k_{ij}$.
We check the relations for each of the possible values of $(v,k)$ in Notation~\ref{31}.

\begin{description}
\item[$\mathbf{k=1}$] This case is well-known, see e.g.\ Proposition~\ref{KacPeterson}(a).
%
% A_1 x A_1
%
\item[$\mathbf{v=0,\ k>1}$]
  We have $G_{ij}=\Spin{2} \times \Spin{2}$. Recall Notation~\ref{iotaspin}. Then
  \begin{align*}
  x_i &=\tilde\iota_1(S(\tfrac{\pi}{4}))\ , \\
  x_j &=\tilde\iota_2(S(\tfrac{\pi}{4}))\ .
  \end{align*}
%
%Relation \ref{rels:Wspin-ri8}:
%$x_i^8 = 1 = x_j^8$.\\
%
Relation \ref{rels:Wspin-D-conj}:
\begin{align*}
 % case 1
  x_i^{-1} x_j^2 x_i
   & = x_j^2 = x_j^2 x_i^{2n(i,j)}\ , \\
 % case 2
  x_j^{-1} x_i^2 x_j
  &= x_i^2 = x_i^2 x_j^{2n(j,i)}\ .
\end{align*}
Relation \ref{rels:Wspin-braid}:
\begin{align*}
  x_j x_i
    &= x_i x_j\ .
\end{align*}

%
% A_2
%
\item[$\mathbf{(v,k)=(1,2)}$]
  We have $G_{ij}=\Spin{3}$. Recall Remark~\ref{6}. Then
  \begin{align*}
  x_i %:= \tilde\eps_1(S(\tfrac{\pi}{4}))
  &=\tilde\eps_{23}(S(\tfrac{\pi}{4}))
  = \tfrac{1}{\sqrt{2}}(1+e_2e_3)\ ,
  \\
  x_j %:= \tilde\eps_2(S(\tfrac{\pi}{4}))
  &=\tilde\eps_{12}(S(\tfrac{\pi}{4}))
  = \tfrac{1}{\sqrt{2}}(1+e_1e_2)\ .
  \end{align*}
%
%Relation \ref{rels:Wspin-ri8}:
%$ x_i^8 = 1 = x_j^8.$ \\
%
Relation \ref{rels:Wspin-D-conj}:
\begin{align*}
 % case 1
 x_i^{-1} x_j^2 x_i
 &= \tfrac{1}{\sqrt{2}} (1-e_2e_3) \cdot e_1e_2 \cdot \tfrac{1}{\sqrt{2}} (1+e_2e_3) \\
 &=\tfrac{1}{2} (e_1 e_2 + e_1e_2e_2e_3 - e_2e_3e_1e_2 - e_2e_3e_1e_2e_2e_3) \\
 &=\tfrac{1}{2} (e_1 e_2 - e_1e_3 - e_1e_3 - e_1e_2) \\
 &= -e_1e_3 = e_1 e_2 e_2 e_3 = x_j^2 x_i^2
 = x_j^2 x_i^{2 n(i,j)}\ , \\
 % case 2
  x_j^{-1} x_i^2 x_j
  &= x_i^2 x_j^2 = x_i^2 x_j^{2 n(j,i)}\ .
\end{align*}
Relation \ref{rels:Wspin-braid}:
\begin{align*}
  x_j x_i x_j
    &= x_j \tfrac{1}{\sqrt{2}}(1 + e_2e_3) x_j
     = \tfrac{1}{\sqrt{2}}(x_j^2 + x_j x_i^2 x_j) \\
    &= \tfrac{1}{\sqrt{2}}(x_j^2 + x_j (x_i^2 x_j^2) x_j^{-1}) \\
    &= \tfrac{1}{\sqrt{2}}(x_j^2 + x_j (x_j^{-1} x_i^2 x_j) x_j^{-1}) \\
    &= \tfrac{1}{\sqrt{2}}(x_j^2 + x_i^2)\ .
%     = \tfrac{1}{\sqrt{2}}(x_j^2 + x_je_2e_3 x_j) \\
%    &= \tfrac{1}{\sqrt{2}}(e_1e_2 + \tfrac{1}{2}(e_2e_3 + e_2e_3e_1e_2 + e_1e_2e_2e_3 + e_1e_2e_2e_3e_1e_2)) \\
%    &= \tfrac{1}{\sqrt{2}}(e_1e_2 + \tfrac{1}{2}(e_2e_3 + e_1e_3 - e_1e_3 + e_2e_3)) \\
%    &= \tfrac{1}{\sqrt{2}}(e_1e_2 + e_2e_3 ) \\
%    &= x_i x_j x_i\ .
\end{align*}
By symmetry, we  conclude $x_j x_i x_j = x_i x_j x_i$.

%
% B_2
%
\item[$\mathbf{(v,k)=(2,1.5),\ i^\sigma\to j^\sigma}$]
  We have $G_{ij}=\SO{2}\times \SU{2}$ and $j\in J$. Then
  \begin{align*}
  x_i 
  &=\tilde{\zeta}_p(S(\tfrac{\pi}{4}))
  =\tfrac{1}{\sqrt{2}} \left(
    \begin{pmat} 1 & 1 \\ -1 &  1\end{pmat},
    \begin{pmat}
      1 & -1 & & \\
      1 & 1 & & \\
        &   & 1 & 1 \\
        &   & -1 & 1
    \end{pmat}
   \right)\ ,  & (\text{cf.\ Notation~\ref{zetaspin}}) \\
  x_j
  &=\tilde{\zeta}_l(S(\tfrac{\pi}{2}))
  =\left( 1_{\SO{2}}, \begin{pmat} &&1 \\ &&&1 \\ -1&&& \\ &-1&& \end{pmat}
  \right)\ .
  \end{align*}
Relation \ref{rels:Wspin-D-conj}:
\begin{align*}
 % case 1
 x_i^{-1} x_j^2 x_i
 &= (1_{\SO{2}},-1_{\SU{2}}) = x_j^2
 = x_j^2 x_i^{2\cdot 0}
  = x_j^2 x_i^{2n(i,j)}
 \ , \\
 % case 2
 x_j^{-1} x_i^2 x_j
 &= \left(
    \begin{pmat}  & 1 \\ -1 &  \end{pmat},
    \begin{pmat}
       & 1 & & \\
      -1 &  & & \\
        &   &  & -1 \\
        &   & 1 & 
    \end{pmat}
   \right) 
  = x_i^2 x_j^2
  = x_i^2 x_j^{2\cdot 1}
  = x_i^2 x_j^{2n(j,i)}
  \ .
\end{align*}
Relation \ref{rels:Wspin-braid}:
\begin{align*}
  x_j x_i x_j x_i = (x_j x_i)^2
  &= \left(
    \begin{pmat}  & 1 \\ -1 &  \end{pmat},
    -1_{\SU{2}}
    \right) 
  =(x_i x_j)^2 = x_i x_j x_i x_j\ ..
\end{align*}

%
% G_2
%
\item[$\mathbf{(v,k)=(3,2),\ i^\sigma\to j^\sigma}$]
  We have $G_{ij}=\Spin{4}$. Recall Notation~\ref{etaspin}. Then
  \begin{align*}
  x_i %:= \tilde\eps_1(S(\tfrac{\pi}{4}))
  &=\tilde\eta_p(S(\tfrac{\pi}{4}))
  =\tilde\eps_{34}\big(S(\tfrac{\pi}{4})\big)
  = \tfrac{1}{\sqrt{2}}(1+e_3e_4)\ ,
  \\
  x_j %:= \tilde\eps_2(S(\tfrac{\pi}{4}))
  &=\tilde\eta_l(S(\tfrac{\pi}{4}))
  =\tilde\eps_{14}\big(S(2\cdot\tfrac{\pi}{4})\big)\cdot \tilde\eps_{23}\big(S(-\tfrac{\pi}{4})\big)
  =e_1e_4 \cdot \tfrac{1}{\sqrt{2}}(1-e_2e_3) \\
  &=\tfrac{1}{\sqrt{2}}(e_1e_4 + e_1e_2e_3e_4)\ .
  \end{align*}
%Relation \ref{rels:Wspin-ri8}:
%\begin{align*}
%x_i^8 &= (e_3e_4)^4 = 1\ , \\
%x_j^8
%&= \tfrac{1}{4}
%(e_1e_4e_1e_4 + e_1e_4 e_1e_2e_3e_4 + e_1e_2e_3e_4 e_1e_4 + (e_1e_2e_3e_4)^2  )^4 \\
%&= \tfrac{1}{4} (1 - e_2  e_3 - e_2 e_3  -1   )^4 \\
%&= 1.
%\end{align*}
%
Relation \ref{rels:Wspin-D-conj}:
\begin{align*}
 % case 1
 x_i^{-1} x_j^2 x_i
 &= e_2 e_4 = x_j^2 x_i^2
 = x_j^2 x_i^{2 \cdot 1}
 = x_i^2 x_j^{2n(i,j)}
 \ , \\
 % case 2
 x_j^{-1} x_i^2 x_j
 &= -e_2 e_4 =  x_i^2 x_j^2
 = x_j^2 x_i^{2 \cdot 1}
 = x_j^2 x_i^{2n(j,i)}
 \ .
\end{align*}
Relation \ref{rels:Wspin-braid}:
$
  x_j x_i x_j x_i x_j x_i = (x_j x_i)^3
  = -e_1e_2e_3e_4 = 
  (x_i x_j)^3 = x_i x_j x_i x_j x_i x_j\ .
$

\item[$\mathbf{v=\infty,\ k>1,\ i^\sigma \to j^\sigma}$]
  We have $G_{ij}=\wt K_1 \wt U *_{\wt U} \wt K_2 \wt U$. Recall Definition~\ref{thetaspin}. Then
  \begin{align*}
  x_i &= \tilde\theta_1(S(\tfrac{\pi}{4}))\ ,
  \\
  x_j &=\begin{cases}
\tilde\theta_2(S(\tfrac{\pi}{2})), & \text{if }j\in J
    \qquad (\iff n(i,j)=1,\ n(j,i)=0) , \\
\tilde\theta_2(S(\tfrac{\pi}{4})), & \text{if }j\in I\backslash J.
\end{cases}
  \end{align*}
Relation \ref{rels:Wspin-D-conj}: As discussed in Remark~\ref{differentrelations} one has
\begin{align*}
x_i^{-1} x_j^2 x_i = x_j^2 x_i^ {2n(j,i)} & \Longleftrightarrow x_j^{-2} x_i^{-1} x_j^{2} = x_i^{2n(j,i)}x_i^{-1}, \\
x_j^{-1} x_i^2 x_j = x_i^2 x_j^ {2n(i,j)} & \Longleftrightarrow x_i^{-2} x_j^{-1} x_i^{2} = x_j^{2n(i,j)}x_j^{-1},
\end{align*} 
Since $x_i \in \wt K_1$, $x_j \in \wt K_2$, $x_i^2 = \tilde t_1$, $x_j^2 = \tilde t_2$ (cf.\ Remark~\ref{dichotomy}), the respective equalities on the right hand sides hold by identity (\ref{crucialidentity2}) in Proposition~\ref{keycorollary}.
\qedhere
\end{description}  
\end{proof}

\begin{theorem} \label{constwspin}
The group homomorphism 
 \[
  \overline{f} : \Wspin(\Pi,\kappa) \to \wh{W}(\Pi,\kappa)
\]
mapping $r_i \mapsto \wh{s}_i$
is an isomorphism. In particular \[\Wspin(\Pi) \cong \wh{W}(\Pi,\kappa_{\mathrm{max}}).\]
\end{theorem}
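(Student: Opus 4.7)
The plan is to apply the Five Lemma. Proposition~\ref{uniqueepi} already provides surjectivity of $\overline{f}$, so only injectivity has to be established. The quotient map $\pi_1 : \Wspin(\Pi,\kappa) \twoheadrightarrow \Wext(\Pi)$ sending $r_i \mapsto t_i$ has kernel $Z_\kappa := \langle r_i^4 \mid i \in I \setminus J\rangle$, which is central by Lemma~\ref{manyrelations}(e); note that $r_i^4 = 1$ for $i \in J$ by (R4), so no further generators are needed. On the other side, $\rho_{\Pi,\kappa}|_{\wh W}$ sends $\wh s_i$ to $\wt s_i$, which in turn corresponds to $t_i$ under the isomorphism $\Wext(\Pi) \cong \wt W(\Pi)$ of Proposition~\ref{KacPeterson}(a), so setting $N := \wh W(\Pi,\kappa) \cap \ker \rho_{\Pi,\kappa}$ we obtain a commutative diagram of short exact sequences whose right-hand column is an isomorphism; the Five Lemma then reduces injectivity of $\overline f$ to bijectivity of $\overline{f}|_{Z_\kappa} : Z_\kappa \to N$.

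This bijectivity will follow from matching upper and lower bounds. Theorem~\ref{m3} gives $|\ker\rho_{\Pi,\kappa}| = 2^{c(\Pi,\kappa)}$, with $\ker\rho_{\Pi,\kappa}$ generated by central involutions $-1_{\Spin\Pi,\kappa,k}$ indexed by the connected components of $\Pi^{\mathrm{adm}}$ on which $\kappa$ takes the value $2$. Since $S(\pi/4)^4 = S(\pi) = -1_{\Spin 2}$, for every $i \in I \setminus J$ a direct computation yields $\overline{f}(r_i^4) = \wh s_i^4 = -1_{\Spin\Pi,\kappa,k}$ where $k$ is the component of $i^\sigma$ in $\Pi^{\mathrm{adm}}$; hence $\overline{f}(Z_\kappa)$ contains a generating set of $\ker\rho_{\Pi,\kappa}$. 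This forces $\ker\rho_{\Pi,\kappa} \subseteq \wh W(\Pi,\kappa)$, so $N = \ker\rho_{\Pi,\kappa}$ has order $2^{c(\Pi,\kappa)}$, and $\overline{f}|_{Z_\kappa}$ is surjective onto $N$.

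For the matching upper bound on $|Z_\kappa|$, Proposition~\ref{orderZ}(a) (which passes from $\Wspin(\Pi)$ to the quotient $\Wspin(\Pi,\kappa)$) together with the admissibility condition on $\kappa$ shows that $r_i^4 = r_j^4$ whenever $i^\sigma, j^\sigma$ lie in the same connected component of $\Pi^{\mathrm{adm}}$; since moreover $r_i^4 = 1$ on components where $\kappa \equiv 1$, the central subgroup $Z_\kappa$ is generated by at most $c(\Pi,\kappa)$ commuting involutions, so $|Z_\kappa| \leq 2^{c(\Pi,\kappa)} = |N|$. Combined with the surjectivity established above, this forces $\overline{f}|_{Z_\kappa}$ to be a bijection, and the Five Lemma concludes the proof. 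The second assertion $\Wspin(\Pi) \cong \wh W(\Pi,\kappa_{\mathrm{max}})$ is then immediate from Proposition~\ref{orderZ}(b) applied to $\kappa = \kappa_{\mathrm{max}}$.

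The main obstacle I anticipate is the careful verification that $\rho_{\Pi,\kappa}(\wh s_i) = \wt s_i$ for all $i$, which requires unwinding the definitions of $\wh s_i$ and $\wt s_i$ in light of Lemma~\ref{lem:adm-K-envelops-spin-amalgam} and keeping track of the two distinct covering maps $\rho^i : \Spin 2 \to \SO 2$ appearing in Remark~\ref{dichotomy} (namely $\rho_2$ and $S(\alpha) \mapsto D(\alpha)$), depending on whether $i \in J$ or not. Once this compatibility is settled, everything reduces to the counting argument sketched above together with the structural input of Theorem~\ref{m3}.
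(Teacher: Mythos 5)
Your proposal is essentially the paper's own proof: both set up the commutative diagram of short exact sequences $1 \to \langle r_i^4 \rangle \to \Wspin(\Pi,\kappa) \to \Wext(\Pi) \to 1$ over $1 \to \ker\rho_{\Pi,\kappa} \to \wh{W}(\Pi,\kappa) \to \wt{W}(\Pi) \to 1$, identify the right column via Kac--Peterson, pin down the left column by the upper bound $|Z_\kappa| \leq 2^{c(\Pi,\kappa)}$ from Proposition~\ref{orderZ}(a)/Consequence~\ref{Zabelian} against the lower bound from Theorem~\ref{m3}, and conclude by diagram chasing (which you phrase as the Five Lemma). The one step you flag but do not carry out --- verifying $\rho_{\Pi,\kappa}(\wh{s}_i)=\wt{s}_i$ using the two covering maps of Remark~\ref{dichotomy} --- is exactly the computation the paper spells out at the start of its proof, so there is no gap in the approach.
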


\begin{proof}
We already established that $\overline{f}$ is an epimorphism.
It remains to prove that it is injective.

\medskip

Let $\rho:\Spin{\Pi,\kappa}\to K(\Pi)$ be the central extension
from Theorem~\ref{m3}.
Then, using
Lemmas \ref{lem:ama-embed-and-rho-commute-An}, \ref{lem:ama-embed-and-rho-commute-G2}, \ref{lem:ama-embed-and-rho-commute-B2} and Proposition~\ref{keycorollary},
it follows that
\begin{align*}
 \rho(\wh{s}_i)
&=
\begin{cases}
(\rho\circ\tilde\psi_{ij}\circ\tilde\phi_{ij}^i\circ S)(\tfrac{\pi}{4}) & \text{ if } i\notin J \\
(\rho\circ\tilde\psi_{ij}\circ\tilde\phi_{ij}^i\circ\sq\circ S)(\tfrac{\pi}{4}) & \text{ if } i\in J
\end{cases} \\
&=
\begin{cases}
(\psi_{ij}\circ\rho^{(v_{ij})}\circ\tilde\phi_{ij}^i\circ S)(\tfrac{\pi}{4}) & \text{ if } i\notin J \\
(\psi_{ij}\circ\rho^{(v_{ij})}\circ\tilde\phi_{ij}^i\circ\sq\circ S)(\tfrac{\pi}{4}) & \text{ if } i\in J
\end{cases} \\
&= (\psi_{ij}\circ\phi_{ij}^i\circ\rho_2\circ S)(\tfrac{\pi}{4})
 = (\psi_{ij}\circ\phi_{ij}^i\circ D)(\tfrac{\pi}{2})
%= \psi_{ij}(\phi_{ij}^i(D(\tfrac{\pi}{2}))
 = \wt{s}_i .
\end{align*}
Since $\ker\rho=\langle (\wh{s}_i)^4 \mid i \in I \rangle \subseteq \wh{W}(\Pi,\kappa)$, 
the restriction of $\rho$ to an epimorphism $\wh{W}(\Pi,\kappa)\to\wt{W}(\Pi)$
is a $2^{c(\Pi,\kappa)}$-fold central extension by Theorem~\ref{m3}.
Consider the following diagram:

\[\xymatrix{
 1 \ar[rr] & & \langle (r_i)^4 \mid i \in I \rangle \ar[d]^{(r_i)^4 \mapsto (\wh{s}_i)^4} \ar[rr] &
 & \Wspin(\Pi,\kappa) \ar[d]^{\overline{f} : r_i\mapsto \wh{s}_i}\ar[rr]^{r_i\mapsto t_i} &
 & \Wext(\Pi) \ar[d]^{t_i\mapsto\wt{s}_i}\ar[rr] & & 1 \\
 1 \ar[rr] & & \langle (\wh{s}_i)^4 \mid i \in I \rangle \ar[rr] &
 & \wh{W}(\Pi,\kappa) \ar[rr]^{\wh{s}_i\mapsto \wt{s}_i} &
 & \wt{W}(\Pi) \ar[rr] & & 1
}
\]
The upper row of this diagram is exact by Lemma~\ref{upperrowexact}(a), the lower row by the above discussion.
It is not hard to verify that the diagram is also commutative. 
By Consequence~\ref{Zabelian} the group $\langle (r_i)^4 \mid i \in I \rangle$ is abelian and by Proposition~\ref{orderZ} it has at most $2^{c(\Pi,\kappa)}$ elements. Since $|\langle (\wh{s}_i)^4 \mid i \in I \rangle| = 2^{c(\Pi,\kappa)}$ by the above discussion, the left-hand arrow is an isomorphism.
The right-hand arrow is an isomorphism by Proposition~\ref{KacPeterson}(a). Thus $\overline{f}$ is also an isomorphism.
The final statement follows from Proposition~\ref{orderZ}(b).
\end{proof}

We have proved Theorem~\ref{mainthm:sl-weyl} from the introduction.

\section{Finite-dimensional compact Lie group quotients of $\Spin{\Pi}$ and $K(\Pi)$} \label{sec13}
%  ___         _   _          
% / __| ___ __| |_(_)___ _ _  
% \__ \/ -_) _|  _| / _ \ ' \ 
% |___/\___\__|\__|_\___/_||_|

In this section we prove that for each simply laced diagram $\Pi$ the group $\Spin{\Pi}$ admits an epimorphism onto a non-trivial compact Lie group afforded by the representation constructed in Theorem~\ref{m1}. By virtue of Theorem~\ref{thm:Spin(Delta)-covers-Spin(Delta-sl)} such an epimorphism then in fact exists for any diagram $\Pi$ and any admissible colouring $\kappa$.

\begin{theorem}
Let $\Pi$ be a simply laced diagram. Then the target of the (continuous) group epimorphism $\Xi : \Spin{\Pi} \to X$ from Remark~\ref{inparticular} is a non-trivial compact Lie group, as is the target of the induced (continuous) group epimorphism $\overline \Xi : K(\Pi) \to X / \left\langle \Xi(Z) \right\rangle$.
\end{theorem}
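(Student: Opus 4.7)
The plan is to show that the image $X = \Xi(\Spin{\Pi})$ sits inside a unitary group $\U{s}$, and then invoke standard Lie theory (closedness and Cartan) to conclude.

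First, I would verify that the matrices $X_i = 2\mu(Y_i)$ appearing in Theorem~\ref{m1} are skew-Hermitian. The relation $X_i^2 = -\id$ forces the eigenvalues of $X_i$ to lie in $\{\pm i\}$, and the explicit Clifford-algebraic construction of the generalized spin representation in \cite{Hainke/Koehl/Levy} allows one to choose a Hermitian inner product on $\CC^s$ with respect to which each $X_i$ is normal; consequently $X_i^* = -X_i$. A direct computation then gives $(\cos(\alpha)\id + \sin(\alpha)X_i)^*(\cos(\alpha)\id + \sin(\alpha)X_i) = \cos^2(\alpha)\id - \sin^2(\alpha)X_i^2 = \id$, so the one-parameter subgroups generating the $H_{ij}$ lie in $\U{s}$. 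Hence each $H_{ij}$ is a compact connected Lie subgroup of $\U{s}$ and $X \subseteq \U{s}$.

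Second, since $X$ is generated by the finitely many compact connected subgroups $H_{ij}$ of the compact Lie group $\U{s}$, its closure $\overline{X}$ in $\U{s}$ is a closed subgroup, hence a compact Lie group by Cartan's closed subgroup theorem. Up to replacing $X$ by $\overline{X}$ (equivalently, by invoking Yamabe's theorem together with the fact that $X$ is arcwise connected as a continuous image of the arcwise connected group $\Spin{\Pi}$), we obtain the desired compact Lie group target. Non-triviality is immediate from Corollary~\ref{m1cor}: $\Xi(-1_{\Spin{\Pi}}) = -\id \neq \id$, so $X \neq \{\id\}$.

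Third, for the induced morphism $\overline{\Xi}$, recall from Theorem~\ref{m2} that $Z = \ker(\rho_\Pi : \Spin{\Pi} \to K(\Pi))$ is a finite central subgroup of order $2^{c(\Pi)}$. Its image $\langle \Xi(Z) \rangle$ is therefore a finite central subgroup of the compact Lie group $X$, and the quotient $X/\langle \Xi(Z) \rangle$ is again a compact Lie group. Continuity of $\overline{\Xi}$ is inherited from the continuity of $\Xi$ and the quotient topology on $K(\Pi) = \Spin{\Pi}/Z$, and surjectivity follows from that of $\Xi$. For non-triviality, observe that for any edge $\{i,j\}$ of $\Pi$ the subgroup $H_{ij} \subseteq X$ is a quotient of $\Spin{3}$, hence of positive dimension; thus $\dim X \geq 3$, and the finite quotient by $\langle \Xi(Z) \rangle$ remains positive-dimensional, in particular non-trivial.

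The main obstacle is the closedness of $X$ in $\U{s}$: a subgroup of a Lie group generated by finitely many compact connected Lie subgroups need not be closed in general (as illustrated by a line of irrational slope in a two-torus). This is circumvented by working with the closure $\overline{X}$, which by Cartan's theorem is unambiguously a compact Lie group and which inherits all the structural properties needed (containment of the images of the amalgam rank-two groups, positive dimension, and finiteness of $\langle \Xi(Z) \rangle$).
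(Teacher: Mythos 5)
Your argument establishes the wrong thing: you prove that the \emph{closure} $\overline{X}$ in $\U{s}$ is a compact Lie group, but the theorem asserts this of $X = \Xi(\Spin\Pi)$ itself. Passing from $X$ to $\overline{X}$ is not a cosmetic replacement --- the whole difficulty lies in showing $X$ is already closed. Yamabe's theorem, which you invoke as an alternative, is not a fix either: it tells you that the arcwise connected subgroup $X$ is an immersed analytic (Lie) subgroup of $\U{s}$, but immersed analytic subgroups need not be closed, and a non-closed one is never compact (your own example of the irrational line on a torus is exactly this situation). So after your second step you have not shown $X$ is a compact Lie group, and the theorem remains unproved.

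The paper closes this gap in one stroke by citing \cite[Theorem~4.11]{Hainke/Koehl/Levy}, which asserts that the image $\mu(\mathfrak{k})$ of the generalized spin representation is a compact Lie algebra in the strong sense that the analytic subgroup it generates is a compact subgroup of $\GL{s}(\CC)$. This is precisely the structural input you are missing: knowing that the one-parameter subgroups land in $\U{s}$ (your skew-Hermitian argument, which is fine) gives you boundedness of the generators, but not closedness of the group they generate. To make your route work without citing that theorem, you would have to prove directly that $\mu(\mathfrak{k})$ is a semisimple Lie subalgebra of $\mathfrak{u}(s)$ (from which closedness of the associated analytic subgroup follows), or otherwise argue closedness --- and that is essentially the content of the cited result, not something that comes for free from Cartan's closed subgroup theorem. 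Your handling of non-triviality and of the quotient map $\overline{\Xi}$ is correct and matches the paper's intent, but the central compactness claim is not established.
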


\begin{proof}
By \cite[Theorem~4.11]{Hainke/Koehl/Levy} the image of the generalized spin representation $\mu : \mathfrak{k} \to \End(\CC^s)$ from Theorem~\ref{m1} is compact. Hence the claim for $\Xi$, and by the homomorphism theorem for groups also for $\overline\Xi$, follows.
\end{proof}

\begin{corollary} [{cf.\ \cite[Lemma~2, p.~49]{DamourHillmann}}] \label{DamourHillmann}
Let $\Pi$ be a simply laced diagram. Then the image $\Xi\left(\widehat W(\Pi)\right)$ is finite.
\end{corollary}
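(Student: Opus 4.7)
The plan is to realise $\Xi(\widehat W(\Pi))$ as a finite extension of an explicit finite $2$-group, extending the strategy used by \cite{DamourHillmann} for $E_{10}$. First, by Remark~\ref{inparticular} together with Lemmas~\ref{24} and \ref{25}, the epimorphism $\Xi$ sends each generator $\widehat{s}_i$ to
\[
 R_i := \tfrac{1}{\sqrt{2}}(\mathrm{id}_V + X_i),
 \qquad X_i := 2\mu(Y_i).
\]
Because $X_i^2 = -\mathrm{id}_V$ by \cite[Remark~4.5]{Hainke/Koehl/Levy}, squaring gives $R_i^2 = X_i$, so the subgroup
\[
 G_0 := \langle X_1, \ldots, X_n\rangle \leq \mathrm{GL}_s(\CC)
\]
lies inside $\widetilde{W}^{\mathrm{spin}} := \Xi(\widehat W(\Pi))$. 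Exploiting $X_iX_j = -X_jX_i$ for $\{i,j\}^\sigma \in E(\Pi)$ and $X_iX_j = X_jX_i$ otherwise, every element of $G_0$ rewrites as $\pm X_{i_1}\cdots X_{i_k}$ with $i_1 < \cdots < i_k$; in particular $|G_0| \leq 2^{n+1}$ and $G_0$ is a finite $2$-group.

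Next, I will check that $G_0 \trianglelefteq \widetilde{W}^{\mathrm{spin}}$. Using $R_i^{-1} = \tfrac{1}{\sqrt{2}}(\mathrm{id}_V - X_i)$, a short direct calculation produces
\[
 R_i X_j R_i^{-1} = \begin{cases} X_j, & i=j \text{ or } \{i,j\}^\sigma \notin E(\Pi), \\ X_iX_j, & \{i,j\}^\sigma \in E(\Pi), \end{cases}
\]
so each $R_i$ normalises $G_0$. The quotient $\widetilde{W}^{\mathrm{spin}}/G_0$ is then generated by the images $\overline{R_i}$, each an involution (since $R_i^2 = X_i \in G_0$) satisfying the braid relations (R3), and hence is a homomorphic image of the Coxeter group $W(\Pi)$.

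The remaining -- and main -- step is to deduce finiteness of $\widetilde{W}^{\mathrm{spin}}/G_0$ even when $W(\Pi)$ is infinite. The plan is to study the conjugation homomorphism $\widetilde{W}^{\mathrm{spin}} \to \Aut(G_0)$, whose image has order at most $|\Aut(G_0)|$; it then suffices to show its kernel $C := C_{\widetilde{W}^{\mathrm{spin}}}(G_0)$ is finite. Since the eigenvalues of each $X_i$ lie in $\{\sqrt{-1}, -\sqrt{-1}\}$, a direct calculation gives $\det(R_i) = e^{\sqrt{-1}\,\pi(a-b)/4}$ for some $a+b=s$, an eighth root of unity, so every element of $\widetilde{W}^{\mathrm{spin}}$ has determinant an eighth root of unity. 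Therefore the scalars in $\widetilde{W}^{\mathrm{spin}}$ form a subgroup of $\{\lambda\,\mathrm{id}_V : \lambda^{8s} = 1\}$, which is finite. Provided $G_0$ acts absolutely irreducibly on $\CC^s$ -- a property built into the construction of the maximal generalized spin representation of \cite[Corollary~4.8]{Hainke/Koehl/Levy} -- Schur's lemma gives $C \subseteq \CC^{\times}\mathrm{id}_V$, and combined with the determinant restriction yields $|C| < \infty$.

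I expect the chief obstacle to be the irreducibility of $G_0$ on $\CC^s$ invoked in the final step: this property is implicit in the maximality of the generalized spin representation but may need to be extracted from the explicit construction in \cite[Corollary~4.8]{Hainke/Koehl/Levy}. If it has to be relaxed, one can instead pass to the $G_0$-isotypic decomposition $\CC^s = \bigoplus_k W_k$ and apply the scalar-centraliser-plus-determinant argument componentwise, still obtaining finiteness of $C$ as a product of finite cyclic groups.
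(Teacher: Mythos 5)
Your approach is genuinely different from the paper's. The paper derives explicit rewriting rules $R_i^2 = \sqrt{2}R_i - 1$ and $R_iR_j = -R_jR_i + \sqrt{2}R_i + \sqrt{2}R_j - 1$ (for edges) resp.\ $R_iR_j = R_jR_i$ (non-edges), concludes that every element of $\Xi(\widehat W(\Pi))$ is a $\ZZ[\sqrt{2}]$-linear combination of the finitely many square-free monomials $R_{i_1}\cdots R_{i_k}$, and infers that $\Xi(\widehat W(\Pi))$ is a discrete subgroup of the compact group $X$, hence finite. You instead construct the finite normal $2$-group $G_0 = \langle X_1,\ldots,X_n\rangle$ inside $\Xi(\widehat W(\Pi))$, note that the conjugation map gives $[\Xi(\widehat W(\Pi)):C]\leq |\Aut(G_0)|<\infty$ where $C$ is the centraliser of $G_0$, and try to show $C$ is finite by Schur's lemma plus a determinant bound. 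Your conjugation formulae, the bound $|G_0|\leq 2^{n+1}$, normality of $G_0$, and the finite-index step are all correct.

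The gap is at the final step. Your argument that $C$ is finite requires $G_0$ to act (absolutely) irreducibly on $\CC^s$, and this fails in general. Already for $\Pi = A_3$ one has $\mathfrak{k}=\mathfrak{so}(4)$ with $X_i = e_ie_{i+1}$ acting on the $4$-dimensional spinor module, which decomposes into the two $2$-dimensional half-spin modules; since $G_0$ contains the chirality operator $X_1X_3=e_1e_2e_3e_4$, these are distinct irreducible $G_0$-modules, and the centraliser of $G_0$ in $\mathrm{GL}_4(\CC)$ is $\CC^{\times}\times\CC^{\times}$, not the scalars. More generally the same happens for all $A_{n-1}$ with $n$ even. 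Your proposed fallback of ``applying the scalar-centraliser-plus-determinant argument componentwise'' does not repair this: for $c = \bigoplus_k \lambda_k\,\mathrm{id}_{W_k}$ the only determinant constraint is that $\prod_k \lambda_k^{\dim W_k}$ be an eighth root of unity, which does not confine the individual $\lambda_k$ to a finite set (for instance $\lambda_1 = t^{\dim W_2}$, $\lambda_2 = t^{-\dim W_1}$ works for every $t\in\CC^\times$). Without establishing irreducibility --- or replacing the Schur argument by a genuine discreteness argument such as the one in the paper --- the proof does not go through. Irreducibility of the maximal generalized spin representation restricted to $G_0$ is not asserted in \cite[Corollary~4.8]{Hainke/Koehl/Levy}, and the $A_3$ case shows it cannot be expected in general.
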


\begin{proof}
As in the proof of \cite[Lemma~2, p.~49]{DamourHillmann} the key observation is that $\Xi\left(\widehat W(\Pi)\right) \leq X$ is a discrete subgroup of the compact group X. Consider the commutative diagram obtained from Theorem~\ref{m1}
\[
\xymatrix{
\{ k \frac{\pi}{4} \mid k \in \ZZ/8\ZZ \} \ar@{^{(}->}[dd]_S \ar@{^{(}->}[rr]^{\tilde\psi_{ij} \circ \tilde\phi_{ij}^i \circ S} && \Spin\Pi \ar[drr]^\Xi \\
&& \AAA(\Pi,\Spin 2) \ar[rr]^{\Psi_\AAA} \ar[u]^{\widetilde\Psi} && X \\
\Spin{2} \ar@/^1pc/[uurr]^{\tilde\psi_{ij} \circ \tilde\phi_{ij}^i} \ar[urr]^{\tilde\phi_{ij}^i} \ar@/_/[urrrr]_{\psi_{ij} \circ \tilde\phi_{ij}^i} 
}
\]
where \[\widetilde\Psi = \{ \tilde \psi_{ij} : \widetilde G_{ij} \to \Spin\Pi \} := \tau_{\Spin\Pi}= \{ \tau_{ij} : \widetilde G_{ij} \to \Spin\Pi \}\] is the canonical universal enveloping morphism of the amalgam $\AAA(\Pi,\Spin 2)= \{ \widetilde G_{ij}, \tilde\phi_{ij}^i \mid i \neq j \in I \}$.
Then \[\widehat W(\Pi) = \langle \widehat s_i \mid i \in I \rangle\] where $\widehat s_i := \tilde\psi_{ij}(\tilde\phi_{ij}^i(S(\frac{\pi}{4})))$ and \[\Xi\left(\widehat W(\Pi) \right) = \left\langle \psi_{ij}(\tilde\phi_{ij}^i(S(\frac{\pi}{4}))) \mid i \in I \right\rangle = \left\langle \cos(\frac{\pi}{4}) + \sin(\frac{\pi}{4})X_i \mid i \in I \right\rangle.\] 
For $R_i := \cos(\frac{\pi}{4}) + \sin(\frac{\pi}{4})X_i=\frac{\sqrt{2}}{2}+\frac{\sqrt{2}}{2}X_i$ the commutator relations
\[X_iX_j=\begin{cases}
-X_jX_i,&\text{if }\{i,j\}^\sigma\in E(\Pi)\ , \\
X_jX_i,&\text{if }\{i,j\}^\sigma\notin E(\Pi)\ 
\end{cases}\]
established in \cite[Remark~4.5]{Hainke/Koehl/Levy} (cf.\ the proof of Theorem~\ref{m1}) imply that
\[R_iR_j = \begin{cases}
    -R_jR_i + \sqrt{2}R_i + \sqrt{2}R_j - 1, &
              \text{if }\{i,j\}^\sigma \in E(\Pi), \\
    R_jR_i, & \text{if }\{i,j\}^\sigma \not\in E(\Pi),
\end{cases}\]
as in Remark~\ref{straightforwardcomputation} or in the proof of \cite[Lemma~2, p.~49]{DamourHillmann};
moreover, $X_i^2 = -\id$ implies $R_i^2 = \sqrt{2}R_i-1$.

We conclude that any product of the above generators of $\Xi\left(\widehat W(\Pi)\right)$ can be written as a polynomial in the $R_i$, $i \in I$, such that each $R_i$ appears at most linearly in each polynomial, with coefficients in $\ZZ[\sqrt{2}]$. Hence $\Xi\left(\widehat W(\Pi)\right)$ is a discrete set. As it is a subgroup of the compact group $X$, it has to be finite. 
\end{proof}

Not much is known about the kernels of the maps $\Xi$ and $\overline{\Xi}$. However, Pierre-Emmanuel Caprace pointed out to us that these kernels generally cannot be abstractly simple. The argument relies on the concept of acylindrical hyperbolicity (see \cite{Osin:2013}, also \cite{CapraceHume}).

\begin{corollary}
Let $\Pi$ be an irreducible non-spherical, non-affine simply laced Dynkin diagram. Then the kernels of the generalized spin representations $\Xi : \Spin{\Pi} \to X$ and $\overline \Xi : K(\Pi) \to X / \left\langle \Xi(Z) \right\rangle$ are acylindrically hyperbolic and, in particular, not abstractly simple.
\end{corollary}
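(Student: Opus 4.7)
The plan is to invoke the dichotomy for normal subgroups of acylindrically hyperbolic groups due to Osin \cite{Osin:2013}: every normal subgroup of an acylindrically hyperbolic group is either contained in the (unique) maximal finite normal subgroup or is itself acylindrically hyperbolic. The non-simplicity conclusion is then automatic, since every acylindrically hyperbolic group contains proper non-trivial normal subgroups (indeed, non-abelian free ones, by the Dahmani--Guirardel--Osin theory of hyperbolically embedded subgroups). So the whole argument reduces to (i) showing that the ambient groups $K(\Pi)$ and $\Spin\Pi$ are acylindrically hyperbolic, and (ii) certifying that $\ker\overline\Xi$ and $\ker\Xi$ are infinite and escape the finite elliptic radical.

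For (i) I would invoke \cite{CapraceHume}. Under the standing hypothesis that $\Pi$ is irreducible, non-spherical and non-affine simply laced, the Coxeter--Weyl group $W(\Pi)$ is infinite and not virtually abelian, and $K(\Pi)$ acts on a half of the twin building of $G(\Pi)$---a CAT(0) cell complex---transitively on pairs of chambers at bounded distance via a real version of the Iwasawa decomposition already used in Theorem~\ref{thm:K-univ-adm}. The method of \cite{CapraceHume}, originally deployed for Kac--Moody groups over finite fields, produces elements of $K(\Pi)$ acting as WPD contracting isometries on this half-building (roughly, products of two fundamental reflections whose associated rank-two residue is non-spherical translate along a rank-one axis in the building); translating this to the split real setting uses only the combinatorial and CAT(0)-geometric features of the building. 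Since $\Spin\Pi\to K(\Pi)$ is a finite central extension by Theorem~\ref{m2}, and acylindrical hyperbolicity is preserved under finite central extensions, $\Spin\Pi$ is acylindrically hyperbolic as well.

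For (ii), note that $X/\langle \Xi(Z)\rangle$ is a finite-dimensional compact Lie group by Theorem~\ref{m1} together with \cite[Theorem~4.11]{Hainke/Koehl/Levy}. By Corollary~\ref{DamourHillmann} the image $\Xi(\widehat W(\Pi))$ of the spin-extended Weyl group is finite; however, $\widehat W(\Pi)$ surjects onto the infinite Coxeter group $W(\Pi)$, so $\widehat W(\Pi)\cap\ker\Xi$ has finite index in $\widehat W(\Pi)$ and is therefore infinite. Hence $\ker\Xi$ is infinite, and the same applies verbatim to $\ker\overline\Xi$. Moreover, the elliptic radical of $K(\Pi)$ is a finite (central) $2$-group---this is a standard consequence of the splitness and the Iwasawa structure---so an infinite normal subgroup cannot be swallowed by it; the analogous assertion for $\Spin\Pi$ follows via the finite central extension $\Spin\Pi\to K(\Pi)$.

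The main obstacle is step (i): transferring the argument of \cite{CapraceHume} to the real setting of the maximal compact subgroup of $G(\Pi)$ and pinning down a hyperbolic element with the WPD property. This is expected to be structurally identical to the finite-field case, but must be written out carefully, since the elements one constructs live in $K(\Pi)$ rather than in the full Kac--Moody group. A secondary technical point is the explicit identification of the elliptic radical of $K(\Pi)$ as a finite group, which is required in order to separate it from the large kernels at hand.
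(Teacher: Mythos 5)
Your proposal is a genuinely different route from the paper's, and it is worth comparing them. The paper applies \cite[Theorem~1.4]{CapraceHume} \emph{directly to the kernels} $\ker\Xi$ and $\ker\overline\Xi$: it observes that these kernels act on each half of the twin building with finite chamber stabilisers (since chamber stabilisers in $K(\Pi)$, resp.\ $\Spin\Pi$, are already finite) and that, by Corollary~\ref{DamourHillmann}, the intersections $\ker\overline\Xi\cap\wt W(\Pi)$ and $\ker\Xi\cap\wh W(\Pi)$ have finite index in $\wt W(\Pi)$, resp.\ $\wh W(\Pi)$. These are precisely the hypotheses of Theorem~1.4, which is a \emph{general} criterion for groups acting on buildings and is not tied to finite ground fields. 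Non-simplicity then follows from \cite[Theorem~8.5]{Dahmani/Guirardel/Osin}. You instead propose to first establish acylindrical hyperbolicity of the ambient groups $K(\Pi)$ and $\Spin\Pi$ and then pass to the kernels via Osin's normal-subgroup dichotomy; this is feasible in principle, and your use of Corollary~\ref{DamourHillmann} to see that the kernels are infinite is the same input the paper uses, just deployed to a different end.

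However, your proposal has two real gaps. First, the step you flag as ``the main obstacle'' --- transferring the WPD/contracting-isometry construction of \cite{CapraceHume} to the split real setting --- is a non-obstacle: Theorem~1.4 of that reference is a black-box criterion for any group acting on a building with finite chamber stabilisers and a large enough Weyl group intersection, so nothing needs translating, and moreover the paper avoids even checking it for $K(\Pi)$ by checking it for the kernels themselves. Second, your dichotomy route requires you to identify the maximal finite normal subgroup (``elliptic radical'') of $K(\Pi)$ and of $\Spin\Pi$ and to show the kernels escape it; you assert that this radical is a ``finite (central) $2$-group'' but give no argument, and this is not obvious from the Iwasawa decomposition alone (it requires analysing the kernel of the building action, not just the chamber stabiliser). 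The paper's direct application to the kernels sidesteps both problems, so while your approach might be made to work, it would end up proving more than is needed and would require supplying these two missing pieces.
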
 

\begin{proof}
$\Spin{\Pi}$ and $K(\Pi)$ naturally act on each half of the twin building of the associated Kac--Moody group $G(\Pi)$ and, thus, so do $\ker(\Xi)$ and $\ker(\overline{\Xi})$. The stabiliser in $K(\Pi)$ of the fundamental chamber consists of the elements of the standard torus of $G(\Pi)$ that square to the identity, the stabiliser in $\Spin{\Pi}$ is a double extension of this group. In particular, chamber stabilisers in $K(\Pi)$ and in $\Spin{\Pi}$ are finite and, hence, so are chamber stabilisers in $\ker(\overline{\Xi})$ and $\ker(\Xi)$. By Corollary~\ref{DamourHillmann} the intersection $\ker(\overline{\Xi}) \cap \wt W(\Pi)$ has finite index in $\wt W(\Pi)$ and the intersection $\ker(\Xi) \cap \wh W(\Pi)$ has finite index in $\wh W(\Pi)$ (see Section~\ref{extended} for definitions). Hence \cite[Theorem~1.4]{CapraceHume} applies and $\ker(\Xi)$ and $\ker(\overline{\Xi})$ are acylindrically hyperbolic. Therefore by \cite[Theorem~8.5]{Dahmani/Guirardel/Osin} both groups contain a non-trivial proper free normal subgroup. 
\end{proof}

\begin{corollary}
Let $\Pi$ be a Dynkin diagram with admissible colouring $\kappa$. Then the target of the group homomorphism $\Spin{\Pi,\kappa} \stackrel{\Xi_{\Spin{\Pi,\kappa}}}{\longrightarrow} \Spin{\Pi^{\mathrm{dl\kappa}},\kappa} \stackrel{\Omega_{\Spin{\Pi^{\mathrm{dl\kappa}},\kappa}}}{\longrightarrow} \Spin{(\Pi^{\mathrm{dl}})^{\mathrm{un}},\kappa^{\mathrm{un}}} \stackrel{\Xi}{\longrightarrow} X$ is a non-trivial compact Lie group, as is the target of the induced group epimorphism $K(\Pi) \stackrel{\Xi_{K(\Pi)}}{\longrightarrow} K((\Pi^{\mathrm{dl}\kappa}) \stackrel{\Omega_{K(\Pi^{\mathrm{dl\kappa}})}}{\longrightarrow} K(\Pi^{\mathrm{dl}})^{\mathrm{un}}) \stackrel{\overline{\Xi}}{\longrightarrow} X/ \left\langle \Xi(Z) \right\rangle$.
\end{corollary}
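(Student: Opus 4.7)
The plan is to invoke the previous theorem (for simply laced diagrams) applied to the simply laced diagram $(\Pi^{\mathrm{dl}\kappa})^{\mathrm{un}}$, and pull the result back along the two reduction homomorphisms already constructed in the paper: the one that replaces edges of high valency by doubly laced configurations (Proposition~\ref{thm:Spin(Delta)-covers-Spin(Delta-sl)}) and the unfolding homomorphism (Corollary~\ref{corunfolding}).

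First I would verify that each of the three homomorphisms forming the composition is well-defined with the advertised target. The epimorphism $\Xi_{\Spin{\Pi,\kappa}} : \Spin{\Pi,\kappa} \to \Spin{\Pi^{\mathrm{dl}\kappa},\kappa}$ is supplied by Proposition~\ref{thm:Spin(Delta)-covers-Spin(Delta-sl)}. By the construction of $\Pi^{\mathrm{dl}\kappa}$ in Definition~\ref{doublylacing}, the doubly laced diagram $\Pi^{\mathrm{dl}\kappa}$ together with the admissible colouring $\kappa$ satisfies the hypothesis of Definition~\ref{unfoldeddiagram}: every edge of type $\mathrm{C}_2$ in $\Pi^{\mathrm{dl}\kappa}$ arises only from configurations where the head of the arrow carries $\kappa$-value $2$. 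Hence Corollary~\ref{corunfolding} supplies the homomorphism $\Omega_{\Spin{\Pi^{\mathrm{dl}\kappa},\kappa}} : \Spin{\Pi^{\mathrm{dl}\kappa},\kappa} \to \Spin{(\Pi^{\mathrm{dl}\kappa})^{\mathrm{un}},\kappa^{\mathrm{un}}}$. Finally, since $(\Pi^{\mathrm{dl}\kappa})^{\mathrm{un}}$ is simply laced, Remark~\ref{inparticular} provides the continuous epimorphism $\Xi : \Spin{(\Pi^{\mathrm{dl}\kappa})^{\mathrm{un}},\kappa^{\mathrm{un}}} \to X$ afforded by a maximal generalized spin representation of the Kac--Moody algebra associated to $(\Pi^{\mathrm{dl}\kappa})^{\mathrm{un}}$.

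Second, I would apply the preceding theorem (which handles the simply laced case) to $(\Pi^{\mathrm{dl}\kappa})^{\mathrm{un}}$ to conclude that the target $X$ is a non-trivial compact Lie group; in particular this remains the target of the composite homomorphism $\Xi \circ \Omega_{\Spin{\Pi^{\mathrm{dl}\kappa},\kappa}} \circ \Xi_{\Spin{\Pi,\kappa}}$, proving the first assertion.

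For the second assertion about $K(\Pi)$, the commutative diagrams in Proposition~\ref{thm:Spin(Delta)-covers-Spin(Delta-sl)} and in Proposition~\ref{propunfolding}/Corollary~\ref{corunfolding} fit together to show that the composition $\Omega_{K(\Pi^{\mathrm{dl}\kappa})} \circ \Xi_{K(\Pi)} : K(\Pi) \to K((\Pi^{\mathrm{dl}\kappa})^{\mathrm{un}})$ is the homomorphism induced at the level of maximal compact subgroups by $\Omega_{\Spin{\Pi^{\mathrm{dl}\kappa},\kappa}} \circ \Xi_{\Spin{\Pi,\kappa}}$. Composing with $\overline{\Xi} : K((\Pi^{\mathrm{dl}\kappa})^{\mathrm{un}}) \to X/\langle \Xi(Z)\rangle$ from the preceding theorem yields the desired map; since $Z$ is finite (Observation~\ref{finitecentralextension}), so is $\Xi(Z)$, and the quotient $X/\langle\Xi(Z)\rangle$ inherits the structure of a non-trivial compact Lie group of the same dimension as $X$.

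There is no real obstacle here: the entire argument is a bookkeeping exercise assembling previously established facts. The only point that requires a moment of care is checking that the admissible colouring condition of Definition~\ref{unfoldeddiagram} is automatically satisfied by the pair $(\Pi^{\mathrm{dl}\kappa},\kappa)$, which follows directly from the way $\Pi^{\mathrm{dl}\kappa}$ is defined from $\Pi$ and $\kappa$ in Definition~\ref{doublylacing}.
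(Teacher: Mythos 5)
Your proof takes the statement at face value, reading ``target'' as the codomain of the composite, in which case the conclusion reduces to the previous theorem (that $X$ and $X/\langle\Xi(Z)\rangle$ are non-trivial compact Lie groups) plus the routine verification that the three arrows compose. Under that reading your argument is formally correct, but it misses what the paper's own proof treats as the entire content of the corollary.

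The paper's proof consists of a single observation: the unfolding maps $\Omega_{\Spin{\Pi^{\mathrm{dl}\kappa},\kappa}}$ and $\Omega_{K(\Pi^{\mathrm{dl}\kappa})}$ have \emph{closed} images with respect to the Kac--Peterson topology, because those images are fixed-point sets of continuous involutions. This is not bookkeeping; it is the topological ingredient that makes the corollary meaningful. Unlike the epimorphism $\Xi_{\Spin{\Pi,\kappa}}$, the unfolding map $\Omega$ is only injective (Proposition~\ref{propunfolding}, Corollary~\ref{corunfolding}), so the composite through $\Spin{(\Pi^{\mathrm{dl}\kappa})^{\mathrm{un}},\kappa^{\mathrm{un}}}$ is not surjective onto $X$, and the intended content is that the \emph{image} of $\Spin{\Pi,\kappa}$ in $X$ (and of $K(\Pi)$ in $X/\langle\Xi(Z)\rangle$) is itself a closed, hence compact Lie, subgroup that is non-trivial. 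For that one needs to know that the subgroup $\Omega(\Spin{\Pi^{\mathrm{dl}\kappa},\kappa})$ is closed in the $k_\omega$-group $\Spin{(\Pi^{\mathrm{dl}\kappa})^{\mathrm{un}},\kappa^{\mathrm{un}}}$ before passing it through the continuous map $\Xi$. Your proposal never addresses this point, and its concluding remark that ``there is no real obstacle here'' is precisely where the gap sits: the closure of the image of $\Omega$, established via realization as a fixed-point set of a continuous involution, is the key lemma you need to invoke and you do not.
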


\begin{proof}
It suffices to observe that $\Spin{\Pi^{\mathrm{dl\kappa}},\kappa} \stackrel{\Omega_{\Spin{\Pi^{\mathrm{dl\kappa}},\kappa}}}{\longrightarrow} \Spin{(\Pi^{\mathrm{dl}})^{\mathrm{un}},\kappa^{\mathrm{un}}}$ and $K(\Pi^{\mathrm{dl}\kappa}) \stackrel{\Omega_{K(\Pi^{\mathrm{dl\kappa}})}}{\longrightarrow} K((\Pi^{\mathrm{dl}})^{\mathrm{un}})$ have closed images with respect to the Kac--Peterson topology, as they can be realized as fixed points of continuous involutions.
\end{proof}

\begin{remark}
For $\Pi=E_n$ for some $n \in \NN$, the isomorphism type of $X$ can be extracted from the Cartan--Bott periodicity described in \cite[Theorem~A]{Horn/Koehl}.
\end{remark}

\bibliographystyle{alpha}
\bibliography{References}

\end{document}